\def\section{\@startsection{section}{1}%
\z@{.7\linespacing\@plus\linespacing}{.5\linespacing}
{\normalfont\scshape\LARGE}}
\newcommand{\fa}{\mathfrak a}
\newcommand{\fg}{\mathfrak g}
\newcommand{\fh}{\mathfrak h}
\newcommand{\fk}{\mathfrak k}
\newcommand{\fn}{\mathfrak n}
\newcommand{\ft}{\mathfrak t}
\newcommand{\fC}{\mathfrak C}
\newcommand{\fF}{\mathfrak F}
\newcommand{\fH}{\mathfrak H}
\newcommand{\fJ}{\mathfrak J}
\newcommand{\fL}{\mathfrak L}
\newcommand{\fU}{\mathfrak U}
\newcommand{\bbA}{\mathbb A}
\newcommand{\bbC}{\mathbb C}
\newcommand{\bbN}{\mathbb N}
\newcommand{\bbR}{\mathbb R}
\newcommand{\bbZ}{\mathbb Z}
\newcommand{\rA}{\mathrm A}
\newcommand{\rB}{\mathrm B}
\newcommand{\rE}{\mathrm E}
\newcommand{\rF}{\mathrm F}
\newcommand{\rG}{\mathrm G}
\newcommand{\rH}{\mathrm H}
\newcommand{\rK}{\mathrm K}
\newcommand{\rM}{\mathrm M}
\newcommand{\rN}{\mathrm N}
\newcommand{\rP}{\mathrm P}
\newcommand{\rS}{\mathrm S}
\newcommand{\rT}{\mathrm T}
\newcommand{\rU}{\mathrm U}
\newcommand{\rV}{\mathrm V}
\newcommand{\rZ}{\mathrm Z}
\newcommand{\rSL}{\mathrm S \mathrm L}
\newcommand{\rSU}{\mathrm S \mathrm U}
\newcommand{\bm}{\mathbf m}
\newcommand{\bs}{\mathbf s}
\newcommand{\bt}{\mathbf t}
\newcommand{\bx}{\mathbf x}
\newcommand{\bpi}{\boldsymbol \pi }
\newcommand{\bsigma}{\boldsymbol \sigma }
\newcommand{\bH}{\mathbf H}
\newcommand{\bL}{\mathbf L}
\newcommand{\bW}{\mathbf W}
\newcommand{\cA}{\mathcal A}
\newcommand{\cF}{\mathcal F}
\newcommand{\cH}{\mathcal H}
\newcommand{\cG}{\mathcal G}
\newcommand{\cL}{\mathcal L}
\newcommand{\cR}{\mathcal R}
\newcommand{\cV}{\mathcal V}
\newcommand{\Cl}{\mathrm {Cl} }
\newcommand{\Rint}{\sideset{_{\mathrm R}}{}\int}
\theoremstyle{plain}
\newtheorem{thm}{Theorem}[section]
\newtheorem{lem}[thm]{Lemma}
\newtheorem{prop}[thm]{Proposition}
\newtheorem{cor}[thm]{Corollary}
\theoremstyle{remark}
\newtheorem{rem}[thm]{Remark}
\theoremstyle{definition}
\newtheorem{exam}[thm]{Example}
\newtheorem{nota}[thm]{Notation}
\newtheorem{dfn}[thm]{Definition}
\newtheorem{conj}[thm]{Conjecture}
\DeclareMathOperator{\ad}{a d}
\DeclareMathOperator{\im}{i m}
\DeclareMathOperator{\id}{i d}
\DeclareMathOperator{\re}{r e}
\DeclareMathOperator{\ind}{i n d}
\DeclareMathOperator{\Span}{Span}
\DeclareMathOperator{\Supp}{Supp}
\DeclareMathOperator{\End}{End}
\DeclareMathOperator{\Hom}{Hom}
\DeclareMathOperator{\Ker}{Ker}
\DeclareMathOperator{\str}{s t r}
\DeclareMathOperator{\Str}{S t r} 
\begin{document}
%%%%%%%%%%%%%%%%
\thispagestyle{empty}
\pagenumbering{Alph}

\title{Pseudo-differential operators, 
heat calculus and index theory of groupoids satisfying the Lauter-Nistor condition}
\author{Bing Kwan SO}

\begin{abstract}
In this thesis, 
we study singular pseudo-differential operators defined by groupoids satisfying the Lauter-Nistor condition, 
by a method parallel to that of manifolds with boundary and edge differential operators.
The example of the Bruhat sphere is studied in detail.
In particular,
we construct an extension to the calculus of uniformly supported pseudo-differential operators
that is analogous to the calculus with bounds defined on manifolds with boundary.
We derive a Fredholmness criterion for operators on the Bruhat sphere,
and prove that their parametrices up to compact operators lie inside the extended calculus;
we construct the heat kernel of perturbed Laplacian operators;
and prove an Atiyah-Singer type renormalized index formula for perturbed Dirac operators on the Bruhat sphere
using the heat kernel method.
\end{abstract}

%\maketitle
\markboth{}{}

\pagebreak
\thispagestyle{empty}

\begin{figure}
\begin{center}
\includegraphics[width=45mm]{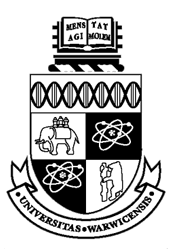} 
\end{center}
\end{figure}

\begin{center}
\begin{huge}
{\sc Pseudo-differential operators, heat calculus and index theory of groupoids satisfying the Lauter-Nistor condition }
\end{huge}
\end{center}

\vspace{1.5cm}
% \begin{center}
% by \\
% \end{center}

\begin{center}
\begin{Large}
{\sc Bing Kwan SO }
\end{Large}
\end{center}
\vspace{3.0cm}

\begin{center}
A thesis submitted in partial fulfillment of the requirements for \\
the degree of Doctor of Philosophy \\
at The University of Warwick. \\
January 2010.
\end{center}
\vspace{0.5cm}

\pagebreak
\pagenumbering{roman}
\tableofcontents

\pagebreak
\begin{LARGE}
{\centering {\sc Acknowledgment} \\}
\end{LARGE}
$ \; \\ $
I wish to express my sincere gratitude to my supervisor Prof. J. Rawnsley 
for his continual guidance throughout the period of postgraduate studies.
I wish to thank The Croucher Foundation, 
whose scholarship makes this project possible.
Last but not the least, my parents for their continual support through the telephone from the other side of the globe,
during the last three very difficult years of my lifetime.

\pagebreak
\begin{LARGE}
{\centering {\sc Declaration} \\}
\end{LARGE}
$ \; \\ $
Unless otherwise specified, all material in this thesis is the result of 
my own, independent, and original work to the best of my knowledge.
The research was done under the supervision of 
Prof. J. Rawnsley during the period 2006-2010 for the degree of Doctor of Philosophy at The University of Warwick. 
The work submitted has not been previously included in a thesis, 
dissertation or report submitted to any institution for a degree,
diploma or other qualification.

\vspace{7.0cm}

\begin{flushright}
==================\\
SO, Bing Kwan
\end{flushright}
\vspace{2.0cm}

$ \; $

\pagebreak
\begin{LARGE}
{\centering {\sc Abstract} \\}
\end{LARGE}
$ \; \\ $
In this thesis, 
we study singular pseudo-differential operators defined by groupoids satisfying the Lauter-Nistor condition, 
by a method parallel to that of manifolds with boundary and edge differential operators.
The example of the Bruhat sphere is studied in detail.
In particular,
we construct an extension to the calculus of uniformly supported pseudo-differential operators
that is analogous to the calculus with bounds defined on manifolds with boundary.
We derive a Fredholmness criterion for operators on the Bruhat sphere,
and prove that their parametrices up to compact operators lie inside the extended calculus;
we construct the heat kernel of perturbed Laplacian operators;
and prove an Atiyah-Singer type renormalized index formula for perturbed Dirac operators on the Bruhat sphere
using the heat kernel method.

\pagebreak
\pagenumbering{arabic}
\thispagestyle{firstpage}
\section{Introduction: From singular to groupoid pseudo-differential calculus} 
Traditionally, 
the way to study singular pseudo-differential operators involves studying underlying manifolds 
with embedded boundaries or corners.
These boundaries are always defined by the zero set of some functions 
(known as the boundary defining functions $\rho $),
with non-vanishing differentials near the boundary.
As a consequence, 
a neighborhood of the boundary $\partial \mathrm M$ is of the form 
$[0 , 1) \times \partial \mathrm M$ (with the closed interval $[0, 1) $ parameterized by $\rho $).
Then, one would typically consider the calculus of pseudo-differential operators whose kernels have 
poly-homogeneous expansions in $\rho $ near the boundary (see \cite{Mazzeo;EdgeRev} and the reference there).

The use of groupoids for studying the geometry of manifolds with boundaries (or corners)
was a much later development.
Early use of groupoids in pseudo-differential analysis include the convolution algebra 
defined on the holonomy groupoid of a regular foliation by Connes et. al. 
(see \cite{Connes;Book} for a review).
The notion of pseudo-differential operators on a groupoid was developed by Nistor, Weinstein and Xu 
\cite{NWX;GroupoidPdO}.
Subsequently, Monthubert \cite{M'bert;CornerGroupoids} showed that the $b$-calculus is, indeed,
the vector representation of pseudo-differential operators on some groupoids.
The theory is further formalized by Ammann et. al. into so called Lie manifolds,
or manifolds with Lie structure at infinity
\cite{Nistor;Polyhedral,Nistor;LieMfld,Nistor;Polyhedral2}.

Despite the development of the groupoid theory, 
most, if not all analysis was done on examples very similar to the manifold with boundary case.

In this thesis, 
we study the analysis of pseudo-differential operators 
in a systematic way parallel to that of singular pseudo-differential operators on manifolds with boundary
(i.e. \cite{Melrose;Book} etc.).
Our work is motivated by the study of the Poisson (co)-differential operator and its homology.
These invariants are difficult to compute. 
The only attempt to develop any form of machinery seems to be \cite{So;MPhil},
and the Laplacian defined there is not elliptic in the usual sense.
Also it is clear that the singularities are not explicitly defined by any boundary defining function.
Moreover, even if the homology is computed directly,
the result is often infinite dimensional,
and therefore not very meaningful.
For this reason,
we consider renormalized index theory,
which gives finite results.

The approach in this thesis is based on the principle that all singular pseudo-differential operators are defined by
vector representations of operators on the \\ groupoids.
Therefore instead of studying the calculus of singular pseudo-differential operators, 
one only needs to study non-singular pseudo-differential operators on the groupoid. 
The main part of this thesis, Sections 2-5, 
is an account of the technical details on how this principle is implemented,
particularly to the example of the Bruhat Poisson structure on the sphere $\bbC \rP (1)$.

Here, we shall give an overview of our approach.
In Section 2,
We collect together background material from several standard sources, which is needed for the thesis.
We begin with reviewing the well known formalism of uniformly supported
groupoid pseudo-differential operators by Nistor et. al. \cite{NWX;GroupoidPdO}.
The uniformly supported calculus is comparable to the small calculus in manifolds with boundary.
We shall also define the notion of a Dirac operator on a groupoid.
Then we shall introduce some examples, 
most notably the symplectic groupoids of the Bruhat Poisson structure on flag manifolds,
where the Bruhat sphere is the simplest case.

Section 3 focuses on two questions, which are exact counterparts of \cite[Chapter 5]{Melrose;Book}:
\begin{enumerate}
\item
What (elliptic) pseudo-differential operator on a groupoid has Fredholm vector representation? 
\item
What does the parametrix of a Fredholm operator on a the groupoid defining the Bruhat sphere look like? 
\end{enumerate}
Lauter and Nistor's \cite{Nistor;GeomOp} theory gives a quick answer for (1), 
namely, if an operator is invertible on all the singular leaves,
then its vector representation is Fredholm.
In the simple case of the Bruhat sphere,
question (1) therefore immediately reduces to checking the invertibility of the operator over the only singular leaf.
Due to some invariance properties, 
the natural way to proceed is by Fourier-Laplace transform.
We remark that our approach is again parallel to the indicial family formalism for manifolds with boundary
(recall that given a partial differential operator $\varPsi$,
the indicial family is defined to be the family of differential operators 
$ (e ^{-i \xi \rho } \varPsi e^ {i \xi \rho } ) |_{\partial \rM}, \quad \xi \in \bbC $,
see \cite{Loya;Pert,Melrose;Book} for detailed definitions).
Indeed, it can be said that Fourier-Laplace transform {\it is the right} definition for indicial family.
We then turn to describe the parametrix of Fredholm operators on the Bruhat sphere,
using the fact that the inverse of a properly supported, invariant pseudo-differential operator 
is an invariant pseudo-differential operator with exponential decaying kernel.
We then generalize the notion to groupoids with sub-exponential growth, 
and prove that the resulting calculus has a composition rule similar to that of calculus with bounds.

In Section 4,
we turn to the heat calculus of Laplacian operators.
The treatment here is very different from that of \cite{Albin;EdgeInd,Melrose;Book}, 
and much simpler. 
That is not surprising because the source fibers are just non-singular manifolds with bounded geometry,
and the heat kernel is essentially the leaf-wise heat kernel.
Therefore the classical construction suffices.
Perhaps the only unexpected result is the proof of transverse smoothness,
which requires additional growth conditions on the differential of the product map.
We shall leave the technical details to Section 4.2.

Given a perturbed Dirac operator that is Fredholm (one satisfying the conditions in Section 3),
it is natural to seek an Atiyah-Singer type formula for its Fredholm index.
That is the theme of Section 5. 
Again the technique we use is parallel to that of \cite{Albin;EdgeInd,Loya;Pert,Melrose;Book},
and is fairly standard.
We use the stereographic coordinates on the open leaf of the Bruhat sphere to define a renormalized trace.
The we derive the local index formula.
We do have to fall back to the machinery of \cite{Albin;EdgeInd} to describe the long time behavior of the heat kernel.
However, it can be said that the `cheating' occurs already when we use the stereographic coordinates,
which effectively serves as a boundary defining function.
Nevertheless, our result is stronger than that of \cite{Albin;EdgeInd} 
in the sense that we are able to derive an explicit trace defect formula.

\pagebreak \thispagestyle{firstpage}
\section{Lie groupoids and pseudo-differential operators}

\subsection{ The differential geometry of Lie groupoids}
We begin our technical discussion with the basic definition of a Lie groupoid.
Our definition follows the convention of \cite{Mackenzie;Book2}, 
but with the source and target maps denoted by $\bs$ and $\bt$ instead of $\alpha $ and $\beta $.
\begin{dfn}
A {\it Lie groupoid} $\cG \rightrightarrows \rM $ consists of:
\begin{enumerate}
\item
Manifolds $\cG$ and $\rM$;
\item
A unit inclusion ${\mathbf u} : \rM \rightarrow \cG$;
\item
Submersions $\bs , \bt : \cG \rightarrow \rM$, called the source and target map respectively,
satisfying
$$ \bs \circ {\mathbf u} = \id _\rM = \bt \circ {\mathbf u}; $$
\item
A multiplication map 
$\mathbf m : \{ (a, b) \in \cG \times \cG : \bs (a) = \bt (b) \} \rightarrow \cG,
(a , b) \mapsto a b$
that is associative and satisfies
$$ \bs (a b) = \bs (b) , \quad \bt (a b) = \bt (a), 
\quad a ( \mathbf u \circ \bs (a)) = a = ( \mathbf u  \circ \bt (a)) a ; $$
\item
An inverse diffeomorphism $\mathbf i : \cG \rightarrow \cG, a \mapsto a^{-1}$,
such that $\bs (a^{-1}) = \bt (a), \\ \bt (a^{-1}) = \bs (a)$ and 
$$ a a^{-1} = \mathbf u (\bt (a)), a^{-1} a = \mathbf u (\bs (a)).$$
\end{enumerate} 
\end{dfn}

\begin{rem}
In this thesis,
we assume that the groupoid $\cG$ is Hausdorff.
This extra assumption is clearly satisfied in all of the examples we shall shortly see. 
Note that many important groupoids, like holonomy groupoids of foliations, are not Hausdorff.
\end{rem}

\begin{nota}
For simplicity we shall denote a Lie groupoid $\cG \rightrightarrows \rM$ by $\cG$ and call it a groupoid;
Also, with an abuse in notation we consider $\rM$ as a subset of $\cG$ via the unit inclusion
$\mathbf u$.
For each $x \in \rM $, we write 
$$ \cG _x := \bs ^{-1} (x) .$$ 
\end{nota}

\begin{dfn}
We say that a groupoid $\cG$ is $\bs${\it -connected} if $\cG _x $ is connected for all $x \in \rM$.
\end{dfn}

\begin{dfn}
Let $\cG$ be a Lie groupoid and $a \in \cG$.
The {\it right translation} is the diffeomorphism:
$$ R_a : \bs ^{-1} (a) \rightarrow \bt ^{-1} (a), b \mapsto b a, b \in \cG .$$
\end{dfn}

\begin{dfn}
A {\it right-invariant function} on $\cG$ is a smooth function $f$ such that
$$ f (b a) = f (b), \quad \forall a \in \cG, b \in \bs^{-1} (a) ;$$
A {\it right-invariant vector field} on $\cG$ is a vector field
$X$ such that $d \bs X = 0 $ (i.e., $X$ is a vector field along the $\bs$-fibers) and 
$$ d R_a (X (b)) = X (b a), \quad \forall a \in \cG, b \in \bs^{-1} (a) .$$
\end{dfn}
 
From the definition, 
one immediately observes that any right invariant function $f \in C^\infty (\cG)$ can be written in the form
\begin{equation}
f = \bt ^{-1} \tilde f, \; \text {where} \; \tilde f := \mathbf u ^* f \in C^\infty (\rM).
\end{equation}

\subsubsection{\bf Lie algebroids and singular foliations}

\begin{dfn}
A {\it Lie algebroid} $\cA$ is a vector bundle over $\rM$, together with 
a Lie algebra structure $[ \cdot , \cdot ]$ on the space of smooth sections $\Gamma ^\infty (\cA)$,
and a bundle map $\nu : \cA \rightarrow T \rM$ satisfying 
$$ \nu ([X, Y]) = [\nu (X) , \nu (Y)],
\; \text {and} \; [X , f Y] = f [X, Y] + (\fL_{\nu (X)} f) Y, $$
for any $ X, Y \in \Gamma ^\infty (\cA), f \in C^\infty (\rM).$
\end{dfn}

\begin{exam}
Let $(\rM , \varPi)$ be a Poisson manifold \cite{Vas;Book}.
Denote the contraction with the Poisson bi-vector field $\varPi$ by $\tilde \varPi : T^* \rM \to T \rM $.
Define the bracket 
$$ [ \omega _ 1 , \omega _2 ] 
:= d (\omega _1 \wedge \omega _2 (\varPi))
+ \iota _{\tilde \varPi (\omega _1)} d \omega _2 - \iota _{\tilde \varPi (\omega _2)} d \omega _1, $$
for any 1-forms $\omega _1 , \omega _2$.
It is easy to check that $T ^* \rM $ is a Lie algebroid using $\tilde \varPi$ as the anchor map.
\end{exam} 

In many ways the Lie algebroid plays the role of tangent bundle in our study.
For example we have:
\begin{dfn}
\cite{Fern'd;HoloAndChar}
Let $\rE $ be a vector bundle over $\rM $.
An $\cA$-connection on $\rE$ is a differential operator 
$\nabla ^\rE : \Gamma ^\infty (\rE ) \to \Gamma ^\infty (\cA ' \otimes \rE) $
satisfying the relations
\begin{align*}
\nabla ^\rE _{f X} u =& \: f \nabla ^\rE _X u \\
\nabla ^\rE _X (f u) =& \: f \nabla ^\rE _X u + \fL _{\nu (X)} u,
\end{align*}
for any $X \in \Gamma ^\infty (\cA) , f \in C^\infty (\rM ), u \in \Gamma ^\infty (\rE)$.
\end{dfn}

\begin{exam}
As in the case of Riemannian manifolds, given a metric $g _\cA$, 
i.e., a positive definite symmetric bi-linear form on $\cA$,
one can define the {\it Levi-Civita $\cA$-connection} $\nabla ^{g _\cA} $ on $\cA$ by the formula
\begin{align*}
2 g_\cA (\nabla ^{g _ \cA} _X Y, Z) 
:=& g_\cA ([ X, Y ], Z) - g_\cA ([Y, Z ], X) + g_\cA ([Z, X ], Y) \\
&+ \fL_{\nu (X)} g_\cA (Y, Z) + \fL_{\nu (Y)} g_\cA (Z, X) - \fL_{\nu (Z)} g_\cA (X, Y),
\end{align*}
for any $X, Y, Z \in \Gamma ^\infty (\cA)$.
\end{exam}

It is well known that every Lie groupoid $\cG$ determines a Lie algebroid:
Define the vector bundle 
$$\cA := \{ X \in T_x \cG : x \in \rM \subset \cG, d \bs ( X ) = 0 \}.$$
It is clear that restriction gives a 1-1 correspondence between 
$\Gamma ^\infty (\cA)$ and the space of right invariant vector fields on $\cG$.
Define $[ \cdot , \cdot ]$ to be the Lie bracket between invariant vector fields, and define
\begin{equation}
\nu := d \bt |_{\cA} : \cA \rightarrow T \rM.
\end{equation}
It is straightforward to check that $\cA$ is a Lie algebroid over $\rM$.

\begin{dfn}
A Lie algebroid defined by some Lie groupoid as above is said to be {\it integrable}.
\end{dfn}

Note that not all Lie algebroids are integrable.
See \cite{Fern'd;IntAlgebroid} for details. 

For any Lie algebroid $\cA \rightarrow \rM$,
the family of vector fields 
$$ \cF := \{ \nu (X) : X \in \Gamma ^\infty (\cA ) \}.$$
defines a (singular) integrable foliation on $\rM$ in the sense of Sussmann \cite{Sussmann;SingFol}.
We denote the leaf space of $\cF$ by $\rM / \cF$. 
For each $x \in \rM$, we denote the leaf of $\cF $ through $x$ by $\cF_x$.
Note that the leaves may be non-embedded sub-manifolds of $\rM$.
Given a singular foliation $\cF $ defined by an integrable Lie algebroid $\cA$, 
the following propositions, 
both are direct consequences of the results in \cite{Fern'd;HoloAndChar} (in particular Theorem 1.1),
describe the leaves of $\cF$.
\begin{prop}
\label{SubmersionProp}
Let $\cG \rightrightarrows \rM$ be a Lie groupoid.
For each $x \in \rM$, 
the map $\bt |_{\cG _x } : \cG _x \rightarrow \rM$ is a submersion onto its image.
%Furthermore, if $\bs^{-1} (x) $ is path connected, 
%then its image under $\bt$ is the leaf of $\cA$ through $x$.
\end{prop}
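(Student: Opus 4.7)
The plan is to fix an arbitrary $a \in \cG_x$, set $y := \bt(a)$, and reduce the computation of $d(\bt|_{\cG_x})_a$ to the differential at the unit $\mathbf u(y)$ by means of right translation. Since $R_a(b) = b a$ is well-defined for $b$ with $\bs(b) = y$ and satisfies $\bs(ba) = \bs(a) = x$, right translation restricts to a diffeomorphism $R_a \colon \cG_y \to \cG_x$ sending $\mathbf u(y)$ to $a$. The identity $\bt(b a) = \bt(b)$ says precisely that
\[
\bt|_{\cG_x} \circ R_a = \bt|_{\cG_y}.
\]
Differentiating at $\mathbf u(y)$ and using that $dR_a$ is a linear isomorphism $T_{\mathbf u(y)} \cG_y \to T_a \cG_x$, the problem reduces to identifying the image of $d(\bt|_{\cG_y})_{\mathbf u(y)}$.

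By the very construction of the Lie algebroid, $T_{\mathbf u(y)} \cG_y = \ker(d\bs _{\mathbf u(y)}) = \cA_y$, and $d\bt$ restricted to $\cA_y$ is, tautologically, the anchor $\nu_y$. Putting the last two steps together gives
\[
d(\bt|_{\cG_x})_a (T_a \cG_x) = \nu(\cA_y) \subset T_y \rM.
\]
To conclude I would invoke the already-cited result of Fernandes (Theorem~1.1 of \cite{Fern'd;HoloAndChar}): the orbit $\bt(\cG_x) \subset \rM$ is precisely the leaf $\cF_x$ of the singular foliation $\cF$ induced by the integrable Lie algebroid $\cA$, it carries the natural structure of an immersed submanifold, and its tangent space at every $y \in \cF_x$ is $\nu(\cA_y)$. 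Combined with the surjectivity just established, this gives that $\bt|_{\cG_x}$ is a submersion onto its image.

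The differential calculation itself is essentially formal once right-invariance is exploited; the substantive input is Fernandes' theorem, which is what guarantees that $\bt(\cG_x)$ is actually a manifold with the expected tangent space. That identification of the groupoid orbit with the Sussmann leaf — together with the need to handle the possible non-embedding of $\cF_x$ into $\rM$ — is the only step that is not a direct consequence of the definitions, and so it is the natural obstacle to make explicit.
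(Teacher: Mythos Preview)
Your argument is correct and follows the same line the paper intends: the paper does not give a proof but simply records that the proposition is a direct consequence of Fernandes' results (Theorem~1.1 in \cite{Fern'd;HoloAndChar}), which is exactly the substantive input you invoke after your right-translation reduction. Your explicit computation of $d(\bt|_{\cG_x})_a$ via $R_a$ is the natural way to unwind what the citation is buying, so there is nothing to add.
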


% \begin{proof}
% Fix arbitrary $a \in \cG$. 
% For any curve $a (t) \in \bs^{-1} (x), a(0) = a$, one has
% \begin{equation}
% \label{APath}
% \frac{d}{d t} \bt(a(t)) = d \bt (\frac{d}{d t} a (t)) 
% = d \bt (d R^{-1}_{a(t)}) (\frac{d}{d t} a(t)) = \nu (d R^{-1}_{a(t)}) (\frac{d}{d t} a(t)). 
% \end{equation}
% By the local splitting of $\cA$ \cite[Theorem 1.1]{Fern'd;HoloAndChar},
% it follows that $\bt (a(t)) \subset \cF_{\bt(a)}$.
% Therefore $\bt$ maps an open neighborhood of $a$ in $\bs^{-1}(\bs (a))$ 
% onto a neighborhood of $\bs (a)$ in $\cF_{\bs (a)}$.
% Furthermore, since $(d R^{-1}_{a(t)}) $ is invertible and $a(t)$ is arbitrary, 
% it is clear that $d \bt$ is surjective.
% In other words, $\bt$ is a submersion.
% \end{proof}

\begin{prop}
\label{btImage}
Let $\cG$ be an $\bs$-connected Lie groupoid.
Then for each $x \in \rM$, one has
$$\bt ( \cG _x ) = \cF_x; \quad \text{and } \quad \cF_x \cong \cG _x / \cG^x_x,$$ 
where $\cG ^x_x$ is the Lie group 
$\cG ^x_x := \{ a \in \cG : \bs (a) = \bt (a) = x \}$,
known as the isotropy subgroup. 
\end{prop}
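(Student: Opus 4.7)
The plan is to establish the identity $\bt(\cG_x)=\cF_x$ first, and then to exhibit $\bt|_{\cG_x}$ as a principal $\cG^x_x$-bundle, from which the quotient identification follows.

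The key computation is the tangent space of $\bt(\cG_x)$ at an arbitrary point. For $b\in\cG_x$, right translation $R_{b^{-1}}$ restricts to a diffeomorphism $\cG_x\to\cG_{\bt(b)}$ sending $b$ to $\mathbf u(\bt(b))$, and because $\bs\circ R_{b^{-1}}$ is constant and $\bt\circ R_{b^{-1}}=\bt$, its differential identifies $T_b\cG_x$ with $\cA_{\bt(b)}$ in such a way that $d\bt|_b$ becomes $\nu|_{\bt(b)}$. Combined with Proposition~\ref{SubmersionProp}, this yields
\[
d\bt|_b (T_b\cG_x)\;=\;\nu(\cA_{\bt(b)})\;=\;T_{\bt(b)}\cF.
\]
Hence $\bt(\cG_x)$ is an immersed submanifold whose tangent spaces coincide at every point with those of the singular foliation $\cF$. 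Since $\cG_x$ is connected by $\bs$-connectedness and contains $\mathbf u(x)$, the image is connected and contains $x$; by the maximality of leaves in Sussmann's sense, $\bt(\cG_x)\subseteq\cF_x$.

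For the reverse inclusion, I would invoke the standard lifting procedure. Given $X\in\Gamma^\infty(\cA)$, extend $X$ to the right-invariant vector field $\tilde X$ on $\cG$, which is tangent to every $\bs$-fiber, so its local flow preserves $\cG_x$. By construction $d\bt\,\tilde X=\nu(X)$, hence the flow of $\tilde X$ on $\cG_x$ projects via $\bt$ to the flow of $\nu(X)$ on $\rM$. Since $\cF_x$ is by definition the set of points reachable from $x$ by finite concatenations of flows of vector fields $\nu(X_i)$, concatenating the corresponding lifts starting at $\mathbf u(x)\in\cG_x$ produces an element of $\cG_x$ above any prescribed $y\in\cF_x$. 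The only mildly delicate point, which I expect to be the main obstacle in a fully rigorous write-up, is controlling the domains of the lifted flows so that all concatenations stay inside the $\bs$-fiber; this is handled by restricting to sufficiently small time intervals and using right-invariance to translate the local flows of $\tilde X$ past each intermediate base point.

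For the quotient description, observe that $\cG^x_x$ acts on $\cG_x$ from the right by groupoid multiplication, and the action is free (cancellation in a groupoid) and proper (it is given by a Lie group with $\bt$-fiber $\cG^x_x$ inside the closed submanifold $\cG_x$). Two elements $b_1,b_2\in\cG_x$ satisfy $\bt(b_1)=\bt(b_2)$ if and only if $b_1^{-1}b_2\in\cG^x_x$, so the fibers of the surjection $\bt|_{\cG_x}\colon\cG_x\to\cF_x$ are precisely the $\cG^x_x$-orbits. Combined with the submersion statement of Proposition~\ref{SubmersionProp}, this makes $\bt|_{\cG_x}$ a principal $\cG^x_x$-bundle onto its image, and the induced map $\cG_x/\cG^x_x\to\cF_x$ is a diffeomorphism of immersed submanifolds, which is the asserted isomorphism.
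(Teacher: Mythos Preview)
Your argument is essentially correct and self-contained, whereas the paper does not give a proof at all: both Proposition~\ref{SubmersionProp} and Proposition~\ref{btImage} are simply attributed to \cite{Fern'd;HoloAndChar} (specifically Theorem~1.1 there) as ``direct consequences''. So there is nothing to compare at the level of strategy; you have supplied what the paper outsources.

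A couple of minor comments on your write-up. The flow-lifting step you flag as ``mildly delicate'' is indeed the only nontrivial point, and your sketch is the right one: right-invariance of $\tilde X$ gives $\Phi_t\circ R_a=R_a\circ\Phi_t$, so the maximal interval of definition of $t\mapsto\Phi_t(\mathbf u(x))$ coincides with that of $t\mapsto\phi_t(x)$ for $\nu(X)$ on $\rM$. This is standard (it appears, e.g., in Mackenzie's book and in Crainic--Fernandes), so a one-line citation would suffice to close the gap. For properness of the $\cG^x_x$-action, your parenthetical justification is a bit loose; the clean statement is that the map $(g,b)\mapsto(bg,b)$ has closed image $\{(b_1,b_2):\bt(b_1)=\bt(b_2)\}$ and is injective, and continuity of inversion and multiplication then gives properness directly. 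With those two points tightened, your proof stands on its own.
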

% \begin{proof}
% First, note that $x \in \bs^{-1} (x)$.
% From the $\bs$-connectedness hypothesis, 
% for any $a \in \bs^{-1} (x)$ there exists a (piecewise) smooth curve $a(t)$ joining 
% $x$ and $a$.
% Repeating the arguments of Proposition \ref{SubmersionProp}, 
% it is clear that $\bt (a) \in \cF_x$. Hence $\bt ( \cG _{x} ) \subseteq \cF_x.$

% To prove the other inclusion, fix arbitrary $y \in \cF_x$.
% Then there exists a curve $\gamma (t) \subset \cF_x$ such that 
% $\gamma (0) = x, \gamma (1) = y$.
% Using \cite[Theorem 1.1]{Fern'd;HoloAndChar} again, 
% there is a curve $\gamma^\cA (t) \subset \cA$ such that 
% $$\bt \circ \gamma^\cA (t) = \gamma (t) \; \text{and} \; 
% \nu (\gamma ^\cA (t)) = \frac{d}{d t} \gamma (t).$$ 
% Then it follows from \cite[Proposition 1.1]{Fern'd;IntAlgebroid}, 
% which asserts the 1-1 correspondences between $\cA$-paths and curves in $\bs$-fibers,
% that there exists a curve $a (t) \subset \bs^{-1} (x)$,
% such that $\bt \circ a ^\cG (t) = \gamma (t)$. 
% The result follows by putting $t=1$. 

% As for proving the last assertion, simply observe that $ a = a (b^{-1} b) $
% for any $a, b \in \cG$ satisfying $\bs (a) = \bs (b) = x, \bt (a) = \bt (b)$.
% \end{proof} 

\subsubsection{\bf Riemannian geometry of the $\bs$-fibers}
\label{RiemVert}
Let $\cG \rightrightarrows \rM$ be a Lie groupoid over a compact manifold $\rM$.
Let $\cA \rightarrow \rM$ be its Lie algebroid. 
Fix a metric $g _\cA$ (i.e. a symmetric, positive definite bi-linear form) on $\cA $. 
For each $x \in \rM$, $g _\cA$ defines a Riemannian metric on the $\bs$-fiber $\bs ^{-1} (x)$ by
$$ g_\bs (X , Y ) := g _\cA (\bt (a) ) (d R_a (X) , d R_a (Y) ). $$
Observe that $g_\bs $ is right invariant in the sense that the right translation
$$ R_a : \cG _{\bt (a)} \rightarrow \cG _{\bs (a)}, \quad \forall a \in \cG, X, Y \in T_a \cG _x $$
is an isometry for any $a \in \cG$.
As a direct consequence of the assumptions, one has
\begin{lem}
For each $x \in \rM$, 
the Riemannian manifold $(\cG _x , g_\bs )$ is a manifold with bounded geometry (see Appendix \ref{BdGeomNonSense}). 
\end{lem}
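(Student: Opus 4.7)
The plan is to exploit the right-invariance of $g_\bs$ together with the compactness of $\rM$ to reduce bounded geometry of $(\cG_x , g_\bs )$ to a uniform statement at the unit section $\mathbf{u} (\rM ) \subset \cG$. Recall that the conditions for bounded geometry (a uniform positive lower bound on the injectivity radius, and uniform upper bounds on $|\nabla ^k \rR |$ for all $k \geq 0$) are intrinsic, hence preserved by isometries.

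First I would observe that for any $a \in \cG _x$, setting $y := \bt (a)$, the right translation $R _a : \cG _y \to \cG _x$, $b \mapsto b a$, is by construction an isometry of $(\cG _y , g_\bs )$ onto $(\cG _x , g_\bs )$ sending the unit $\mathbf{u} (y) \in \cG _y$ to $a$. Consequently, the injectivity radius of $(\cG _x , g_\bs )$ at $a$ and the value of $|\nabla ^k \rR |$ at $a$ coincide with the corresponding quantities at $\mathbf{u} (y) \in (\cG _y , g_\bs )$. Hence the desired uniform bounds on all of $(\cG _x , g_\bs )$, for every $x \in \rM$, reduce to uniform bounds at $\mathbf{u} (y)$ as $y$ ranges over $\rM$.

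Next I would establish those bounds at the units. Since $\rM$ is compact and the algebroid metric $g _\cA$ is smooth, the family of Riemannian manifolds $\{ (\cG _y , g _\bs ) \}_{y \in \rM }$ varies smoothly in $y$; in particular, for each $k$, the map $y \mapsto | \nabla ^k \rR | (\mathbf{u}(y))$ is a continuous function on the compact set $\rM$, hence bounded. For the injectivity radius, I would apply the implicit function theorem to the fibrewise geodesic equation, regarded as a smooth family of ODEs on $\cA$ parametrized by $y \in \rM$; this yields, uniformly in $y$, an $\epsilon > 0$ on which the exponential map $\exp _{\mathbf{u}(y)} : B_\epsilon \subset \cA _y \to \cG _y$ is defined, and by continuity together with compactness of $\rM$ one can shrink $\epsilon$ so that $\exp _{\mathbf{u}(y)}$ is additionally a diffeomorphism onto its image for every $y$.

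The step I expect to be the main obstacle is precisely the last one, where one must promote pointwise non-singularity of $\exp _{\mathbf{u}(y)}$ to a uniform-in-$y$ non-singularity, and simultaneously exclude conjugate points within radius $\epsilon$ uniformly in $y$. This is an open-condition argument combined with the compactness of $\rM$, and it is essentially the standard fact that a smoothly varying, compactly parametrized family of pointed Riemannian manifolds admits a uniform injectivity radius at the distinguished points. Once this is in place, the isometry $R _a$ of the second paragraph propagates the bound to every source fiber, completing the verification of bounded geometry.
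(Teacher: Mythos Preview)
Your proposal is correct and follows essentially the same strategy as the paper: reduce, via right-invariance, all pointwise geometric quantities on $\cG_x$ to their values at units $\mathbf u(y)$, and then invoke compactness of $\rM$.

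The difference is one of presentation rather than substance. You argue abstractly: $R_a$ is an isometry, isometries preserve $|\nabla^k R|$ and injectivity radius, and the values at $\mathbf u(y)$ vary continuously in $y$ over compact $\rM$. The paper instead computes explicitly that the fibrewise Levi-Civita connection $\nabla^{\cG_x}$ is right-invariant and restricts at units to the algebroid Levi-Civita connection $\nabla^\cA$; this immediately identifies the fibrewise curvature with $\bt^{-1}R^\cA$ and the fibrewise exponential with $R_a\circ\exp^{\nabla^\cA}\circ dR_a^{-1}$, so both bounds follow from the single fact that $\exp^{\nabla^\cA}$ is a diffeomorphism on a uniform ball $\cA_{r_0}$. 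Your route is slightly cleaner for this lemma in isolation; the paper's explicit identification $\nabla^{\cG_x}|_{\rM}=\nabla^\cA$ pays off later (heat kernel parametrix, normal coordinates on $B(\rM,r_0)$), so it is worth having on record even if the bounded-geometry statement itself does not require it.
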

\begin{proof}
Consider the $\cA$-Levi-Civita connection:
\begin{align*}
2 g_\cA (\nabla ^\cA _X Y, Z) 
:=& g_\cA ([ X, Y ], Z) - g_\cA ([Y, Z ], X) + g_\cA ([Z, X ], Y) \\
&+ \fL_{\nu (X)} g_\cA (Y, Z) + \fL_{\nu (Y)} g_\cA (Z, X) - \fL_{\nu (Z)} g_\cA (X, Y),
\end{align*}
where $ X, Y, Z \in \Gamma ^\infty (\cA).$
Let $R^\cA$ be the curvature of $\nabla ^\cA$.

Consider $\nabla ^{\cG _x} _ {\tilde X} \tilde Y$,
where $\tilde X, \tilde Y$ are right invariant vector fields,
and $\nabla ^{\cG _x}$ is the Levi-Civita connection of $(\cG _x, g _\bs ) $ for each $x \in \rM$.
Write $X:= \tilde X|_\rM , Y:= \tilde Y|_\rM $, then $X, Y \in \Gamma ^\infty (\cA)$. 
Then for any right invariant vector field $\tilde Z, a \in \cG$,
one has
$$
2 g_\bs (a) (\nabla ^{\cG _{\bs (a)}} _{\tilde X} \tilde Y , \tilde Z)
% =& g_\bs (\bt (a))([ \tilde X, \tilde Y ], \tilde Z) 
% - g_\bs (\bt(a))([\tilde Y,  \tilde Z ], \tilde X) + g_\bs (\bt(a))([\tilde Z, \tilde X ], \tilde Y) \\
% &+ \fL_{\tilde X} g_\bs (\tilde Y, \tilde Z) (\bt (a)) + \fL_{\tilde Y} g_\bs (\tilde Z, \tilde X) (\bt (a)) 
% - \fL_{\tilde Z} g_\bs (\tilde X, \tilde Y) (\bt (a)) \\
% =& g_\cA (\bt (a))([ X, Y ], Z) - g_\cA (\bt(a))([Y, Z ], X) + g_\cA (\bt(a))([Z, X ], Y) \\
% &+ \fL_{\nu (X)} g_\cA (Y, Z) (\bt (a)) + \fL_{\nu (Y)} g_\cA (Z, X) (\bt (a)) - \fL_{\nu (Z)} g_\cA (X, Y) (\bt (a))\\
% =& 2 g_\cA (\bt (a))(\nabla ^\cA _X Y, Z) \\
= 2 g_\bs (a) ((d R _a) ((\nabla ^\cA _X , Y) (\bt (a))) , (d R _a) (Z (\bt (a)))).
$$
It follows that for any $\tilde X, \tilde Y$ right invariant, 
the vector field $ a \mapsto \nabla ^{\cG _{\bs (a)}} _{\tilde X} \tilde Y (a) $ is also right invariant.
Furthermore, $\nabla ^{\cG _ {\bs (a)}} _{\tilde X} \tilde Y (x) = \nabla ^\cA _X Y (x)$ for any $x \in \rM$.

By similar arguments, for any $\tilde X, \tilde Y, \tilde Z$ right invariant, 
$R (\tilde X, \tilde Y) \tilde Z $ is right invariant and one has
$$ R (\tilde X (a) , \tilde Y (a) ) \tilde Z (a)) = R ^\cA (X (\bt (a), Y (\bt (a))) Z (\bt (a))$$ 
for any $a \in \cG$.
Clearly, the right hand side $R ^\cA (X (\bt (a), Y (\bt (a))) Z (\bt (a)) $ is bounded since $\rM$ is compact.
Formulas for higher covariant derivatives also follow from these arguments.

Finally, to prove that the $\bs$-fibers have positive injectivity radius, observe that $\rM$ is compact.
It follows that there exists $r_0 > 0$ such that $\exp ^{\nabla ^\cA} $ is a diffeomorphism form the set 
$$ \cA_{r_0} := \{ X \in \cA :g _\cA (X, X) < r_0 ^2 \} $$
onto its image.
In proof of boundedness of curvature above, 
we saw that the Levi-Civita connection is obtained by right translating $\nabla ^ \cA$.
Therefore, for any $X \in T _a \cG _{\bs (a)}, a \in \cG$,
$$ \exp ^{\nabla ^{\cG _{\bs (a)}}} X = (d R _a ) \circ \exp ^{\nabla ^\cA } \circ (d R ^{-1} _a ) X .$$
It follows that the injectivity radius of $\cG _{\bs (a)} \geq r_0 $.
\end{proof}

The bounded geometry of the $\bs$-fibers means that the notion from manifolds of bounded geometry applies. 
In particular, we say that
\begin{dfn}
A function $ u \in C ^\infty (\cG )$ is said to have {\it bounded (fiberwise) derivatives} 
if for any $x \in \rM$, $u |_{\cG _x} $ has uniformly bounded covariant derivatives.
\end{dfn}

\subsubsection{\bf Examples of Lie groupoids}
We give some examples of Lie groupoids relevant to Poisson geometry.
\begin{exam}
Let $\rM$ be a manifold. 
The pair groupoid over $\rM$ is the manifold $\cG := \rM \times \rM$ together with the operations:
\begin{align*}
\text {source and target maps: } &  \bs (x, y) = y, \bt (x, y) = x, \quad \forall (x, y) \in \rM \times \rM , \\
\text {multiplication: } &  \mathbf m ((x, y), (y, z)) = (x, z), \quad \forall (x, y) , (y, z) \in \rM \times \rM , \\
\text {inverse: } & \mathbf i (x, y) = (y, x), \quad \forall (x, y) \in \rM \times \rM , \\
\text {unit: } & \mathbf u (x) = (x, x), \quad \forall x \in \rM .
\end{align*}   
The anchor map is the identity on $T \rM$. 
If, in addition, $\omega $ is a symplectic 2-form on $\rM$,
then $(\rM \times \rM , \wp_1 ^* \omega - \wp_2 ^* \omega )$ is the symplectic groupoid of $(\rM , \omega )$,
where $\wp_1 , \wp_2: \rM \times \rM \rightarrow \rM $ is the projection to the first and second factor respectively. 
\end{exam}

\begin{exam}
\label{Iwasawa example}
(See Lu and Weinstein \cite{LuWein;DressingTran})
Let $\fg$ be a complex semi-simple Lie algebra, 
let $\fk$ be a compact real form of $\fg$.
Let $\theta $ be the Cartan involution on $\fg$ fixing $\fk$. 
Let $\fa$ be a maximal Abelian subalgebra of $i \fk$. 
Then $\fh = \fa + i \fa$ is a Cartan subalgebra of $\fg$.
Let $\fg = \fh \oplus \sum_{\alpha \in \Delta} \fg_\alpha$ be the root space decomposition. 
Choose a set of positive roots $\Delta^+$ and let $\fn = \sum_{\alpha \in \Delta^+} \fg_\alpha$.
Then $\fg = \fk \oplus \fa \oplus \fn$ is an Iwasawa decomposition of $\fg$ (see \cite[Chapter IV.4]{Knapp;Book1}).

Let $ \langle \cdot , \cdot \rangle $ be the imaginary part of the Killing form.
Then $(\fg, \fk, \fa + \fn, \langle \cdot , \cdot \rangle)$ is a Manin triple (see \cite[Chapter 10]{Vas;Book}).
Its corresponding Poisson Lie group structure can be written as
$$ \varPi_\rK (g) := \frac{1}{2} \sum_{\alpha \in \Delta ^+} 
(d R_g) (X_\alpha \wedge Y_\alpha ) - (d L_g) (X_\alpha \wedge Y_\alpha ) , \quad g \in \rK , $$
where
$$ X_\alpha := E_\alpha + \theta E_\alpha, 
\; {\mbox {\rm and}} \; Y_\alpha := i E_\alpha - i \theta (E_\alpha ) \in \fk, \alpha \in \Delta ^+ ,$$
and $L_g , R_g$ denotes the left and right translation by $g$ respectively.

We turn to construction of the symplectic groupoid.
From the construction of Iwasawa decomposition of Lie algebra above, 
one gets the Iwasawa decomposition of Lie group:
$$ \rG = \rK \rA \rN.$$
Take $\cG := \rG $ as a manifold. Define:
\begin{align*}
\text {source and target maps: } &  \bs (g) := k , \bt (g) := k', 
\text {where } g = a n k = k' a' n' \\ 
& \text {is the (unique) Iwasawa decomposition;} \\  
\text {multiplication: } &  \mathbf m (g_1 , g_2) := g_1 (\bs (g_1))^{-1} g_2 ; \\
\text {inverse: } & \mathbf i (g) :=  k (n ')^{-1} (a ')^{-1} = n ^{-1} a ^{-1} k' ; \\
\text {unit: } & \mathbf u (k) := k \in \rG \supset \rK .
\end{align*}    
\end{exam} 

\begin{exam}
\label{BruhatExam}
\cite{Lu;PoissonCohNotes}
Let $\rG = \rK \rA \rN $ be the Iwasawa decomposition as above.
Let $\rT \subset \rK$ be the maximal torus with $\ft = i \fa$.
Then the Poisson bi-vector field $\varPi _\rK$ on $\rK$ is $\rT$-invariant.
Hence one has a well defined Poisson manifold
$$ (\rT \backslash \rK , d \wp_ \rT ( \varPi _\rK )),$$
where $\wp_ \rT : \rK \rightarrow \rT \backslash \rK $ is the natural projection onto coset space.
This Poisson structure is known as the Bruhat Poisson structure.

Define the left action of $\rT$ on $\rK \times \rN$ by
$$ g \cdot (k , n) := (g k, g n g ^{-1}), \quad \forall (k, n) \in \rK \times \rN, g \in \rT .$$
It is easy to see that the projection onto 
$$ \rT \backslash (\rK \times \rN) $$
is a submersion. 
Define the groupoid operations on $\cG := \rT \backslash (\rK \times \rN) \rightrightarrows \rT \backslash \rK $:
\begin{align*}
\text {source and target maps: } & \bs ( {}_\rT (k, n) ) = {}_ \rT k , \bt ( {}_\rT (k, n) ) := {}_\rT k' , \\
& \text {where } n k = k' a' n' \text { is the (unique) Iwasawa decomposition;} \\  
\text {multiplication: } &  \mathbf m ({}_\rT (k _1 , n _1) , {}_ \rT (k _2 , n _2) ) := {}_\rT (k _2 , n _1 n _2) , \\
& \text {provided one has Iwasawa decomposition } n _2 k _2 = k _1 a' n' ; \\
\text {inverse: } & \mathbf i ( {}_\rT (k, n)) := {}_\rT (k', n^{-1}) , \\
& \text {where } n k = k' a' n' \text { is the (unique) Iwasawa decomposition;} \\  
\text {unit: } & \mathbf u ({}_ \rT k ) := {}_ \rT (k , e) , e \in \rN .
\end{align*} 
\end{exam}

\subsection{ Uniformly supported pseudo-differential calculus on a Lie groupoid}
In this section, we review the standard theory of pseudo-differential calculus developed by
Nistor, Weinstein and Xu \cite{NWX;GroupoidPdO}. 
We refer to Appendix \ref{PDONonSense} for notations on pseudo-differential operators (on ordinary manifolds).

\begin{dfn}
A pseudo-differential operator $\varPsi $ on a groupoid $\cG$ of order $\leq m$ 
is a smooth family of pseudo-differential operators $\{ \varPsi _x \}_{x \in \rM}$,
where $\varPsi _x \in \Psi ^m ( \cG _{x} )$,
and satisfies the right invariance property
$$ \varPsi _{\bs (a)} (R_a^* f) = R_g^* \varPsi _{\bt (a)} (f), 
\quad \forall a \in \cG, f \in C^\infty_c (\cG _{\bs (a)}).$$
If, in addition, all $\varPsi _x $ are classical of order $m$, then we say that $\varPsi $ is classical of order $m$.
\end{dfn}

\begin{dfn}
For a pseudo-differential operator $\varPsi = \{ \varPsi _x \}$ on $\cG$.
The support of $\varPsi $ is defined to be 
$$ \Supp (\varPsi) = \overline {\bigcup_{x \in \rM} \Supp (\varPsi _x)}.$$
The operator $\varPsi $ is called properly supported if the set 
$$ (\rK \times \cG) \bigcap \Supp (\varPsi) $$
is compact for every compact subset $\rK \subseteq \cG$;
The operator $\varPsi $ is called uniformly supported if the set
$$ \{ a b^{-1} : (a, b) \in \Supp (\varPsi) \} $$
is a compact subset of $\cG$.
\end{dfn}

We denote the space of uniformly supported pseudo-differential operators 
(resp. classical pseudo-differential operators) on $\cG$,
of order $\leq m$, by $\Psi _\mu ^m (\cG)$ (resp. $\Psi _\mu ^{[m]} (\cG)$).

The way to define the total symbol for 
$\varPsi \in \Psi ^\infty (\cG)$ is similar to that of an ordinary pseudo-differential operator.
Fix an $\cA$-connection $\nabla$ 
(say, $\nabla _X Y := \nabla ^\cA _{\nu (X)} Y$ for some usual connection $\nabla ^\cA$). 
Then there is a neighborhood of the zero section $\Omega \subset \cA$
such that the exponential map 
$\exp _\nabla : \Omega \rightarrow \cG$ is a diffeomorphism onto its image.
Fix a smooth function $\chi (g)$ supported on the image of $\exp _\nabla$ 
and equal to 1 on a smaller neighborhood of $\rM$.
Define $\Theta (g, h) := \chi (g) \exp ^{-1}_\nabla (g)$.

\begin{dfn}
\label{GpoidTotalDfn}
\cite[Equation (16)]{NWX;GroupoidPdO}
Given $\varPsi \in \Psi ^\infty (\cG)$. Define $\sigma \in C^\infty (\cA^*)$ by
$$ \sigma (\zeta ) := \varPsi _x (e^{i \langle \zeta , \Theta (\cdot ) \rangle} \chi ( \cdot))(x),
\quad \forall x \in \rM \subset \cG, \zeta \in \cA^*_x.$$
The function $\sigma $ is called the total symbol of $\varPsi $ with respect to $(\nabla, \chi )$.

As in the case of manifolds, 
if there exist homogeneous symbols $\sigma _m , \sigma _{m-1} , \cdots$,
of orders $m, m-1, \cdots$ respectively, such that 
$$ \sigma - \sum_{l=0}^{N - 1} \sigma _{m - l} \in S^{m - N} (\rM )$$
for $N = 1, 2, \cdots$,
then we say that $\varPsi $ is a classical pseudo-differential operator on $\cG$. 
In this case, we define the principal symbol of $\varPsi $ as
$$ \sigma _{\mathrm {top}} (\varPsi ) := \sigma _{m}.$$
\end{dfn}
As in the case of manifolds,
we denote the space of uniformly supported classical pseudo-differential operator of order $m$
by $\Psi ^{[m]} _\mu (\cG)$.

\begin{dfn}
A classical pseudo-differential operator $\varPsi \in \Psi ^{[m]} _\mu (\cG)$ is said to be elliptic if 
$$ \sigma _{\mathrm {top}} (\varPsi ) (X) \neq 0 $$
for any $X \neq 0 \in \cA^*$.
\end{dfn}

A pseudo-differential operator $\varPsi \in \Psi ^\infty (\cG)$ acts on $C^\infty (\cG)$ by 
$$ \varPsi (u) (a):= \varPsi _{\bs (a)} (u |_{\bs^{-1} (\bs (a)})).$$
It is easy to see that the composition
$ \varPhi \circ \varPsi $ is well defined as long as either $\varPhi $ or $ \varPsi $ is uniformly supported.
Furthermore, the composition respects the grading:
\begin{lem}
Let $\varPsi \in \Psi ^{[m]} (\cG), \varPhi \in \Psi ^{[m']} (\cG)$ 
be such that either $\varPsi $ or $\varPhi $ is properly supported. 
Then $\varPhi \circ \varPsi \in \Psi ^{[m+m']} (\cG)$.
\end{lem}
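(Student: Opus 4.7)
The plan is to reduce the statement to the standard composition theorem for classical pseudo-differential operators on a single manifold, applied fiberwise, and then verify that the resulting family is again right-invariant and depends smoothly on the base point.

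First I would check that the composition is well-defined and of the right type fiberwise. For each $x \in \rM$, the operator $(\varPhi \circ \varPsi)_x := \varPhi_x \circ \varPsi_x$ acts on $C^\infty(\cG_x)$, and the groupoid-level properness hypothesis on one of the factors translates directly into each $\varPhi_x$ (or $\varPsi_x$) being a properly supported classical pdo on the manifold $\cG_x$, which is the hypothesis needed to invoke the manifold composition theorem. That theorem then yields $\varPhi_x \circ \varPsi_x \in \Psi^{[m+m']}(\cG_x)$ with the usual Leibniz-type asymptotic expansion for the total symbol.

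Next I would verify right-invariance of the family. For any $a \in \cG$ and $f \in C_c^\infty(\cG_{\bs(a)})$, applying the right-invariance of $\varPsi$ and then of $\varPhi$ gives
$$(\varPhi \circ \varPsi)_{\bs(a)}(R_a^* f) = \varPhi_{\bs(a)}(R_a^* \varPsi_{\bt(a)}(f)) = R_a^* (\varPhi \circ \varPsi)_{\bt(a)}(f).$$
Smoothness of the family in $x$ should follow from the smoothness of each factor together with the smooth dependence on parameters of the manifold composition formula. Evaluating Definition \ref{GpoidTotalDfn} at points $x \in \rM \subset \cG$ and applying the fiberwise symbol expansion then produces a polyhomogeneous expansion for the total symbol of $\varPhi \circ \varPsi$ whose leading term is $\sigma_{\mathrm{top}}(\varPhi) \cdot \sigma_{\mathrm{top}}(\varPsi)$, confirming that the composition is classical of order $m + m'$.

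The main obstacle is ensuring that the symbolic calculations go through uniformly in the parameter $x$ so as to produce a genuine groupoid pdo rather than merely a fiberwise one. Since $\rM$ is compact and the $\bs$-fibers have bounded geometry by the previous subsection, the uniform estimates on the remainder terms in the composition expansion are standard, so no analysis beyond a careful invocation of the manifold-level result is required.
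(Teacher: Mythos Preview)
The paper does not supply its own proof of this lemma; it is stated without argument as part of the review of the uniformly supported calculus from \cite{NWX;GroupoidPdO}. Your approach --- reduce to the fiberwise composition theorem on each $\cG_x$, check right-invariance directly, and then argue smooth dependence on $x$ --- is exactly the strategy used in that reference, so your proposal is correct and aligned with the intended argument.

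One small comment: your final paragraph invokes compactness of $\rM$ and bounded geometry of the $\bs$-fibers to control remainder terms uniformly, but this is not really what is at stake. The statement that $\varPhi_x \circ \varPsi_x$ is classical of order $m+m'$ is a purely local assertion on each fiber and does not require any uniform estimates; what needs care is the \emph{smoothness} of the family $x \mapsto (\varPhi \circ \varPsi)_x$ in the sense required by the definition of a groupoid pseudo-differential operator. That is handled in \cite{NWX;GroupoidPdO} by working with the reduced kernels (which are conormal distributions on $\cG$ at $\rM$) and observing that the composition corresponds to convolution, which preserves the conormal class. Your sketch is fine as written, but the bounded-geometry remark could be dropped without loss.
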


\subsubsection{\bf Example: Dirac operators on a groupoid}
In this section, we briefly describe the Dirac type operators on a groupoid $\cG$ \cite[Section 6]{Nistor;GeomOp}.

We begin with recalling the notion of Clifford algebra, following \cite[Chapter 3]{BGV;Book}.
Let $\rV$ be a finite dimensional vector space over $\bbR$ or $\bbC$.
Let $B ( \cdot , \cdot ) $ be a symmetric bi-linear form on $\rV$.
Then the {\it Clifford algebra} of $(\rV, B )$,
denoted by $\Cl ( \rV , B ) $, is the algebra generated by $\rV$ with the relation
$$ v w + w v = - 2 B (v , w).$$
The algebra $\Cl (\rV , B)$ is $\bbZ _2$-graded by
$$ \Cl (\rV , B) = 
\mathrm {span} \{ 1 , v_{i_1} \cdots v _{i_{2 j }} : j = 1, 2 \cdots \} \oplus
\mathrm {span} \{ v_{i_1} \cdots v _{i_{2 j + 1}} : j = 0, 1, 2 \cdots \}, $$
where $\{ v _i \}$ is any basis of $\rV$.

A {\it Clifford module} of $\Cl (\rV)$ is a $\bbZ _2$-graded vector space $\rE = \rE ^+ \oplus \rE ^- $
such that the Clifford action $\gamma : \Cl (\rV) \to \End (\rE)$ satisfies
\begin{align*}
\gamma (\Cl ^+ (\rV )) \rE ^\pm & \subseteq \rE ^ \pm \\
\gamma (\Cl ^- (\rV )) \rE ^\pm & \subseteq \rE ^ \mp.
\end{align*}

\begin{exam}
\label{FormModule}
Let $B$ be an inner product on $\rV$.
Then $\wedge ^\bullet \rV = (\bigoplus _{i=0} \wedge ^{2 i} \rV) \oplus (\bigoplus _{i=0} \wedge ^{2 i + 1} \rV) $ 
is a natural $\Cl (\rV , B)$ module,
with action defined by:
$$ \gamma _\wedge (v) \omega := v \wedge \omega - \iota _{B (v , \cdot)} \omega , 
\quad \forall v \in \rV, \omega \in \wedge ^\bullet \rV,$$
where $\iota $ denotes the contraction. 
It is easy to verify that such an action of $\rV$ extends to $\Cl (\rV)$.
\end{exam} 
Example $\ref{FormModule}$ also provides a canonical bijective map between 
$\Cl (\rV)$ and $\wedge ^\bullet \rV $ as vector spaces, namely,
\begin{equation}
v \mapsto \gamma _\wedge (v) 1 , \quad v \in \Cl (\rV), 
\end{equation}
where $1 $ is the identity in the exterior algebra $\wedge ^\bullet \rV $. 
It is easy to see that the $\bbZ _2 $ splitting of 
$ \wedge ^\bullet \rV $ into even and odd orders gives a 
$\bbZ _2$ grading of the Clifford algebra $\Cl (\rV)$.

\begin{exam}
Let $\rV$ be an even dimensional vector space with inner product $B$.
Let $e _1 , e_2 , \cdots , e_{2n} $ be an orthonormal basis of $\rV$.
Define 
$$ \rP := \Span \{ e_{2 i - 1 } + i e _{2 i} : i = 1, \cdots , n \} \subset \rV \otimes \bbC .$$
Then $\rP \oplus \bar \rP = \rV \otimes \bbC $.
Define the action of $\rV \otimes \bbC $ on $\rS := \wedge ^\bullet \rP $ by
\begin{equation}
\gamma _\rS ( v ) \omega := 
\begin{cases}
\: v \wedge \omega , & \forall v \in \rP \\ 
\: \iota _{B (v , \cdot ) } \omega , & \forall v \in \bar \rP .
\end{cases}
\end{equation} 
The Clifford module $\rS$ is known as the {\it spin representation } of the Clifford algebra $\Cl (\rV)$. 
\end{exam}

Here, we list some basic facts about Clifford modules. See \cite[Chapter 3]{BGV;Book} for details.
\begin{lem}
\label{CliffLem}
Let $\rV$ be an even dimensional vector space over $\bbR$.
\begin{enumerate}
\item
The complexified Clifford algebra $\Cl (\rV) \otimes \bbC $ is isomorphic to the matrix algebra
$ \End (\rS)$, where $\rS$ is the spinor module;
\item
The spinor module $\rS$ is the only irreducible representation of $\Cl (\rV)$;
\item
For any Clifford module $\rE$,
$ \End (\rE ) \cong \Cl (\rV) \otimes \Hom _{\Cl (\rV)} (\Cl (\rV) , \rE),$
with isomorphism given by 
$ v \otimes T \mapsto T (v) .$
\end{enumerate}
\end{lem}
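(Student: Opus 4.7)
The plan is to prove the three parts in sequence, each leveraging the previous one.

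For part (1), I would first check that the map $\gamma_\rS$ of the preceding example extends from $\rV \otimes \bbC$ to an algebra homomorphism $\rho : \Cl(\rV) \otimes \bbC \to \End(\rS)$ by verifying the Clifford relation $\gamma_\rS(v) \gamma_\rS(w) + \gamma_\rS(w) \gamma_\rS(v) = -2 B(v,w) \id$ on $\wedge^\bullet \rP$; this reduces to three cases according to whether $v, w$ both lie in $\rP$, both in $\bar\rP$, or one in each, and the mixed case uses the Leibniz-type identity for interior multiplication against a wedge product (here it is essential that $B$ pairs $\rP$ and $\bar\rP$ non-degenerately while vanishing on each isotropic summand). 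The key step is then to show that $\rS$ is irreducible: given nonzero $\omega = \sum_k \omega_k \in \bigoplus_k \wedge^k \rP$, select the highest-degree nonzero component $\omega_k$ and apply a suitable product $\gamma_\rS(\bar v_{i_1}) \cdots \gamma_\rS(\bar v_{i_k})$ of contractions to descend to a nonzero element of $\wedge^0 \rP = \bbC$; from the scalar $1$ the raising operators $\gamma_\rS(v_j) = v_j \wedge \cdot$, $v_j \in \rP$, sweep out all of $\wedge^\bullet \rP = \rS$. Irreducibility forces $\rho$ to be injective (since $\ker \rho$ is a two-sided ideal annihilating $\rS$), and a dimension count
$$ \dim_\bbC (\Cl(\rV) \otimes \bbC) = 2^{\dim_\bbR \rV} = 2^{2n} = (2^n)^2 = \dim_\bbC \End(\rS) $$
upgrades injectivity to the claimed isomorphism.

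Part (2) is then an immediate consequence of (1): by Wedderburn--Artin, the full matrix algebra $\End(\rS)$ has a unique simple module up to isomorphism, namely the defining representation $\rS$.

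For part (3), I would exploit the semisimplicity of $\Cl(\rV) \otimes \bbC$ coming from (1) to decompose any Clifford module as $\rE \cong \rS \otimes_\bbC W$ for a multiplicity space $W$ on which the Clifford algebra acts trivially, with Clifford action on the first tensor factor. The endomorphism algebra then splits as $\End(\rE) \cong \End(\rS) \otimes \End(W)$; substituting $\End(\rS) \cong \Cl(\rV) \otimes \bbC$ from (1) and identifying the multiplicity space with the space of $\Cl(\rV)$-equivariant maps yields the stated factorisation, and tracing through the identifications on an elementary tensor $v \otimes T$ one recovers the formula $v \otimes T \mapsto T(v)$.

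The main obstacle is the irreducibility of the spin representation used in (1); the combinatorial bookkeeping with raising and lowering operators must be handled carefully precisely because $\bar\rP$ is isotropic rather than orthogonal to $\rP$, so the mixed commutation relations do not decouple. Once irreducibility is secured, parts (2) and (3) become routine applications of Wedderburn theory.
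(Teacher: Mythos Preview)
The paper does not actually prove this lemma; it simply records the three statements as standard facts and refers the reader to \cite[Chapter 3]{BGV;Book}. Your outline is essentially the textbook argument found there, and parts (2) and (3) are handled correctly.

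There is, however, a logical slip in your argument for (1). You claim that irreducibility of $\rS$ forces $\rho$ to be injective ``since $\ker \rho$ is a two-sided ideal annihilating $\rS$'' --- but irreducibility of a module does not by itself imply faithfulness of the representation (a non-simple algebra can certainly have irreducible modules on which a proper ideal acts by zero). What your raising/lowering argument actually gives you, via Burnside's theorem over $\bbC$, is \emph{surjectivity} of $\rho$ onto $\End(\rS)$; the dimension count then supplies injectivity, not the other way around. The repair is immediate and the overall strategy is sound.

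One further remark: the statement of (3) as printed appears to contain a typo --- the intended second factor is $\End_{\Cl(\rV)}(\rE)$ (the commutant of the Clifford action), not $\Hom_{\Cl(\rV)}(\Cl(\rV), \rE)$, as the latter is naturally identified with $\rE$ itself and the dimensions would not match. Your argument in (3) correctly proves the intended version.
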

% Using (1) of Lemma \ref{CliffLem},
% we define the {it super-trace} on $\Cl (\rV)$ to be the functional
% \begin{equation}
% \str (v) :=
% \begin{cases}
% \tr (v : \rS ^+ \to \rS ^+ ) - \tr (v : \rS ^- \to \rS ^- ) & \text { if $v$ is even } \\
% 0 & \text { if $v$ is odd } ,
% \end{cases}
% \end{equation}
% where $\tr $ denotes the trace of the respective linear maps. 

We turn to consider bundles of Clifford modules. 
Let $\cG \rightrightarrows \rM$ be a groupoid.
Let $\cA \rightarrow \rM$ be the Lie algebroid of $\cG$, equipped with a metric $g _\cA$.
Abusing notation we also use $g _\cA$ to denote the inner product on $\cA '$.
Then we define the {\it Clifford bundle}, 
to be the vector bundle 
$$\Cl (\cA ') := \bigcup _{x \in \rM} \Cl (\cA '_x , g _\cA (x)). $$
Note that $\Cl (\cA ')$ is also $\bbZ _2 $-graded and we write
$$ \Cl (\cA ') := \Cl (\cA ')^+ \oplus \Cl (\cA ')^- .$$
Analogous to the case of Clifford algebras, we define:
\begin{dfn}
A (bundle of) Clifford module is a $\bbZ _2$-graded Hermitian vector bundle 
$ \rE = \rE ^+ \oplus \rE ^- $ over $\rM$,
with an action map $\gamma \in \Gamma ^\infty (\cA \otimes \rE \otimes \rE ')$,
such that 
\begin{enumerate}
\item
For any $ \xi \in \cA' \subset \Cl (\cA ')$, 
$ \gamma (\xi ) : \rE \to \rE $ is skew-symmetric;
\item
Each $\rE _x , x \in \rM $ is a $\Cl (\cA '_x)$-module.
\end{enumerate}
\end{dfn}

A Hermitian $\cA$-connection $\nabla ^\rE$ is called {\it Clifford} if
for any $X \in \Gamma ^\infty (\cA), \xi \in \Gamma ^\infty (\cA '), u \in \Gamma ^\infty (\rE)$,
$$ \nabla ^\rE _X (\gamma (\xi ) u) = \gamma (\xi ) \nabla ^\rE _X u + \gamma (\nabla ^{g _\cA} _X \xi ) u, $$
where $\nabla ^{g _\cA} $ is the Levi-Civita connection.
It can be shown that Clifford $\cA$-connections always exist (see \cite[Section 6]{Nistor;GeomOp}).

Consider the pullback bundle $\bt ^{-1} \rE$. 
Any $\cA$-connection $\nabla ^\rE$ on $\rE$ uniquely determines a right-invariant family of connections,
still denoted by $\nabla ^{\rE}$ for simplicity,
on the $\bs$-fibers of $\cG$ by requiring that 
$$ \nabla ^{\rE} _{\tilde X} (\bt ^{-1} u) = \bt ^{-1} (\nabla ^\rE _X u), $$
for any right-invariant vector field $\tilde X$ with $\tilde X |_\rM = X$, 
and $u \in \Gamma ^\infty (\rE)$.
Furthermore, if $\rE $ is a $\Cl (\cA)$-module,
then $\bt ^{-1} \rE |_{\cG _x} $ is a $\Cl (T ^* \cG _x )$-module for each $x \in \rM$,
and $\nabla ^{\rE} $ is a Clifford connection in the usual sense.

The curvature of any even rank Clifford $\cA$-connection $\nabla ^\rE$ decomposes under the isomorphism 
$\End (\rE) \cong \Cl (\cA ') \otimes \End _{\End \Cl (\cA ')} (\rE) $ as
$$ \gamma (R) + F ^{\rE / \rS} , $$ 
where $R $ is the Riemannian curvature of $\cA$, considered as a section in 
$ \Gamma ^\infty (\wedge ^2 \rA ' \otimes \Cl (\rA) )$ with $\gamma $ acting on the $\Cl (\rA)$ factor, 
and $F ^{\rE / \rS} \in \Gamma ^\infty (\wedge ^2 \cA ' \otimes \End _{\Cl (\rA ') } (\rE))$
is known as the {\it twisting curvature}.   

\begin{dfn}
A (groupoid) {\it Dirac operator} is a differential operator from $\bt ^{-1} \rE $ to itself of the form
$$ \eth = (\bt ^{-1} \gamma ) \circ \nabla ^\rE ,$$
where $\nabla ^\rE $ is a right-invariant, Clifford connection on the $\bs$-fibers of $\cG$;
A {\it perturbed Dirac operator} is an operator of the form
$$ \eth + \varPsi \in \Psi ^1 _\mu (\cG , \rE ),$$
where $\eth $ is a Dirac operator, 
and $\varPsi $ is an odd degree operator in $ \Psi ^{- \infty } _\mu (\cG , \rE ) $
satisfying
$$ \varPsi (a ^{-1} ) = (\varPsi (a ) )^* , \quad \forall a \in \cG .$$ 
\end{dfn} 
It is easy to see that all Dirac operators are symmetric, hence essentially self adjoint.
From our definition, it is also clear that any perturbed Dirac operators are also essentially self-adjoint.

% \begin{rem}
% In \cite{Loya;Pert}, the authors assume in addition that $\rE $ is equipped with a Hermitian structure, 
% and $R $ satisfies the relations
% \begin{align*}
% R (a) \gamma (\xi ) =& - \gamma (\xi ) R (a) , \quad \forall , a \in \cG , \xi \in \cA ' _a .
% \end{align*}
% We shall not use these additional relations.
% \end{rem}

\subsubsection{\bf The reduced kernel and convolution product}
Let $\cG \rightrightarrows \rM$ be a groupoid with compact set of units $\rM$.
Recall that we fixed a fiberwise metric $g_\cA $ on the Lie algebroid $\cA$ and extended it to a Riemannian metric 
on each $\bs$-fiber by right translation.
Hence, one has a family of Riemannian volume densities $\mu _x$ on $\bs^{-1} (x) $.
We shall also regard $\mu \in \Gamma ^\infty (| \wedge ^{\mathrm {top}} \Ker (d \bs ) |)$.

\begin{dfn}
\label{ConvDfn}
For any pair of functions $f, g \in C^\infty (\cG)$,
such that $f (b) g (a b^{-1}) \in \bL^1 (\cG _{\bs (a)}, \mu_{ \bs (a) }), \quad \forall a \in \cG$,
the convolution product $f \circ g$ is defined to be
$$ f \circ g (a) := \int _{b \in \cG _{\bs (a)}} f (a b^{-1} ) g ( b ) \: \mu _{\bs (a)} (b). $$
\end{dfn}
In particular, the convolution product is well defined for any pair $f, g \in C^\infty _c (\cG) $,
and $f \circ g \in C^\infty _c (\cG)$. 
The resulting algebra $( C^\infty _c (\cG), \circ )$ is known as the {\it convolution algebra} of $\cG$.

The convolution product can also be defined for sections of vector bundles.
Let $\rE , \rF$ be vector bundles over $\rM$,
$f \in \Gamma ^\infty (\bt^{-1} \rE \otimes \bs ^{-1} \rE '), 
g \in \Gamma ^\infty (\bt ^{-1} \rE \otimes \bs ^{-1} \rF) $.
Since one has natural identifications 
$$ (\bt ^{-1} \rE \otimes \bs ^{-1} \rE ')_{a b^{-1}} \cong \bs ^{-1} \rE _{\bt (a)} \otimes \bt ^{-1} \rE ' _{\bt (b)},
\text { and } (\bt ^{-1} \rE \otimes \bs ^{-1} \rF) _b \cong \rE _{\bt (b)} \otimes \rF _{\bs (b)},$$
the point-wise multiplication 
$$ f (a b^{-1}) g (b) \in (\bt ^{-1} \rE \otimes \bs ^{-1} \rF )_a $$
is well defined for each $a \in \cG, b \in \cG _{\bs (a)}$,
using the pairing between $\rE ' _{\bt (b)} $ and $\rE _{\bt (b)}$.
Hence the convolution product can be defined as:
\begin{dfn}
\label{ConvDfn2}
For any $f \in \Gamma ^\infty (\bt ^{-1} \rE \otimes \bs ^{-1} \rE '), 
g \in \Gamma ^\infty (\bt ^{-1} \rE \otimes \bs ^{-1} \rF) $,
such that $f (b) g (a b^{-1}) $
is a $\bL^1 (\cG _{\bs (a)}, \mu_{ \bs (a) })$
section with values in $\rE _{\bt (a)} \otimes \rF _{\bs (a)} $ for all $a \in \cG$,
then the convolution product $f \circ g$ is defined to be
$$ f \circ g (a) := \int _{b \in \cG _{\bs (a)}} f (a b^{-1} ) g ( b ) \: \mu _{\bs (a)} (b)
\in \Gamma ^\infty (\bt ^{-1} \rE \otimes \bs ^{-1} \rF). $$
\end{dfn}

Alternatively, consider the set
$$ \tilde \cG := \{ (a , b) \in \cG \times \cG : \bs (a) = \bs (b) \}.$$
On $\tilde \cG$ one defines the natural maps
\begin{align*}
\tilde \bt : \tilde \cG \to \cG, & \quad \tilde \bt (a, b) := a \\
\tilde \bs : \tilde \cG \to \cG, & \quad \tilde \bs (a, b) := b \\
\bs ^{(2)} : \tilde \cG \to \rM, & \quad \bs ^{(2)} (a, b) := \bs (a) = \bs (b) \\
\bpi : \tilde \cG \to \cG, & \quad \bpi (a , b) = a b ^{-1} .
\end{align*}
Note that $\tilde \cG$ is just the fibered product groupoid of $\cG$, 
with source and target maps $\tilde \bs , \tilde \bt $.
Using the relations
$$ \bt \circ \widetilde \bm = \bt \circ \tilde \bt , \quad
\bs \circ \widetilde \bm = \bt \circ \tilde \bs , \quad
\text { and } \quad \bs \circ \tilde \bt = \bs ^{(2)} = \bs \circ \tilde \bs,$$
one naturally identifies the bundles (over $\tilde \cG$):
\begin{align*}
\widetilde \bm ^{-1} (\bt ^{-1} \rE \otimes \bs ^{-1} \rE ') 
\cong & \: \tilde \bt ^{-1} (\bt ^{-1} \rE ) \otimes \tilde \bs ^{-1} (\bt ^{-1} \rE ') \\
\tilde \bs ^{-1} \otimes (\bt ^{-1} \rE \otimes \bs ^{-1} \rF)
\cong & \: \tilde \bs ^{-1} (\bt ^{-1} \rE ) \otimes \tilde \bt ^{-1} (\bs ^{-1} \rF ).
\end{align*}
Hence, one can rewrite Definition \ref{ConvDfn2} using the language of Appendix \ref{DGNonsense} as 
\begin{equation}
\label{ConvDef3}
f \circ g (a)
= \int _{ (b' , b) \in \tilde \bt ^{-1} (a)}
\big( \widetilde \bm ^{-1} f (b' , b) \big)
\big( \tilde \bs ^{-1} g (b' , b) \big) \tilde \mu (b' , b),
\end{equation}
where $\tilde \mu \in \Gamma ^\infty ( | \wedge ^{\mathrm {top}} \ker (d \tilde \bs ) |)$ 
is defined by $\tilde \mu = \mu $ at $ \tilde \bs ^{-1} (b') \cong \bs ^{-1} (\bs (b))$,
regarded as a family of measures (densities) on the fibers.  

\begin{dfn}
\label{RedKer}
For any $\varPsi = \{ \varPsi _x \}_{x \in \rM} \in \Psi ^\infty (\cG)$.
The {\it reduced kernel} of $\varPsi $ is defined to be the distribution
$$ K_\varPsi (f) :=  \int _\rM \mathbf u^* (\varPsi (\mathbf i^* f)) (x) \: \mu_\rM (x), 
\quad f \in C^\infty _c (\cG),$$
where $\mathbf i$ and $\mathbf u$ denote respectively the inversion and unit inclusion.
\end{dfn}

Observe that, if $\varPsi \in \Psi ^{- \infty} (\cG)$, 
then $K_\varPsi \in C^\infty (\cG)$, i.e., there exists $\kappa \in C^\infty (\cG) $ such that
$$ K_\varPsi (f) = 
\int _{x \in \rM} \left( \int _{b \in \cG _x} \kappa (b) f (b^{-1}) \: \mathbf i ^* \mu _{\bs (b)} \right) \mu _\rM , 
\quad \forall f \in C^\infty _c (\cG), $$
and one can recover $\varPsi $ by the formula:
$$ \varPsi (f) (a) = \int _{\cG _{\bs (a)}} \kappa (a b ^{-1}) f (b) \: \mu_{\bs (a) } (b). $$

\begin{rem}
In \cite{NWX;GroupoidPdO}, the authors defined the reduced kernel canonically using 1-densities.
\end{rem}  
 
One particularly important property of the reduced kernel of a 
pseudo-differential operator on a groupoid is the following:
\begin{lem}
\cite[Corollary 1]{NWX;GroupoidPdO}
For any $\varPsi \in \Psi ^\infty (\cG)$, the reduced kernel is co-normal at $\rM$ and smooth elsewhere.
\end{lem}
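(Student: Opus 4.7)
The key insight is that the right-invariance built into the definition of a pseudo-differential operator on $\cG$ forces the family of Schwartz kernels $\{k_x\}$ of $\{\varPsi_x\}$ on $\cG_x \times \cG_x$ to assemble into a single distribution on the fibered product $\tilde\cG = \{(a,b) \in \cG \times \cG : \bs(a) = \bs(b)\}$ that is invariant under the diagonal right action $(a,b) \mapsto (ag, bg)$ for $g$ with $\bt(g) = \bs(a) = \bs(b)$. First I would verify this invariance directly: the identity $\varPsi_{\bs(g)} \circ R_g^* = R_g^* \circ \varPsi_{\bt(g)}$ translates, at the level of kernels, to $k_{\bs(g)}(ag, bg)$ equalling $k_{\bt(g)}(a,b)$ up to the Jacobian of $R_g$, which is absorbed into the fiberwise density since $R_g$ is an isometry for $g_\bs$.

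Next I would exploit the diffeomorphism $\Phi : \tilde\cG \to \cG \times_{\bs,\bt} \cG$ given by $\Phi(a,b) = (ab^{-1}, b)$, whose inverse is $(g,b) \mapsto (gb, b)$. This $\Phi$ trivializes the diagonal right action: $(ag,bg) \mapsto ((ag)(bg)^{-1}, bg) = (ab^{-1}, bg)$, so the action moves only the second factor. The invariant distribution on $\tilde\cG$ therefore descends, via $\Phi_*$, to a distribution $\kappa$ on $\cG$ (tensored with a family of fiberwise densities on the second factor which play the role of the integration measure). Crucially, the preimage under $\Phi$ of $\rM \subset \cG$ (in the first factor) is precisely the union of fiber-diagonals $\{(a,a) : a \in \cG_x\} \subset \cG_x \times \cG_x$, since $ab^{-1} \in \rM$ iff $a = b$. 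So the conormality of each $k_x$ to its diagonal translates directly into conormality of $\kappa$ at $\rM$, and smoothness of $k_x$ off the diagonal (with smooth dependence on $x$, by definition of a smooth family) translates into smoothness of $\kappa$ on $\cG \setminus \rM$.

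Finally, I would check that the reduced kernel $K_\varPsi$ of Definition \ref{RedKer} coincides with $\kappa$ up to a Jacobian arising from the inversion pullback. Unpacking the definition,
\[
K_\varPsi(f) = \int_\rM \int_{\cG_x} k_x(\mathbf u(x), b) f(b^{-1})\, \mu_x(b)\, \mu_\rM(x).
\]
Changing variables $c = b^{-1}$ (so $\bt(c) = x$, with the Jacobian of $\mathbf i$ absorbed into the target-fiber density), the condition $b = \mathbf u(x)$ defining the diagonal becomes $c = \mathbf u(x) \in \rM$, and $K_\varPsi$ is exhibited as the integral against $\kappa$ evaluated along $\bt$-fibers. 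Pulling wavefront sets through this identification yields the claim.

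The main technical obstacle is the careful bookkeeping of the various $1$-densities: the reduced kernel mixes the density $\mu_\rM$ on units, the fiberwise densities $\mu_x$ on $\bs$-fibers, and the Jacobian of $\mathbf i$, and one must check that these combine coherently under $\Phi$ so that the conormal symbol of $K_\varPsi$ at $\rM$ is well-defined independently of choices. This is ultimately a fiberwise linear-algebraic computation, but it is the step where the density conventions from \cite{NWX;GroupoidPdO} alluded to in the remark must be matched up with the ad hoc definition used here.
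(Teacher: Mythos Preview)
The paper does not actually prove this lemma; it simply records the statement and cites \cite[Corollary 1]{NWX;GroupoidPdO} for the proof. Your sketch is essentially the standard argument from that reference: pass from the family of fiberwise Schwartz kernels on $\cG_x \times \cG_x$ to a right-invariant distribution on $\tilde\cG$, quotient by the diagonal action via the map $(a,b)\mapsto (ab^{-1},b)$, and observe that the fiber diagonals are carried to the unit section $\rM\subset\cG$, so conormality at the diagonal becomes conormality at $\rM$. The density bookkeeping you flag is exactly the content of the remark following Definition~\ref{RedKer} and is handled in \cite{NWX;GroupoidPdO} by working with half-densities throughout; it is a genuine nuisance but not a gap. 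In short, your approach is correct and is the same as the cited one, which the paper itself declines to reproduce.
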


\subsubsection{\bf Some representations of $\Psi ^\infty (\cG)$}
In this section, we recall some homomorphisms from $\Psi ^\infty _\mu (\cG)$ to other spaces of operators. 
The materials in this section can be found in \cite{Nistor;GeomOp}.
Let $\cG \rightrightarrows \rM$ be an $\bs$-connected Lie groupoid. 
Let $\cA$ be the Lie algebroid of $\cG$. 

\begin{dfn}
\label{OpNorm}
Given any $\varPsi \in \Psi _\mu ^{-n-1} (\cG)$,
define the 1-norm of $\varPsi $ by (see \cite[Equation (16)]{Nistor;GeomOp})
\begin{equation}
\| \varPsi \|_1 := \sup _{x \in \rM} 
\left\{ \int _{ \cG _{x} } | \kappa (a) | d \mu _x (a) , 
\int _{ \cG _{x} } | \kappa (a^{-1}) | d \mu _x (a) \right\},
\end{equation}
where $\kappa (a)$ is the reduced kernel of $\varPsi $.
Note that $\kappa $ is continuous because $\varPsi \in \Psi _\mu ^{-n-1} (\cG)$.

Next, we define the full norm of any $\varPsi \in \Psi _\mu ^0 (\cG)$ by
\begin{equation}
\| \varPsi \| := \sup _{\rho } \| \rho (\varPsi ) \| _{\bH},
\end{equation} 
where $\| \cdot \|_\bH$ is just the operator norm,
and the supremum ranges through all bounded representation $\rho $ of $\Psi ^0 _\mu (\cG)$ on $\bH$ satisfying
$$ \| \rho (\varPsi ) \| _\bH \leq \| \varPsi \|_1 , \quad \forall \varPsi \in \Psi ^0 _\mu (\cG). $$

We denote the closure of $\Psi ^0 _\mu (\cG)$ under $\| \cdot \|$ by 
$$\fU (\cG),$$
and the closure of $\Psi ^{- \infty } _\mu (\cG) $ under $\| \cdot \|$ by 
$$\fC^* (\cG) \subset \fU (\cG).$$
\end{dfn}

Another important homomorphism is the so called vector representation,
which defines the class of (leafwise)-differential operators on a manifold that we are interested in: 
\begin{dfn}
The {\it vector representation} is the homomorphism defined by
$\nu : \Psi _\nu ^\infty (\cG) \rightarrow \End (C^\infty (\rM))$,
$$ (\nu (\varPsi ) u)(x) := \varPsi _x (\bt ^{-1} u)(x). $$ 
\end{dfn}

\begin{rem}
Equivalently, one can define $(\nu (\varPsi ))u$ to be the (unique) function on $\rM$ satisfying
$ (\nu (\varPsi ))u \circ \bt = \varPsi (u \circ \bt) $
\end{rem}

\begin{rem}
Observe that if $X \in \Gamma ^\infty (\cA)$ is regarded as a differential operator on $\cG$,
then the vector representation of $X$ is just $\nu (X)$,
the image of $X$ under the anchor map (regarded as a differential operator on $\rM$),
so there is no confusion using the same notation for both.
\end{rem}

\vspace{15cm}
$ \; $

\pagebreak \thispagestyle{firstpage}
\section{Elliptic and Fredholm operators}
Using the same arguments as in the construction of parametrices of elliptic pseudo-differential operators on a manifold, 
one has:
\begin{lem}
Let $\varPsi \in \Psi ^{[m]} _\mu (\cG) $ be elliptic. 
Then there exists an operator $Q \in \Psi ^{[-m]} _\mu (\cG)$, 
known as the parametrix of $\varPsi $, such that
\begin{equation}
\label{CrudePara}
R _1 = \varPsi \circ Q - \id \text { and } R _2 = Q \circ \varPsi - \id 
\end{equation}
are elements in $ \Psi ^{- \infty } _\mu (\cG)$.
\end{lem}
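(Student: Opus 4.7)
The plan is to imitate the classical construction of elliptic parametrices, exploiting the fact that the symbolic calculus on a groupoid, as introduced in Definition \ref{GpoidTotalDfn}, behaves exactly as on an ordinary manifold: the total symbol map $\sigma : \Psi^{[k]}_\mu(\cG) \to S^k(\cA^*)$ (modulo smoothing, and modulo the choice of cutoff $\chi$) is surjective, and composition of classical operators obeys the usual asymptotic Leibniz expansion. Right-invariance together with the fact that $\bs$-fibers are manifolds of bounded geometry means all of this follows fiberwise from the standard pseudo-differential theory on $\cG_x$.

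Since $\varPsi$ is elliptic, $\sigma_{\mathrm{top}}(\varPsi) \in S^{[m]}(\cA^*)$ is invertible off the zero section of $\cA^*$. Choose a cutoff $\varphi \in C^\infty(\cA^*)$ vanishing in a neighborhood of the zero section and equal to $1$ outside a slightly larger neighborhood, and define
\[
q_{-m}(\zeta) := \varphi(\zeta)\,\sigma_{\mathrm{top}}(\varPsi)(\zeta)^{-1} \in S^{[-m]}(\cA^*).
\]
Quantize $q_{-m}$ using the chart $(\exp_\nabla, \chi)$ of Definition \ref{GpoidTotalDfn} to produce an operator $Q_0 \in \Psi^{[-m]}_\mu(\cG)$; uniform support of $Q_0$ is guaranteed by the fact that $\chi$ is supported in a fixed compact neighborhood of $\rM$ inside the image of $\exp_\nabla$. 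The composition rule for total symbols gives
\[
\sigma_{\mathrm{top}}(\varPsi \circ Q_0) = \sigma_{\mathrm{top}}(\varPsi)\cdot q_{-m} = \varphi,
\]
which agrees with the symbol $1$ of the identity modulo a compactly supported symbol. Hence $R_1^{(0)} := \varPsi \circ Q_0 - \id \in \Psi^{[-1]}_\mu(\cG)$, and similarly one gets a first-order-improved left parametrix.

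Iterate in the standard way: inductively construct $Q_N \in \Psi^{[-m]}_\mu(\cG)$ with $\varPsi \circ Q_N - \id \in \Psi^{[-N-1]}_\mu(\cG)$ by setting $Q_{N+1} = Q_N - Q_0 \circ R_1^{(N)}$. Then apply Borel's lemma on the level of total symbols of $Q_N - Q_{N-1}$ on $\cA^*$ to assemble a single symbol $q \in S^{[-m]}(\cA^*)$ with $q \sim \sum_N \sigma(Q_N - Q_{N-1})$, and quantize to $Q \in \Psi^{[-m]}_\mu(\cG)$ satisfying $\varPsi \circ Q - \id \in \Psi^{-\infty}_\mu(\cG)$. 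Constructing a left parametrix $Q'$ analogously and observing that $Q' - Q = Q'(\varPsi Q - \id) - (Q'\varPsi - \id)Q \in \Psi^{-\infty}_\mu(\cG)$ shows that a single $Q$ works for both remainders.

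The one point that requires a little care, and is the main technical obstacle, is ensuring that \emph{uniform} (not merely proper) support is preserved at every stage. This reduces to two observations: first, by shrinking the cutoff $\chi$ we can make the support of each $Q_N$ lie in an arbitrarily small, fixed uniform neighborhood of $\rM$, so that finite products $Q_0 \circ R_1^{(N)}$ remain uniformly supported with a uniform bound; second, when assembling $Q \sim \sum_N (Q_N - Q_{N-1})$ via Borel summation on $\cA^*$, the asymptotic sum is taken at the symbol level and then quantized with a single fixed cutoff $\chi$, so the resulting operator automatically has uniform support. With this in place, the remainders $R_1, R_2$ are smoothing operators of uniform support, i.e.\ elements of $\Psi^{-\infty}_\mu(\cG)$, completing the construction.
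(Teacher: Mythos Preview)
Your proposal is correct and follows exactly the approach the paper intends: the paper does not give a proof at all, stating only that the lemma follows ``using the same arguments as in the construction of parametrices of elliptic pseudo-differential operators on a manifold,'' and your write-up is a careful implementation of precisely that standard symbolic-inversion and Borel-summation argument adapted to the groupoid setting. Your attention to preserving uniform support at each stage is the one groupoid-specific point worth making explicit, and you handle it correctly.
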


If $\cG$ is the pair groupoid over a compact manifold,
then all elements in $\Psi ^{-\infty } (\cG)$ are compact. 
It follows from Equation (\ref{CrudePara}) that all elliptic operators are Fredholm.
Unfortunately, in general, elements in $\Psi ^{- \infty } _\mu (\cG)$ are not compact operators.
In the following section we review a Fredholmness criterion given by Lauter and Nistor \cite{Nistor;GeomOp}. 
Here, we first recall the notion of an invariant sub-manifold.
\begin{dfn}
Let $\cG \rightrightarrows \rM$ be a groupoid.
A proper sub-manifold $ \rZ \subset \rM $ is called an {\it invariant sub-manifold} if 
$\bs ^{-1} (\rZ ) = \bt ^{-1} (\rZ ) $.
For an invariant sub-manifold,
we denote $\cG _{\rZ} := \bs ^{-1} (\rZ)$.
It is clear that $\cG _\rZ $ is a groupoid over $\rZ $ by restricting the groupoid structure on $\cG$.
Also, for any $\varPsi = \{ \varPsi _x \} _{x \in \rM } \in \Psi ^{\infty } ( \cG )$, 
define the {\it restriction} of $ \varPsi $ to be the operator
$$ \varPsi |_ \rZ := \{ \varPsi _x \} _{x \in \rZ } \in \Psi ^{\infty } (\cG _\rZ ).$$
\end{dfn}
 
\subsection{ Lauter and Nistor's Fredholmness criterion }
\label{LauNis}
Let $\cG \rightrightarrows \rM $ be a groupoid with compact units $\rM$. 
Assume that the anchor map $\nu : \cA \to T \rM $ is an isomorphism when restricted to some open dense subset
$\rM_0 \subseteq \rM$. 
Then one can also define the metric 
$$ g _{\rM _0} (X, Y) := g_\cA (\nu^{-1} X , \nu ^{-1} Y) , \quad \forall X, Y \in T_x \rM_0 , x \in \rM_0. $$
By definition, it is clear that $\bt |_{ \cG _{x} } : \cG _x \rightarrow \rM_0$ is a local isometry.

Following \cite{Nistor;GeomOp}, we shall make the following assumptions:
\begin{dfn}
\label{BdGpoid}
An $\bs$-connected groupoid $\cG \rightrightarrows \rM$ is said to be a {\it Lauter-Nistor groupoid} if
\begin{enumerate}
\item
The unit set $\rM $ is compact;
\item
The anchor map $\nu : \cA \rightarrow T \rM$ is bijective over an open dense subset $\rM_0 \subseteq \rM$;
\item
The Riemannian manifold $(\rM_0 , g _{\rM _0}) $ has positive injectivity radius
and has finitely many connected components $\rM _0 = \coprod _\alpha \rM _\alpha $;
\item
As a groupoid,
$\cG_{\rM_0} \cong \coprod _\alpha \rM_\alpha \times \rM_\alpha $, the pair groupoid.
\end{enumerate}
\end{dfn}
Note that condition (2) implies the Lie algebroid is integrable, 
using the following result from Debord \cite{Debord;IntAlgebroid}.
\begin{thm}
Let $\cA$ be a Lie algebroid over $\rM$, with anchor map $\nu : \cA \rightarrow T \rM$.
Suppose that there exists an open dense subset $U \subset \rM$ such that $\nu $ is injective on $\cA |_U$.
Then $\cA$ is integrable.
\end{thm}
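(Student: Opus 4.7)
The plan is to construct an $\bs$-connected Lie groupoid $\cG \rightrightarrows \rM$ whose associated Lie algebroid is canonically isomorphic to $\cA$. I would follow Debord's original strategy: build $\cG$ as the Weinstein path space (equivalently, the holonomy groupoid of the singular foliation generated by $\nu(\Gamma^\infty(\cA))$), and exploit almost-injectivity to force the Crainic--Fernandes integrability obstructions to vanish.

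First, on the dense open set $U \subseteq \rM$ where $\nu$ is injective, the image $\nu(\cA|_U) \subset T U$ is an involutive subbundle; by Frobenius it defines a regular foliation $\cF_U$, whose monodromy groupoid $\cH \rightrightarrows U$ is a genuine Lie groupoid integrating $\cA|_U$. Next, I would form the Weinstein topological groupoid $\cG(\cA) \rightrightarrows \rM$ whose elements are $\cA$-homotopy classes of $\cA$-paths $a:[0,1] \to \cA$ covering base paths $\gamma = \nu \circ a$ in $\rM$, with source and target the endpoints of $\gamma$ and multiplication by concatenation. Over $U$ one has $\cG(\cA)|_U \cong \cH$, and the source map is already a topological submersion; what is missing is a compatible global smooth structure.

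The critical step is to check that the Crainic--Fernandes monodromy groups $\mathcal{N}_x \subseteq \Ker(\nu_x) \subseteq \cA_x$ are trivial (or at least uniformly discrete) at every $x \in \rM$. At points of $U$ this is automatic since $\Ker(\nu_x) = 0$. At a singular point $x \in \rM \setminus U$, density of $U$ allows any closed $\cA$-homotopy class at $x$ to be approximated by $\cA$-homotopies whose interiors lie entirely in $U$; the corresponding holonomy is then computed inside the smooth groupoid $\cH$, and passing to the limit shows that no nontrivial monodromy class survives. Hence $\mathcal{N}_x = 0$, and the general Crainic--Fernandes integrability criterion equips $\cG(\cA)$ with a smooth structure making it a Lie groupoid.

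With $\cG := \cG(\cA)$ in hand, the final identification is routine: choosing an $\cA$-connection and exponentiating yields a diffeomorphism from a neighborhood of the zero section in $\cA$ onto a neighborhood of $\rM \subset \cG$, which exhibits $\Ker(d\bs)|_\rM \cong \cA$ compatibly with the anchor and bracket. The main obstacle is the vanishing of the monodromy groups at singular points --- it is precisely at this step that the \emph{almost}-injectivity hypothesis is indispensable, since for general Lie algebroids these monodromy groups can be non-discrete and the algebroid genuinely fails to integrate.
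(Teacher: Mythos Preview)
The paper does not prove this theorem; it is quoted verbatim from Debord \cite{Debord;IntAlgebroid} and used as a black box. So there is no ``paper's own proof'' to compare against --- your task was really to outline Debord's argument (or an equivalent one), and that is where the proposal runs into trouble.

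Your overall architecture (Weinstein groupoid plus Crainic--Fernandes obstruction theory) is a legitimate alternative route --- Crainic and Fernandes do recover Debord's theorem as a corollary of their criterion --- but the ``critical step'' as you have written it does not work. You claim that density of $U$ lets you approximate an $\cA$-homotopy based at a singular point $x$ by $\cA$-homotopies whose interiors lie in $U$. This is false in general: the base map of any $\cA$-homotopy at $x$ lands entirely in the leaf $L_x$ through $x$, and $L_x$ may be a lower-dimensional immersed submanifold contained wholly in $\rM \setminus U$ (for instance a single point, when $\nu_x = 0$). There is no perturbation into $U$ available, and the holonomy of $\cH \rightrightarrows U$ says nothing directly about such loops.

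The correct arguments are different. Debord's original proof predates the Crainic--Fernandes obstruction and does not use it: she builds the holonomy groupoid directly from an atlas of ``quasi-graphoids'' (local bisubmersions coming from exponentiating local sections of $\cA$), and almost-injectivity is exactly what guarantees that the germs of these local integrations glue to a Hausdorff smooth groupoid. The Crainic--Fernandes version instead shows that almost-injective algebroids admit \emph{local} Lie groupoid integrations around every point (again by exponentiating sections, not by pushing paths into $U$), and the existence of local integrations forces the monodromy groups $\mathcal N_x$ to be uniformly discrete. Either way, the mechanism is local integrability of sections of $\cA$, not an approximation-through-$U$ argument.
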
 
Indeed, we shall mainly be studying examples where the groupoid is explicitly given.

The following lemma is useful for verifying assumption (4).
\begin{lem}
If all connected components of $\rM _0$ are simply connected, 
then $\cG_{\rM_0} \cong \coprod _\alpha \rM_\alpha \times \rM_\alpha $.
\end{lem}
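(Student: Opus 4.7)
The plan is to use Proposition \ref{btImage} to exhibit $\bt|_{\cG_x}\colon \cG_x\to \rM_\alpha$ (where $\rM_\alpha$ is the component of $\rM_0$ containing $x$) as a covering map, and then invoke simple connectedness to turn it into a diffeomorphism; from this the pair-groupoid structure will fall out.

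First I would analyze the isotropy at points of $\rM_0$. Because $\nu$ is an isomorphism there, the Lie algebra of the isotropy $\cG_x^x$ equals $\ker\nu_x=0$, so $\cG_x^x$ is a discrete (Lie) subgroup of $\cG_x$. Simultaneously, surjectivity of $\nu$ forces the leaf $\cF_x$ to contain an open neighborhood of $x$, and since the leaf is connected and contained in $\rM_0$, it must coincide with the entire connected component $\rM_\alpha$ through $x$. Combining this with Proposition \ref{btImage} gives $\bt|_{\cG_x}\colon \cG_x\to \rM_\alpha$ realized as the quotient by the free right action of the discrete group $\cG_x^x$, hence a covering map. Proposition \ref{SubmersionProp} additionally identifies it as a submersion, and a dimension count ($\dim\cG_x=\dim\cA=\dim T_x\rM_\alpha$) confirms it is a local diffeomorphism.

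Next I would use the hypotheses. By assumption $\cG$ is $\bs$-connected, so $\cG_x$ is connected; and $\rM_\alpha$ is simply connected by hypothesis. A connected cover of a simply connected manifold is trivial, so $\bt|_{\cG_x}\colon \cG_x\to \rM_\alpha$ is a diffeomorphism, which in particular means $\cG_x^x=\{x\}$ for every $x\in\rM_0$.

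Finally I would assemble the global isomorphism. Define
\[
\Phi\colon \cG_{\rM_0}\longrightarrow \coprod_\alpha \rM_\alpha\times \rM_\alpha,\qquad \Phi(a):=(\bt(a),\bs(a)).
\]
For every $x\in\rM_\alpha$, the preceding step shows $\Phi$ maps $\cG_x$ bijectively onto $\rM_\alpha\times\{x\}$, so $\Phi$ is a bijection onto $\coprod_\alpha\rM_\alpha\times\rM_\alpha$. Smoothness of $\Phi$ is immediate from smoothness of $\bs,\bt$; smoothness of $\Phi^{-1}$ follows because $\Phi$ is a smooth bijection whose differential is an isomorphism on each $\bs$-fiber (a diffeomorphism of $\bs$-fibers, as just shown) and $\bs$ itself is a submersion, so $d\Phi$ is fiberwise invertible and therefore an isomorphism. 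That $\Phi$ intertwines the groupoid operations is formal: $\bs$ and $\bt$ are preserved by construction, and the unique element of $\cG_x$ above each $(y,x)$ forces the multiplication and inverse to match those of the pair groupoid.

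The only delicate point is the covering-map step, where one needs the freeness of the isotropy action and properness of the quotient $\cG_x\to\cG_x/\cG_x^x$ to conclude a genuine covering rather than merely a local diffeomorphism; these, however, are automatic for a free action of a discrete Lie subgroup on a Hausdorff manifold, so the argument goes through.
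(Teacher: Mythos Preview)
Your proposal is correct and follows essentially the same approach as the paper: both arguments identify $\bt|_{\cG_x}\colon \cG_x\to \cF_x=\rM_\alpha$ as a covering (via Proposition~\ref{btImage} and discreteness of the isotropy), use simple connectedness to make it a diffeomorphism, and deduce the pair-groupoid structure. Your version is simply more explicit than the paper's, spelling out the map $\Phi=(\bt,\bs)$ and the verification that it is a groupoid isomorphism, whereas the paper leaves these steps to the reader.
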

\begin{proof}
Observe that, for each $x \in \rM _0$, $\cG _x $ is a covering of $\cF _x $, 
the connected component in $\rM _0 $ containing $x $.
If all connected components of $\rM _0$ are simply connected, 
then $\cG _x \cong \cF _x $ for all $x \in \rM _0$.
It follows from Proposition \ref{btImage} that the isotropy subgroups $\cG ^x _x $ are trivial for all $x \in \rM _0 $.
Hence the assertion.
\end{proof}

Since the Riemannian curvature is a local object, it follows that $( \rM _0 , g _{\rM _0} ) $ 
is a manifold with bounded geometry.
Also, it is easy to see that for any vector bundle $\rE \to \rM$, 
the restriction $\rE | _{\rM _0} \to \rM _0 $ is a vector bundle of bounded geometry.
Hence one can consider the Sobolev spaces $\bW ^l (\rM _0 , \rE )$ for any $l \in \bbR$.

Let $\varPsi = \{ \varPsi _x \} \in \Psi ^{[m]} _\mu (\cG , \rE)$.
For any $x \in \rM _0$,
assumption (4) enables one to identify 
$$ \cG _x \cong \rM _\alpha ,$$
where $M _\alpha $ is the connected component of $\rM _0 $ containing $x$.
Hence one identifies $\Gamma ^\infty (\rM _\alpha  , \rE ) \cong \Gamma ^\infty (\cG _x , \bs ^{-1} \rE ) $.
Under such identification,  
one has 
\begin{equation}
\label{VecSobo}
\nu (\varPsi) (f) |_{\rM _\alpha } = \varPsi _x (f |_{\rM _\alpha }) ,
\end{equation}
for any $f \in \Gamma ^\infty (\rM , \rE) $. 
Since $\varPsi _x$ is a pseudo-differential operator of order $\leq m$,
Equation (\ref{VecSobo}) enables one to extend the vector representation $\nu (\varPsi)$
to a bounded map
$$ \nu _l (\varPsi) : \bW ^l (\rM _0 , \rE ) \to \bW ^{l - m} (\rM _0 , \rE) ,$$
for any $ l \geq m$.
In particular, if $\varPsi \in \Psi ^{- \infty } _\mu (\cG , \rE)$,
then $\nu _0 (\varPsi)$ is just the smoothing map
\begin{equation}
\label{LNOpen}
\nu _0 (\varPsi) f (x) = \int _{y \in \rM _\alpha } \psi |_{\cG _{\rM _0}} (x , y) f (y ) \mu _{\rM _0} (y), 
\quad f \in \bL ^2 (\rM , \rE) ,
\end{equation}
where $\psi \in \Gamma ^\infty _c (\cG , \bt ^{-1} \rE \otimes \bs ^{-1} \rE ')$ is the reduced kernel of $\varPsi$,
and we have used the identification $\cG _{\rM _0} \cong \coprod _\alpha \rM _\alpha \times \rM _\alpha $.

Recall that we defined $\fU (\cG)$ and $\fC^* (\cG)$ to be the closure of 
$\Psi ^0 _\mu (\cG)$ and $\Psi ^{-\infty} _\mu (\cG)$ under the full norm $\| \cdot \|$ respectively.
We shall denote $\fJ := \fC^* (\cG_{\rM _0})$ 
(the closure of pseudo-differential operators of order $-\infty$ on the groupoid over $\rM_0$).
The importance of $\fJ$ lies in 
\begin{lem}
For any $\varPsi \in \fJ$, the vector representation $\nu (\varPsi )$ is a compact operator on $\bL^2 (\rM_0)$.
\end{lem}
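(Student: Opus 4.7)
The plan is to first verify compactness on the dense subset $\Psi^{-\infty}_\mu(\cG_{\rM_0}) \subset \fJ$ and then extend by norm-continuity, using that the compact operators form a norm-closed subspace of bounded operators. For the first step, the reduced kernel $\kappa$ of any $\varPsi \in \Psi^{-\infty}_\mu(\cG_{\rM_0})$ is smooth everywhere and has compact support in $\cG_{\rM_0}$. The pair-groupoid hypothesis (Definition \ref{BdGpoid}(4)) identifies $\cG_{\rM_0} \cong \coprod_\alpha \rM_\alpha \times \rM_\alpha$ and $\bL^2(\rM_0) = \bigoplus_\alpha \bL^2(\rM_\alpha)$, and formula (\ref{LNOpen}) then presents $\nu(\varPsi)$ as a finite direct sum of integral operators on $\bL^2(\rM_\alpha)$ with smooth, compactly supported kernels $\kappa(x,y)$. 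Each summand is Hilbert--Schmidt, so $\nu(\varPsi)$ is compact.

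The second step is to show that the restriction of $\nu$ to $\Psi^{-\infty}_\mu(\cG_{\rM_0})$ extends to a continuous linear map $\fJ \to \mathcal{B}(\bL^2(\rM_0))$ with $\|\nu(\varPsi)\|_{\bL^2(\rM_0)} \leq \|\varPsi\|$. By the definition of the full norm (Definition \ref{OpNorm}), it suffices to check that $\nu$ itself is a bounded representation obeying $\|\nu(\varPsi)\|_{\bL^2(\rM_0)} \leq \|\varPsi\|_1$ on operators of order $-\infty$. Schur's test applied to the integral operator of Step 1 gives
$$\|\nu(\varPsi)\|_{\bL^2}^2 \leq \Bigl(\sup_x \int |\kappa(x,y)|\,d\mu(y)\Bigr)\Bigl(\sup_y \int |\kappa(x,y)|\,d\mu(x)\Bigr),$$
and identifying $\cG_x \cong \rM_\alpha$ via $\bt|_{\cG_x}$ (which by construction of $g_\bs$ and $g_{\rM_0}$ is a Riemannian isometry), each of the two factors coincides with one of the integrals $\int_{\cG_x} |\kappa(a)|\,d\mu_x(a)$ or $\int_{\cG_x} |\kappa(a^{-1})|\,d\mu_x(a)$ appearing in Definition \ref{OpNorm}, hence is bounded by $\|\varPsi\|_1$.

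Combining the two steps: for any $\varPsi \in \fJ$, take $\varPsi_n \in \Psi^{-\infty}_\mu(\cG_{\rM_0})$ with $\|\varPsi_n - \varPsi\| \to 0$; Step 2 yields $\|\nu(\varPsi_n) - \nu(\varPsi)\|_{\bL^2(\rM_0)} \to 0$, while Step 1 says each $\nu(\varPsi_n)$ is compact, so $\nu(\varPsi)$ is compact as a norm limit of compact operators. The main conceptual point is Step 2, where hypothesis (4) of Definition \ref{BdGpoid} is genuinely used: it is precisely the pair-groupoid structure on $\cG_{\rM_0}$ that turns $\kappa$ into an honest integral kernel on $\rM_0$ and aligns the Schur bounds with the $1$-norm.
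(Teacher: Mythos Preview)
Your proof is correct and follows the same two-step strategy as the paper: compactness on the dense subalgebra $\Psi^{-\infty}_\mu(\cG_{\rM_0})$ via the explicit integral-kernel description (\ref{LNOpen}), then passage to the closure $\fJ$ by norm continuity. Your write-up is in fact more careful than the paper's, which simply asserts that $\nu(\varPsi)$ is a properly supported smoothing operator ``well known to be compact'' and that the result ``follows by taking limits''; you spell out why the kernel is compactly supported (hence Hilbert--Schmidt) and justify the limit step by verifying via Schur's test that $\nu$ is one of the representations entering the supremum in Definition~\ref{OpNorm}, so that $\|\nu(\varPsi)\|_{\bL^2}\leq\|\varPsi\|$.
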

\begin{proof}
If $\varPsi \in \Psi _\mu ^{- \infty } (\cG _{\rM_0})$, 
then Equation (\ref{LNOpen}) says that $\nu (\varPsi )$ is just a properly supported, smoothing operator on $\rM_0$,
which is well known to be compact. 
The assertion follows by taking limits.
\end{proof} 

One remarkable fact about these spaces is the following lemma:
\begin{lem}
\label{ExactC^*}
\cite[Lemma 2]{Nistor;Family}
One has short exact sequences
\begin{align*}
0 & \rightarrow \fC^* (\cG _{\rM _0}) = \fJ \rightarrow \fC^* (\cG) 
\rightarrow \fC^* (\cG _{\rM \setminus \rM_0}) \rightarrow 0 \\
0 & \rightarrow \fU (\cG _{\rM _0}) \rightarrow \fU (\cG) 
\rightarrow \fU (\cG _{\rM \setminus \rM_0}) \rightarrow 0.
\end{align*}
\end{lem}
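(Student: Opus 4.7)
The plan is to handle both sequences in parallel, exploiting that $\fC^*(\cG)$ and $\fU(\cG)$ are defined as the closures of $\Psi^{-\infty}_\mu(\cG)$ and $\Psi^0_\mu(\cG)$ under the same full norm. Since $\rM_0$ is open and invariant, $\rM\setminus\rM_0$ is a closed invariant subset and $\cG_{\rM\setminus\rM_0}$ inherits a groupoid structure. The candidate surjection is the restriction homomorphism
\[
\rho:\{\varPsi_x\}_{x\in\rM}\longmapsto\{\varPsi_x\}_{x\in\rM\setminus\rM_0},
\]
a $*$-homomorphism from $\Psi^{-\infty}_\mu(\cG)$ (respectively $\Psi^0_\mu(\cG)$) to $\Psi^{-\infty}_\mu(\cG_{\rM\setminus\rM_0})$ (respectively $\Psi^0_\mu(\cG_{\rM\setminus\rM_0})$) that visibly preserves uniform support.

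The first step is to verify that $\rho$ is bounded in both norms. For the $1$-norm of Definition \ref{OpNorm}, taking the supremum over the smaller index set $\rM\setminus\rM_0\subset\rM$ can only decrease the value, giving $\|\rho(\varPsi)\|_1\leq\|\varPsi\|_1$. For the full norm, any bounded representation $\pi$ of $\Psi^0_\mu(\cG_{\rM\setminus\rM_0})$ satisfying $\|\pi(\cdot)\|\leq\|\cdot\|_1$ pulls back along $\rho$ to such a representation $\pi\circ\rho$ of $\Psi^0_\mu(\cG)$, so $\|\rho(\varPsi)\|\leq\|\varPsi\|$ by the very definition of $\|\cdot\|$. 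Hence $\rho$ extends continuously to $\fC^*(\cG)\to\fC^*(\cG_{\rM\setminus\rM_0})$ and to $\fU(\cG)\to\fU(\cG_{\rM\setminus\rM_0})$.

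Next I would identify the kernel. An operator $\varPsi\in\Psi^{-\infty}_\mu(\cG)$ lies in $\ker\rho$ iff $\varPsi_x=0$ for all $x\notin\rM_0$, iff its reduced kernel (Definition \ref{RedKer}) is supported in the open set $\cG_{\rM_0}\subset\cG$; such operators are precisely the extensions-by-zero of elements of $\Psi^{-\infty}_\mu(\cG_{\rM_0})$, and the same identification holds on the level of $\Psi^0_\mu$. Because both the $1$-norm and, by functoriality, the full norm are intrinsic to the reduced kernel, they agree on this subalgebra whether computed within $\cG_{\rM_0}$ or within $\cG$. Passing to closures inside $\fC^*(\cG)$ and $\fU(\cG)$ yields $\ker\rho=\fC^*(\cG_{\rM_0})=\fJ$ and $\ker\rho=\fU(\cG_{\rM_0})$ respectively, which gives exactness at the middle term.

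The last and hardest step is surjectivity. Given $\varPhi\in\Psi^0_\mu(\cG_{\rM\setminus\rM_0})$ with reduced kernel $\psi$, I would construct a lift by choosing a tubular neighborhood $\cG_{\rM\setminus\rM_0}\subset\cU\subset\cG$ obtained by exponentiating a normal section of $\cA$ along $\rM\setminus\rM_0$, then using a fiberwise smooth cutoff supported in $\cU$ and equal to $1$ on a smaller neighborhood to extend $\psi$ to a uniformly supported distribution on $\cG$ that is conormal at units; the associated operator $\tilde\varPsi$ satisfies $\rho(\tilde\varPsi)=\varPhi$. The principal obstacle is that this dense lift need not be uniformly norm-bounded, so surjectivity of the $\rho$'s on the $C^*$-completions is not automatic from surjectivity on the dense subalgebras. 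Following \cite[Lemma 2]{Nistor;Family}, one circumvents this by showing directly that the image of $\rho$ is dense in the target $C^*$-algebra: the universality of the full norm, together with the fact that every admissible representation of $\Psi^0_\mu(\cG_{\rM\setminus\rM_0})$ pulls back via $\rho$, forces the quotient $\fC^*(\cG)/\fJ$ (respectively $\fU(\cG)/\fU(\cG_{\rM_0})$) to be a $C^*$-algebra dominating all such representations, which is precisely $\fC^*(\cG_{\rM\setminus\rM_0})$ (respectively $\fU(\cG_{\rM\setminus\rM_0})$).
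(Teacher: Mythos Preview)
The paper does not supply its own proof of this lemma; it simply records the statement and cites \cite[Lemma~2]{Nistor;Family}. So there is no paper-proof to compare your proposal against.

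Evaluating your argument on its own merits: the architecture (define the restriction $*$-homomorphism, identify its kernel, prove surjectivity) is correct, and your contractivity argument for $\rho$ is fine. But there is a genuine gap at the step you pass over with ``by functoriality, the full norm \dots\ agree[s] on this subalgebra whether computed within $\cG_{\rM_0}$ or within $\cG$.'' One inequality is easy: every admissible representation of $\Psi^0_\mu(\cG)$ restricts to an admissible representation of the ideal, so the ambient full norm is dominated by the intrinsic one. The reverse inequality---that every admissible representation of $\Psi^0_\mu(\cG_{\rM_0})$ extends to one of $\Psi^0_\mu(\cG)$, or equivalently that the closure of $\Psi^{-\infty}_\mu(\cG_{\rM_0})$ inside $\fC^*(\cG)$ coincides with the intrinsic $\fC^*(\cG_{\rM_0})$---is precisely the content of the cited result and is not a formality. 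Without it you have only $\fJ\subseteq\ker\rho$, not equality.

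Your final paragraph inherits this gap: you assert that the quotient $\fC^*(\cG)/\fJ$ ``is precisely $\fC^*(\cG_{\rM\setminus\rM_0})$'' by universality, but this presupposes both the norm compatibility above and injectivity of the induced map on the quotient, neither of which you have established. The tubular-neighborhood lift shows the image of $\rho$ is dense, and closed range of $C^*$-morphisms then gives surjectivity; but exactness in the middle still needs the missing norm argument. If you want a self-contained proof, that is the step to supply.
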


Another useful fact about Lauter-Nistor groupoids is that their vector representation is faithful.
In other words:
\begin{lem}
{\em \cite{Nistor;Polyhedral3}}
The map $\nu : \Psi ^{\infty } _\mu (\cG , \rE ) \to \End (\Gamma ^\infty _c (\bt ^{-1} \rE ) )$ is injective.
\end{lem}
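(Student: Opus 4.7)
The plan is to reduce the injectivity statement to the open dense nonsingular subset $\rM_0 \subset \rM$, on which assumption (4) in the definition of a Lauter--Nistor groupoid forces $\cG_{\rM_0}$ to be a disjoint union of pair groupoids; once vanishing of the reduced kernel is established there, I propagate it to all of $\cG$ by continuity off $\rM$ together with the conormal structure of the reduced kernel at $\rM$.

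Suppose $\nu(\varPsi) = 0$. For $x \in \rM_0$, assumption (4) identifies $\bt : \cG_x \xrightarrow{\sim} \rM_\alpha$, where $\rM_\alpha$ is the connected component of $\rM_0$ containing $x$. Right-invariance of the family $\{\varPsi_y\}_{y \in \rM_\alpha}$ means that, under this identification, it descends to a single pseudo-differential operator $\Phi_\alpha$ on $\rM_\alpha$, and unwinding the definition of $\nu$ yields $\nu(\varPsi)|_{\rM_\alpha}(u) = \Phi_\alpha(u|_{\rM_\alpha})$ whenever $u$ is a section compactly supported in $\rM_\alpha$. Since $\rM_\alpha$ is open in $\rM$, every such $u$ extends by zero to a smooth section on $\rM$; so $\nu(\varPsi) = 0$ forces $\Phi_\alpha = 0$, hence $\varPsi_x = 0$ for all $x \in \rM_0$. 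In particular the reduced kernel $\psi$ of $\varPsi$ vanishes on the open subset $\cG_{\rM_0} = \bs^{-1}(\rM_0)$.

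Because $\bs$ is a submersion and $\rM_0$ is open and dense in $\rM$, the set $\bs^{-1}(\rM_0) \setminus \rM$ is open and dense in $\cG \setminus \rM$. The reduced kernel is smooth on $\cG \setminus \rM$ by the Nistor--Weinstein--Xu co-normality lemma quoted in Section 2, so continuity forces $\psi \equiv 0$ on $\cG \setminus \rM$. Thus $\psi$ is a conormal distribution at $\rM$ supported on $\rM$; by the structure theorem for distributions supported on a submanifold, $\psi$ is a finite sum of transverse derivatives of the delta section on $\rM$, and $\varPsi$ is therefore a right-invariant, fiberwise differential operator, identified with a section $\mathrm D$ of the enveloping algebra bundle $\mathcal U(\cA)$. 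The vector representation of such a $\varPsi$ is the ordinary differential operator on $\rM$ obtained by extending the anchor to $\mathcal U(\cA)$; since $\nu : \cA_x \to T_x \rM$ is an isomorphism for $x \in \rM_0$, the induced map from $\mathcal U(\cA_x)$ into the algebra of differential operators on $\rM$ at $x$ is injective there (by an order-by-order principal symbol argument). Hence $\mathrm D|_{\rM_0} = 0$, and by continuity $\mathrm D = 0$ on all of $\rM$, so $\varPsi = 0$.

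The main obstacle is the last paragraph: carefully identifying a conormal distribution supported on $\rM$ with a section of $\mathcal U(\cA)$, and upgrading anchor injectivity on $\cA$ to injectivity on the entire enveloping algebra. Both steps are standard but require some bookkeeping in the groupoid setting; the earlier reduction to $\rM_0$ is essentially formal once the pair-groupoid identification is in hand.
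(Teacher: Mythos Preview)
Your proof is correct, but it takes a considerably more elaborate route than the paper's. The first step (vanishing of $\varPsi_x$ for $x \in \rM_0$ via the pair-groupoid identification) is identical. The divergence is in how you handle $x \in \rM \setminus \rM_0$: the paper simply picks $u \in \Gamma^\infty_c(\bt^{-1}\rE|_{\cG_x})$, extends it to a smooth compactly supported section $\hat u$ on all of $\cG$, and observes that $\varPsi(\hat u)$ is a \emph{continuous} section on $\cG$ which vanishes on the dense open subset $\cG_{\rM_0}$ (because $\varPsi_y = 0$ there). Hence $\varPsi(\hat u) = 0$ everywhere, and restricting to $\cG_x$ gives $\varPsi_x(u) = 0$. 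No reduced kernel, no conormal structure theorem, no enveloping algebra.

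Your route through the reduced kernel is sound --- the density of $\bs^{-1}(\rM_0) \setminus \rM$ in $\cG \setminus \rM$ follows from $\bs$ being a submersion, and the smoothness-off-$\rM$ of the kernel then forces support in $\rM$, whence $\varPsi$ is a differential operator. But note that the part you flag as the ``main obstacle'' (upgrading anchor injectivity to $\mathcal U(\cA)$) is actually unnecessary even within your own argument: having already established $\varPsi_x = 0$ for $x \in \rM_0$ in step one, you know directly that the section $\mathrm D \in \Gamma(\mathcal U(\cA))$ vanishes on $\rM_0$, and continuity of $\mathrm D$ (a section of a finite-rank bundle, since the order is bounded) on the dense set $\rM_0$ finishes the job. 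You do not need to invoke $\nu(\varPsi) = 0$ a second time or appeal to injectivity of the anchor on the enveloping algebra. What your approach buys is a kernel-level description of the vanishing, which could be useful in other contexts; what the paper's approach buys is brevity and the avoidance of any distribution-theoretic structure theorems.
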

\begin{proof}
Let $\varPsi = \{ \varPsi _x \} _{x \in \rM } \in \Psi ^{\infty } _\mu (\cG , \rE ) $ be such that 
$\nu (\varPsi ) = 0 $.
First consider $\varPsi _x $ for arbitrary $x \in \rM _0 $.
For any $u \in \Gamma ^\infty _c (\bt ^{-1} \rE |_{\cG _x} )$, 
let $\tilde u \in \Gamma ^\infty _c (\rE )$ be the extension of $u$ by $0$.
Then 
$$ \varPsi _x (u ) = \nu (\varPsi ) (\tilde u) |_{\rM _0 } = 0. $$
Therefore $\varPsi _x = 0$ for any $x \in \rM _0 $.
Now consider $x \in \rM \setminus \rM _0 $.
For any $u \in \Gamma ^\infty _c (\bt ^{-1} \rE |_{\cG _x} ) $,
let $\hat u \in \Gamma ^\infty _c (\bt ^{-1} \rE )$ be any extension of $u$.
Then one has
$$ \varPsi (\hat u) = 0 $$
on $\cG _{\rM _0}$,
because $\varPsi _x = 0 $ for any $x \in \rM _0$.
Since $ \varPsi (\hat u) $ is continuous and $\cG _{\rM _0 }$ is dense in $\cG $, 
it follows that $\varPsi _x (\hat u) = 0$ everywhere, hence $\varPsi = 0$.
\end{proof}

In the following theorem, 
let $A $ be any fixed elliptic (pseudo)-differential operator of order $k > 0 $.
(One can take $A $ to be, say, a Laplacian operator in Definition \ref{LapDfn}). 
Then $(\id + A ^* A ) ^{- \frac{1}{2 k}} $ is well defined by functional calculus.
Moreover, by \cite[Theorem 4]{Nistor;GeomOp} and its corollaries,
$\nu \big( (\id + A ^* A ) ^{\frac{m}{2 k}} \big) : \bW ^m (\rM _0 , \rE ) \to \bW ^ 0 (\rM _0 , \rE ) $ 
is bounded for all $m$.
With these preliminaries, the main result of Lauter and Nistor can be stated as:
\begin{thm}
\label{Nis;Thm7}
\cite [Theorem 7]{Nistor;GeomOp}
For any $\varPsi \in \Psi ^0 (\cG)$, or $\varPsi \in \Psi ^{[m]} (\cG)$ elliptic self-adjoint,
the spectrum and essential spectrum of $\nu (\varPsi )$ satisfy
\begin{equation}
\label{nuSpec}
\bsigma (\nu (\varPsi )) \subseteq \bsigma _{\fU (\cG)} (\varPsi) 
\; \text { and } \; \bsigma ^e (\nu (\varPsi )) \subseteq \bsigma _{\fU / \fJ } (\varPsi ). 
\end{equation}
In particular, for any $\varPsi \in \Psi ^{[m]}_\mu (\cG)$ such that 
$ \varPsi (\id + A^* A)^{- \frac{m}{2 k}} $ is invertible in 
then $\fU (\cG) / \fJ $, $\nu (\varPsi ) $ extends to a Fredholm operator 
from $\bW^m (\rM _0 , \rE) $ to $ \bL^2 (\rM _0 , \rE)$;
if \\ $ \varPsi (\id + A^* A)^{- \frac{m}{2 k}} \in \fJ $, then 
$\nu (\varPsi ) : \bL^2 (\rM _0 , \rE) \to \bL^2 (\rM _0 , \rE)$ is compact.
\end{thm}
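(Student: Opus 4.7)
The strategy is to realise $\nu$ as a $*$-homomorphism of $C^*$-algebras, so that the spectral inclusions in \eqref{nuSpec} become instances of the standard fact that $*$-homomorphisms do not expand spectra; the Fredholmness and compactness conclusions then follow by transferring invertibility modulo $\fJ$ and membership in $\fJ$ across $\nu$. The first step is to reduce to order zero by setting $\tilde \varPsi := \varPsi (\id + A^*A)^{-m/2k} \in \Psi^0_\mu(\cG)$; since $\nu\bigl((\id + A^*A)^{m/2k}\bigr) : \bW^m(\rM_0,\rE) \to \bL^2(\rM_0,\rE)$ is the topological isomorphism cited just before the theorem, Fredholmness or compactness of $\nu(\varPsi) : \bW^m \to \bL^2$ is equivalent to the same property for $\nu(\tilde\varPsi)$ on $\bL^2$. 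Second, I would extend $\nu$ continuously from $\Psi^0_\mu(\cG)$ to a bounded $*$-homomorphism $\fU(\cG) \to \mathcal B(\bL^2(\rM_0,\rE))$. Because $\fU(\cG)$ is by construction the closure under the full norm, which is the supremum over Hilbert-space representations dominated by $\|\cdot\|_1$, it suffices to verify the Schur-type bound $\|\nu(\varPsi)\|_{\bL^2} \leq \|\varPsi\|_1$ for $\varPsi \in \Psi^{-n-1}_\mu(\cG)$; by \eqref{LNOpen} the operator $\nu(\varPsi)$ acts on each component of $\rM_0$ as convolution with the reduced kernel on the pair groupoid, and the standard Schur test applied to these kernels yields exactly the two sup-integrals in Definition \ref{OpNorm}.

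With the extension in place, the first inclusion in \eqref{nuSpec} is the classical $C^*$-algebra lemma that a unital $*$-homomorphism $\phi : \fA \to \fB$ satisfies $\bsigma_\fB(\phi(a)) \subseteq \bsigma_\fA(a)$, applied to $\phi = \nu$. For the essential spectrum, the earlier lemma guarantees $\nu(\fJ) \subseteq \mathcal K(\bL^2(\rM_0,\rE))$, so $\nu$ descends to a $*$-homomorphism $\bar \nu : \fU(\cG)/\fJ \to \mathcal B(\bL^2)/\mathcal K(\bL^2)$, and the same spectrum-nonexpansion principle yields $\bsigma^e(\nu(\varPsi)) \subseteq \bsigma_{\fU(\cG)/\fJ}(\varPsi)$. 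The elliptic self-adjoint order-$m$ case reduces via $\tilde\varPsi$ as above, or alternatively by a resolvent argument: essential self-adjointness of $\varPsi$ ensures that $(\lambda - \varPsi)^{-1}$ exists in a suitable enlargement of $\fU(\cG)$ for $\mathrm{Im}\,\lambda \neq 0$, and $\nu$ transports this inverse to the spectral resolvent of $\nu(\varPsi)$.

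The remaining conclusions are then immediate. If $\tilde\varPsi$ is invertible in $\fU(\cG)/\fJ$, then $\bar\nu(\tilde\varPsi)$ is invertible in the Calkin algebra, so $\nu(\tilde\varPsi)$ is Fredholm on $\bL^2$, and precomposing with the topological isomorphism $\nu((\id + A^*A)^{m/2k})$ from Step~1 gives the Fredholm property of $\nu(\varPsi) : \bW^m \to \bL^2$; if instead $\tilde\varPsi \in \fJ$, the same construction produces compactness. I expect the principal obstacle to lie in Step~1, specifically in certifying the Schur bound and the $*$-property of the extension: the Schur bound demands uniformity over the (possibly several) components of $\rM_0$ and a careful use of right-invariance of the $\bs$-fiber measures $\mu_x$ to match the two kernel integrals with $\|\varPsi\|_1$, while the $*$-property requires checking that $\nu$ intertwines the groupoid involution $a \mapsto a^{-1}$ with the Hilbert-space adjoint, using that $\mathbf i$-pullback and $R_a$-invariance are compatible with the $\mathbf u^*$-evaluation in Definition \ref{RedKer}.
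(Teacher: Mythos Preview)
Your proposal is correct and follows essentially the same approach as the paper's proof. The paper argues directly: for $\lambda \notin \bsigma_{\fU/\fJ}(\varPsi)$ it picks $Q \in \fU(\cG)$ with $(\varPsi-\lambda)Q - \id$ and $Q(\varPsi-\lambda) - \id$ in $\fJ$, applies $\nu$, and uses that $\nu(\fJ)$ consists of compact operators to conclude $\nu(\varPsi)-\lambda$ is Fredholm; the order-$m$ case is handled exactly as you do, via the factorisation $\nu(\varPsi) = \nu(\tilde\varPsi)\,\nu((\id+A^*A)^{m/2k})$. Your formulation via the abstract ``$*$-homomorphisms contract spectra'' lemma and the descended map $\bar\nu : \fU/\fJ \to \mathcal B/\mathcal K$ is just the $C^*$-algebraic packaging of the same computation, and your discussion of the Schur bound for extending $\nu$ makes explicit a step the paper leaves to the cited reference.
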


\begin{proof}
By definition, for each $\lambda \in \bbC \setminus \bsigma _{\fU (\cG) / \fJ}$, 
there exists $ Q \in \fU (\cG) $ such that
$$ (\varPsi - \lambda ) Q - \id _{\fU (\cG)} , Q (\varPsi - \lambda ) - \id _{\fU (\cG)} \in \fJ. $$
Since $\nu $ maps $\fJ$ to compact operators, it follows that $\nu (\varPsi )$ is Fredholm,
hence $\lambda \in \bbC \setminus \bsigma ^{e} (\nu (\varPsi ))$.
The second inclusion follows by contra-positivity.  
The first inclusion is similar (with $\fJ $ replaced by $\{ 0 \} $).

To prove that $\nu (\varPsi )$ is Fredholm (resp. compact) from the hypothesis,
observe that $\nu (\varPsi ) = \nu ( \varPsi (\id + A^* A)^{- \frac{m}{2 k}} ) \nu ((\id + A^* A)^{\frac{m}{2 k}}) $,
and use the well known fact that the composition between a Fredholm (resp. compact) operator 
and a bounded invertible operator is Fredholm (resp. compact). 
\end{proof}  

Using the injectivity of the vector representation,
and the fact that injective homomorphisms of $C ^*$-algebra preserve the spectrum 
\cite[p.12]{Averson;Book},
the inclusion in Equation (\ref{nuSpec}) can be sharpen to an equality.
In particular:
\begin{thm}
\label{Nis;Thm8}
\cite [Theorem 8]{Nistor;GeomOp}
Suppose the groupoid $\cG$ is Hausdorff. 
Then, for any $\varPsi \in \Psi ^0 (\cG)$, or $\varPsi \in \Psi ^{[m]} (\cG)$ elliptic self-adjoint,
the spectrum and essential spectrum of $\nu (\varPsi )$ satisfy
\begin{equation}
\bsigma (\nu (\varPsi )) = \bsigma _{\fU (\cG)} (\varPsi) 
\; \text { and } \; \bsigma ^e (\nu (\varPsi )) = \bsigma _{\fU / \fJ } (\varPsi ). 
\end{equation}
\end{thm}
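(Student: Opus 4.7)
The plan is to upgrade the inclusions (\ref{nuSpec}) in Theorem \ref{Nis;Thm7} to equalities by promoting the injectivity of the vector representation from the uniform calculus (the preceding lemma) to the $C^*$-completion $\fU(\cG)$, and then invoking the standard fact that an injective $*$-homomorphism between $C^*$-algebras is isometric and preserves the spectrum \cite[p.~12]{Averson;Book}.

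First I would extend $\nu$ to a continuous unital $*$-homomorphism $\bar\nu : \fU(\cG) \to \mathcal{B}(\bL^2(\rM_0, \rE))$. By \cite[Theorem 4]{Nistor;GeomOp} and the bounded geometry of $(\rM_0, g_{\rM_0})$, the vector representation is bounded on $\bL^2(\rM_0, \rE)$ with $\|\nu(\varPsi)\|_{\mathcal{B}} \leq \|\varPsi\|_1$ on $\Psi^{-n-1}_\mu(\cG)$; the definition of the full norm $\|\cdot\|$ on $\Psi^0_\mu(\cG)$ as the supremum over such representations then gives $\|\nu(\varPsi)\|_{\mathcal{B}} \leq \|\varPsi\|$, so $\bar\nu$ exists by continuous extension. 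Equation (\ref{LNOpen}) together with the pair-groupoid structure of $\cG_{\rM_0}$ further ensures $\bar\nu(\fJ) \subseteq \mathcal{K}(\bL^2(\rM_0, \rE))$, so $\bar\nu$ descends to a unital $*$-homomorphism $\tilde\nu : \fU(\cG)/\fJ \to \mathcal{B}(\bL^2(\rM_0, \rE))/\mathcal{K}(\bL^2(\rM_0, \rE))$ into the Calkin algebra.

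The main obstacle will be injectivity of $\bar\nu$ and $\tilde\nu$, which is exactly where the Hausdorff hypothesis enters. For $\bar\nu$: given $a \in \ker\bar\nu$ and an approximating sequence $\varPsi_n \in \Psi^0_\mu(\cG)$ with $\varPsi_n \to a$, the preceding lemma gives injectivity of $\nu$ on the uniform calculus, while the Hausdorff assumption propagates the vanishing of the reduced kernels from the dense open subset $\cG_{\rM_0}$ (where $\nu(\varPsi_n) \to 0$ forces it) to all of $\cG$ via continuity of the kernels off the unit manifold, forcing $a = 0$. For $\tilde\nu$: one combines this with Lemma \ref{ExactC^*}, which identifies $\fU(\cG)/\fJ$ with $\fU(\cG_{\rM \setminus \rM_0})$, and applies the same propagation to the restricted groupoid over the singular stratum $\rM \setminus \rM_0$. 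Spectral permanence then converts the inclusions in (\ref{nuSpec}) into equalities, completing the proof.
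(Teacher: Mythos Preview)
Your overall strategy matches the paper's exactly: extend $\nu$ to a $*$-homomorphism $\bar\nu$ on $\fU(\cG)$, argue it is injective, and then invoke spectral permanence for injective $C^*$-homomorphisms \cite[p.~12]{Averson;Book} to upgrade the inclusions of Theorem~\ref{Nis;Thm7} to equalities. The paper's own proof is the single sentence preceding the theorem and does not attempt to justify injectivity at the $C^*$-level; it simply cites \cite[Theorem~8]{Nistor;GeomOp}.

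There is, however, a genuine gap in your justification of the injectivity of $\bar\nu$. Injectivity of $\nu$ on the dense $*$-subalgebra $\Psi^0_\mu(\cG)$ does \emph{not} pass to the completion by the argument you sketch. From $a\in\ker\bar\nu$ and $\varPsi_n\to a$ you only get $\nu(\varPsi_n)\to 0$ in operator norm; this says nothing about pointwise vanishing of the reduced kernels of the $\varPsi_n$ on $\cG_{\rM_0}$, and $a$ itself, being in the $C^*$-completion, need not have a reduced kernel in the classical sense at all, so the ``propagation by continuity across $\cG_{\rM_0}$'' step has no object to act on. The Hausdorff hypothesis is used in \cite{Nistor;GeomOp} in a different way: it guarantees that elements of $\fC^*(\cG)$ are represented by genuine $C_0$-sections on $\cG$, and one then shows that the regular representations $\pi_x$ on $\bL^2(\cG_x)$ jointly separate $\fU(\cG)$ and are all weakly contained in $\nu$ via the pair-groupoid structure over $\rM_0$. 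That is the substantive content you would need to supply. Your treatment of $\tilde\nu$ also misstates Lemma~\ref{ExactC^*} (it gives $\fU(\cG)/\fU(\cG_{\rM_0})\cong\fU(\cG_{\rM\setminus\rM_0})$, not $\fU(\cG)/\fJ$); the correct route is to observe that $\bar\nu$ maps $\fJ$ \emph{onto} $\mathcal K(\bL^2(\rM_0))$ (pair groupoid), so once $\bar\nu$ is injective, $\bar\nu(a)\in\mathcal K$ forces $a\in\fJ$ and $\tilde\nu$ is injective.
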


Suppose that $\rM \backslash \rM_0$ is a disjoint union of closed immersed invariant sub-manifolds
$$ \rM \backslash \rM_0 = \bigcup _{j=1}^k \rZ _k . $$  
Then the hypothesis of Theorem \ref{Nis;Thm7} can be made more explicit by 
\begin{thm}
\label{Nis;Thm10}
\cite[Theorem 10]{Nistor;GeomOp}
For any $\varPsi \in \fU (\cG)$,
the spectrum $\varPsi + \fJ$ in $\fU (\cG) / \fJ$ can be written as a union
$$\bsigma _{\fU (\cG) / \fJ} (\varPsi + \fJ ) = 
\bsigma _{S (\cA^*)} (\sigma _{\mathrm {top}} (\varPsi )) 
\bigcup \bigcup _{j=1}^k \bsigma _{\fU (\cG _{\rZ_j})} (\varPsi |_{\rZ _j}), $$
where $\sigma _{\mathrm {top}} (\varPsi )$ is the principal symbol of $\varPsi $.
\end{thm}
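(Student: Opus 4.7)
The plan is to realize $\fU(\cG)/\fJ$ as a unital $C^*$-subalgebra of the direct sum
$$ C(S(\cA^*)) \oplus \bigoplus_{j=1}^k \fU(\cG_{\rZ_j}), $$
so that the spectrum computation reduces to the essentially trivial computation of spectrum inside a direct sum of $C^*$-algebras.

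First, I would invoke the standard principal-symbol short exact sequence
$$ 0 \to \fC^*(\cG) \to \fU(\cG) \xrightarrow{\sigma_{\mathrm{top}}} C(S(\cA^*)) \to 0, $$
obtained by extending the symbol map on $\Psi^0_\mu(\cG)$ to the full norm closure; this uses that the symbol norm is dominated by the full norm and that the closure of $\Psi^{-1}_\mu(\cG)$ in the full norm coincides with $\overline{\Psi^{-\infty}_\mu(\cG)} = \fC^*(\cG)$. Since $\rM \setminus \rM_0 = \bigsqcup_j \rZ_j$ is a disjoint union of invariant submanifolds, one has $\cG_{\rM \setminus \rM_0} = \bigsqcup_j \cG_{\rZ_j}$, and Lemma \ref{ExactC^*} specializes to
$$ 0 \to \fJ \to \fC^*(\cG) \to \bigoplus_{j=1}^k \fC^*(\cG_{\rZ_j}) \to 0, \qquad 0 \to \fU(\cG_{\rM_0}) \to \fU(\cG) \to \bigoplus_{j=1}^k \fU(\cG_{\rZ_j}) \to 0, $$
with both right-hand maps given by the combined restriction $\varPsi \mapsto (\varPsi|_{\rZ_j})_j$.

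Next, I would define the unital $C^*$-algebra homomorphism
$$ \Phi : \fU(\cG)/\fJ \longrightarrow C(S(\cA^*)) \oplus \bigoplus_{j=1}^k \fU(\cG_{\rZ_j}), \qquad \varPsi + \fJ \mapsto \bigl(\sigma_{\mathrm{top}}(\varPsi),\,(\varPsi|_{\rZ_j})_j\bigr). $$
This is well defined because $\fJ \subseteq \fC^*(\cG)$ lies in $\ker\sigma_{\mathrm{top}}$ and $\fJ \subseteq \fU(\cG_{\rM_0})$ lies in the kernel of every restriction map. Injectivity is the crucial point: if $\Phi(\varPsi + \fJ) = 0$, then $\sigma_{\mathrm{top}}(\varPsi) = 0$ forces $\varPsi \in \fC^*(\cG)$ by the principal-symbol sequence, and the vanishing $\varPsi|_{\rZ_j} = 0$ for all $j$ then places $\varPsi$ in the kernel of the restriction map $\fC^*(\cG) \to \bigoplus_j \fC^*(\cG_{\rZ_j})$, which by the first sequence above is precisely $\fJ$. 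Hence $\ker\Phi = 0$, and being an injective homomorphism of $C^*$-algebras, $\Phi$ is automatically isometric onto its image.

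The conclusion now follows from two standard facts: spectral permanence says that for a unital $C^*$-subalgebra $B \subseteq A$ one has $\bsigma_B(b) = \bsigma_A(b)$ for every $b \in B$ (cf.\ \cite[p.12]{Averson;Book}), and the spectrum of an element in a direct sum of unital $C^*$-algebras is the union of the spectra of its components. Applying these to $\Phi(\varPsi + \fJ)$ yields the asserted decomposition. The main obstacle I anticipate is establishing the principal-symbol sequence for $\fU(\cG)$ cleanly in the full norm (in particular the identification $\overline{\Psi^{-1}_\mu(\cG)} = \fC^*(\cG)$) and verifying that the restriction maps of Lemma \ref{ExactC^*} and the principal-symbol map descend coherently to $\fU(\cG)/\fJ$, rather than merely to the pre-completed algebra $\Psi^0_\mu(\cG)$.
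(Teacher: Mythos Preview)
Your proposal is correct and follows essentially the same approach as the paper: define the homomorphism $\varPsi + \fJ \mapsto (\sigma_{\mathrm{top}}(\varPsi), (\varPsi|_{\rZ_j})_j)$, prove injectivity by first using the principal-symbol sequence to land in $\fC^*(\cG)$ and then the first exact sequence of Lemma \ref{ExactC^*} to land in $\fJ$, and conclude via spectral permanence. The paper's proof is simply a more compressed version of exactly this argument.
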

\begin{proof}
It suffices to prove that the homomorphism 
$$ \varPsi + \fJ 
\mapsto \sigma _{\mathrm {top}} (\varPsi ) \oplus \varPsi \rvert_{\rZ_1 } \oplus 
\cdots \oplus \varPsi \rvert _{\rZ_j} $$
is injective. That is true because
$\sigma _{\mathrm {top}} (\varPsi ) = 0 $ implies $\varPsi \in C^* (\cG)$,
and the first exact sequence of Lemma \ref{ExactC^*} implies $\varPsi \in \fJ$.
\end{proof}

Combining Theorem \ref{Nis;Thm7} and Theorem \ref{Nis;Thm10}, we get
\begin{cor}
\label{NisLem}
\cite[Theorem 10]{Nistor;GeomOp}
Given an elliptic operator $\varPsi \in \Psi ^{[m]} _\mu (\cG) , m \geq 0$. 
Suppose for all invariant sub-manifolds $\rZ _j$,
there exist $\varPhi _j \in \Psi ^{-m} (\cG _{\rZ _j} , \rE |_{\rZ _j} ) \bigcap \fU (\cG _{\rZ _j}) $
such that 
$$ (\varPsi |_{\rZ_j} ) \varPhi _j = \varPhi _j ( \varPsi |_{\rZ _j } ) = \id ,$$
then $\nu (\varPsi )$ is Fredholm.
\end{cor}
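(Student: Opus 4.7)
The plan is to apply Theorem \ref{Nis;Thm7} by verifying its invertibility hypothesis. Set $\tilde\varPsi := \varPsi (\id + A^* A)^{-m/(2k)} \in \fU(\cG)$, which is an operator of order $0$; it suffices to show that $\tilde\varPsi + \fJ$ is invertible in the quotient $\fU(\cG)/\fJ$. For this I would invoke Theorem \ref{Nis;Thm10}, which expresses the spectrum $\bsigma_{\fU(\cG)/\fJ}(\tilde\varPsi + \fJ)$ as the union of the cosphere spectrum $\bsigma_{S(\cA^*)}(\sigma_{\mathrm{top}}(\tilde\varPsi))$ and the restriction spectra $\bsigma_{\fU(\cG_{\rZ_j})}(\tilde\varPsi|_{\rZ_j})$, and check that $0$ lies in neither.

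The cosphere piece is immediate from ellipticity: $\sigma_{\mathrm{top}}(\varPsi)$ is nowhere zero on $\cA^* \setminus 0$ by assumption, and $(\id + A^* A)^{-m/(2k)}$ is a positive order $-m$ elliptic operator constructed via functional calculus from the elliptic operator $A$ of order $k$, so it also has nonvanishing principal symbol. Their product $\sigma_{\mathrm{top}}(\tilde\varPsi)$ is therefore nowhere zero on $S(\cA^*)$, giving $0 \notin \bsigma_{S(\cA^*)}(\sigma_{\mathrm{top}}(\tilde\varPsi))$.

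For each invariant sub-manifold $\rZ_j$, restriction is compatible with composition, so $\tilde\varPsi|_{\rZ_j} = (\varPsi|_{\rZ_j}) \cdot ((\id + A^* A)^{-m/(2k)}|_{\rZ_j})$. The hypothesis supplies $\varPhi_j \in \fU(\cG_{\rZ_j})$ as a two-sided inverse of the first factor, while functional calculus provides $(\id + A^* A)^{m/(2k)}|_{\rZ_j}$ as an inverse of the second. Consequently $\tilde\varPsi|_{\rZ_j}$ admits the order-zero inverse $(\id + A^* A)^{m/(2k)}|_{\rZ_j} \varPhi_j$, and granted that this inverse lies in $\fU(\cG_{\rZ_j})$, we get $0 \notin \bsigma_{\fU(\cG_{\rZ_j})}(\tilde\varPsi|_{\rZ_j})$. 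Combining both spectrum computations, $\tilde\varPsi + \fJ$ is invertible in $\fU(\cG)/\fJ$, and Theorem \ref{Nis;Thm7} then yields the Fredholmness of $\nu(\varPsi)$.

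The principal technical obstacle is precisely that last step: verifying that $(\id + A^* A)^{m/(2k)}|_{\rZ_j} \varPhi_j$ actually sits inside $\fU(\cG_{\rZ_j})$ rather than merely in the algebra of bounded operators on the relevant Sobolev spaces. This requires a Seeley-type statement that places the complex powers of a positive elliptic self-adjoint groupoid operator inside the uniformly supported pseudo-differential calculus on $\cG$, together with the preservation of uniform support under composition. The remaining bookkeeping---factoring $\tilde\varPsi|_{\rZ_j}$, assembling the spectra, and invoking Theorem \ref{Nis;Thm7}---is routine once these ingredients are in hand.
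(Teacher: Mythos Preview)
Your proposal is correct and follows precisely the route the paper takes: the paper states the corollary as the immediate combination of Theorem~\ref{Nis;Thm7} and Theorem~\ref{Nis;Thm10}, without spelling out the details you have supplied. The technical obstacle you flag---that the candidate inverse $(\id + A^*A)^{m/(2k)}|_{\rZ_j}\varPhi_j$ must lie in $\fU(\cG_{\rZ_j})$---is genuine and is exactly the kind of point the paper defers to the cited reference \cite[Theorem~10]{Nistor;GeomOp}, where the requisite Seeley-type statement for complex powers in the groupoid calculus is established.
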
 

% Alternatively, if we further assumes that $\cG$ is amenable,
% then one has
% \begin{thm}
% \cite[Theorem 9]{Nistor;GeomOp} 
% Let $\varPsi \in \Psi ^{[m]} _\mu (\cG)$ be elliptic. 
% Then $\nu (\varPsi)$ is Fredholm if and only if for all $x \in \rM \setminus \rM _0$,
% $\varPsi _x : \bW ^m (\cG _x) \to \bW ^m (\cG _x)$ are invertible.
% \end{thm}
% \begin{proof}
% The homomorphism $\fU (\cG ) / \fJ \to \prod _{x \in \rM \setminus \rM _0 } \Psi ^{[m]} (\cG _x , \rE) $
% is injective.
% \end{proof}

\subsection{ Application: Fredholm operators on the Bruhat sphere}
In this section, we study the Bruhat sphere $\bbC \rP (1)$ in greater detail.

\subsubsection{\bf The Bruhat sphere and its symplectic groupoid}
The Bruhat Poisson structure is obtained by taking $\rG = \rSL (2 , \bbC) , \rK = \rSU (2) $,
and $\rA \rN = $ set of upper diagonal matrices in Example \ref{BruhatExam}.
It is well known that the Bruhat sphere has two $\cA$-leaves:
${}_\rT e$ and its complement. 
As we have seen in Example \ref{BruhatExam}, the symplectic groupoid over the Bruhat sphere is
$ \rT \backslash (\rSU (2) \times \rN ) . $
Here, we describe the groupoid structure in greater detail.
\begin{nota}
\label{BruhatPair}
Let $\alpha , \beta , w \in \bbC , |\alpha |^2 + |\beta |^2 = 1 $.
Then we write
$$ [\alpha , \beta ] _\rT ^ { w } :=
\left(
\left( 
\begin{smallmatrix}
\alpha & \beta \\
- \bar \beta & \bar \alpha 
\end{smallmatrix}
\right), 
\left( 
\begin{smallmatrix}
1 & w \\
0 & 1 
\end{smallmatrix}
\right)
\right) \rT \in \cG = \rT \backslash (\rSU (2) \times \rN ) .
$$
Also, recall that one can define stereographic coordinates 
\begin{align*}
z =& \: x + \imath y \mapsto [ z , 1 ] \in \bbC \rP (1) - [1 , 0] , \quad x , y \in \bbR \\
\dot z =& \: \dot x + \imath \dot y \mapsto [ 1 , \dot z ] \in \bbC \rP (1) - [0 , 1] ,
\quad \dot x , \dot y \in \bbR .
\end{align*}
Then the source submersion $\bs$ can be trivialized as
\begin{align*}
\bx (z , w) 
:=& \: \left[ \frac{\bar w - z}{(1 + |\bar w - z|^2)^\frac{1}{2}} , 
\frac{1}{(1 + |\bar w - z|^2)^\frac{1}{2}} \right] _\rT ^{ w },
\quad z, w \in \bbC. \\
\dot \bx ( \dot z , \dot w) 
:=& \: \left[ \frac{\dot z \bar {\dot w} - 1}{(|\dot z|^2 + |\dot z \bar {\dot w} - 1 |^2)^\frac{1}{2}} , 
\frac{\dot z}{(|\dot z | ^2 + |\dot z \bar {\dot w} - 1|^2)^\frac{1}{2}} \right] _\rT ^{ \dot w },
\quad \dot z, \dot w \in \bbC.
\end{align*}
\end{nota}

For any
$ k = \left( 
\begin{smallmatrix}
\alpha & \beta \\
- \bar \beta & \bar \alpha 
\end{smallmatrix}
\right) \in \rK, n = 
\left( 
\begin{smallmatrix}
1 & w \\
0 & 1 
\end{smallmatrix}
\right) \in \rN,$
one has the Iwasawa decomposition $n k = k' a' n' $,
where 
$$ k ' = \left( 
\begin{array}{cc}
\alpha ' & \beta ' \\
- \bar \beta ' & \bar \alpha '
\end{array}
\right) \in \rK,
\alpha ' = \frac{\alpha - \bar w \beta }{(|\beta |^2 + |\alpha - \bar w \beta |^2)^{\frac{1}{2}}},
\beta '  = \frac{ \beta }{(|\beta |^2 + |\alpha - \bar w \beta |^2)^{\frac{1}{2}}}.$$
Hence, one can easily write down the source, target and inverse maps
\begin{align}
\label{CP2Formula}
\bs ( [\alpha , \beta ] _ \rT ^{ w }) =& \: [\alpha , \beta ] \\ \nonumber
\bt ( [\alpha , \beta ] _ \rT ^{ w }) =& \: [\alpha  - \bar w \beta , \beta ] \\ \nonumber
([\alpha , \beta ] _\rT ^{ w })^{-1} =& \: 
\left[ \frac{\alpha - \bar w \beta }{(|\beta |^2 + |\alpha - \bar w \beta |^2)^{\frac{1}{2}}},
\frac{ \beta }{(|\beta |^2 + |\alpha - \bar w \beta |^2)^{\frac{1}{2}}} \right] ^{- w} _\rT. 
\end{align}
It follows that in the $\bx $ and $\dot \bx$ coordinates
$\bs (\bx (z , w) ) = [z , 1] , \bs (\dot \bx (\dot z , \dot w)) = [ 1 , \dot z ]$.
The inverse can also be written down:
$$ ([1 , 0] ^ w _ \rT) ^{-1} = ([1 , 0] ^{- w} _\rT) , (\bx (z , w) )^{-1} = \bx (z + \bar w , - w) ,
\quad \forall z , w \in \bbC. $$
 
%$$ \left( \left[ \frac{- z}{(1 + |z|^2)^\frac{1}{2}} , \frac{1}{(1 + |z|^2)^\frac{1}{2}} \right] ; w \right)^{-1}
%= \left( \left[ \frac{- (z + \bar w)}{(1 + |z + \bar w |^2)^\frac{1}{2}} , 
%\frac{1}{(1 + |z + \bar w|^2)^\frac{1}{2}} \right] ; - w \right).
%$$

\begin{rem}
It is clear that the symplectic groupoid defining the Bruhat Poisson sphere is a Lauter-Nistor groupoid.
Indeed, many Poisson homogeneous spaces constructed by Lu (see \cite{Lu;PoissonCohNotes}), with open symplectic leaves, 
have symplectic groupoids satisfying the Lauter-Nistor conditions.
Finally, note that we shall not use the symplectic structure in this thesis.
\end{rem}

The Poisson bi-vector field can be also be explicitly written down \cite{Ryot;NecklaceCP1}.
On the stereographic coordinate patch excluding ${}_\rT e$, one has
$$ \varPi = (1 + x^2 + y^2 ) \partial _x \wedge \partial _ y; $$
On the opposite coordinate patch one has
$$ \varPi = (\dot x^2 + \dot y^2)(1 + \dot x^2 + \dot y^2) \partial _{\dot x } \wedge \partial _{\dot y}.$$

As an illustration, we describe the metric on the open leaf induced by the Poisson bi-vector field.
For simplicity, take the round metric on the sphere
$$ (1 + x^2 + y^2)^{-2} (d x^2 + d y^2), $$
and the dual metric on $\cA = T^* \bbC \rP (1)$:
$$ g_\cA := (1 + x^2 + y^2)^2 ((\partial _x )^2 + (\partial _y)^2). $$ 
Then the metric on the open leaf $\bbC \rP (1) - \{ {} _\rT e  \} $ is defined by
\begin{align*}
g_\cA (\nu ^{-1} \partial _x, \nu ^{-1} \partial _x)
= \: & g_\cA ((1+ x^2 +y^2)^{-1} d y, (1+ x^2 + y^2)^{-1} d y) = 1 \\
= \:& g_\cA (\nu ^{-1} \partial _y, \nu ^{-1} \partial _y), \\
g_\cA (\nu ^{-1} \partial _x, \nu ^{-1} \partial _y) = \: & 0,
\end{align*}
where $\nu (\omega ) := \iota _{\omega } \varPi, \: \forall \omega \in T^* \bbC \rP (1) $ is the anchor map

\begin{rem}
Here, we observe that the metric we obtained is just the Euclidean metric on $\bbR ^2$.
On the polar coordinates $ (\dot r , \dot \vartheta ) \mapsto \dot \bx ( \dot r e ^{i \dot \vartheta })$,
the metric $g _{\rM _0 } $ is just $ \dot r ^{-1} d \dot r ^2 + d \vartheta ^ 2 $.
A metric of this form is known as `scattering metric' in the edge calculus literature (see \cite{Albin;EdgeInd}).
We shall use this fact later in Section 5. 
However, it is important to note that the compactification to the Bruhat sphere is {\it not} the same as the standard
compactification to the disk with boundary. 
\end{rem} 

\subsubsection{\bf Inverse and the Laplace-Fourier transform}
Observe that, over ${}_\rT e$, one has 
$$ \bs ^{-1} ({}_\rT e) = \bt ^{-1} ({}_\rT e) = \rN \cong \bbR^2 $$
as a Lie group.
Therefore, given any pseudo-differential operator $\varPsi = \{ \varPsi _x \} _{x \in \bbC \rP (2)}$,
it follows that $\varPsi _{{}_\rT e}$ is an operator on $\bbR^2$ that is invariant under translation.
As we shall see in this section, 
the simple structure on $\bbR^n$ enables one to study inverses through the Laplace-Fourier transform,
which in turn gives a simple Fredholmness criterion. 

Set $\nabla$ be the usual flat, translation invariant connection on $\bbR^n$, $\chi = 1$ on $\bbR^n \times \bbR^n$.
One can regard $\bbR^n$ as a groupoid over a one point space.
Recall, from Definition \ref{GpoidTotalDfn}, 
the total symbol of any properly supported $\varPsi _{ {} _\rT e} \in \Psi ^\infty _\varrho (\bbR^n)$ is defined by
\begin{equation}
\label{FourierLaplace}
\sigma (\zeta ) := (\varPsi _{ {} _\rT e} )_p (e^{-i \langle p , \zeta \rangle}).
\end{equation}
By virtue of Lemma \ref{Kennedy}, one has
$$ \varPsi _{ {} _\rT e} (f) (p) 
= \int _{\zeta \in \bbR^n} \sigma (\zeta ) e^{i \langle p, \zeta \rangle} \hat f (\zeta ) \: d \zeta.$$
It would be useful to consider $\varPsi $ as convolution with a distribution.
Define 
$$ \psi (f) := \varPsi _{ {} _\rT e} (f (- p)) (0) 
= \int _{\zeta \in \bbR^n} \sigma (\zeta ) \int _{q \in \bbR^n} e^{i \langle q, \zeta \rangle} f (q) \: d q d \zeta, $$
so that one has
$$ \varPsi _{ {} _\rT e} (f) (p) = \psi _q (f (p - q)).$$
Note that $\psi $ is just the reduced kernel in Definition \ref{RedKer}, regarding $\bbR^n$ as a groupoid over a point.

Assume that one has the estimate 
$$ C (1 + |\zeta |)^m \geq |\sigma (\zeta  )| \geq C' (1 + |\zeta |)^m > 0 $$
for some constants $C, C' > 0 $ (which implies that $\varPsi $ is elliptic of order $m$).
It is straightforward to check that $(\sigma (\zeta ))^{-1} $ is also a symbol. 
Since the symbol map is a homomorphism, it follows that the inverse of $\varPsi $ is given by
\begin{equation}
\varPsi _{ {} _\rT e} ^{-1} (f) (p) 
= \int _{\zeta \in \bbR^n} (\sigma (\zeta ))^{-1} e^{i \langle q, \zeta \rangle} \hat f (\zeta ) \: d \zeta .
\end{equation}

Next, we describe the kernel of $\varPsi ^{-1}$ in greater detail. 
Note that Equation (\ref{FourierLaplace}) is still valid for $\zeta \in \bbC^n$.
Such extension is known as the Laplace-Fourier transform and shall be denoted by $\tilde \sigma (\zeta )$
or $\fF (f) $ if $f \in C ^\infty _c (\bbR ^n)$.
Indeed, one has
\begin{lem}
For any properly supported, invariant pseudo-differential operator $\varPsi $ on $\bbR^n$,
the Laplace-Fourier transform $\tilde \sigma (\zeta )$ is a holomorphic function on $\bbC^n$.
\end{lem}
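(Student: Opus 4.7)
The plan is to identify $\tilde\sigma$ with the distributional pairing of the reduced kernel $\psi$ of $\varPsi_{{}_\rT e}$ against the family of entire functions $e^{-i\langle q,\zeta\rangle}$, and then to invoke the easy direction of Paley--Wiener, i.e.\ to differentiate under the pairing.

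First I would verify that $\Supp\psi$ is compact. From the formula $\varPsi_{{}_\rT e}(f)(p)=\psi_q(f(p-q))$ displayed in the excerpt, the Schwartz kernel of $\varPsi_{{}_\rT e}$ equals $(p,q)\mapsto\psi(p-q)$. The proper support hypothesis requires that for each $p_0\in\bbR^n$ the intersection of this kernel's support with $\{p_0\}\times\bbR^n$ be compact; but this intersection is $\{p_0\}\times(p_0-\Supp\psi)$, so $\Supp\psi$ is itself compact. Consequently
$$\tilde\sigma(\zeta)=\varPsi_{{}_\rT e}(e^{-i\langle\cdot,\zeta\rangle})(0)=\psi_q\bigl(e^{-i\langle q,\zeta\rangle}\bigr)$$
is well defined for every $\zeta\in\bbC^n$, since $q\mapsto e^{-i\langle q,\zeta\rangle}$ is a smooth function on $\bbR^n$.

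To conclude holomorphicity I would differentiate under the pairing. Writing $\zeta_j=x_j+iy_j$ with $x_j,y_j$ real, the Cauchy--Riemann operators $\partial/\partial\bar\zeta_j=\tfrac12(\partial/\partial x_j+i\partial/\partial y_j)$ may be commuted with $\psi_q(\cdot)$ because $\psi$ has some finite order $N$ on its compact support $\rK$, so it suffices that the relevant difference quotients, together with their $q$-derivatives up to order $N$, converge uniformly on $\rK$; this in turn is immediate from the Taylor expansion of the exponential, uniformly for $\zeta$ in compact subsets of $\bbC^n$. Since $\partial e^{-i\langle q,\zeta\rangle}/\partial\bar\zeta_j=0$ pointwise in $q$, this yields $\partial\tilde\sigma/\partial\bar\zeta_j\equiv 0$ for every $j$, and $\tilde\sigma$ satisfies the Cauchy--Riemann equations throughout $\bbC^n$. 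The only real obstacle is justifying this interchange, which is a routine consequence of the finite-order bound for a compactly supported distribution; once it is in place holomorphicity (indeed entireness) is automatic.
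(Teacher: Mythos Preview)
The paper states this lemma without proof, so there is no authorial argument to compare against; your proposal supplies what the paper omits, and the approach is the natural one.

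Your argument is essentially correct. One small slip: from $\varPsi_{{}_\rT e}(f)(p)=\psi_q(f(p-q))$ you get, at $p=0$ with $f(p)=e^{-i\langle p,\zeta\rangle}$,
\[
\tilde\sigma(\zeta)=\psi_q\bigl(e^{-i\langle -q,\zeta\rangle}\bigr)=\psi_q\bigl(e^{\,i\langle q,\zeta\rangle}\bigr),
\]
not $\psi_q(e^{-i\langle q,\zeta\rangle})$. This sign has no bearing on holomorphicity, since $\zeta\mapsto e^{\pm i\langle q,\zeta\rangle}$ are both entire in $\zeta$ for each fixed $q$. The deduction that $\Supp\psi$ is compact from the proper-support hypothesis is correct under the paper's (one-sided) definition, and the differentiation under the pairing is justified exactly as you say: a compactly supported distribution has finite order, and the difference quotients of $e^{\pm i\langle q,\zeta\rangle}$ converge in $C^N(\rK)$ uniformly for $\zeta$ in compacta. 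The conclusion that $\tilde\sigma$ is entire on $\bbC^n$ follows.
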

 
In the case when $\varPsi $ is a differential operator, 
it was shown in \cite[Chapter 4.2]{Shimakura;Book} that the reduced kernel of $\varPsi ^{-1}$ decays exponentially,
depending on the zeros of $\tilde \sigma (\zeta )$, i.e., the poles of $\tilde \sigma (\zeta )^{-1}$.
Here, we prove a similar result for general pseudo-differential operators.
\begin{prop}
\label{DecayProp}
Let $\fH $ be a holomorphic function on the strip 
$$ S_\theta := \{ (\zeta _1 , \cdots \zeta _n ) \in \bbC^n : | \im (\zeta _i) | < \theta , \quad \forall i \},$$
and satisfies the estimate
\begin{equation}
\label{HoloEst}
\left| \partial _I \fH (\zeta ) \right| \leq C _I (1 + |\zeta |)^{m - |I|} , \quad \zeta \in S_\theta ,
\end{equation}
for each multi-index $I$ and some $C_I > 0$, $m \in \bbR$.
Let $\kappa$ be the distribution
$$ \kappa (f) := \int _{\zeta \in \bbR^n} \fH (\zeta ) \hat f (\zeta ) \: d \zeta, \quad f \in C^\infty _c (\bbR^n).$$
Then $\kappa $ is $C^\infty$ on $\bbR^n \backslash \{ 0 \}$.
Furthermore, for any $0 < \varepsilon < \theta $, 
one has
$$ \kappa |_{\bbR \backslash \{ 0 \}} = e^{- \varepsilon |p|} F , \quad \forall | p | > 1 $$
for some smooth function function $F$ with bounded derivatives.
\end{prop}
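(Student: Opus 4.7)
The plan is to realize $\kappa$ as the inverse Fourier transform of $\fH$ and then shift the contour of integration into the complex domain, exploiting the holomorphy of $\fH$ in the strip $S_\theta$.

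First, by the Plancherel formula $\kappa$ is the tempered distribution whose Fourier transform is $(2 \pi)^n \fH$; formally,
\begin{equation*}
\kappa(p) = (2\pi)^{-n} \int_{\bbR^n} \fH(\zeta)\, e^{i p \cdot \zeta}\, d\zeta.
\end{equation*}
For $p \neq 0$, the symbol estimate \eqref{HoloEst} (restricted to $\zeta \in \bbR^n$) together with the standard trick of integration by parts against $(1-\Delta_\zeta)^N$ and then against $|p|^{-2N}(1-\Delta_\zeta)^N$ shows that the integral converges absolutely once $2N > m + n$, and moreover admits arbitrarily many derivatives in $p$ (each $\partial^\alpha_p$ pulls down a factor $(i\zeta)^\alpha$ of order $|\alpha|$, absorbed by taking $N$ larger). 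This gives smoothness of $\kappa$ on $\bbR^n \setminus \{0\}$.

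The key step is the contour deformation. Fix $p$ with $|p| > 1$ and write $\hat p := p/|p|$, so $|\hat p_i| \leq 1$ and hence $\varepsilon \hat p$ lies in the strip since $\varepsilon < \theta$. By the integration by parts above,
\begin{equation*}
|p|^{2N} \kappa(p) = (2\pi)^{-n}\int_{\bbR^n} \big((-\Delta_\zeta)^N \fH\big)(\zeta)\, e^{i p \cdot \zeta}\, d\zeta.
\end{equation*}
I then deform the real contour $\bbR^n$ to $\bbR^n + i \varepsilon \hat p$ using Cauchy's theorem applied successively in each complex variable $\zeta_j$. The deformation is justified because $(-\Delta)^N \fH$ is holomorphic on $S_\theta$ and, by \eqref{HoloEst}, satisfies $|(-\Delta)^N \fH(\zeta + i\eta)| \leq C_N (1+|\zeta|)^{m-2N}$ uniformly for $|\eta_i| \leq \varepsilon$, so the boundary contributions at $|\Re \zeta| \to \infty$ vanish. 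Factoring out $e^{i p \cdot (i\varepsilon \hat p)} = e^{-\varepsilon |p|}$ yields
\begin{equation*}
|p|^{2N} \kappa(p) = (2\pi)^{-n} e^{-\varepsilon |p|} \int_{\bbR^n} \big((-\Delta_\zeta)^N \fH\big)(\zeta + i\varepsilon \hat p)\, e^{i p \cdot \zeta}\, d\zeta,
\end{equation*}
and for $2N > m + n$ the remaining integral is uniformly bounded in $p$ with $|p| > 1$. Setting $F(p) := e^{\varepsilon |p|} \kappa(p)$, this proves $F$ is bounded on $\{|p| > 1\}$.

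For the boundedness of derivatives of $F$, the same device applies: any $\partial^\alpha_p F(p)$ is a finite linear combination of terms of the form $e^{\varepsilon |p|} p^\beta \partial^\gamma_p \kappa(p)$ (arising from the Leibniz rule applied to $e^{\varepsilon |p|}$, whose own derivatives are polynomial in $p/|p|$ times $\varepsilon^k e^{\varepsilon |p|}$). Each such term can be written, after integration by parts in $\zeta$ a sufficient number of times and a contour shift by $\varepsilon \hat p$, as $e^{\varepsilon |p|} \cdot e^{-\varepsilon |p|}$ times an absolutely convergent integral uniformly bounded in $p$, by again choosing $N$ large enough to dominate the polynomial factor $\zeta^{\gamma}$ coming from differentiation in $p$. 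The main technical obstacle is precisely the uniform control of these integrals under the combined contour shift and integration by parts; all the required estimates, however, are direct consequences of \eqref{HoloEst} in the strip $S_\theta$, so the scheme goes through.
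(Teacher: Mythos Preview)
Your argument is correct and rests on the same idea as the paper's proof, namely a Cauchy contour shift of the Fourier integral into the strip $S_\theta$, but you implement the shift differently. The paper pairs $\kappa$ with a test function, uses Paley--Wiener for $\hat f$, and shifts by the \emph{fixed} imaginary vector $i(\pm\varepsilon,\ldots,\pm\varepsilon)$; this produces $\kappa(p)=e^{\varepsilon(\pm p_1+\cdots+\pm p_n)}\tilde\kappa_\varepsilon(p)$, and a separate argument on each of the $2^n$ ``quadrants'' is then needed to bound $e^{\varepsilon(\pm p_1+\cdots+\pm p_n)}$ by $e^{-\varepsilon|p|}$ up to a bounded factor. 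You instead regularize the inverse Fourier transform by integration by parts against $(-\Delta_\zeta)^N$ and shift by the $p$-\emph{dependent} vector $i\varepsilon\hat p$, which yields the factor $e^{-\varepsilon|p|}$ directly and avoids the quadrant patching. This is a genuine simplification; the only point to be careful about is that since the shift depends on $p$, one must differentiate $\kappa$ first (on the real axis) and shift afterwards for the estimate, which you do. Your handling of the derivatives of $F$ is a bit loose notationally (the derivatives of $e^{\varepsilon|p|}$ produce bounded rational functions of $p$ on $\{|p|>1\}$ rather than polynomials $p^\beta$), but the scheme is sound.
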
 
\begin{proof}
First of all, since $\varsigma (\zeta ), \zeta \in \bbR^n$ is a symbol, 
it is well known that $\kappa $ is $C^\infty$ on $\bbR^n \backslash \{ 0 \}$,
and for any natural number $N$ and multi-index $I$, there exists $C_{I, N} > 0$ such that
\begin{equation}
\label{PdOKerEst}
|\partial _I \kappa ( p )| \leq C_{I, N} (1 + | p |)^N , \quad \forall \zeta \in \bbR^n , | p | \geq 1.
\end{equation}

By the well known Paley-Weiner theorem, $\fF (f)$ is holomorphic on $\bbC$ for any $f \in C^\infty _c (\bbR^n)$,
and for any natural number $N$,
there exists constants $C_N$ such that 
$$ \left| \fF (u) (\zeta ) \right| \leq C_N (1 + |\zeta |)^{-N} $$
for any $\zeta \in S _\theta $.
Using Equation (\ref{HoloEst}) in the hypothesis, the integrand 
$$ \fH (i (\varepsilon , \varepsilon , \cdots , \varepsilon ) + \zeta )
\times \fF (f) (i (\varepsilon , \varepsilon , \cdots , \varepsilon ) + \zeta ), \quad \zeta \in \bbR^n $$
lies in $L^1 (\bbR^n)$ for any $0 < \varepsilon < \theta $.
Therefore we can use Fubini's theorem to compute the integral
$$ \int _{\zeta \in \bbR^n} \fH (\zeta ) \fF (f) \: d \zeta 
= \int \cdots \int \left( \int \fH (\zeta ) \fF (f) (\zeta ) d \zeta _1 \right) d \zeta _2 \cdots d \zeta _n. $$
We then use the Cauchy integral formula to shift the contour of $\zeta _1$-integration to 
$$ \xi _1 + i \varepsilon , \xi _1 \in (-\infty , \infty). $$
The integral becomes
\begin{align*}
& \int \cdots \int \left( \int \fH (\xi _1 + i r, \zeta _2 , \cdots , \zeta _n ) 
\int e^{- i \langle (i \varepsilon + \zeta _1 , \zeta _2 , \cdots \zeta _n ) , q \rangle } f (q ) 
\: d q d \xi _1 \right) d \zeta _2 \cdots d \zeta _n \\
=& \int \cdots \int \left( \int \fH (\xi _1 + i \varepsilon , \zeta _2 , \cdots , \zeta _n ) 
\int e^{-i \langle (\xi _1 , \zeta _2 \cdots , \zeta _n , q \rangle}
(e^{\varepsilon q_1} f (q )) d q d \zeta _2 \right) d \zeta _3 \cdots d \zeta _n d \xi _1 \\ 
=& \cdots = \int \fH (\xi _1 + i \varepsilon , \xi _2 + i \varepsilon , \cdots , \xi _n + i \varepsilon ) 
\int e^{- i \langle (\xi _1 , \xi _2 , \cdots \xi _n ) , q \rangle } e^{\varepsilon (q_1 + \cdots + q_n)} f (q ) 
\: d q d \xi 
\end{align*}
by using Fubini's theorem and Cauchy integral formula repeatedly.

Define the distribution
$$ \tilde \kappa _\varepsilon ( g ):= 
\int \fH (\xi _1 + i \varepsilon , \xi _2 + i \varepsilon , \cdots , \xi _n + i \varepsilon ) 
\int e^{- i \langle (\xi _1 , \xi _2 , \cdots \xi _n ) , q \rangle } g (q ) \: d q d \xi .$$
Since $\fH (\xi _1 + i \varepsilon , \xi _2 + i \varepsilon , \cdots , \xi _n + i \varepsilon )$ is a symbol for 
$\xi \in \bbR^n$ by assumption,
using Equation (\ref{PdOKerEst}) again, 
one conclude that $\tilde \kappa _\varepsilon $ is $C ^\infty $ on $\bbR^n \backslash \{ 0 \}$,
and for any natural number $N$ and multi-index $I$, there exists $C_{I, N} > 0$ such that
$$ |\partial _I \tilde \kappa _\varepsilon ( p )| 
\leq C_{I, N} (1 + | p |)^N , \quad \forall \zeta \in \bbR^n , | p | \geq 1. $$
Furthermore, by uniqueness of kernel, it follows that
$$\kappa (p) = e^{\varepsilon  (p _1 + \cdots + p _n) } \tilde \kappa _\varepsilon (p)$$
on $\bbR^n \backslash \{ 0 \}$.
Since $ p _1  + \cdots +  p _n - (- | p |) $ is bounded above on the subset $\{ p _1 , p _2 , \cdots p _n < 1 \}$,
one can write
$$ \kappa = e^{- \varepsilon | p |} \tilde F $$
for some smooth function $\tilde F$ satisfying Equation (\ref{PdOKerEst}) on the subset 
$$\{ | p | > 1 \} \bigcap \{ p _1 , p _2 , \cdots , p _n < 1 \}.$$
Repeating the arguments by considering the contours 
$$ (\xi _1 \pm i \varepsilon , \xi \pm _2 i \varepsilon , \cdots , \xi _n \pm i \varepsilon ), $$
one gets a similar estimate on each quadrant.
The assertion follows by combining these estimates. 
\end{proof}

Remark that the assumption of Proposition \ref{DecayProp} is very mild.
For example, one has
\begin{lem}
Let $P$ be a polynomial of order $n$, $P_{\mathrm {top}}$ be its highest order part.
Let $f$ be a compactly supported function on $\bbR^n$.
Suppose that $P_{\mathrm {top}} |_{\bbR^n} $ is elliptic, and $P + \fF (f) \neq 0 $ on $\bbR ^n$.
Then $P + \fF (f) \neq 0$ on some strip $S_\theta , \theta > 0$,
and $(P + \fF (f))^{-1}$ satisfies the assumption of Proposition \ref{DecayProp}
\end{lem}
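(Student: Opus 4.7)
The plan is to establish the two assertions of the lemma separately: first, the nonvanishing of $P + \fF (f)$ on some strip $S_\theta$ of positive width, and second, the symbol estimates required by Proposition \ref{DecayProp}, with order $m = -n$.

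For the nonvanishing, I would combine two lower bounds. Since $P_{\mathrm{top}}|_{\bbR^n}$ is elliptic, homogeneity gives $|P_{\mathrm{top}}(\xi)| \geq c|\xi|^n$ on $\bbR^n$, and after absorbing the lower-order terms of $P$ one obtains $|P(\xi)| \geq (c/2)|\xi|^n$ for real $|\xi| \geq R_0$. Continuity of the polynomial in $\zeta$ then propagates this bound (with a slightly smaller constant) to $|\re \zeta| \geq R_0$, $|\im \zeta| < \theta$, for $\theta$ small. Meanwhile, since $f \in C^\infty_c(\bbR^n)$ is supported in some ball $B_R$, Paley-Wiener combined with repeated integration by parts yields
$$|\partial^I_\zeta \fF (f)(\zeta)| \leq C_{I,N} \, e^{R|\im \zeta|}(1 + |\re \zeta|)^{-N}$$
for every multi-index $I$ and every $N$. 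Restricting to $S_\theta$ with $\theta$ small makes $\fF(f)$ of lower order than any polynomial, so combining with the polynomial bound gives $|P + \fF(f)| \geq (c/4)|\zeta|^n$ on $\{|\re \zeta| \geq R_0\} \cap S_\theta$.

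To close the argument on the bounded region, I would consider the compact set $K = \{|\re \zeta| \leq R_0, |\im \zeta| \leq \theta\}$. By hypothesis, $P + \fF(f)$ is nonvanishing on the compact slice $K \cap \bbR^n$, hence bounded below there by some $\delta > 0$. A uniform-continuity argument allows a further shrinkage of $\theta$ so that $|P + \fF(f)| \geq \delta/2$ throughout $K$. Together with the large-$|\re \zeta|$ estimate, this gives nonvanishing on a single strip $S_\theta$. For the symbol estimate, the uniform lower bound $|P(\zeta) + \fF(f)(\zeta)| \geq c_1 (1 + |\zeta|)^n$ on $S_\theta$ together with $|\partial^I_\zeta P(\zeta)| \leq C_I (1 + |\zeta|)^{n - |I|}$ (immediate from $P$ being a polynomial) and the super-polynomial decay of $\partial^I_\zeta \fF(f)$ above is enough to run the standard reciprocal-of-elliptic-symbol induction. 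Differentiating the identity $(P + \fF(f)) \cdot (P + \fF(f))^{-1} = 1$ and inducting on $|I|$ yields
$$|\partial^I_\zeta (P + \fF(f))^{-1}(\zeta)| \leq C'_I(1 + |\zeta|)^{-n - |I|}, \quad \zeta \in S_\theta ,$$
which is precisely the hypothesis of Proposition \ref{DecayProp} with $m = -n$.

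The main technical obstacle is to pick a single $\theta > 0$ that serves simultaneously in the asymptotic regime (where the polynomial dominates and $\fF(f)$ is absorbed) and on the bounded region (where only the real-line hypothesis together with continuity in the imaginary direction are available). I would handle this by first fixing $R_0$ from the asymptotic bound, and only then shrinking $\theta$ via uniform continuity on the compact set $K$, so that no circularity in the choice of constants arises.
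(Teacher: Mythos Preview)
Your argument is correct. The paper states this lemma without proof, treating it as a routine verification following Proposition~\ref{DecayProp}; your approach---ellipticity of $P_{\mathrm{top}}$ plus Paley--Wiener to control the region $|\re\zeta|\geq R_0$, compactness together with the real-line hypothesis on the bounded region, and the standard reciprocal-of-elliptic-symbol induction for the derivative bounds---is the natural way to supply the details.

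One point deserves a sharper formulation. When you say that ``continuity of the polynomial in $\zeta$'' propagates the lower bound $|P(\xi)|\geq (c/2)|\xi|^n$ from $\{|\xi|\geq R_0\}\subset\bbR^n$ to the strip $\{|\re\zeta|\geq R_0,\ |\im\zeta|<\theta\}$, continuity by itself is not sufficient on an unbounded set. What actually does the work is the polynomial Taylor expansion
\[
P(\xi+i\eta)-P(\xi)=\sum_{1\leq|\alpha|\leq n}\frac{(i\eta)^\alpha}{\alpha!}\,\partial^\alpha P(\xi),
\]
together with $|\partial^\alpha P(\xi)|\leq C(1+|\xi|)^{n-|\alpha|}$, giving $|P(\xi+i\eta)-P(\xi)|\leq C'\theta(1+|\xi|)^{n-1}$ uniformly for $|\eta|<\theta\leq 1$. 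This is absorbable into $(c/2)|\xi|^n$ once $R_0$ is large and $\theta$ small. You almost certainly had this in mind, but spelling it out removes any ambiguity.
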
  

Also, we recall the following well know fact about the obstruction to existence of invertible perturbations
(see, for example, \cite{Connes;Book} for an overview of the subject):
\begin{lem} 
For any  properly supported, invariant, elliptic pseudo-differential operator 
$\varPsi _{ {}_\rT e } \in \Psi ^{[\infty]} _\varrho (\bbR ^n)$,
there exists $K \in \Psi ^{- \infty } _\varrho (\bbR ^n )$ such that 
$\varPsi _{ {} _\rT e} + K $ is invertible if and only if the $\mathbb K$-theoretic analytic index 
$$ \ind _{\mathrm {Ana}} (\varPsi _x) \in \mathbb K ^0 (C ^\infty _c (\mathbb R ^n ) , \circ) $$
vanishes. Here, $\circ $ denotes the convolution product on $C ^\infty _c (\bbR ^n )$.
\end{lem}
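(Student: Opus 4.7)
The plan is to recognise the lemma as an instance of the standard K-theoretic obstruction to lifting an invertible element from a quotient algebra. Concretely, I would work with the short exact sequence
$$ 0 \to (\Psi^{-\infty}_\varrho(\bbR^n), \circ) \to \Psi^{[0]}_\varrho(\bbR^n) \to \Psi^{[0]}_\varrho(\bbR^n) / \Psi^{-\infty}_\varrho(\bbR^n) \to 0 $$
and its associated six-term K-theory exact sequence. First, I would reduce to the order-zero case by composing $\varPsi_{{}_\rT e}$ with an appropriate power of $(\id - \Delta)^{-1/2}$ (translation invariant, elliptic, invertible of the right order), producing an order-zero elliptic invariant $\tilde\varPsi$ whose invertibility modulo smoothing is equivalent to that of $\varPsi_{{}_\rT e}$. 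Ellipticity together with the parametrix construction recalled at the start of Section 3 then yields $Q \in \Psi^{[0]}_\varrho(\bbR^n)$ with $\tilde\varPsi Q - \id,\, Q\tilde\varPsi - \id \in \Psi^{-\infty}_\varrho(\bbR^n)$, so $[\tilde\varPsi]$ is invertible in the quotient. Under the identification of reduced kernels of smoothing invariant operators with $(C^\infty_c(\bbR^n), \circ)$, the analytic index is precisely the image of $[\tilde\varPsi]$ under the boundary map $\partial : K_1(\Psi^{[0]}_\varrho/\Psi^{-\infty}_\varrho) \to K_0(\Psi^{-\infty}_\varrho, \circ) = K^0(C^\infty_c(\bbR^n), \circ)$.

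The ``only if'' direction is then immediate from exactness: any invertible smoothing perturbation $\tilde\varPsi + K \in \Psi^{[0]}_\varrho$ lifts $[\tilde\varPsi]$ to a class in $K_1$ of the total algebra, forcing $\partial[\tilde\varPsi] = 0$.

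For the ``if'' direction, assume $\partial[\tilde\varPsi] = 0$. Exactness provides an integer $N \geq 1$ and an invertible $U \in GL_N(\widetilde{\Psi^{[0]}_\varrho})$ congruent to $\tilde\varPsi \oplus \id_{N-1}$ modulo $M_N(\Psi^{-\infty}_\varrho)$. I would then apply the standard Whitehead-type manipulation (block conjugation by elementary matrices with smoothing off-diagonal entries, as in \cite{Connes;Book}) to convert $U$ into a diagonal invertible whose top-left corner is a scalar invertible lift $\tilde\varPsi + \tilde K$ with $\tilde K \in \Psi^{-\infty}_\varrho(\bbR^n)$; finally, I would undo the order reduction to recover the required $K$.

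The hard part will be ensuring that the K-theoretic manipulations genuinely take place inside $\Psi^{-\infty}_\varrho(\bbR^n)$ rather than merely inside its $C^*$-closure $\fC^*(\bbR^n)$. Two ingredients address this: first, $\Psi^{-\infty}_\varrho$ is stable under matrix amplification, polynomial identities, and the holomorphic functional calculus needed for the Whitehead trick, so the algebraic side of the argument stays within smooth convolution kernels; and second, invertibility is an open condition in any of the natural operator norms, while $\Psi^{-\infty}_\varrho$ is dense in $\fC^*(\bbR^n)$, so any correction obtained at the $C^*$-level may be approximated by a genuine element of $\Psi^{-\infty}_\varrho$ without destroying invertibility of $\tilde\varPsi + K$.
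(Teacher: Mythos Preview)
The paper does not actually prove this lemma: it is stated as a ``well known fact about the obstruction to existence of invertible perturbations'' with only a reference to \cite{Connes;Book} for background, and the text moves directly on to Theorem~\ref{FredThm}. Your proposal therefore supplies an argument where the paper supplies none.

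Your sketch is the standard one and is essentially correct. The reduction to order zero, the identification of the analytic index with the boundary map in the six-term sequence for the extension $\Psi^{-\infty}_\varrho \hookrightarrow \Psi^{[0]}_\varrho \twoheadrightarrow \Psi^{[0]}_\varrho/\Psi^{-\infty}_\varrho$, and the ``only if'' direction are all fine. For the ``if'' direction, be a little careful with the phrasing: exactness at $K_1$ of the quotient tells you that $[\tilde\varPsi]$ lies in the image of $K_1(\Psi^{[0]}_\varrho)$, i.e.\ after stabilising, $\tilde\varPsi \oplus \id_{N-1}$ lies in the same $K_1$-class as the image of some invertible $U$; to conclude that $\tilde\varPsi$ itself admits an invertible lift you must also use that the connected component of the identity in $GL_N$ of the quotient lifts (which follows, as you note, from the ideal being closed under holomorphic functional calculus, or alternatively by the density-and-openness argument you sketch at the end). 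With that caveat made explicit, the argument is complete and is precisely the kind of reasoning the citation to \cite{Connes;Book} is meant to point the reader toward.
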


Finally, we end up with:
\begin{thm}
\label{FredThm}
Let $\varPsi = \{ \varPsi _x \} _{x \in \bbC \rP (2) } \in \Psi ^{[m]} _\mu (\rSU (2) \times \rN / \rT)$ be elliptic. 
Let $\tilde \sigma (\zeta )$ be the Laplace-Fourier transform of $\varPsi _{{}_\rT e}$.
Suppose $\sigma (\zeta )$ satisfies the estimate
$$ C (1 + |\zeta |)^m \geq |\tilde \sigma (\zeta )| \geq C' (1 + |\zeta |)^m $$
for some $C, C' > 0 $, on some strip $ \zeta \in S_\theta $ for some $\theta > 0 $.
Then $\varPsi $ is Fredholm.
\end{thm}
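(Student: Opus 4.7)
The plan is to invoke Corollary \ref{NisLem}: for the Bruhat sphere the only singular leaf is ${}_\rT e$, so the only invariant submanifold in $\rM \setminus \rM_0$ is the singleton $\{{}_\rT e\}$, with restricted groupoid $\cG_{{}_\rT e} = \bs^{-1}({}_\rT e) = \rN \cong \bbR^2$ viewed as an Abelian Lie group (equivalently, a groupoid over a point). Thus it is enough to exhibit some $\varPhi \in \Psi^{-m}(\bbR^2) \cap \fU(\bbR^2)$ with $\varPhi \circ \varPsi_{{}_\rT e} = \varPsi_{{}_\rT e} \circ \varPhi = \id$, and then the theorem will follow directly.

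Next, I would construct $\varPhi$ as the translation-invariant operator whose total symbol is $\sigma(\zeta)^{-1}$. The lower bound $|\sigma(\zeta)| \geq C'(1+|\zeta|)^m$ on $\bbR^2$ places us in the standard situation of elliptic inversion of symbols: induction on $|I|$, applied to the identity $\partial_I(\sigma \cdot \sigma^{-1}) = 0$, shows that $\sigma^{-1}$ is a (classical, since $\varPsi \in \Psi^{[m]}_\mu$) symbol of order $-m$. Because on $\bbR^n$ composition of translation-invariant operators corresponds to multiplication of total symbols, $\varPhi \circ \varPsi_{{}_\rT e} = \varPsi_{{}_\rT e} \circ \varPhi = \id$ is immediate. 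In particular $\varPhi \in \Psi^{-m}(\bbR^2)$, which is the first half of what is required.

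The substantive step is to show $\varPhi \in \fU(\bbR^2)$. For this I would appeal to Proposition \ref{DecayProp} applied to $\fH := \tilde\sigma^{-1}$. By hypothesis $\tilde\sigma$ is holomorphic and bounded above and below by multiples of $(1+|\zeta|)^m$ on $S_\theta$, so $\tilde\sigma^{-1}$ is holomorphic on $S_\theta$ and satisfies $|\tilde\sigma^{-1}(\zeta)| \leq (C')^{-1}(1+|\zeta|)^{-m}$. The derivative bounds of Equation (\ref{HoloEst}) then come from Cauchy's integral formula on polydiscs of a fixed radius $\delta < \theta - \theta'$ contained in a slightly shrunken strip $S_{\theta'}$, combined with the product-rule induction used above (since $\varPsi$ is classical, the homogeneous components of $\tilde\sigma^{-1}$ shifted by a small imaginary translation remain symbols of the same order). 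Proposition \ref{DecayProp} then guarantees that the reduced kernel $\kappa_\varPhi$ is smooth off the origin and admits, for some $0 < \varepsilon < \theta'$, the representation $\kappa_\varPhi(p) = e^{-\varepsilon |p|} F(p)$ for $|p| > 1$, with $F$ smooth with bounded derivatives.

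To finish, I would truncate: pick a smooth cutoff $\chi_R$ equal to $1$ on $\{|p| \leq R\}$ and compactly supported, and set $\varPhi_R$ to be the operator with reduced kernel $\chi_R \kappa_\varPhi$. Each $\varPhi_R$ is uniformly supported; since $\chi_R = 1$ near $0$ its singularity structure agrees with that of $\varPhi$, so $\varPhi_R \in \Psi^{-m}_\mu(\bbR^2) \subset \Psi^0_\mu(\bbR^2)$. The difference $\varPhi - \varPhi_R$ is smoothing with reduced kernel $(1-\chi_R)\kappa_\varPhi$, which by exponential decay satisfies $\|\varPhi - \varPhi_R\|_1 \to 0$ as $R \to \infty$ in the sense of Definition \ref{OpNorm}; hence $\|\varPhi - \varPhi_R\| \to 0$ in the full norm and $\varPhi \in \fU(\bbR^2)$. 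The main obstacle is the third step — verifying that Proposition \ref{DecayProp} can actually be applied. The hypothesis of the theorem supplies only size estimates on $\tilde\sigma$, not symbol-type derivative estimates, so one must argue, via Cauchy's formula on a shrunken strip together with the classical structure of $\sigma$, that the full symbol estimates on $\tilde\sigma^{-1}$ required by Equation (\ref{HoloEst}) are genuinely available; once this is in place, the remaining steps are routine.
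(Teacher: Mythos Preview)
Your proposal is correct and follows essentially the same line as the paper's proof: reduce via Corollary~\ref{NisLem} to exhibiting an inverse of $\varPsi_{{}_\rT e}$ in $\fU(\cG_{{}_\rT e})$, invoke Proposition~\ref{DecayProp} to get exponential decay of the reduced kernel of $\varPhi=\varPsi_{{}_\rT e}^{-1}$, then split that kernel into a compactly supported piece (which lies in $\Psi^{[-m]}_\mu$) and a smooth exponentially decaying tail approximated in the $\|\cdot\|_1$-norm by compactly supported smoothing kernels. Your truncations $\varPhi_R$ are exactly the paper's decomposition $\psi=\psi_\mu+\psi_e$ with the cut radius allowed to move.

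One remark: you rightly flag the verification of the hypotheses of Proposition~\ref{DecayProp} for $\fH=\tilde\sigma^{-1}$ as the only nontrivial point; the paper itself simply asserts this step. Your proposed fix is on target but needs one sharpening: Cauchy's formula on fixed-radius polydiscs yields only $|\partial_I\tilde\sigma^{-1}|\lesssim(1+|\zeta|)^{-m}$, without the extra $-|I|$. The improvement comes instead from observing that, since $\varPsi$ is uniformly supported, for each fixed $\eta$ with $|\eta|<\theta$ the shifted symbol $\xi\mapsto\tilde\sigma(\xi+i\eta)$ is the total symbol of the translation-invariant operator with reduced kernel $e^{\langle q,\eta\rangle}\psi(q)$ (still compactly supported, conormal at $0$), hence a genuine symbol of order $m$ with constants uniform on $|\eta|\le\theta'<\theta$. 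Feeding \emph{these} symbol bounds on $\tilde\sigma$ into your product-rule induction $\partial_I(\sigma\cdot\sigma^{-1})=0$ then gives the required $(1+|\zeta|)^{-m-|I|}$ bounds on $\tilde\sigma^{-1}$ throughout $S_{\theta'}$, and holomorphy upgrades $\xi$-derivatives to $\zeta$-derivatives.
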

\begin{proof}
Given any $\varPsi $ as in the hypothesis.
Let $\tilde \sigma (\zeta )$ be the Laplace-Fourier transform of $\varPsi _{{}_\rT e}$.
Put $\varsigma := \tilde \sigma ^{-1}$. 
Then $\varsigma $ satisfies the hypothesis of Proposition \ref{DecayProp}.
Hence $\varPsi _{{}_\rT e}^{-1}$ has a reduced kernel of the form
$$ \psi = e^{- \varepsilon |p|} F (p) $$
on $\bbR^n \backslash \{ 0 \}$, 
where $F (p) \in C^\infty (\bbR^n \backslash \{ 0 \})$ satisfies Equation (\ref{PdOKerEst}).

We need to prove that $\varPsi _{{}_\rT e} ^{-1} \in \fU (\cG _{{}_\rT e})$.
To do so, write $\psi := \psi _\mu  + \psi _e $,
where $\psi _\mu $ is compactly supported and 
$\psi _e \in C^\infty (\bbR^n)$, $\psi _e = 0$ on a neighborhood of $0$. 
Let $\varPsi _\mu $ and $\varPsi _e$ be the corresponding pseudo-differential operators.
Then $\varPsi _\mu  \in \Psi ^{[-m]} _\mu (\cG _{{}_\rT e})$.
It remains to consider $\psi _e$.
Since $\psi _e$ decays exponentially, 
it is clear that one can find a sequence $\{ \kappa_j \}, j = 1,2, \cdots $ in $C^\infty _c (\bbR^n)$ such that 
$$ \| \psi _e - \kappa_j \|_{\bL^1 (\bbR^n)} \rightarrow 0. $$
Let $K _j \in \Psi ^{- \infty} _\mu (\cG _{\rT})$ be the corresponding invariant pseudo-differential operators.
Then by Definition \ref{OpNorm},
$$\| \varPsi _e - K _j \| _1 \rightarrow 0. $$
It follows from Definition \ref{OpNorm} of the full norm that $K _j \rightarrow \varPsi _e $.
Hence $\varPsi _e \in \fU (\cG_{{}_\rT e})$ as well.
The result follows from Lemma \ref{NisLem}. 
\end{proof}

Proposition \ref{DecayProp} and Theorem \ref{FredThm} not only give a criterion for an operator $\nu (\varPsi )$, 
where $\varPsi \in \Psi ^{[\infty]} _\mu (\cG) $ to be Fredholm, 
they also give a more precise description for the parametrix of $\nu (\varPsi )$ modulo compact operators.
\begin{thm}
\label{ParaThm}
Let $\varPsi \in \Psi ^{[m]} _\mu \cG $ be an elliptic operator satisfying the hypothesis of Theorem \ref{FredThm}.
There exists operators $Q \in \Psi ^{- [m]} _\mu (\cG)$, 
and $S \in \Gamma ^\infty (\bt ^{-1} \rE \otimes \bs ^{-1} \rE ')$ 
(regarded as a reduced kernel in $\Psi ^{- \infty } (\cG)$)
of the form 
$$S (a) = e ^{- \varepsilon \tilde d (a , \bs (a))} \tilde \kappa , \quad a \in \cG, $$
for some $\varepsilon > 0 $,
where $\tilde d $ is a smooth function on $\cG ^2$ satisfying $\tilde d - d \leq 1$,
and $\tilde \kappa \in \Gamma ^\infty _b (\bt ^{-1} \rE \otimes \bs ^{-1} \rE) $ such that 
$$\nu (\varPsi ) \nu (Q + S) - \id $$ 
is a compact operator. 
\end{thm}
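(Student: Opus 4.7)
The strategy is to build the standard elliptic parametrix $Q$ and add a smoothing correction $S$ that kills the obstruction (coming from the singular leaf $\cG_{{}_\rT e}$) to compactness under the vector representation. The correction is constructed by inverting $\varPsi$ on $\cG_{{}_\rT e}$ and extending the resulting kernel to $\cG$ while preserving its exponential decay. First, classical symbolic calculus, exactly as in (\ref{CrudePara}), produces $Q \in \Psi ^{-[m]}_\mu (\cG)$ with $\varPsi \circ Q = \id - R _0$ for some $R _0 \in \Psi ^{-\infty }_\mu (\cG)$. Its reduced kernel $r _0$ is smooth with uniformly compact support, so $\nu (\varPsi ) \nu (Q) - \id = - \nu (R _0)$; however, uniform support alone does not make $\nu (R _0)$ compact on $\bL ^2 (\rM _0 , \rE)$, since $\rM _0$ is non-compact.

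Next, I would invert $\varPsi$ over the singular leaf. By the hypothesis of Theorem \ref{FredThm}, combined with Proposition \ref{DecayProp}, the inverse $(\varPsi |_{{}_\rT e})^{-1}$ lies in $\fU (\cG _{{}_\rT e})$ and has reduced kernel $\psi = \psi _\mu + e ^{- \varepsilon |p|} F$ with $\psi _\mu$ compactly supported and $F \in C ^\infty _b$. Set $S _0 := (\varPsi |_{{}_\rT e})^{-1} \circ R _0 |_{{}_\rT e}$; its reduced kernel is the convolution of $\psi$ with the compactly supported $r _0 |_{\cG _{{}_\rT e}}$ and thus retains the same exponential form. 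To extend $S _0$ to $\cG$, use a tubular neighborhood of the embedded submanifold $\cG _{{}_\rT e} \subset \cG$ with a smooth retraction onto $\cG _{{}_\rT e}$; together with a partition of unity this produces a smooth $\tilde \kappa \in \Gamma ^\infty _b (\bt ^{-1} \rE \otimes \bs ^{-1} \rE ')$ restricting to the reduced kernel of $S _0$ on $\cG _{{}_\rT e}$. Fix a smooth approximation $\tilde d : \cG \to [0 , \infty)$ of the fiberwise Riemannian distance $a \mapsto d (a , \bs (a))$ with $\tilde d - d \leq 1$, which is possible because the $\bs$-fibers are of bounded geometry (Section \ref{RiemVert}); then define $S (a) := e ^{-\varepsilon \tilde d (a , \bs (a))} \tilde \kappa (a)$.

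The compactness step is then: put $T := \varPsi \circ S - R _0$ (the convolution $\varPsi \circ S$ being well-defined since $S$ is fiberwise $\bL ^1$). On one hand $T |_{\cG _{{}_\rT e}} = \varPsi |_{{}_\rT e} \circ S _0 - R _0 |_{{}_\rT e} = 0$, so $T$ lies in the ideal associated to the open leaf. On the other hand, the exponential decay of $S$ allows $\bL ^1$-approximation of its reduced kernel by compactly supported ones, exactly as in the final paragraph of the proof of Theorem \ref{FredThm}, placing $T$ in the norm closure $\fJ = \fC ^* (\cG _{\rM _0})$. Theorem \ref{Nis;Thm7} then delivers that $\nu (T) = \nu (\varPsi ) \nu (Q + S) - \id$ is compact on $\bL ^2 (\rM _0 , \rE)$, as required.

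The main obstacle is the extension step. One must simultaneously ensure that $\tilde \kappa$ and $\tilde d$ are globally smooth and bounded on $\cG$, that $S$ agrees with the reduced kernel of $S _0$ on $\cG _{{}_\rT e}$, and crucially that the convolution $\varPsi \circ S$ restricts on the singular leaf to $\varPsi |_{{}_\rT e} \circ S _0$ so that $T$ vanishes there. This demands the tubular neighborhood to interact properly with the groupoid multiplication near ${}_\rT e$, and requires the exponential decay estimates to survive convolution with a uniformly supported pseudo-differential operator. A partition of unity adapted to both the groupoid structure and the fiberwise distance function, together with uniform control of the remainder on $\cG \setminus \cG _{{}_\rT e}$, should resolve these compatibilities.
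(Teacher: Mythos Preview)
Your approach is essentially the paper's. Your correction $S_0 = (\varPsi|_{{}_\rT e})^{-1} \circ R_0|_{{}_\rT e}$ is exactly the paper's $S_{{}_\rT e} := (\varPsi|_{{}_\rT e})^{-1} - Q|_{{}_\rT e}$, since applying $\varPsi|_{{}_\rT e}$ to both gives $R_0|_{{}_\rT e}$ and $\varPsi|_{{}_\rT e}$ is invertible. The compactness argument via $\bL^1$-approximation and vanishing on the singular leaf is likewise what the paper packages as Corollary~\ref{CptCor}.

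Two points where the paper is simpler than your plan. First, for the extension you propose a tubular neighborhood of $\cG_{{}_\rT e}$ in $\cG$; the paper instead uses a local trivialisation of the source submersion $\bs$ near ${}_\rT e$, writing $\bs^{-1}(\rU) \cong \rU \times \cG_{{}_\rT e}$ and setting $\tilde S(x,p) := \chi(x)\,S_{{}_\rT e}(p)$ for a cutoff $\chi \in C^\infty_c(\rU)$. This is more natural here because it respects the fibration by $\bs$-fibres and makes the exponential-decay estimate immediate (it is inherited from $S_{{}_\rT e}$ fibre by fibre, as Lemma~\ref{CP1Est} confirms). Second, your ``main obstacle''---that $(\varPsi \circ S)|_{{}_\rT e}$ should equal $\varPsi|_{{}_\rT e} \circ S_0$---is not an obstacle at all: a groupoid pseudo-differential operator is by definition a family $\{\varPsi_x\}_{x\in\rM}$, and restriction to an invariant submanifold is just restriction of the index set, so composition and restriction commute trivially. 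There is no compatibility to arrange between the tubular neighbourhood and the multiplication.
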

\begin{proof}
By standard arguments one can find $Q \in \Psi ^{- [m]} _\mu (\cG)$ such that
$$ \varPsi Q - \id = R _1 , \quad R _1 \in \Psi ^{- \infty} _\mu (\cG). $$
On the other hand, Proposition \ref{DecayProp} implies that one has
$$(\varPsi _{{}_\rT e} )^{-1} = \tilde Q + e ^{- \varepsilon \tilde d (a , \bs (a))} F $$ 
for some $F \in \Gamma ^\infty (\bt ^{-1} \rE \otimes \bs ^{-1} \rE |_{\cG _{ {} _\rT e}} ) $ with bounded derivatives.
Since both $Q _{{}_\rT e}, \tilde Q $ are properly supported parametrices of $\varPsi _{{}_\rT e}$,
it follows that
$$ (\varPsi _{{}_\rT e} )^{-1} - Q _{{}_\rT e} = S _{{}_\rT e} $$ 
for some $S _{{}_\rT e} \in \Gamma ^\infty (\bt ^{-1} \rE \otimes \bs ^{-1} \rE |_{\bs ^{-1} ({}_\rT e)} ) $ of the form
$$ S _{{}_\rT e } (a) = e ^{- \varepsilon \tilde d (a , \bs (a))} \kappa .$$

Let $\rU \subset \rM $ be a local trivialization of $\bs $ around ${} _\rT e $,
i.e., $\rU \times \cG _{ {}_\rT e } \cong \bs ^{-1} \rU $.
Fix any function $\chi \in C^\infty _c (\rU ) $ such that $\chi = 1 $ on a smaller neighborhood of ${} _\rT e $.
Define a section $ \tilde S (a) \in \Gamma ^\infty (\bt ^{-1} \rE \otimes \bs ^{-1} \rE ) $ as follows:
If $a \in \bs ^{-1} (\rU)$, $a$ is identified with a point $(x, p) \in \rU \times \cG _{ {} _\rT e } $,
and we define
$$\tilde S (a) := S (p) \chi (x) .$$ 
Otherwise, we define $\tilde S (a) := 0 $. 
By the computations in the proof of Lemma \ref{CP1Est},
$\tilde S $ satisfies the estimate
$$ \tilde S (a) = e ^{- \varepsilon \tilde d (a , \bs (a))} \tilde \kappa ,$$
where $\kappa (a) $ and $\kappa (a ^{-1})$ are both sections of bounded derivatives. 
Moreover, it is obvious that
$\tilde S |_{\cG _{{}_\rT e)} } = S _{{}_\rT e} $.
It follows that 
$$ \tilde R := \varPsi (Q + \tilde S) - \id \in \Phi ^{- \infty } (\cG)$$
satisfies $\tilde R _{{}_\rT e} = 0$.
By Corollary \ref{CptCor} below, it follows that that $\nu (\tilde R) = \nu (\varPsi ) \nu (Q + \tilde S) - \id $
is a compact operator.
\end{proof}

\subsection{ Exponentially decaying kernels}
Inspired by the results of Proposition \ref{DecayProp} and Theorem \ref{FredThm}, 
we construct the pseudo-differential calculus with bounds,
in parallel with the theory of poly-homogeneous distributions for manifolds with corners 
(see \cite[Chapter 5]{Melrose;Book}).

First, let $\cG \rightrightarrows \rM$ be a Lauter-Nistor groupoid.
We say that
\begin{dfn}
{
The groupoid $\cG$ is of {\it sub-exponential growth } if for any $\varepsilon > 0$,
$$ \int _{a \in \bs ^{-1} (x)} e ^{- \varepsilon d (x , a)} \mu _x (a) \leq C $$
for some constant $C$ independent of $x \in \rM$;
it is of {\it polynomial growth} if for some integer $N $ and constant $C$,
$$ \int _{a \in B (x, r) } \mu _x (a) \leq C r ^N .$$
}
\end{dfn}
Clearly, polynomial growth implies sub-exponential growth.

\begin{exam}
{
Since each $\bs$-fiber of the symplectic groupoid over the Bruhat sphere is quasi-isometric to the 
Euclidean space $\bbR^2$,
the groupoid $\rT \backslash (\rSU (2) \times \rN )$ is of polynomial growth.
}
\end{exam}

Recall that in Section 2, given a Hausdorff groupoid $\cG$, 
we defined the groupoid $ \tilde \cG := \{ (a , b ) \in \cG \times \cG : \bs (a ) = \bs (b) \} $.
Also recall that for any $X \in \Gamma ^\infty (\cA )$,
$X$ determines a right invariant vector field $X ^\cR \in \Gamma ^\infty (\Ker (d \bs ))$.
Here, we furthermore define vector fields on $\tilde \cG $ by
\begin{align*}
X ^{\tilde \cR } (a , b) :=& ( X ^\cR (a ) , 0 ) \in T _a \cG \times T _b (\cG ) \subseteq \tilde T \cG \\
X ^{\tilde \cL } (a , b) :=& ( 0 , X ^\cR (b ) ),
\end{align*}
for any $( a , b ) \in \tilde \cG $. 
Similarly, given any vector bundle $\rE \to \rM$,
and $\cA $-connection ${}^\cA \nabla ^\rE $ on $\rE$,
right translation defines a connection $\hat \nabla ^{\bt ^{-1} \rE }$ on $\bt ^{-1} \rE \to \cG _x$,
for each $x \in \rM$.
We shall consider the family of pullback connections
$\hat \nabla ^{\tilde \bs ^{-1} (\bt ^{-1} \rE ) \otimes \tilde \bt ^{-1} (\bt ^{-1} \rE ' ) }$
on $ \tilde \bs ^{-1} (\bt ^{-1} \rE ) \otimes \tilde \bt ^{-1} (\bt ^{-1} \rE ' ) 
\to \cG _x \times \cG _x \subseteq \tilde \cG $.

Fix a Riemannian metric $g _\cA $ on $\cA $, 
which in turn determines a metric on each of $\cG _x$.
For each $( a , b ) \in \tilde \cG $, 
define $d (a , b) $ to be the Riemannian distance on $\cG _{\bs (a)} = \cG _{\bs (b)}$ between $a$ and $b$.

\begin{dfn}
For each $\varepsilon > 0$, the $\varepsilon $-calculus of order $- \infty , \Psi ^{- \infty} _\varepsilon (\cG , \rE)$, 
is defined to be the space of sections
$\psi \in \Gamma ^\infty (\bt ^{-1} \rE \otimes \bs ^{-1} \rE ' )$, regarded as reduced kernels, 
with the property that there exists some $\varepsilon ' > \varepsilon $
such that for all $(a , b ) \in \tilde \cG , m = 0, 1, 2 \cdots , $ 
$$ e ^{\varepsilon ' d (a , b) }
(\hat \nabla ^{ \tilde \bs ^{-1} (\bt ^{-1} \rE ) \otimes \tilde \bt ^{-1} (\bt ^{-1} \rE ' )} )^k
(\widetilde \bm ^{-1} \psi )(a , b) \leq C _k $$
for some constants $C _k > 0 $.

For each $m \in \bbZ , \varepsilon  > 0$, 
the {\it (classical) $\varepsilon $-calculus of order $m$} is defined to be the space
$$ \Psi ^{[m]} _\varepsilon (\cG , \rE)
:= \Psi ^{[m]} _\mu (\cG , \rE) + \Psi ^{- \infty } _\varepsilon (\cG , \rE). $$
\end{dfn}

As in the case of manifolds with boundary \cite{Mazzeo;EdgeRev, Melrose;Book}, 
we need to compute the composition rule of the calculus.
\begin{lem}
For any $\varepsilon _1 , \varepsilon _2 \geq 0 $
$$ \Psi ^{- \infty } _{\varepsilon _1} \circ \Psi ^{- \infty } _{\varepsilon _2} 
\subseteq \Psi ^{- \infty } _{\min \{ \varepsilon _1 , \varepsilon _2 \}}.$$ 
\end{lem}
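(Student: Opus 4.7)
The plan is to prove the inclusion by direct estimation of the reduced kernel of the convolution, using the fibered-product formula (\ref{ConvDef3}) for $f \circ g$, the triangle inequality along the $\bs$-fibers, and the sub-exponential growth of $\cG$ that is implicit in this section.

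Fix $f \in \Psi ^{- \infty } _{\varepsilon _1} (\cG)$ and $g \in \Psi ^{- \infty } _{\varepsilon _2} (\cG)$, so that by definition there exist $\varepsilon _1 ' > \varepsilon _1$ and $\varepsilon _2 ' > \varepsilon _2$ controlling the decay of $f, g$ and all covariant derivatives. The first step is the right-invariant substitution $c' := c b$, where $R _b : \cG _{\bt (b)} \to \cG _{\bs (b)}$ is the defining isometry, which rewrites
\[
\widetilde \bm ^{-1} (f \circ g) (a, b)
= \int _{c' \in \cG _{\bs (b)}} \widetilde \bm ^{-1} f (a , c') \cdot \widetilde \bm ^{-1} g (c' , b) \: d \mu _{\bs (b)} (c'),
\]
for every $(a, b) \in \tilde \cG$, the pointwise product pairing the $\rE '$-factor of $f$ with the $\rE$-factor of $g$ at $\bt (c')$.

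Next, choose any $\varepsilon '' \in (\min \{ \varepsilon _1, \varepsilon _2 \}, \min \{ \varepsilon _1 ', \varepsilon _2 ' \})$ and factor
\[
e ^{- \varepsilon _1 ' d (a, c')} e ^{- \varepsilon _2 ' d (c', b)}
= e ^{- \varepsilon '' (d (a, c') + d (c', b))} \cdot e ^{- (\varepsilon _1 ' - \varepsilon '') d (a, c')} \cdot e ^{- (\varepsilon _2 ' - \varepsilon '') d (c', b)}.
\]
The first factor is dominated by $e ^{- \varepsilon '' d (a, b)}$ via the triangle inequality in $\cG _{\bs (b)}$ and pulls out of the integral; the second is bounded by $1$; the third integrates to a constant uniform in $b$ by sub-exponential growth, since translating by $R _{b ^{-1}}$ identifies $\int _{\cG _{\bs (b)}} e ^{- (\varepsilon _2 ' - \varepsilon '') d (c', b)} d \mu _{\bs (b)} (c')$ with the defining integral over $\cG _{\bt (b)}$ based at the unit. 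This handles the $k = 0$ estimate with decay rate $\varepsilon '' > \min \{ \varepsilon _1 , \varepsilon _2 \}$.

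For higher covariant derivatives $\hat \nabla ^k$ I would differentiate under the integral. The vector fields $X ^{\tilde \cR}, X ^{\tilde \cL}$ are tangent to the fibers of $\bs ^{(2)}$ and preserve the integration domain $\cG _{\bs (b)}$; by the product rule, $\hat \nabla ^k (\widetilde \bm ^{-1} (f \circ g))$ expands as a finite sum of integrals of products $\hat \nabla ^{k _1} (\widetilde \bm ^{-1} f) \cdot \hat \nabla ^{k _2} (\widetilde \bm ^{-1} g)$ with $k _1 + k _2 = k$, each factor retaining exponential decay by hypothesis, and the same splitting argument yields the bound. The main obstacle is this last bookkeeping: one must verify carefully that $\hat \nabla$, which is the pullback by $\widetilde \bm$ of a right-invariant connection, decomposes cleanly as a sum of first-slot and second-slot derivatives with no cross terms, so that each level-$k$ derivative reduces to the same type of integral estimate as the $k = 0$ case.
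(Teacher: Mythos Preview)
Your argument is correct and in fact streamlines the paper's own proof. Both proofs ultimately rely on the sub-exponential growth assumption to make the residual integral finite, and both handle the derivatives by the same observation that $X^{\tilde\cR}$ (resp.\ $X^{\tilde\cL}$) differentiation passes through the convolution to hit only the first (resp.\ second) factor. The substantive difference is in the triangle-inequality step: the paper works with the reduced kernel on $\cG$ itself, estimating $|u_1\circ u_2(a)|$ against $d(a,\bs(a))$; since the integrand involves $d(ab^{-1},\bs(ab^{-1}))=d(a,b)$ and $d(b,\bs(b))$, the paper invokes the \emph{reverse} triangle inequality $d(a,b)\geq |d(a,\bs(a))-d(b,\bs(b))|$ and is then forced into a case split according to whether $b$ lies inside or outside the ball $B_a=\{d(b,\bs(b))<d(a,\bs(a))\}$. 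Your choice to stay on $\tilde\cG$ and use the ordinary triangle inequality $d(a,c')+d(c',b)\geq d(a,b)$, together with the intermediate rate $\varepsilon''$, eliminates that case analysis entirely. The paper's route has the minor advantage of making the dependence on $d(\,\cdot\,,\bs(\,\cdot\,))$ explicit, which matches how the calculus is actually used later; yours is shorter and more transparent. Your flagged ``main obstacle'' about cross terms in $\hat\nabla$ is exactly what the paper checks in its last paragraph via the formulas for $\fL_{X^{\tilde\cR}}$ and $\fL_{X^{\tilde\cL}}$ acting on $\widetilde\bm^*(u_1\circ u_2)$, so there is no hidden difficulty there.
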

\begin{proof}
For simplicity we only consider the scalar case.
It suffices to consider the convolution product
$ u _1 \circ u _2 $
for any $u _1 \in \Psi ^{- \infty } _{\varepsilon _1} (\cG ), 
u_2 \in \Psi ^{- \infty } _{\varepsilon _2} (\cG )$.
In view of the formula
$$ u _1 \circ u _2 (a) = \int _{b \in \cG _{\bs (a)}} u _1 (a b ^{-1}) u _2 (b) \mu _{\bs (a)} (b)
= \int _{c \in \bs ^{-1} (\bt (a))} u _1 (c ^{-1}) u _2 (c a) \mu _{\bt (a)} (c),$$
one can without loss of generality assume $\varepsilon _1 \leq \varepsilon _2$.
Then by definition one has the estimates 
$ u _1 (a) \leq e ^{- \varepsilon ' _1 d (a, \bs (a))} C ,
u _2 (a) \leq e ^{- \varepsilon ' _2 d (a, \bs (a))} C' $
for some $\varepsilon '_1 > \varepsilon _1 , \varepsilon ' _2 > \varepsilon _2 $.
One may further assume that $\varepsilon ' _1 < \varepsilon ' _2 $.

The hypothesis implies for any $a \in \cG$
\begin{align*}
| u _1 \circ u _2 (a) |
\leq & C _1 \int _{b \in \cG _{\bs (a)}} 
e ^{- \varepsilon ' _1 d (a , b) } e ^{ - \varepsilon ' _2 d (b , \bs (b))} \mu _{\bs (a)} (b) \\
\leq & C _1 \int _{b \in \cG _{\bs (a)}} 
e ^{- \varepsilon ' _1  |d (a , \bs (a)) - d (b , \bs (b))| - \varepsilon ' _2 d (b , \bs (b))} \mu _{\bs (a)} (b) \\
= & C _1 \int _{b \in B _a } 
e ^{- \varepsilon ' _1  d (a , \bs (a)) } e ^{- (\varepsilon ' _2 - \varepsilon ' _1) d (b , \bs (b))} 
\mu _{\bs (a)} (b) \\ 
&+ C _1 \int _{b \not \in B _a} 
e ^{ \varepsilon ' _1  d (a , \bs (a)) } e ^{- (\varepsilon ' _2 + \varepsilon ' _1) d (b , \bs (b))} \mu _{\bs (a)} (b),
\end{align*}
where $B _a$ denotes the set $\{ b \in \cG _{\bs (a)} : d (b , \bs (b)) < d (a , \bs (a)) \} $
for each $a$.
Hence for the first integral, one has
\begin{align*}
\int _{b \in B _a } 
e ^{- \varepsilon ' _1  d (a , \bs (a)) } & e ^{- (\varepsilon ' _2 - \varepsilon ' _1) d (b , \bs (b))} 
\mu _{\bs (a)} (b)e ^{- \varepsilon ' _1  d (a , \bs (a)) } \\
= & \: e ^{- \varepsilon ' _1  d (a , \bs (a)) } 
\int _{b \in B _a } e ^{- (\varepsilon ' _2 - \varepsilon ' _1) d (b , \bs (b))} \mu _{\bs (a)} (b) \\
\leq & \: e ^{- \varepsilon ' _1  d (a , \bs (a)) } 
\int _{b \in \cG _{\bs (a)}} e ^{- (\varepsilon ' _2 - \varepsilon ' _1) d (b , \bs (b))} \mu _{\bs (a)} (b),
\end{align*}
and the last integral is finite and only depends on $\bs (a)$.
As for the second integral, write
\begin{align*}
\varepsilon ' _1  d (a , \bs (a)) -  &(\varepsilon ' _2 + \varepsilon ' _1) d (b , \bs (b)) \\
= &- \varepsilon ' _1  d (a , \bs (a)) 
+ 2 \varepsilon ' _1 ( d (a , \bs (a)) - d (b, \bs (b)))
- (\varepsilon ' _2 - \varepsilon ' _1) d (b , \bs (b)).
\end{align*}
Since $d (b, \bs (b)) \geq d (a , \bs (a)) $ for any $b \not \in B _a$.
It follows that the second integral is again bounded by
$$ e ^{- \varepsilon ' _1 d (a , \bs (a))}
\int _{b \in \cG _{\bs (a)}} e ^{- (\varepsilon ' _2 - \varepsilon ' _1) d (b , \bs (b))} \mu _{\bs (a)} (b).$$
Adding the two together and rearranging, one gets
$ e ^{\varepsilon ' _1 d _\bs (a)} (u _1 \circ u _2 ) (a) $ is a bounded function, as asserted.

To prove the assertion for derivatives,
observe that by right invariance of $\mu$,
$$ \widetilde \bm ^* (u _1 \circ u _2 )(a, b) = \int u _1 (a c ^{-1} ) u _2 (c b ^{-1}) \mu _{\bs (a)} (c) ,$$
for any $(a, b) \in \tilde \cG $.
It follows that for any $X , Y \in \Gamma ^\infty (\cA)$,
\begin{align*}
\fL _{X ^{\tilde \cR}} \widetilde \bm ^* (u _1 \circ u _2 )(a, b) 
=& \int \fL _{X ^{\tilde \cR}} (\widetilde \bm ^* u _1 )(a, c ) (\widetilde \bm ^* u _2 )(c, b) \mu _{\bs (a)} (c) \\
\fL _{Y ^{\tilde \cL}} \widetilde \bm ^* (u _1 \circ u _2 )(a, b) 
=& \int (\widetilde \bm ^* u _1 )(a, c ) \fL _{X ^{\tilde \cR}} (\widetilde \bm ^* u _2 )(c, b) \mu _{\bs (a)} (c)
\end{align*}
(here, note that $\fL _{X ^{\tilde \cR}} \bm ^* u _1 (a, c)$ only differentiates in the $a $-direction),
and so on for higher derivatives.
% Using the hypothesis that $d _\bs (a)$ is a function of bounded derivatives
% and similar arguments as above, one can prove that
% $ e ^{\varepsilon ' _1 d _\bs (a)} (u _1 \circ u _2 ) (a) $ 
% has bounded derivatives. Hence proving the assertion.
\end{proof}

% Fix a smooth function with bounded derivatives 
% $d _\bs $ on $\cG$ 
% satisfying 
% $$ \left| d _\bs (a) - d (a , \bs (a)) \right| < 1 , \quad \forall a \in \cG . $$
% Let $\cG$ be a groupoid of sub-exponential growth.
% For each $\varepsilon \geq 0$, define the sets
% $$ \Psi ^{ -\infty } _\varepsilon (\cG , \rE):= 
% \{ e ^{- \varepsilon ' d _\bs (a) } \kappa : \kappa , 
% \kappa \circ \mathbf i \text { have bounded fiberwise derivatives }, \varepsilon ' > \varepsilon \} .$$
% Here, $e ^{- \varepsilon ' d _ \bs (a) } \kappa $ is regarded as the kernel of an element in 
% $ \Psi ^{- \infty } (\cG) $.
Note that the vector representation $\nu $ is well defined on $\Psi ^{ -\infty } _\varepsilon (\cG , \rE)$
because of the sub-exponential growth assumption.

To apply the results of Section \ref{LauNis},
we first verify that
\begin{prop}
\label{C*SubsetProp}
For any $\varepsilon > 0$,
$$ \Psi ^{- \infty } _\varepsilon \subseteq \fC ^* (\cG , \rE),$$
where $\fC ^* (\cG , \rE )$ is defined in Definition \ref{OpNorm}.
\end{prop}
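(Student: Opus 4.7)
The plan is to approximate $\psi \in \Psi^{-\infty}_\varepsilon(\cG, \rE)$, viewed as a reduced kernel, by compactly supported smooth kernels $\psi_n \in C^\infty_c(\cG, \bt^{-1}\rE \otimes \bs^{-1}\rE') \subseteq \Psi^{-\infty}_\mu(\cG, \rE)$ and to establish convergence in the full norm $\|\cdot\|$ via the inequality $\|\cdot\| \leq \|\cdot\|_1$ built into Definition \ref{OpNorm}. Since each $\psi_n$ lies in $\fC^*(\cG, \rE)$ and the latter is norm-closed, it suffices to arrange $\|\psi - \psi_n\|_1 \to 0$ and then identify the resulting limit with $\psi$.

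For the cutoffs, first observe that the tube $K_n := \{ a \in \cG : d(a, \bs(a)) \leq n \}$ is compact in $\cG$: $\rM$ is compact, each fiber $(\cG_x, g_\bs)$ has bounded geometry so that closed balls of radius $n$ are compact, and these balls depend continuously on the base point. Pick $\chi_n \in C^\infty_c(\cG)$ with $\chi_n = 1$ on $K_n$ and with support in $K_{n+1}$, and set $\psi_n := \chi_n \psi$. Then $\psi_n$ is a smooth compactly supported section of $\bt^{-1}\rE \otimes \bs^{-1}\rE'$, hence sits in $\Psi^{-\infty}_\mu(\cG, \rE)$.

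For the norm estimate, by hypothesis there exist $\varepsilon' > \varepsilon$ and $C_0 > 0$ with $|\psi(a)| \leq C_0 e^{-\varepsilon' d(a, \bs(a))}$. The right-invariance of $g_\bs$ and the unit axiom give that the isometry $R_a : \cG_{\bt(a)} \to \cG_{\bs(a)}$ sends $a^{-1} \mapsto \bs(a)$ and $\bt(a) \mapsto a$, so $d(a^{-1}, \bs(a^{-1})) = d(a, \bs(a))$; consequently the two integrals defining the $1$-norm produce identical bounds. Picking any $\delta \in (0, \varepsilon')$ and factoring $e^{-\varepsilon' d} = e^{-(\varepsilon' - \delta) d} e^{-\delta d}$, one obtains
$$\|\psi - \psi_n\|_1 \leq C_0 \sup_{x \in \rM} \int_{\{a \in \cG_x \,:\, d(a,x) > n\}} e^{-\varepsilon' d(a,x)} \, \mu_x(a) \leq C_0 \, C_\delta \, e^{-(\varepsilon' - \delta) n},$$
where the estimate $\sup_x \int_{\cG_x} e^{-\delta d(a,x)} \mu_x(a) \leq C_\delta$ is precisely the sub-exponential growth assumption. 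Hence $\{\psi_n\}$ is Cauchy in $\|\cdot\|_1$, and therefore in $\|\cdot\|$, and converges to some $\bar\psi \in \fC^*(\cG, \rE)$.

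The remaining step, which I expect to be the main obstacle, is the identification $\bar\psi = \psi$. The natural route is to apply the vector representation $\nu$, which is well defined on $\Psi^{-\infty}_\varepsilon(\cG, \rE)$ by the sub-exponential growth. On the one hand $\nu(\psi_n) \to \nu(\bar\psi)$ in $\bL^2$-operator norm; on the other, pointwise convergence $\psi_n \to \psi$ together with the exponential-decay majorant (which is fiberwise $\bL^1$, again by sub-exponential growth) yields $\nu(\psi_n) u \to \nu(\psi) u$ in $\bL^2$ for every $u \in C^\infty_c(\rM_0, \rE)$ by dominated convergence. Thus $\nu(\bar\psi)$ and $\nu(\psi)$ agree on a dense subspace, and faithfulness of the vector representation on Lauter-Nistor groupoids forces $\bar\psi = \psi$, whence $\psi \in \fC^*(\cG, \rE)$.
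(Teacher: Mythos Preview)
Your proof is correct and follows essentially the same route as the paper: truncate the kernel by a radial cutoff $\chi_n$, estimate $\|\psi - \psi_n\|_1$ by splitting the exponential decay into two factors and invoking the sub-exponential growth bound (the paper takes $\delta = \varepsilon/2$; you leave $\delta$ free), note $d(a^{-1},\bs(a^{-1})) = d(a,\bs(a))$ to handle both halves of the $1$-norm, and pass to the full norm via $\|\cdot\| \leq \|\cdot\|_1$.

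The one divergence is your final identification step $\bar\psi = \psi$ through the vector representation, which you flagged as the main obstacle. The paper does not perform this step, and it is in fact unnecessary: the full norm in Definition~\ref{OpNorm} is a supremum over representations dominated by $\|\cdot\|_1$, so it extends at once to any kernel of finite $1$-norm. Since sub-exponential growth gives $\|\psi\|_1 < \infty$, the inequality $\|\psi - \psi_n\| \leq \|\psi - \psi_n\|_1 \to 0$ already holds with $\psi$ itself as the limit, placing $\psi$ in the $\|\cdot\|$-closure $\fC^*(\cG,\rE)$ directly. Your detour through $\nu$ and faithfulness works too, but it is not needed.
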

\begin{proof}
Let $\{\phi _n \} \in C ^\infty (\bbR)$ be a series such that $0 \leq \phi _n \leq 1$,
$\phi _n = 1 $ on $[0 , n) $, and $\phi _n = 0 $ on $[n +1 , \infty)$,
and define $\chi _n (a) := \phi _n (d _\bs (a)) \in C ^\infty _c (\cG)$.
Given any $\kappa \in \Psi ^{- \infty } _\varepsilon (\cG , \rE)$,
Write $\kappa (a) = e ^{- \varepsilon d _\bs (a)} u (a)$, 
where $u (a)$ is bounded.

Consider $\kappa _n := \chi _n \kappa \in \Gamma ^\infty _c (\cG , \rE ) \cong \Psi ^\infty _\mu (\cG , \rE)$.
For any $x \in \rM , n \in \bbN$,
one has
\begin{align*}
\int _{a \in \bs ^{-1} (x)} | \kappa - \kappa _n | (a) \mu _x (a)
=& \: \int _{a \in \cG _x \backslash B (\rM , n) } e ^{- \varepsilon d _\bs (a)} 
(1 - \chi _n ) | u | (a) \mu _x (a) \\
\leq & \: e ^{- \frac{\varepsilon n }{2}} 
\int _{a \in \cG _x \backslash B (\rM , n) } e ^{- \frac{\varepsilon}{2} d _\bs (a)} 
(1 - \chi _n ) | u | (a) \mu _x (a), 
\end{align*}
where $B (\rM , n) := \{ a \in \cG : d (a , \bs (a)) < n \}$.
By the sub-exponential growth assumption, the integral is bounded by some constant $C$,
independent of $x$.
It follows that $\sup _{x \in \rM} \| \kappa - \kappa _n \| _{\bL ^1 (\bs ^{-1} (x))} \to 0 $
as $n \to \infty $.
Also, observe that $\kappa (a ^{-1}) = e ^{- \varepsilon d _\bs (a)} u (a ^{-1})$ 
(since $d _{\bs } (a ^{-1}) = d _\bs (a)$).
Applying exactly the same arguments to $\kappa (a ^{-1})$
one arrives at
$$ \| \kappa - \kappa _n \|_1 \to 0. $$
Hence $\kappa \in \fC ^* (\cG , \rE)$.
\end{proof}

Combining the above Proposition \ref{C*SubsetProp} with Lemma \ref{ExactC^*},
one has:
\begin{cor}
\label{CptCor}
For any $\varepsilon > 0$, $\varPsi \in \Psi ^{- \infty} _\varepsilon (\cG , \rE)$
such that $\varPsi |_{\rZ_j} = 0$, 
for any invariant sub-manifolds $\rZ _j$,
then $\nu  _0 ( \varPsi ) : \bL ^2 (\rM _0) \to \bL ^2 (\rM _0)$ is a compact operator.
\end{cor}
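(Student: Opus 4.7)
The plan is to reduce the claim to the already-established fact (the unnamed lemma in Section~\ref{LauNis}) that every element of $\fJ = \fC^*(\cG_{\rM_0})$ acts as a compact operator on $\bL^2(\rM_0)$ through the vector representation. Concretely, I would like to show that the given $\varPsi$ actually lies in $\fJ$, and then invoke that lemma directly.

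First I would apply Proposition~\ref{C*SubsetProp}, which says that $\Psi^{-\infty}_\varepsilon(\cG,\rE) \subseteq \fC^*(\cG,\rE)$. This places $\varPsi$ inside $\fC^*(\cG,\rE)$, where the first short exact sequence of Lemma~\ref{ExactC^*} is available:
\begin{equation*}
0 \longrightarrow \fJ \longrightarrow \fC^*(\cG,\rE) \stackrel{r}{\longrightarrow} \fC^*(\cG_{\rM \setminus \rM_0},\rE) \longrightarrow 0.
\end{equation*}
By exactness, it suffices to verify that $r(\varPsi) = 0$. Because $\rM \setminus \rM_0 = \bigcup_{j=1}^k \rZ_j$ is the (finite) union of the invariant submanifolds and the restriction map $r$ is continuous and agrees on $\Psi^\infty_\mu(\cG,\rE)$ with the operator-theoretic restriction $\varPsi \mapsto \varPsi|_{\rM\setminus\rM_0} = \{\varPsi_x\}_{x \in \rM \setminus \rM_0}$, the hypothesis $\varPsi|_{\rZ_j}=0$ for every $j$ immediately gives $r(\varPsi)=0$. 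Hence $\varPsi \in \fJ$.

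Once $\varPsi \in \fJ$, the compactness lemma recalled just after Lemma~\ref{ExactC^*} (based on Equation~(\ref{LNOpen}) and approximation by properly supported smoothing operators on $\rM_0$) shows that $\nu_0(\varPsi) : \bL^2(\rM_0) \to \bL^2(\rM_0)$ is compact, which is the desired conclusion.

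The only potentially delicate point is the identification between the algebraic restriction $\varPsi \mapsto \{\varPsi_x\}_{x\in\rM\setminus\rM_0}$ (available on $\Psi^{-\infty}_\varepsilon$ by construction) and the $C^*$-algebraic quotient map $r$ in Lemma~\ref{ExactC^*}; however, this is routine since $r$ is defined as the continuous extension of the algebraic restriction on $\Psi^0_\mu(\cG)$, and the exponential decay built into $\Psi^{-\infty}_\varepsilon$ (exploited in the proof of Proposition~\ref{C*SubsetProp} via the cut-off approximants $\kappa_n = \chi_n \kappa$) allows one to approximate $\varPsi$ in the full norm by uniformly supported smoothing operators whose algebraic restrictions to $\rM\setminus\rM_0$ are computed directly. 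Thus no genuine obstacle arises, and the proof is a short chain of applications of the preceding results.
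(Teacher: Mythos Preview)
Your proposal is correct and follows exactly the approach indicated in the paper: the corollary is stated as a direct combination of Proposition~\ref{C*SubsetProp} (to place $\varPsi$ in $\fC^*(\cG,\rE)$) and Lemma~\ref{ExactC^*} (whose first exact sequence, together with the vanishing of $\varPsi|_{\rZ_j}$, forces $\varPsi \in \fJ$), after which the compactness lemma for $\fJ$ applies. Your extra remark about the compatibility of the algebraic restriction with the $C^*$-quotient map is a reasonable clarification but not something the paper spells out.
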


As a simple application of the calculus with bound, we can rewrite Theorem \ref{ParaThm} as
\begin{cor}
For any $\varPsi \in \Psi ^{[m]} _\mu (\cG , \rE)$ satisfying the hypothesis of Theorem \ref{ParaThm},
there exist $\tilde Q \in \Psi ^{[- m]} _\varepsilon (\cG , \rE)$ such that
$$ \nu (\varPsi) \tilde Q - \id \in \Psi ^{[- \infty]} _\varepsilon (\cG , \rE) $$
is compact.
\end{cor}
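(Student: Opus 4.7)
The plan is to read the corollary off Theorem \ref{ParaThm}, recognizing that the exponentially decaying tail produced there is precisely an element of the new calculus $\Psi^{-\infty}_\varepsilon$.

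First I would invoke Theorem \ref{ParaThm} to produce $Q \in \Psi^{[-m]}_\mu(\cG, \rE)$ together with a smoothing remainder $\tilde S$ of the form $\tilde S(a) = e^{-\varepsilon \tilde d(a, \bs(a))} \tilde \kappa$, where $\tilde \kappa$ and $\tilde \kappa \circ \mathbf i$ have bounded fiberwise derivatives and $\tilde d - d \leq 1$. A direct inspection against the definition of $\Psi^{-\infty}_{\varepsilon'}$, for any $0 < \varepsilon' < \varepsilon$, shows $\tilde S$ lies in this space: differentiating $\widetilde\bm^{-1}\tilde S$ via the product rule produces bounded expressions in $\tilde \kappa$ and its derivatives times the exponential factor, and the surplus $e^{-(\varepsilon - \varepsilon')d}$ absorbs any linear growth coming from differentiating the distance function. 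Setting $\tilde Q := Q + \tilde S$ thus places $\tilde Q$ in $\Psi^{[-m]}_\mu + \Psi^{-\infty}_{\varepsilon'} = \Psi^{[-m]}_{\varepsilon'}(\cG, \rE)$.

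Second, I would check that $\varPsi \tilde Q - \id \in \Psi^{-\infty}_{\varepsilon'}(\cG, \rE)$. Writing
$$\varPsi \tilde Q - \id = (\varPsi Q - \id) + \varPsi \circ \tilde S,$$
the first summand sits in $\Psi^{-\infty}_\mu \subseteq \Psi^{-\infty}_{\varepsilon'}$ by Theorem \ref{ParaThm}. For the second, I would use uniform support of $\varPsi$: its reduced kernel is supported in $\{a b^{-1} \in K\}$ for some compact $K \subset \cG$. Hence in the convolution $\varPsi \circ \tilde S(a) = \int \varPsi(a b^{-1})\, \tilde S(b)\, d\mu_{\bs(a)}(b)$ the integrand forces $d(a,b) \leq C$, so the triangle inequality gives $d(b, \bs(b)) \geq d(a, \bs(a)) - C$. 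Combining with $|\tilde S(b)| \leq C' e^{-\varepsilon d(b, \bs(b))}$, and invoking sub-exponential growth of $\cG$, yields an exponential bound on $\varPsi \circ \tilde S(a)$ in $d(a, \bs(a))$; the analogous estimate on fiberwise derivatives follows by differentiating through the convolution exactly as in the composition lemma for $\Psi^{-\infty}_{\varepsilon_1} \circ \Psi^{-\infty}_{\varepsilon_2}$ proved earlier in this section.

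Finally, by the construction in the proof of Theorem \ref{ParaThm}, the restriction of $\tilde R := \varPsi \tilde Q - \id$ to the singular leaf $\cG_{{}_\rT e}$ vanishes. Corollary \ref{CptCor} then yields that $\nu(\tilde R) = \nu(\varPsi)\nu(\tilde Q) - \id$ is compact on $\bL^2(\rM_0, \rE)$. The main obstacle is the support/decay bookkeeping in the middle step: I must verify that convolving a positive-order uniformly supported operator with an exponentially decaying kernel preserves the decay rate (possibly after shrinking $\varepsilon$ slightly). This case is not covered verbatim by the composition lemma already proved, which handles two smoothing kernels, but the argument is structurally parallel once the uniform support of $\varPsi$ is used to control the $d(a,b)$ variable.
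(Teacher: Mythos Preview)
Your proposal is correct and matches the paper's intent. The paper gives no proof of this corollary at all, introducing it only with the sentence ``As a simple application of the calculus with bound, we can rewrite Theorem \ref{ParaThm} as\ldots'', so you have in fact supplied more detail than the original. Your identification of $\tilde Q = Q + \tilde S$ from Theorem \ref{ParaThm}, the verification that $\tilde S \in \Psi^{-\infty}_{\varepsilon'}$ for $\varepsilon' < \varepsilon$, and the appeal to Corollary \ref{CptCor} for compactness are exactly what the paper has in mind.

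Your caution about the step $\varPsi \circ \tilde S \in \Psi^{-\infty}_{\varepsilon'}$ is well placed: the paper's composition lemma only treats two smoothing kernels, and the case of a positive-order uniformly supported operator acting on an exponentially decaying kernel is not written out. Your sketch is the right one, though it is cleaner to phrase it without the informal ``convolution with $\varPsi(ab^{-1})$'': the reduced kernel of $\varPsi \circ \tilde S$ is simply $\varPsi$ applied fiberwise to the reduced kernel $s$ of $\tilde S$ (by right invariance), and since $\varPsi_x$ is uniformly supported and $s|_{\cG_x}$ has all derivatives decaying like $e^{-\varepsilon d(\cdot, x)}$, the standard argument (multiply by $e^{\varepsilon' d(\cdot, x)}$, use that this weight has bounded logarithmic derivatives, and invoke the mapping properties of uniformly bounded pseudo-differential operators on manifolds of bounded geometry) gives the result. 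The paper simply does not address this point.
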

\vspace{18cm}
 $ \; $

\pagebreak \thispagestyle{firstpage}
\section{The heat calculus}
\subsection{ The heat kernel of perturbed Laplacian operators}
In this section, we construct the heat kernel of some second order pseudo-differential operators on a groupoid 
$\cG \rightrightarrows \rM$ with $\rM$ compact.

Given a vector bundle $\rE$ over $\rM$, fix an $\cA$-connection  $\nabla ^\rE$ on $\rE$.
Then the pull-back defines a (family of $\bs$-fiberwise) connection on 
the bundle $\bs ^{-1} \rE \rightarrow \bs ^{-1} (x), x \in \rM$,
which we shall still denote by $\nabla ^\rE$.
Also, recall that we fixed a metric on $\cA$, hence a Riemannian metric on the fibers $\bs ^{-1} (x), x \in \rM$,
which we shall still denote by $g _\cA$.
We define the Laplacian by taking the trace of the square of $\nabla ^\rE$. More precisely:
\begin{dfn}
\label{LapDfn}
The {\it Laplacian} $\Delta ^\rE \in \Psi ^2 _\mu (\cG)$ is the family of operators $\{ \Delta ^\rE _x \}_{x \in \rM}, $
where
$$ \Delta ^\rE _x := \sum _{i=1}^n (\nabla ^\rE _{X_i } \nabla ^\rE _{X_i } - \nabla ^\rE _{\nabla ^\rE _{X_i} X_i}), $$
and $X_i$ is any local orthonormal basis of $T \cG _x $.
\end{dfn}
Note that $\Delta ^\rE $ is elliptic, and its principal symbol does not depend on the chosen connection $\nabla ^\rE$.

We consider an operator of the form 
\begin{equation}
\label{GenLap}
\Delta ^\rE + F + K,
\end{equation}
where $F \in \Gamma ^\infty (\bt ^{-1} \rE \otimes \bt ^{-1} \rE)$, considered as a differential operator of order 0;
and $K \in \Psi ^{- \infty }_\mu (\cG , \rE)$. 
We shall denote the reduced kernel of $K$ by $\kappa $.

Since the restriction of $\bt ^{-1} \rE$ to each $\bs$-fiber $ \cG _x $ is a vector bundle with bounded geometry,
we have the Sobolev norms
$ \| \cdot \| _{\infty , l } $ defined by Equation (\ref{L-Infty}). 
For $u \in \Gamma ^\infty (\bt ^{-1} \rE)$, we define 
$$ \| u \| _l := \sup_{x \in \rM} \{ \| u |_{\bs ^{-1} (x)} \|_{\infty , l} \}. $$

Denote by $\bt ^{-1} \rE \otimes \bs ^{-1} \rE ' \ltimes (0, \infty ) $ 
the pullback of $\bt ^{-1} \rE \otimes \bs ^{-1} \rE ' \to \cG$
by the projection $\cG \times (0, \infty) \to \cG$. 
\begin{dfn}
{
A {\it (groupoid) Heat kernel} of $\Delta ^\rE + F + K $ is a continuous section 
$$ Q \in \Gamma ^0 (\bt ^{-1} \rE \otimes \bs ^{-1} \rE' \ltimes (0 , \infty ) ) ,$$
such that $ Q (a , t) , Q (a ^{-1} , t) $ are smooth when restricted to all $\cG _x \times (0 , \infty ) $,
and satisfies: 
\begin{enumerate}
\item
The heat equation
$$ (\partial _t + \Delta ^\rE + F + K) Q (a, t) = 0. $$
Here, we use the fact that $\bs ^{-1} \rE ' |_{\cG _x} \cong \cG _x \times \rE '_x $,
and let $\Delta ^\rE + F + K$ to act on the $\bt ^{-1} \rE $ factor of
$ Q (a, t) |_{\cG _x} \in \Gamma ^\infty (\bt ^{-1} \rE \otimes \bs ^{-1} \rE ' )
\cong \rE ' _x \times \Gamma ^\infty (\bt ^{-1} \rE  )$ for each $t$ fixed;
\item
The initial condition
$$ \lim _{t \rightarrow 0^+} Q \circ u = u, 
\quad \forall u \in \Gamma ^\infty _c (\bt ^{-1} \rE), $$
where $\circ $ denotes the convolution product.
\end{enumerate}
}
\end{dfn}

Let $Q $ be a groupoid heat kernel. 
Then it is clear that for any $x \in \rM $, 
$( a , b ) \in \cG _x \times \cG _x \mapsto Q (a b ^{-1} , t) $ is a heat kernel of 
$( \Delta ^\rE + F + K )_x $ on the manifold with bounded geometry $\cG _x$.
Using the uniqueness of the heat kernel on manifolds with bounded geometry, 
it is clear that:
\begin{lem}
A groupoid heat kernel $Q $ of $\Delta ^\rE + F + K $, if it exists, is unique.
\end{lem}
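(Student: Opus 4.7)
The strategy is to reduce the claim to the uniqueness of the heat kernel on each individual $\bs$-fiber, which is a manifold with bounded geometry by the lemma in Section \ref{RiemVert}, and then reassemble the result via the fibration $\cG = \bigcup_{x \in \rM} \bs^{-1}(x)$.

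Suppose $Q_1$ and $Q_2$ are two groupoid heat kernels and set $D := Q_1 - Q_2$. For each $x \in \rM$ I would consider the function $k_x(a, b, t) := D(a b^{-1}, t)$ on $\cG_x \times \cG_x \times (0, \infty)$. Using right-invariance of $\Delta^\rE$ and of the potential $F$, together with the description of $K$ as convolution with its reduced kernel $\kappa$, I would first check that for each fixed $b \in \cG_x$ the function $a \mapsto k_x(a, b, t)$ satisfies the ordinary heat equation $(\partial_t + (\Delta^\rE + F + K)_x) k_x(\cdot, b, t) = 0$ on $\cG_x \times (0, \infty)$, where $(\Delta^\rE + F + K)_x$ is the classical (non-groupoid) Laplacian-type operator on the complete Riemannian manifold $(\cG_x, g_\bs)$ twisted by $\bt^{-1}\rE|_{\cG_x}$.

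Next, I would translate the convolution-style initial condition $\lim_{t \to 0^+} Q_i \circ u = u$ for $u \in \Gamma^\infty_c(\bt^{-1}\rE)$ into the standard pointwise statement that $k_x(a, b, t) \to \delta_b(a) \otimes \id_{\rE_{\bt(b)}}$ as $t \to 0^+$ in the distributional sense on $\cG_x$. Concretely, by testing against sections $u$ of compact support localized near $b \in \cG_x$ and pulling back via $\bt$, the convolution $Q_i \circ u$ restricted to $\cG_x$ becomes the classical heat semigroup applied to the corresponding section on $\cG_x$. Taking the difference, $k_x$ satisfies the heat equation on $\cG_x$ with zero initial datum.

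Finally, I would invoke uniqueness of the heat kernel on the manifold with bounded geometry $(\cG_x, g_\bs)$ (with bounded potential $F$ and smoothing perturbation $K_x$, both of which preserve the standard energy / maximum-principle arguments); this forces $k_x \equiv 0$. Since every $c \in \cG$ can be written as $c = c \cdot \mathbf{u}(\bs(c))^{-1}$ with $c \in \cG_{\bs(c)}$, the vanishing of each $k_x$ implies $D \equiv 0$ on $\cG \times (0, \infty)$, proving $Q_1 = Q_2$. The main obstacle is the careful translation of the groupoid convolution initial condition into the classical $\delta$-function initial condition on each fiber; once this identification is set up, fiberwise uniqueness on manifolds of bounded geometry with bounded perturbation is standard.
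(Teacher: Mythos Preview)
Your proposal is correct and follows exactly the approach the paper takes: the paper simply observes (in the sentence preceding the lemma) that for each $x \in \rM$ the map $(a,b) \mapsto Q(ab^{-1},t)$ is a heat kernel of $(\Delta^\rE + F + K)_x$ on the manifold with bounded geometry $\cG_x$, and then invokes fiberwise uniqueness. Your version spells out the translation of the convolution initial condition into the classical $\delta$-initial condition more carefully than the paper does, but the underlying argument is identical.
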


\subsubsection{\bf The formal solution}
Before we start, we need to define some notation.

Recall that there exists $r_0 > 0$ such that $\exp ^\nabla $ is a diffeomorphism from
the set 
$$ \cA_{r_0} := \{ X \in \cA :g _\cA (X, X) < r_0 ^2 \} $$
onto its image.
For each $x \in \rM$, we denote the polar coordinates on $\cA _x$, the fiber of $\cA$ over $x$, by $(r , \vartheta )$.
The image of $\cA _{r_0} $ under $\exp ^\nabla$ is denoted by $B (\rM , r_0 )$.
Note that since
$$ d (\exp ^\nabla (r, \vartheta ) , x ) = r, $$
therefore $B (\rM , r_0 ) = \{ a \in \cG : d (a , \bs (a)) < r_0 \}$, as expected.
The exponential map also defines a local trivialization of $\bt ^{-1} \rE$:
For each $a = \exp ^\nabla X \in B (\rM , r_0), E \in \rE _{\bs (a)}$ where $X \in \cA _{\bs (a)}$, define
$ T (a) (E) \in \bt ^{-1} \rE _a $ to be the parallel transport of $ E $ to $ a $ along the curve 
$ \exp ^\nabla \tau X , \tau \in [0, 1] $.
Hence $T$ is a map from the set $\{ (a , E) : a \in B (\rM , r_0), E \in \rE_{\bs (a)} \} $ to 
$\bt ^{-1} \rE |_{B (\rM , r_0)}$,
and we denote its inverse map by $T^{-1} $.
When restricted to $\bt ^{-1} \rE |_{\cG _x} $ for some $x \in \rM$,
the image of $T ^{-1}$, lies in $E _x$. 
In that case we shall still denote the restricted map by
$T ^{-1} : \bt ^{-1} \rE |_{\cG _x \bigcap B (\rM , r_0)} \rightarrow \rE _x $.

Lastly, we let $J := \det (d \exp ^\nabla) \circ (\exp ^\nabla)^{-1} $ to be the Jacobian, 
and $V := d (a , \bs (a)) \times d \exp^\nabla (\partial _r )$ be the radial vector field.

Consider a kernel of the form
$$ q (a , t) \Phi (a, t) \in \Gamma ^\infty (\bt ^{-1} \rE \otimes \bs ^{-1} \rE' \ltimes (0, \infty)), $$
where $q : B (\rM , r_0 ) \times (0 , \infty ) \rightarrow \bbR $ is the Gaussian function
$$ q (a , t) := (4 \pi t)^{- \frac{n}{2}} \: e^{- \frac{d (a , \bs (a))^2 }{4 t}}. $$
A straightforward calculation shows that:
\begin{lem}
One has
$$ (\partial _t + \Delta ^\rE + F) q (a , t) \Phi (a, t)
= q (a , t) (\partial _t + \Delta ^\rE + F + t ^{-1} \nabla ^\rE _V + \frac{\fL _V J}{2 t J} ) \Phi (a, t). $$
\end{lem}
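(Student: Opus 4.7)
The identity is a direct Leibniz-rule computation, performed fiberwise on each $\bs$-fiber in the $\exp^\nabla$-normal coordinates that parametrise $B(\rM,r_0)$. The plan is to peel off the trivial contributions first, then compute $\Delta^\rE(q\Phi)$ by the scalar Leibniz rule, and finally evaluate the two scalar combinations $\mathrm{grad}\,q$ and $\partial_t q+\Delta q$ explicitly in normal coordinates.

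The trivial pieces are the perturbation and the time derivative: $F$ is a zeroth-order operator (multiplication by a section of $\End(\bt^{-1}\rE)$), so $F(q\Phi)=qF\Phi$; and $\partial_t(q\Phi)=(\partial_t q)\Phi+q\,\partial_t\Phi$ by Leibniz. Both are absorbed directly into the right-hand side of the claimed identity, so the only nontrivial work is $\Delta^\rE(q\Phi)$.

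For this, applying the scalar Leibniz rule twice to the definition of $\Delta^\rE$ yields the standard identity
$$\Delta^\rE(q\Phi)=(\Delta q)\Phi+q\,\Delta^\rE\Phi-2\nabla^\rE_{\mathrm{grad}\,q}\Phi.$$
Since $q$ depends only on $d=d(a,\bs(a))$ and on $t$, Gauss's lemma on $B(\rM,r_0)$ identifies $\mathrm{grad}\,d$ with the unit radial vector $d\exp^\nabla(\partial_r)$, so
$$\mathrm{grad}\,q=-\frac{d}{2t}\,q\cdot d\exp^\nabla(\partial_r)=-\frac{q}{2t}V.$$
Substituting, the cross term contributes exactly $q\,t^{-1}\nabla^\rE_V\Phi$. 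The remaining scalar $\partial_t q+\Delta q$ is computed via the radial-Laplacian formula in Riemannian normal coordinates,
$$\Delta f(d)=f''(d)+\Bigl(\frac{n-1}{d}+\frac{\partial_r J}{J}\Bigr)f'(d)$$
(up to sign), which follows from $\Delta=J^{-1}\partial_i(Jg^{ij}\partial_j\,\cdot\,)$ together with the Gauss-lemma identity $g^{ij}x^j=x^i$ in normal coordinates. Applied to the Gaussian $q$, the flat-space contributions cancel against $\partial_t q$ (this is just the exactness of the Euclidean heat equation for $q$), leaving the Jacobian correction $q\,d\,\partial_r J/(2tJ)=q\,\fL_V J/(2tJ)$, using $V=d\partial_r$.

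Assembling the three contributions and factoring out the common $q$ produces the stated identity. No analytic obstacle arises from the groupoid setting: the computation is strictly local inside $B(\rM,r_0)$, and the uniformity in $x=\bs(a)$ needed later is inherited from the bounded-geometry property of the $\bs$-fibers established in Section~2. The only care required is consistent sign bookkeeping between the connection-Laplacian convention of Definition~\ref{LapDfn} and the positive-Laplacian convention implicit in the heat equation $(\partial_t+\Delta^\rE+F+K)Q=0$; once this is fixed, the three pieces assemble exactly as stated.
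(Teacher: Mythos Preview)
Your proof is correct and is precisely the standard computation the paper has in mind; the paper itself omits all details, merely asserting the identity as ``a straightforward calculation,'' and your fiberwise Leibniz--rule derivation (scalar Leibniz for $\Delta^\rE(q\Phi)$, Gauss's lemma to identify $\mathrm{grad}\,q=-\tfrac{q}{2t}V$, and the radial Laplacian formula with Jacobian correction to evaluate $\partial_t q+\Delta q$) is exactly the intended route, as in \cite[Chapter~2]{BGV;Book}. Your closing caveat about the sign convention in Definition~\ref{LapDfn} is well taken: as written there, $\Delta^\rE$ is the negative-semidefinite rough Laplacian, so for the heat equation $(\partial_t+\Delta^\rE+F+K)Q=0$ and the present lemma to be consistent one must read $\Delta^\rE$ as the positive Laplacian throughout Section~4.
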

 
\begin{lem}
\label{HeatPowerSeries}
There exists a formal power series 
$$ \Phi (a, t) = \sum _{i=1} ^\infty t^i \Phi _i (a), \quad \Phi _i \in \Gamma ^\infty $$
satisfying the equation
\begin{equation}
(\partial _t + \Delta ^\rE + F + t ^{-1} \nabla ^\rE _V + \frac{\fL _V J}{2 t J} ) \Phi (a, t) = 0.
\end{equation}
\end{lem}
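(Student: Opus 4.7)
The plan is to construct the $\Phi_i$ recursively as solutions of first-order radial transport ODEs obtained by separating powers of $t$. Substituting $\Phi(a,t)=\sum_i t^i \Phi_i(a)$ into the displayed equation, the term $\partial_t$ contributes $\sum_i i\,t^{i-1}\Phi_i$, the two terms $t^{-1}\nabla^\rE_V$ and $\frac{\fL_V J}{2tJ}$ shift down by one power to give $\sum_i t^{i-1}\bigl(\nabla^\rE_V+\tfrac{\fL_V J}{2J}\bigr)\Phi_i$, and $(\Delta^\rE+F)$ contributes $\sum_i t^i(\Delta^\rE+F)\Phi_i$. Collecting the coefficient of $t^{i-1}$ yields the hierarchy of transport equations
\begin{equation}
\Bigl(i+\nabla^\rE_V+\tfrac{\fL_V J}{2J}\Bigr)\Phi_i+(\Delta^\rE+F)\Phi_{i-1}=0,
\end{equation}
with the convention $\Phi_{-1}=0$.

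For each $x\in\rM$ I would work in polar coordinates $(r,\vartheta)$ on $\cA_x\cap\cA_{r_0}$ and transfer the analysis to $B(\rM,r_0)\subset\cG_x$ via $\exp^\nabla$. Using the radial parallel transport $T$, write $\Phi_i = T\tilde\Phi_i$, so that by definition of $T$ we have $\nabla^\rE_V\Phi_i = r\,T\partial_r\tilde\Phi_i$. The further substitution $\tilde\Phi_i = J^{-1/2}\Psi_i$ is designed to absorb the Jacobian term $\frac{\fL_V J}{2J}$, reducing the transport equation to the scalar radial ODE
\begin{equation}
r\,\partial_r\Psi_i + i\,\Psi_i = -J^{1/2}\,T^{-1}(\Delta^\rE+F)\Phi_{i-1}=:G_i.
\end{equation}
The leading case has $G_0 = 0$, and the canonical choice $\Psi_0=\id_\rE$ recovers the standard Van Vleck--Morette factor $\Phi_0=J^{-1/2}T$ that fixes the initial value of the heat kernel. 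For each subsequent $i\geq 1$, multiplying the ODE by $r^{i-1}$ rewrites it as $\partial_r(r^i\Psi_i)=r^{i-1}G_i$, which integrates explicitly to
\begin{equation}
\Psi_i(r,\vartheta)=r^{-i}\int_0^r s^{i-1}G_i(s,\vartheta)\,ds.
\end{equation}

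The main obstacle I expect is not the formal recursion itself but verifying that each $\Psi_i$ extends smoothly across the unit space $r=0$ and depends smoothly on the transverse data $(\vartheta,x)$. Smoothness in $r$ at the origin is the critical point, because the factor $r^{-i}$ is singular; I would handle it by induction on $i$, using that $\Phi_{i-1}$ is smooth on $B(\rM,r_0)$ (so $G_i$ is smooth) together with the standard observation that for any smooth function $G$ the expression $r^{-i}\int_0^r s^{i-1}G(s)\,ds$ extends smoothly to $r=0$ (expand $G$ in Taylor series at $s=0$ and integrate term by term). Smoothness in $(\vartheta,x)$ is then automatic from the smooth dependence of $\exp^\nabla$, $J$, $T$, $\nabla^\rE$, and $F$ on their base points, so that the resulting sections $\Phi_i\in\Gamma^\infty(\bt^{-1}\rE\otimes\bs^{-1}\rE')|_{B(\rM,r_0)}$ assemble into the desired formal power series solution, and right-invariance under the groupoid is inherited from the invariant construction of $\exp^\nabla$ and $T$.
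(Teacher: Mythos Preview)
Your proposal is correct and follows essentially the same route as the paper: both derive the recursive transport equations by matching powers of $t$, absorb the Jacobian term via the substitution $\Phi_i = J^{-1/2}T\Psi_i$, and write down the explicit radial integral solution (the paper parametrizes by $\tau\in[0,1]$ along the ray, which is your formula after the change of variables $s=\tau r$). Your added discussion of smoothness at $r=0$ is a genuine improvement over the paper, which simply asserts $\Phi_i\in\Gamma^\infty$ without addressing the apparent $r^{-i}$ singularity.
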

\begin{proof}
Equating coefficients one gets
\begin{align*}
\nabla ^\rE _V ( J ^\frac{1}{2} \Phi _0 ) = & \: 0 \\
\nabla ^\rE _V ( J ^\frac{1}{2} \Phi _i ) + i \Phi _i =& - (\partial _t + \Delta ^\rE + F ) \Phi _{i-1} ,
\quad i = 1, 2 \cdots 
\end{align*}
These are simple ordinary differential equations, with explicit solutions
\begin{align*}
\Phi _0 (\exp X) =& J ^{- \frac{1}{2}} T (\exp X) \\
\Phi _i (\exp X) =& 
- J ^{ - \frac{1}{2}} T \int _0 ^1 J ^{\frac{1}{2}} T^{-1} ((\partial _t + \Delta ^\rE + F ) 
\Phi _{i-1} (\exp \tau X )) \tau ^{i-1} \: d \tau .
\end{align*}
\end{proof} 

Fix a cutoff function $\chi $ supported on $B (\rM , r_0)$ such that $\chi = 1 $ on the smaller set
$B (\rM , \frac{r_0}{2}) := \{ a \in \cG : d (a , \bs (a)) \leq \frac{r_0}{2} \}$.
Write 
$$ G _N (a, t) := \chi (a) q (a, t) \sum _{i=1}^N t^i \Phi _i (a), 
\quad t \in (0, \infty ).$$
Then one has
\begin{lem}
\label{ParaProof}
For any $N > \frac{n}{2}$, 
\begin{enumerate}
\item
For any $k, l \in \bbN$, there exists a constant $C_{k, l}$ such that 
$$\| \partial _t ^k ((\partial _t + \Delta ^\rE + F) G _N) \|_l \leq 
C_{k, l} t ^{N - \frac{n}{2} - k - \frac{l}{2}} ;$$ 
\item
For any $t_0 > 0$, the map
$$u \mapsto G _N (\cdot , t) \circ u, 0 \leq t \leq t_0 $$ 
is a uniformly bounded family of operators on 
$\Gamma ^l (\bt ^{-1} \rE)$, and for any $u \in \Gamma ^l (\bt ^{-1} \rE)$,
$$ \lim _{t \rightarrow 0^+} \| G _N \circ u - u \| _l = 0. $$
\end{enumerate}
\end{lem}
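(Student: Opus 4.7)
The plan is to make the error $(\partial_t + \Delta^\rE + F) G_N$ explicit, and then estimate each resulting piece by standard Gaussian-scaling techniques.

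To produce the explicit form, write $\Phi^N := \sum_{i=0}^N t^i \Phi_i$ for the truncated formal series. Since $\chi$ is independent of $t$ and $F$ is a zeroth-order operator, one has
$$
(\partial_t + \Delta^\rE + F) G_N = \chi\, (\partial_t + \Delta^\rE + F)(q \Phi^N) + [\Delta^\rE, \chi](q \Phi^N).
$$
The lemma preceding Lemma \ref{HeatPowerSeries} rewrites the first summand as $\chi q (\partial_t + \Delta^\rE + F + t^{-1} \nabla^\rE_V + \tfrac{\fL_V J}{2tJ}) \Phi^N$; by the transport equations of Lemma \ref{HeatPowerSeries} the coefficient of $t^{i-1}$ in the expansion vanishes for every $0 \leq i \leq N$, so only the $t^N$-endpoint remainder survives and
$$
(\partial_t + \Delta^\rE + F) G_N = \chi\, t^N q\, (\Delta^\rE + F)\Phi_N + [\Delta^\rE, \chi](q \Phi^N).
$$

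To bound the first summand, I would invoke the Gaussian scaling bounds
$$
\bigl\lvert \partial_t^k (\nabla^\rE)^j q(a, t) \bigr\rvert \leq C_{j,k}\, t^{-n/2 - k - j/2}\, e^{-d(a, \bs(a))^2 / (8 t)}
$$
on each $\bs$-fiber, which hold uniformly in the base point because $(\cG_x, g_\bs)$ has bounded geometry by Section \ref{RiemVert}; this same bounded geometry makes $J$, the parallel transport $T$, and hence every $\Phi_i$ constructed iteratively in Lemma \ref{HeatPowerSeries} bounded with all covariant derivatives. Multiplying by $\chi t^N (\Delta^\rE + F)\Phi_N$, the first summand contributes the asserted bound $C_{k,l} t^{N - n/2 - k - l/2}$. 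The commutator $[\Delta^\rE, \chi]$ is a first-order differential operator whose coefficients are supported in the shell $\{r_0/2 \leq d(a, \bs(a)) \leq r_0\}$, on which the Gaussian satisfies $e^{-d^2/4t} \leq e^{-r_0^2/(16 t)}$ and therefore dominates every negative power of $t$; hence this term is $O(t^M)$ for every $M$, proving (1).

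For (2), the key observation is that right-invariant covariant differentiation on each $\bs$-fiber commutes with convolution, so $\|G_N \circ u\|_l$ is controlled by the uniform fiberwise $L^1$ norm of $G_N(\cdot, t)$ multiplied by $\|u\|_l$. In the normal coordinates $\exp^\nabla$, the integral $\int q(a, t)\, \mu_{\bs(a)}$ differs from the Euclidean Gaussian integral $1$ by a factor controlled by the bounded Jacobian, while each summand $t^i \chi q \Phi_i$ with $i \geq 1$ contributes $O(t^i)$ uniformly in the base point; this gives the uniform operator bound for $t \in [0, t_0]$. The initial condition $\lim_{t \to 0^+} \|G_N \circ u - u\|_l = 0$ then follows from the standard approximate-identity argument: the leading summand $\chi q \Phi_0$ is a Gaussian approximation to the fiberwise delta with $\Phi_0|_\rM = \id$, so it converges to the identity on $u$ in $\|\cdot\|_l$, and the higher-order summands vanish at rate $O(t)$. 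The main obstacle throughout is the uniformity in the base point $x \in \rM$ of all the sup-norm estimates; this is precisely where the bounded-geometry results of Section \ref{RiemVert} are essential, since once uniform control on $J$, $T$ and the iteratively defined $\Phi_i$ is in hand the estimates reduce to their classical Euclidean analogues.
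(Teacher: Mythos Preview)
Your approach is essentially the same as the paper's: both compute the error term on $B(\rM, r_0/2)$ via the transport equations to get $t^N q$ times a bounded coefficient, handle the shell region via exponential decay of the Gaussian where $d \geq r_0/2$, and for (2) reduce to the Euclidean Gaussian approximate-identity argument via exponential coordinates. Your write-up is in fact slightly more careful than the paper's---you make explicit the commutator $[\Delta^\rE,\chi]$ and the role of bounded geometry in the uniformity, and you correctly identify the surviving coefficient as $(\Delta^\rE+F)\Phi_N$ rather than $\Phi_N$---but the strategy is identical.
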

\begin{proof}
On $B (\rM , \frac{r_0}{2})$,
from the proof of Lemma \ref{HeatPowerSeries}, one has 
\begin{align*}
(\partial _t + \Delta ^\rE + F) G _N (a) =& \: t ^N q (a, t) \Phi _N (a) \\
=& \: (4 \pi )^{- \frac{n}{2}} t ^{N - \frac{n}{2}} e ^{- \frac{d (a , \bs (a))^2}{4 t} } \Phi _N (a).
\end{align*}
It is elementary that $ e ^{- \frac{d (a , \bs (a))^2}{4 t} }$ is bounded for any $a, t$,
and $\Phi _N $ is smooth and hence has bounded derivatives.
On $\cG \backslash B (\rM , \frac{r_0}{2}) $ observe that 
$e ^{- \frac {d (a , \bs (a))}{4 t}}$ and all its derivatives decay faster than any powers as $t \rightarrow 0$.
That proves (1) in the case $l = k = 0$. 
Other cases follow from a similar argument, with the additional observation that 
\begin{align*}
\partial _t e^{- \frac {y^2 }{t}} =& - t ^{-1} (\frac{y ^2}{t} ) e ^{ - \frac{y ^2}{t}} = O (t^{-1}) \\
\partial _y e^{- \frac {y^2 }{t}} =& - t ^{-\frac{1}{2}} (\frac{y ^2}{t} )^{\frac{1}{2}} e ^{ - \frac{y ^2}{t}} 
= O (t^{- \frac{1}{2}}) .
\end{align*}

To prove (2), write for any $a \in \cG$,
\begin{align*}
G _N \circ u (a) := & \: \int _{\cG _{\bs (a)}} G _N (a b^{-1} ) u (b ) \mu _{\bs (a)} (b) \\ 
= & \: \int _{\bs ^{-1} (\bt (a))} G _N (c ^{-1}) u ( c a ) \mu _{\bt (a)} (c) 
\quad \text {(using the right invariance of $\mu$)} \\
= & \: \int _{\bt (a)} 
(4 \pi t)^{- \frac{n}{2}} e^{\frac{d (c ^{-1} , \bt (c))^2 }{4 t}} 
\chi (c ^{-1}) \Big( \sum _{i=0}^N t ^i \Phi _i (c^{-1}) \Big) u (c a) \mu _{\bt (a)} (c).
\end{align*}
By right invariance and symmetry of the distance function $d (\cdot , \cdot)$,
one has $d (c ^{-1} , \bt (c)) \\ = d (c , \bs (c)) $.
Hence $\chi ( c^ {-1}) = \chi (c) $, 
and $ e^{\frac{d (c ^{-1} , \bt (c))^2 }{4 t}} = e^{\frac{d (c , \bs (c))^2 }{4 t}}$.
Therefore the integrand is supported on $B (\rM , r _0)$ and the integral can be computed by a change of variable
$c = \exp ^\nabla X , X \in \cA _{\bt (a)},  g_\cA (X, X) \leq r_0 ^2 $:
\begin{align*}
\int _{\bs ^{-1} (\bt (a))} 
(4 \pi t)^{- \frac{n}{2}} & e^{- \frac{d (c ^{-1} , \bt (c))^2 }{4 t}} 
\chi (c ^{-1}) \Big( \sum _{i=0}^N t ^i \Phi _i (c^{-1}) \Big) u (c a) \mu _{\bt (a)} (c) \\
=& \int _{\bs ^{-1} (\bt (a))} 
(4 \pi t)^{- \frac{n}{2}} e^{- \frac{d (c , \bs (c))^2 }{4 t}} 
\chi (c) \Big( \sum _{i=0}^N t ^i \Phi _i (c^{-1}) \Big) u (c a) \mu _{\bt (a)} (c) \\
=& \int _{X \in \cA _{\bt (a)}} (4 \pi t)^{- \frac{n}{2}} e^{- \frac{g _\cA (X, X) }{4 t}} \chi (\exp ^\nabla X) \\
& \times \Big( \sum _{i=0}^N t ^i \Phi _i ((\exp ^\nabla X)^{-1}) \Big)
u ((\exp ^\nabla X) a) \det (d \exp ^\nabla )(X) \: d X.
\end{align*}
It is clear that the last expression converges to 
$$ \chi (\exp ^\nabla 0) (\sum _{i=0}^N t ^i \Phi _i ((\exp ^\nabla 0)^{-1})) 
u ((\exp ^\nabla 0) a) (\det (d \exp ^\nabla )(0)) = u (a), $$
since $(4 \pi t)^{- \frac{n}{2}} e^{- \frac {g _\cA (X, X) }{4 t}}$ is just the Gaussian heat kernel on the
usual Euclidean space.  
\end{proof}

\subsubsection{\bf From parametrix to heat kernel}
\label{LeviSeries}
In the last section we constructed an approximate solution to the heat kernel.
In this section we use the method of Levi parametrix to construct a heat kernel.
We turn to operators of the form
$$  \Delta ^\rE + F + K. $$

For each $N > \frac{n}{2}$, define the sections 
$ R ^{(k)} _n \in \Gamma ^\infty (\bt ^{-1} \rE \otimes \bs ^{-1} \rE ' _ {[0, \infty )})$:
\begin{align*}
R ^{(1)} _N :=& \: (\partial _t + \Delta ^\rE + F + K ) G _N \\
R ^{(k)} _N :=& \: \int _0 ^t R _N (\cdot , t - \tau ) \circ R ^{(k-1)} _N (\cdot , \tau ) d \tau \\
=& \: \int _0 ^t \int _{\bs ^{-1} (a)} R_n (a b^{-1} , t - \tau ) R^{(k-1)} _N (b , \tau ) \mu _{\bs (a)} (b) d \tau  \\
Q ^{(0)} _N :=& \: G _N \\ 
Q ^{(k)} _N :=& \: \int _0 ^t G _N (\cdot , t - \tau ) \circ R ^{(k)} _N (\cdot , \tau ) d \tau, \quad k \geq 1 .
\end{align*}
Then one has the estimates
\begin{lem}
\label{RGrowth}
Let $N > \frac{n + l}{2} $. There exists constants $\tilde C _l, l \in \bbN$ such that 
$$ \| R ^{(k)} (\cdot , t) \| _l 
\leq \tilde C _l \tilde C_0 ^k M ^k (1 + t ^{N - \frac{n+l}{2}} )^k t ^{k-1} ((k-1)!)^{-1}.$$
\end{lem}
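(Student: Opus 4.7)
My plan is to prove this by induction on $k$, following the classical Levi parametrix recursion but with the Gaussian convolution estimates replaced by their groupoid analogues.

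\textbf{Base case ($k = 1$).} Split $R^{(1)}_N = (\partial_t + \Delta^\rE + F)\,G_N + K\,G_N$. The first summand is controlled by Lemma \ref{ParaProof}(1) applied with $k = 0$, giving $\|(\partial_t + \Delta^\rE + F) G_N\|_l \leq C_{0,l}\,t^{N - \frac{n+l}{2}}$. For the perturbation summand, the reduced kernel $\kappa$ of $K \in \Psi^{-\infty}_\mu(\cG,\rE)$ is smooth, uniformly supported, and has uniformly bounded fiberwise derivatives; writing $K G_N(\cdot,t) = \kappa \circ G_N(\cdot,t)$ and using Lemma \ref{ParaProof}(2) — which says that $G_N(\cdot, t) \circ (\,\cdot\,)$ is a uniformly bounded family of operators on $\Gamma^l(\bt^{-1}\rE)$ for $t \in [0, t_0]$ — I get $\|K G_N\|_l \leq C''_l$ uniformly for small $t$. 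Adding the two contributions and enlarging the constants yields $\|R^{(1)}_N(\cdot,t)\|_l \leq \tilde C_l\,\tilde C_0\, M\,(1 + t^{N - \frac{n+l}{2}})$, which is the asserted bound at $k = 1$.

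\textbf{Inductive step.} Assume the estimate at level $k$. By the defining recursion
\[
R^{(k+1)}_N(\cdot,t) = \int_0^t R^{(1)}_N(\cdot, t-\tau) \circ R^{(k)}_N(\cdot, \tau)\, d\tau,
\]
combined with a uniform convolution estimate $\|f \circ g\|_l \leq M\,\|f\|_l\,\|g\|_l$ — valid because right invariance of the Haar system $\mu$ and of the fiberwise connection $\nabla^\rE$ allows invariant covariant derivatives in the $a$-variable to slide through the convolution onto the first factor, and bounded geometry of the $\bs$-fibers makes the resulting pointwise estimates uniform in the base point — the inductive hypothesis yields, with $\alpha := N - \tfrac{n+l}{2} > 0$,
\[
\|R^{(k+1)}_N(\cdot,t)\|_l \leq \tilde C_l^2\,\tilde C_0^{k+1}\, M^{k+2} \int_0^t (1 + (t-\tau)^\alpha)(1 + \tau^\alpha)^k\, \frac{\tau^{k-1}}{(k-1)!}\, d\tau.
\]
Monotonicity on $[0,t]$ gives $(1 + (t-\tau)^\alpha)(1 + \tau^\alpha)^k \leq (1 + t^\alpha)^{k+1}$, and the elementary identity $\int_0^t \tau^{k-1}/(k-1)!\, d\tau = t^k/k!$ produces the exact form asserted at level $k+1$, after absorbing the extra factor of $\tilde C_l$ into $\tilde C_0$ (equivalently, fixing $\tilde C_l \geq 1$ and enlarging $\tilde C_0$ once and for all so that $\tilde C_l^2 \leq \tilde C_l\,\tilde C_0$).

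\textbf{Main obstacle.} The crux is the uniform convolution estimate $\|f \circ g\|_l \leq M\,\|f\|_l\,\|g\|_l$ on the groupoid. Two features must be verified: (i) right-invariance of $\mu$ and of $\nabla^\rE$, ensuring that differentiation in the $a$-variable inside the convolution integral falls only on the first factor (so covariant derivatives of $f \circ g$ are expressible in terms of derivatives of $f$ alone), and (ii) uniform control of the fiberwise support of the iterated kernels $R^{(k)}$, which grows under convolution but stays inside a set of bounded fiberwise volume on any compact time interval (thanks to the support of $G_N$ lying in $B(\rM, r_0)$ and the bounded geometry of the $\bs$-fibers from Section \ref{RiemVert}). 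Once this estimate is in place, the factorial gain in the denominator is a direct consequence of iterating the Duhamel-type time integral, so the rest of the induction is essentially bookkeeping.
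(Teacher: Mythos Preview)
Your approach is essentially the paper's: both rest on the compact support of $R^{(1)}_N$ (giving a uniform ball volume $M$), the sup bound $\|R^{(1)}_N(\cdot,t)\|_l \leq \tilde C_l(1+t^{N-\frac{n+l}{2}})$, and the time-simplex volume $t^{k-1}/(k-1)!$. The paper simply unrolls the $k$-fold convolution into one multiple integral and bounds all factors simultaneously, placing $|\cdot|_l$ on the leftmost $R^{(1)}$ and $|\cdot|_0$ on the remaining $k-1$; your induction is the recursive version of exactly this computation.

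There is one bookkeeping slip. In point (i) you correctly argue that $a$-derivatives fall only on the first convolution factor, which yields the sharp estimate $\|f\circ g\|_l \leq M\,\|f\|_l\,\|g\|_0$; but you then state and use the weaker $\|f\circ g\|_l \leq M\,\|f\|_l\,\|g\|_l$. With the weak form your induction acquires an extra factor of $\tilde C_l M$ at every step (your displayed $\tilde C_l^2\,\tilde C_0^{k+1}\,M^{k+2}$ versus the target $\tilde C_l\,\tilde C_0^{k+1}\,M^{k+1}$), and the proposed fix of enlarging $\tilde C_0$ would force $\tilde C_0$ to depend on $l$, contradicting the lemma's intended form where $\tilde C_0$ is the $l=0$ constant. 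The cure is to use the sharp estimate: first close the $l=0$ induction to get $\|R^{(k)}\|_0 \leq \tilde C_0(\tilde C_0 M)^{k-1}(1+t^\alpha)^k t^{k-1}/(k-1)!$, then feed $\|R^{(k)}\|_0$ (not $\|R^{(k)}\|_l$) into the general-$l$ step. This reproduces the paper's $\tilde C_l\,\tilde C_0^{k-1}\,M^{k-1}$ exactly. Finally, your remark (ii) about the growing support of $R^{(k)}$ is a red herring: only the support of the \emph{first} factor $R^{(1)}$ determines the integration domain $B(a,\rho)$ and hence the single volume constant $M$.
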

\begin{proof}
Using the same arguments as in the proof of (2) Lemma \ref{ParaProof}, 
one has $K G _N = \kappa (\cdot ) \circ G _N (\cdot , t) \rightarrow \kappa $ in the $\| \cdot \| _l $-norm
as $t \rightarrow 0$.
Therefore $ K G _N $ is a continuous section over $ \cG \times [0, \infty)$,
and its $l$-partial derivatives extends continuously to $t \in [0, \infty)$.  
Combining with (1) of Lemma \ref{ParaProof}, 
it follows that the integrand is a continuous section on $\cG \times [0, t] $, 
so the integral exists (and is finite).

Combining (1) of Lemma \ref{ParaProof} and the boundedness of $K$ to obtain for each $l$,
$$\| R ^{(1)} _N (\cdot , t ) \| _l 
= \| (\partial _t + \Delta ^\rE + F + K ) G _N (\cdot , t) \|_l
\leq \tilde C _l (1 + t ^{N - \frac{n+l}{2}} ) $$ 
for some $\tilde C_l > 0$.
Expand $R ^{(k)} $ as a multiple integral:
\begin{align*}
R ^{(k)} _N (a , t) =
\int _{0 \leq t_{k-1} \leq \cdots \leq t _1 \leq t }
& \int _{ b_1 , b_2, \cdots b_{k -1} \in \bs ^{-1} (a) } \\
R _N ^{(1)} (a b _1 ^{-1} & , t - t_1 ) R _N ^{(1)} (b_1 b_2 ^{-1} , t_1 - t_2) \cdots \\
\times R _N ^{(1)} & (b _{k-1} b _k ^{-1}, t _{k-2} - t _{k-1} ) 
R _N ^{(1)} (b _{k-1} , t_{k-1} ) \mu (b _1) \cdots \mu (b _{k-1}).
\end{align*}

Next, consider the domain of integration.
Since both $G _N $ and $\kappa $ have compact supports, 
$R _N ^{(1)}$ is compactly supported for each $t \geq 0$.
In particular, there exists $\rho > 0$ such that 
$R _N ^{(1)} (c _1 c _2 ^{-1} , t ) = 0  $ for any $c _1 , c _2  \in \cG $ such that 
$\bs (c _1) = \bs (c _2) $ and $d (c _1 , c _2) \geq \rho $.
Using the bounded geometry property of the $\bs$-fibers, we take 
$$M := \sup _{c \in \cG} \int _{B (a, \rho )} \mu _{\bs (c)} < \infty.$$
Then it follows that the volume of the domain of integration is bounded by 
$$ \int _{b _1 \in B (a , \rho )} \int _{b _2 \in B (b_1 , \rho )} \cdots \int _{b _{k-1} \in B (b _{k-2} , \rho )}
\mu _{\bs (a)} (b _1) \cdots \mu _{\bs (a)} (b _{k-1} ) \leq M ^{k-1}. $$
By elementary calculation, one also gets
$$ \int _{0 \leq t_k \leq \cdots \leq t _1 \leq t } d t _1 d t _2 \cdots d t _{k - 1} = t ^{k-1} ((k - 1)! )^{-1}. $$
Finally, one has for any $a \in \cG$,
\begin{align*}
| R ^{(k)} _N (a , t) |_l \leq
\int _{0 \leq t_{k-1} \leq \cdots \leq t _1 \leq t }
& \int _{ b_1 , b_2, \cdots b_{k -1} \in \bs ^{-1} (a) }
| R _N ^{(1)} (a b _1 ^{-1} , t - t_1 ) |_l \\
\times & | R _N ^{(1)} (b_1 b_2 ^{-1} , t_1 - t_2)| _0 \cdots 
|R _N ^{(1)} (b _{k-1} b _k ^{-1} , t _{k-2} - t _{k-1} ) |_0 \\
& \times | R _N ^{(1)} (b _{k-1} , t_{k-1} ) |_0 \mu _{\bs (a)} (b _1) \cdots \mu _{\bs (a)} (b _{k-1}) \\
\leq \int _{0 \leq t_{k-1} \leq \cdots \leq t _1 \leq t }
& \int _{b _1 \in B (a , \rho )} \int _{b _2 \in B (b_1 , \rho )} \cdots \int _{b _{k-1} \in B (b _{k-2} , \rho )} \\
& \tilde C _l \tilde C _0 ^{k-1} (1 + t ^{N - \frac{n+l}{2}} )^k 
\mu _{\bs (a)} (b _1) \cdots \mu _{\bs (a)} (b _{k-1} ) \\
\leq \tilde C _l \tilde C_0 ^k M ^k t ^{k-1} (1 & + t ^{N - \frac{n+l}{2}} )^k ((k-1)!)^{-1}.
\end{align*}
The assertion follows by taking supremum over $a \in \cG$.
\end{proof}

\begin{lem}
\label{QEst}
Assume that $l > 1, 2N > n + l$.
\begin{enumerate}
\item
There exists constants $C' _l$ such that
$$ \| Q _N ^{(k)} (\cdot , t) \|_l \leq C' _l \tilde C_0 ^k M ^k (1 + t ^{N - \frac{n+l}{2}} )^k t ^k (k!)^{-1}; $$   
\item
The kernel $Q _N ^{(k)} (a, t)$ is continuously differentiable with respect to $t$ and 
$$ (\partial _t + \Delta ^\rE + F + K) Q ^{(k)} _N = R ^{(k+1)} _N + R ^{(k)} _N.$$
\end{enumerate}
\end{lem}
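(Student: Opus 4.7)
The plan is to derive both statements directly from the defining integral
$$ Q^{(k)}_N(\cdot, t) \;=\; \int_0^t G_N(\cdot, t-\tau) \circ R^{(k)}_N(\cdot, \tau)\, d\tau, $$
combining three ingredients already at hand: the uniform boundedness and initial-condition property of $u \mapsto G_N \circ u$ on $\Gamma^l$ from Lemma \ref{ParaProof}(2), the growth bound for $R^{(k)}_N$ proved in Lemma \ref{RGrowth}, and the tautology $R^{(1)}_N = (\partial_t + \Delta^\rE + F + K)G_N$ coming straight from the definition.

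For (1), Lemma \ref{ParaProof}(2) supplies a constant $C_G$ (depending only on the chosen horizon for $t$) such that
$$ \| G_N(\cdot, t-\tau) \circ R^{(k)}_N(\cdot, \tau) \|_l \;\leq\; C_G\, \| R^{(k)}_N(\cdot, \tau) \|_l $$
uniformly in $\tau \in [0,t]$. Inserting the estimate of Lemma \ref{RGrowth} and using that $2N > n+l$ makes $(1 + \tau^{N-(n+l)/2})^k$ monotonically increasing in $\tau$, this factor bounds by $(1 + t^{N-(n+l)/2})^k$ and can be pulled outside. The remaining integral is the elementary $\int_0^t \tau^{k-1}/(k-1)!\, d\tau = t^k/k!$, yielding the stated bound with $C'_l := C_G \tilde C_l$.

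For (2), the strategy is Leibniz differentiation under the integral. Formally,
$$ \partial_t Q^{(k)}_N(\cdot, t) \;=\; \lim_{s \to 0^+} \bigl(G_N(\cdot, s) \circ R^{(k)}_N(\cdot, t)\bigr) \;+\; \int_0^t \partial_t G_N(\cdot, t-\tau) \circ R^{(k)}_N(\cdot, \tau)\, d\tau. $$
The boundary term equals $R^{(k)}_N(\cdot, t)$ exactly by the initial-condition half of Lemma \ref{ParaProof}(2). The spatial operator $\Delta^\rE + F$ passes inside the integral because it acts fibrewise on the $\bt^{-1}\rE$-variable of $G_N$; the same is true of $K$ because convolution is associative, so $K(G_N \circ R^{(k)}_N) = (K G_N) \circ R^{(k)}_N$. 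Grouping these, the integrand becomes $[(\partial_t + \Delta^\rE + F + K)G_N(\cdot, t-\tau)] \circ R^{(k)}_N(\cdot, \tau) = R^{(1)}_N(\cdot, t-\tau) \circ R^{(k)}_N(\cdot, \tau)$, whose time integral is $R^{(k+1)}_N$ by the very recursion that defines it. Hence $(\partial_t + \Delta^\rE + F + K) Q^{(k)}_N = R^{(k)}_N + R^{(k+1)}_N$.

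The main obstacle will be justifying the differentiation under the integral at the boundary $\tau = t$, where $G_N(\cdot, t-\tau)$ develops its Gaussian short-time singularity. The remedy is to split the integral as $\int_0^{t-\delta} + \int_{t-\delta}^{t}$: on the first piece, unsingular Leibniz applies; on the second piece, Lemma \ref{ParaProof}(2) controls $G_N(\cdot, s) \circ R^{(k)}_N(\cdot, t-s)$ uniformly. Letting $\delta \to 0^+$, the contribution of $\int_{t-\delta}^t \partial_t G_N \circ R^{(k)}_N\, d\tau$ vanishes thanks to Lemma \ref{ParaProof}(1), whose exponent $N - n/2 - 1 - l/2$ is strictly greater than $-1$ precisely under the hypothesis $2N > n+l$, rendering the time-singularity integrable. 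The assumption $l > 1$ is used to ensure that the derivatives produced by the operator $\Delta^\rE + F$ remain absorbable into the $\|\cdot\|_l$-norm at each intermediate step.
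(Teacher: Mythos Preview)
Your proof is correct and follows essentially the same route as the paper: bound the convolution via Lemma~\ref{ParaProof}(2) and Lemma~\ref{RGrowth}, then integrate $\tau^{k-1}/(k-1)!$ for part~(1); differentiate the Volterra integral, identify the boundary term as $R^{(k)}_N$ via the initial condition, and recognize the remaining integral as $R^{(k+1)}_N$ for part~(2). You are actually more careful than the paper in justifying differentiation under the integral near the $\tau=t$ singularity and in explaining how the hypotheses $2N>n+l$ and $l>1$ enter.
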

\begin{proof}
Define the section
$$ B (a, t, s) := (G _N (\cdot , t - s ) \circ R ^{(k)} _N (\cdot , s)) (a),
\quad \forall a \in \cG, t \in [0, \infty ), s \in [0, t].$$
Since $G _N (\cdot , t - s) $ is $C^l$ by our construction,
by (2) of Lemma \ref{ParaProof}, one has for any $0 \leq s \leq t$,
\begin{align*}
\| b (\cdot , t , s) \| _l \leq & \: C' \int _0 ^t \| R ^{(k)} _N (\cdot , s) \| _l d s \\
\leq & \: C' \tilde C_0 ^k M ^k (1 + t ^{N - \frac{n+l}{2}} )^k \int _0 ^t s ^{k-1} ((k-1)!)^{-1} d s \\
\leq & \: C' \tilde C_0 ^k M ^k (1 + t ^{N - \frac{n+l}{2}} )^k t ^{k} (k!)^{-1},
\end{align*}
from which (1) follows. 
To prove (2), one has
\begin{align*}
(\partial _t + \Delta ^\rE + F + K) & (\int _0 ^t B (a, t, s ) d s ) (a, t) \\
=& \: B (a, t, t) 
+ \int _0 ^t (\partial _t + \Delta ^\rE + F + K) G _N (\cdot , t - s) \circ R ^{(k)} _N (\cdot , s) d s \\
=& \: R ^{(k)} _N (a, t)
+ \int _0 ^t R ^{(0)} (\cdot , t - s) \circ R ^{(k)} _N (\cdot , s) d s \\
=& \: R ^{(k)} _N (a, t) + R ^{(k+1)} _N (a, t).
\end{align*}
\end{proof} 
Finally, we can construct the heat kernel
\begin{lem}
\label{HeatKer}
For any $l, N$ with $2 N > n + l + 1$,
the series   
$$ \sum _{k=0} ^\infty (-1)^k Q ^{(k)} _N (\cdot , t) $$
converges to a limit $Q (\cdot , t) \in \Gamma ^0 (\bt ^{-1} \rE \otimes \bs ^{-1} \rE ' \times (0, \infty) ) $, 
independent of $N$, in the $\| \cdot \|_l$ norm.
Furthermore,
\begin{enumerate}
\item
The section $Q$ is the heat kernel of $\partial _t + \Delta ^\rE + F + K $;
\item
$ G _N $ approximates $Q$ in the sense that 
$$ \| Q - G _N \| _l = O (t) $$
as $t \rightarrow 0$.
\end{enumerate}
\end{lem}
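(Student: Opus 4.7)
The plan is to combine the estimates of Lemma \ref{RGrowth} and Lemma \ref{QEst} in a manner parallel to the classical Levi parametrix argument on a manifold with bounded geometry, with the only real new ingredient being bookkeeping of the fact that everything is right-invariant on the $\bs$-fibers.

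\textbf{Convergence.} First I would fix $l$ with $2N > n+l+1$ and apply part (1) of Lemma \ref{QEst} to get the uniform bound
\[
\| Q_N^{(k)}(\cdot , t) \|_l \;\leq\; C'_l \, \tilde C_0^{\,k} M^k \bigl(1 + t^{N - (n+l)/2}\bigr)^k \frac{t^k}{k!}.
\]
Because of the $k!$ in the denominator, the series $\sum_k (-1)^k Q_N^{(k)}(\cdot,t)$ converges absolutely in the $\|\cdot\|_l$-norm, uniformly for $t$ in any compact subinterval of $[0,\infty)$. Since $\|\cdot\|_l$ controls derivatives up to order $l$ along $\bs$-fibers, the limit $Q(\cdot,t)$ lies in $\Gamma^0$ on $\cG \times (0,\infty)$ and is $C^l$ along each $\bs$-fiber. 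Taking $l$ arbitrarily large and noting uniqueness of the limit in coarser norms shows the sum does not depend on $l$ and produces a section smooth along the fibers; independence of $N$ will follow once uniqueness of the heat kernel (already recorded just before Section 4.1.1) is invoked, once I verify the limit is a heat kernel.

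\textbf{Heat equation.} Next I would compute
\[
(\partial_t + \Delta^\rE + F + K)\sum_{k=0}^{K_0}(-1)^k Q_N^{(k)}
\;=\; \sum_{k=0}^{K_0}(-1)^k\bigl(R_N^{(k+1)} + R_N^{(k)}\bigr)
\;=\; (-1)^{K_0} R_N^{(K_0+1)},
\]
by telescoping, using part (2) of Lemma \ref{QEst} and the convention $R_N^{(0)} := 0$ together with $Q_N^{(0)} = G_N$, so that $(\partial_t + \Delta^\rE + F + K) G_N = R_N^{(1)}$. By Lemma \ref{RGrowth}, $\| R_N^{(K_0+1)}\|_l \to 0$ as $K_0 \to \infty$, and in fact uniformly on compact subsets of $(0,\infty)$. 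Passing to the limit, the heat equation
$(\partial_t + \Delta^\rE + F + K) Q = 0$ holds in the strong sense on each fiber; since the Sobolev bounds are uniform in $x \in \rM$, transverse continuity of $Q$ as a section of $\bt^{-1}\rE \otimes \bs^{-1}\rE'$ over $\cG\times(0,\infty)$ is preserved. Along the way one also sees that $Q(a^{-1},t)$ is fiberwise smooth by the same argument applied with $a \mapsto a^{-1}$.

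\textbf{Initial condition and the $O(t)$ estimate.} For the initial condition I would use part (2) of Lemma \ref{ParaProof}: $G_N \circ u \to u$ in $\|\cdot\|_l$ for $u \in \Gamma^l(\bt^{-1}\rE)$. It remains to show $\sum_{k\geq 1}(-1)^k Q_N^{(k)}\circ u \to 0$, but
\[
\bigl\| Q_N^{(k)}(\cdot,t)\circ u\bigr\|_l
\;\leq\; C'_l \,\tilde C_0^{\,k} M^k \bigl(1+t^{N-(n+l)/2}\bigr)^k \frac{t^k}{k!}\, \|u\|_l,
\]
which is $O(t)$ term by term and summable, yielding $\|Q\circ u - u\|_l \to 0$. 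The very same estimate, applied to the tail $Q - G_N = \sum_{k\geq 1}(-1)^k Q_N^{(k)}$, immediately gives
\[
\|Q - G_N\|_l \;\leq\; \sum_{k\geq 1} C'_l \,\tilde C_0^{\,k} M^k (1+t^{N-(n+l)/2})^k \frac{t^k}{k!} = O(t)
\]
as $t \to 0^+$, proving (2).

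\textbf{Main obstacle.} The delicate point is keeping the estimates uniform in $x \in \rM$ when $l \geq 1$: the constants $\tilde C_l$ in Lemma \ref{RGrowth} come from bounds on covariant derivatives of $G_N$ and of $\kappa$ along the $\bs$-fibers, and one must be certain that the bounded-geometry constants (in particular the volume bound $M$) are fiber-uniform. This is exactly the payoff of the Riemannian bounded-geometry lemma proved in Section \ref{RiemVert} — without it one could not interchange limit and differentiation to extract the heat equation uniformly over $\rM$.
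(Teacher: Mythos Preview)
Your proof is correct and follows essentially the same approach as the paper: absolute convergence from the factorial bound in Lemma~\ref{QEst}(1), the $O(t)$ estimate and initial condition from the tail $\sum_{k\geq 1}(-1)^k Q_N^{(k)}$ combined with Lemma~\ref{ParaProof}(2), and the heat equation via telescoping of $(\partial_t+\Delta^\rE+F+K)Q_N^{(k)}=R_N^{(k)}+R_N^{(k+1)}$. Your presentation is in fact slightly more careful than the paper's in two respects: you phrase the telescoping via partial sums (avoiding a termwise-differentiation-of-an-infinite-sum step that the paper glosses over), and you explicitly note that independence of $N$ follows from the uniqueness lemma recorded just before the construction.
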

\begin{proof}
From (1) of Lemma \ref{QEst}, one has $Q ^{(k)} _N < \frac{1}{2 ^k }$ for sufficient large $k$.
Convergence of the series $\sum _{k=0} ^\infty (-1)^k Q _N ^{(k)}$ follows from the comparison test.
Assertion (2) follows from $ Q - G _N = \sum _{k=1} ^\infty Q _N ^{(k)}$,
and implies the initial condition of (1), i.e.,
$$ \lim _{t \rightarrow 0^+} \| Q \circ u - u \| _l = 0, $$
since 
$$ \| Q \circ u - u \| \leq \| (Q - G _N ) \circ u \| _l + \| G _N \circ u - u \| _l \rightarrow 0. $$
To show that $(\partial _t + \Delta ^\rE + F + K) Q = 0$, 
observe that $\| (\partial _t + \Delta ^\rE + F + K) Q _N ^{(k)} \| _l \leq 2 ^{-k} $ for sufficient large $k$.
Therefore one has 
\begin{align*}
(\partial _t + \Delta ^\rE + F + K) \sum _{k=1}^\infty (-1) ^k Q _N ^{(k)}
=& \: \sum _{k=1} ^\infty (-1)^k (\partial _t + \Delta ^\rE + F + K) Q _N ^{(k)} \\
=& \: R ^{(1)} _N + \sum _{k=2} ^\infty (-1)^k (R ^{(k)} _N + R ^{(k-1)} _N) \\
=& \: 0.
\end{align*}
\end{proof}

\begin{nota}
We shall denote the heat kernel of the Laplacian $\Delta ^\rE + F + K$, as constructed above, by
$$ e ^{- t (\Delta ^\rE + F + K) } := Q (\cdot , t). $$
\end{nota}

\begin{rem}
Alternatively, let $e ^{-t (\Delta ^\rE + F)} $ be the heat kernel of 
$\Delta ^\rE + F$ constructed using the same method as above.
Then a heat kernel of $\Delta ^\rE + F + K$ is given by
\begin{equation}
\label{HeatPert}
e ^{-t (\Delta ^\rE + F + K)}  = e ^{-t (\Delta ^\rE + F)} + \sum_{i = 1} ^\infty t^i \tilde Q ^{(i)} ,
\end{equation}
where 
$$ \tilde Q ^{(i)} 
:= \int _{0 < \tau _0 < \cdots < \tau _i < 1} 
e ^{-t (\Delta ^\rE + F)} (\cdot , \tau _0 t) \circ \kappa 
\circ e ^{-t (\Delta ^\rE + F)} (\cdot , \tau _1 t) \circ \kappa \circ \cdots
\circ \kappa \circ e ^{-t (\Delta ^\rE + F)} (\cdot , \tau _i t),$$
and the integration is over the Lebesgue measure.
\end{rem}
As in the case of manifolds with bounded geometry, 
the heat kernel of Laplacian on groupoids satisfies the following `off diagonal' estimate:
\begin{prop}
\label{HeatDecay}
Fix $\varepsilon > 0$ such that for any $a \in \cG$,
$\kappa (a b ^{-1}) = 0$ and $ G _N (a b ^{-1}, t) \\ = 0 $ for any $t$,
whenever $b \in \cG _{\bs (a)} \backslash B (a, \varepsilon )$.
Let $t > 0$ be fixed.
For any $\lambda > 0$, there exists $C > 0 $ such that 
\begin{equation}
\label{QDecay}
| e ^{- t (\Delta ^\rE + F + K) }(a, t) | 
\leq C e ^{- \lambda d (a, \bs (a))}, \quad \forall a \in \cG, d (a , \bs (a) ) > 2 \varepsilon ,
\end{equation}
and $Q (a ^{-1} , t) \in \bL ^1 (\cG _{\bs (a)})$ 
\end{prop}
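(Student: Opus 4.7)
The strategy is to track how the Levi parametrix construction of Section \ref{LeviSeries} propagates supports through repeated convolutions. By right-invariance of the fiberwise metric, the function $d_\bs(a) := d(a, \bs(a))$ satisfies $d_\bs(ab^{-1}) = d(a, b)$, so the hypothesis is equivalent to saying that $\kappa(c) = 0$ and $G_N(c, t) = 0$ whenever $d_\bs(c) \geq \varepsilon$. Such an $\varepsilon$ exists because $G_N$ is supported in $B(\rM, r_0)$ by the cutoff $\chi$ and $\kappa$ has compact support as the reduced kernel of $K \in \Psi^{-\infty}_\mu$. In particular, for any $a$ with $d_\bs(a) > 2\varepsilon$ we have $G_N(a, t) = 0$, so the expansion from Lemma \ref{HeatKer} reduces to $Q(a, t) = \sum_{k \geq 1} (-1)^k Q_N^{(k)}(a, t)$.

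The key step is to prove by induction on $k$ that $R_N^{(k)}$ is supported in $\{d_\bs < 2k\varepsilon\}$ and $Q_N^{(k)}$ in $\{d_\bs < (2k+1)\varepsilon\}$. For the base case $R_N^{(1)} = (\partial_t + \Delta^\rE + F + K)G_N$, the operators $\partial_t, \Delta^\rE, F$ are local and do not enlarge the support of $G_N$ beyond $\{d_\bs < \varepsilon\}$, while $K G_N(\cdot, t) = \kappa \circ G_N(\cdot, t)$ is supported in $\{d_\bs < 2\varepsilon\}$ by the triangle inequality: if $\kappa(ab^{-1}) G_N(b, t) \neq 0$ then $d(a, b) < \varepsilon$ and $d_\bs(b) < \varepsilon$, so $d_\bs(a) < 2\varepsilon$. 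The inductive step applies the same triangle inequality to the fiberwise convolutions defining $R_N^{(k)}$ and $Q_N^{(k)}$, and since the $\tau$-integration over $[0, t]$ does not affect spatial supports, the induction goes through uniformly in $t$.

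Combining the support restrictions with the norm estimate $\|Q_N^{(k)}\|_0 \leq C'_0 (C_t)^k / k!$ from Lemma \ref{QEst} (where $C_t$ absorbs the $t$-dependence for $t$ fixed), for $a$ with $d_\bs(a) > 2\varepsilon$ only the terms with $k \geq N_a := \lceil (d_\bs(a) - \varepsilon)/(2\varepsilon) \rceil$ contribute. Applying Stirling's formula to the tail gives
\begin{equation*}
|Q(a, t)| \leq C'_0 \sum_{k \geq N_a} \frac{(C_t)^k}{k!} \leq C''_t \left( \frac{e C_t}{N_a} \right)^{N_a},
\end{equation*}
and the right-hand side, of the form $\exp(-N_a \log(N_a / (e C_t)))$, decays faster than any pure exponential in $d_\bs(a)$ as $d_\bs(a) \to \infty$. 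Hence for any $\lambda > 0$ there is a $C > 0$ with $|Q(a, t)| \leq C e^{-\lambda d_\bs(a)}$ for $d_\bs(a) > 2\varepsilon$, which is (\ref{QDecay}).

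For the $\bL^1$ statement, note that $d_\bs(a^{-1}) = d_\bs(a)$ by right-invariance, so the same bound applies to $Q(a^{-1}, t)$. Since the $\bs$-fibers are manifolds of bounded geometry, their volume grows at most exponentially, $\mathrm{vol}(B(x, r)) \leq C_1 e^{c_1 r}$, by Bishop-Gromov; choosing $\lambda > c_1$ in the decay estimate makes $Q(\cdot^{-1}, t)$ integrable on $\cG_x$ for each $x$. The main obstacle is the careful bookkeeping of support propagation through the convolutions defining the Levi series, particularly ensuring the linear-in-$k$ bound on the radius of support; once this is in place, the super-exponential decay follows from the classical Stirling tail estimate.
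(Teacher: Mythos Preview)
Your proof is correct and follows essentially the same approach as the paper: both track how supports propagate through the Levi series convolutions to show that $Q_N^{(k)}(a,t)$ vanishes unless $k$ is at least a fixed multiple of $d_\bs(a)/\varepsilon$, then combine this with the factorial bound from Lemma~\ref{QEst} to obtain super-exponential decay, and finish the $\bL^1$ claim via exponential volume growth of the $\bs$-fibers. Your bookkeeping of the support radii (yielding $(2k+1)\varepsilon$ for $Q_N^{(k)}$) is in fact more careful than the paper's, which simply asserts $Q_N^{(k)}(a,t)=0$ for $k<I$ when $I\varepsilon \le d_\bs(a)$ without spelling out the induction.
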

\begin{proof}
Let $I \in \bbN $ be such that $I \varepsilon \leq d (a , \bs (a)) \leq (I + 1) \varepsilon $.
Then $Q _N ^{(k)} (a, t) = 0 $ for any $k < I$.
Therefore one has
\begin{align*}
| Q (a, t) | e ^{\lambda d (a, \bs (a))}
\leq & \: \sum _{k = I} ^\infty e ^{\lambda (I + 1) \varepsilon } 
C' _0 \tilde C_0 ^k M ^k (1 + t ^{N - \frac{n}{2}} )^k t ^k (k!)^{-1} \\
=& \: e ^{\lambda (I + 1) \varepsilon } 
\frac{ C' _0 \tilde C_0 ^I M ^I (1 + t ^{N - \frac{n}{2}} )^I t ^I }{I!}
\times \sum _{k = 0} ^\infty \frac{ \tilde C_0 ^k M ^k (1 + t ^{N - \frac{n}{2}} )^k t ^k I! }{(k + I)!}.
\end{align*}
It is clear that the last expression goes to $0$ as $I \rightarrow \infty$, 
so Equation (\ref{QDecay}) is proved.
From Equation (\ref{QDecay}), one has
$$Q (a ^{-1} , t) \leq C e ^{- \lambda d (a ^{-1}, \bt (a))} = C e ^{- \lambda d (a, \bs (a)) }. $$
It follows that $Q (a^{-1} , t) \in \bL ^1 (\cG _{\bs (a)})$
because $\cG _{\bs (a)}$ has at most exponential volume growth.
\end{proof}

\subsubsection{\bf The heat kernel of the vector representation}
We turn to study the heat kernel of $\nu (\partial _t + \nabla ^\rE + F + K)$,
where $\nu$ is the vector representation.
The construction becomes very simple, 
once we know the heat kernel of $(\partial _t + \nabla ^\rE + F + K)$.
\begin{thm}
\label{VectorHeat}
If $Q $ is a heat kernel of $\partial _t + \nabla ^\rE + F + K$, 
then $\nu (Q)$ is a heat kernel of $\nu (\partial _t + \nabla ^\rE + F + K)$ in the sense that
\begin{align}
\nu (\partial _t + \nabla ^\rE + F + K) \nu (Q) f =& \: 0, \quad \forall t > 0 \\ \nonumber
\lim _{t \rightarrow 0^+} \| \nu (Q) f - f \| =& \: 0
\end{align}
for any $f \in \Gamma ^\infty (\rE)$.
\end{thm}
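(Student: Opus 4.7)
The plan is to deduce both assertions directly from the construction of $Q$ in the preceding subsections, exploiting the fact that $\nu$ is a homomorphism and that the operators $\Delta^\rE$, $F$, $K$ are all right-invariant. First I would unpack the meaning of $\nu(Q)$: since $Q(\cdot, t)$ is the reduced kernel of a smoothing operator $\hat Q_t$ for each $t > 0$, I would write, for $f \in \Gamma^\infty(\rE)$ and $x \in \rM$,
\begin{equation*}
\nu(\hat Q_t) f (x) = \int_{b \in \cG_x} Q(b^{-1}, t) \, f(\bt(b)) \, \mu_x(b),
\end{equation*}
and note that the integrand is in $\bL^1(\cG_x)$ by Proposition \ref{HeatDecay}, so the integral is finite and depends smoothly on $x$. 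Observe that $\nu$ plainly commutes with differentiation in the external parameter $t$, so $\partial_t \nu(\hat Q_t) = \nu(\partial_t \hat Q_t)$.

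For the heat equation, the key point is that $\Delta^\rE + F$ is (a sum of products of) right-invariant differential operators on $\cG$, and $K \in \Psi^{-\infty}_\mu(\cG, \rE)$, so the composition $(\Delta^\rE + F + K) \circ \hat Q_t$ is again a well-defined operator whose reduced kernel is obtained by applying $\Delta^\rE + F + K$ to the first variable of $Q(\cdot, t)$. By the construction in Section 4.1 (Lemma \ref{HeatKer}(1)), this reduced kernel equals $-\partial_t Q(\cdot, t)$. Applying the homomorphism property of the vector representation then gives
\begin{equation*}
\nu(\Delta^\rE + F + K) \, \nu(\hat Q_t) f = \nu\bigl((\Delta^\rE + F + K) \circ \hat Q_t\bigr) f = -\partial_t \nu(\hat Q_t) f,
\end{equation*}
which is the first assertion. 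The homomorphism property for the composition of a properly-supported pseudo-differential operator with a uniformly supported smoothing operator is standard on groupoids; the only technicality is that $\hat Q_t$ is not uniformly supported, but the exponential decay of $Q$ off the diagonal from Proposition \ref{HeatDecay} is more than enough to justify exchanging the differentiation with the integral.

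For the initial condition, I would use the approximation $Q - G_N = O(t)$ in the $\|\cdot\|_l$ norm from Lemma \ref{HeatKer}(2). Since $\rM$ is compact, $\bt^{-1}f$ lies in the space of bounded sections of $\bt^{-1} \rE$, and the $O(t)$ contribution to $\nu(\hat Q_t) f(x)$ is controlled by a bound of the form $C t \| f \|_\infty$ uniformly in $x$, using the $\bL^1$-norm bound on the tails provided by Proposition \ref{HeatDecay}. It then remains to show $\nu(G_N(\cdot, t)) f \to f$ uniformly. This I would do by the same change of variables $b = \exp^\nabla X$ that appears in the proof of (2) of Lemma \ref{ParaProof}: the integral reduces to a Gaussian convolution in the fiber $\cA_x$ against the continuous section $X \mapsto \Phi_0((\exp^\nabla X)^{-1}) f(\bt(\exp^\nabla X))$, modified by the Jacobian $\det(d \exp^\nabla)$, plus $O(t)$ terms; dominated convergence identifies the limit as $\Phi_0(x) f(x) \det(d\exp^\nabla)(0) = f(x)$.

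The main obstacle is exactly the last step: the estimate of Lemma \ref{ParaProof}(2) is stated for $u \in \Gamma^\infty_c(\bt^{-1}\rE)$, whereas $\bt^{-1} f$ is globally smooth and bounded but not compactly supported along $\bs$-fibers. The resolution is that the Gaussian factor $q(a, t)$ is supported in $B(\rM, r_0)$ and concentrates as $t \to 0^+$, so only the values of $\bt^{-1}f$ in a fixed compact neighborhood of $\rM$ enter, and one may multiply $\bt^{-1} f$ by a fiberwise cutoff without changing $\nu(G_N(\cdot,t))f$; this reduces the initial-condition statement on $\rM$ to the already-proven compactly-supported version on each $\bs$-fiber, uniformly in $x$ because $\rM$ is compact.
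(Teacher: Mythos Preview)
Your proposal is correct and follows essentially the same approach as the paper: both arguments use that $\nu(Q)$ is well defined by the decay estimate of Proposition~\ref{HeatDecay}, and then reduce the heat equation and the initial condition for $\nu(Q)$ to those for $Q$ via the defining property $\bt^{-1}\bigl(\nu(\varPsi)u\bigr)=\varPsi(\bt^{-1}u)$ of the vector representation. The paper's proof is terse to the point of simply writing ``by definition'' and ``similar argument''; you supply considerably more detail, and in particular you correctly flag and resolve a point the paper glosses over, namely that the initial condition for $Q$ is stated only for compactly supported $u$ whereas $\bt^{-1}f$ is not compactly supported on the $\bs$-fibers --- your cutoff argument (the support of $G_N$ sits in $B(\rM,r_0)$, so only values of $\bt^{-1}f$ near $\rM$ enter) is the right fix.
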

\begin{proof}
By Proposition \ref{HeatDecay}, $\nu Q$ is well defined for each $t \leq 0$.
By definition one has
$$ \bt ^{-1} ( \nu (\partial _t + \nabla ^\rE + F + K) \nu (Q) f)
= (\partial _t + \nabla ^\rE + F + K) Q (\bt ^{-1} f) = 0.$$
The second equality follows by a similar argument.   
\end{proof} 
One important observation from Theorem \ref{VectorHeat} is that the heat kernel of the vector representation
is not a smoothing operator.
However, if $\cG \rightrightarrows \rM$ is a Lauter-Nistor groupoid in the sense of Definition \ref{BdGpoid},
then for any $f \in \Gamma ^\infty _c (\rE | _{\rM_0})$, 
one has
\begin{equation}
\nu (K) (f) (x) = \int _{a \in \cG _x } \kappa (a ^{-1} ) f (\bt (a)) \mu _{x} (a) 
= \int _{y \in \rM _\alpha } \kappa |_{\cG _{\rM _0}} (x , y ) f (y) \mu _{\rM _0}, 
\end{equation}
where $\rM _\alpha $ is the connected component of $\rM _0$ containing $x$ 
and we have used the identification $\cG _{\rM _0} \cong \coprod _\alpha \rM _\alpha \times \rM _\alpha $. 

\subsubsection{\bf Application: Heat kernel in edge calculus}
As an application of our construction, 
we give a simple proof to Albin's conjecture on generalization of \cite[Theorem 4.3]{Albin;EdgeInd}.
We refer to the same paper for details.
\begin{thm}
\label{AlbinConj}
A Laplacian operator on any manifolds $\rM $ with iterated complete edge has a heat kernel.
\end{thm}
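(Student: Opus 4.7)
The plan is to reduce Theorem \ref{AlbinConj} to the already-established groupoid heat kernel construction of Lemma \ref{HeatKer} together with the vector-representation transfer in Theorem \ref{VectorHeat}. The key point is that an iterated complete edge structure on a compact manifold $\rM$ is modeled by a Lie algebroid $\cA \to \rM$ whose anchor $\nu : \cA \to T \rM$ is an isomorphism over the open dense interior $\rM _0$, and whose integrating groupoid $\cG \rightrightarrows \rM$ (whose existence follows from Debord's integrability theorem, since $\nu$ is injective on a dense open) is precisely a Lauter-Nistor groupoid in the sense of Definition \ref{BdGpoid}. Under this identification, the edge Laplacian on $\rM _0$ is exactly the vector representation $\nu (\Delta ^\rE )$ of the intrinsic groupoid Laplacian of Definition \ref{LapDfn}, with the fibre metric $g _\bs$ restricting on each $\bs$-fibre to a complete manifold of bounded geometry.

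First I would make this identification precise: choose a compatible metric $g _\cA$ on $\cA$ (which produces the complete edge metric on $\rM _0$ via $g _{\rM _0} (X , Y) = g _\cA (\nu ^{-1} X , \nu ^{-1} Y)$), and note that by the lemma in Section \ref{RiemVert} every $\bs$-fibre $(\cG _x , g _\bs)$ is a manifold with bounded geometry. Second, I would form the groupoid Laplacian $\Delta ^\rE \in \Psi ^2 _\mu (\cG , \rE )$ associated to the Clifford or vector bundle data of the edge Laplacian; because $\nu (\Delta ^\rE ) $ agrees with the given edge Laplacian on $\rM _0$ by construction, the problem of producing a heat kernel for the latter is reduced to producing one for $\Delta ^\rE$ on $\cG$.

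Third, I would invoke the construction of Section 4.1 verbatim. The Gaussian ansatz $G _N $ of Lemma \ref{ParaProof} can be built using the exponential map $\exp ^\nabla$ of $\cA$ on a tubular neighbourhood $B (\rM , r _0)$, since positivity of the injectivity radius is uniform by compactness of $\rM $; the formal transport equation of Lemma \ref{HeatPowerSeries} is solved fibrewise exactly as written (the coefficients $\Phi _i$ make sense because $\rE$ carries an $\cA $-connection); and the Levi iteration of Section \ref{LeviSeries}, together with the estimates of Lemmas \ref{RGrowth} and \ref{QEst}, then produces the groupoid heat kernel $Q = e ^{-t \Delta ^\rE }$ on $\cG \times (0, \infty)$ as in Lemma \ref{HeatKer}. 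Finally, by Theorem \ref{VectorHeat} the vector representation $\nu (Q)$ is a heat kernel for $\nu (\Delta ^\rE )$ acting on $\Gamma ^\infty (\rE )$, which is precisely the edge Laplacian on $\rM _0$.

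The one step that is not purely formal is the first: showing that iterated complete edge structures (in the sense of \cite{Albin;EdgeInd}) genuinely arise from a Lauter-Nistor groupoid of the type to which the machinery of Sections 2 and 4 applies, i.e.\ that the successive edge blow-ups assemble into a Lie algebroid with a single integrating Hausdorff $\bs$-connected groupoid whose restriction to the top stratum $\rM _0$ is a union of pair groupoids. This is the main obstacle: once the correct groupoid model is written down (which in the iterated edge setting has been done in spirit by the Ammann--Lauter--Nistor school and is what Albin anticipates), the heat kernel then falls out of the uniform bounded-geometry construction above with no further analysis required.
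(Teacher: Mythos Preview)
Your proposal is correct and follows essentially the same route as the paper: reduce to the groupoid heat kernel construction (Lemma~\ref{HeatKer}) and transfer via Theorem~\ref{VectorHeat}, with the only substantive step being the identification of the iterated edge calculus with a groupoid pseudo-differential calculus. The paper handles that step by a direct citation of \cite{Nistor;LieMfld} rather than via Debord's theorem; note also that the heat kernel construction in Section~4 only requires $\rM$ compact, not the full Lauter-Nistor conditions of Definition~\ref{BdGpoid}, so your invocation of the latter is slightly more than is needed.
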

\begin{proof}
By \cite{Nistor;LieMfld}, the pseudo-differential calculus is defined by a groupoid $\cG$ 
over the compactification $\rM$ of $\rM _0 $.
In particular, any Laplacian on $\rM _0$ is the vector representation of a Laplacian operator on $\cG$. 
The lemma follows from our constructions above.
\end{proof}

\subsection{ Transverse regularity of the heat kernel}
In the last Section, we proved that the series   
$ \sum _{k=0} ^\infty (-1)^k Q ^{(k)} _N (\cdot , t) $
converges to the heat kernel $Q (\cdot , t) $ in the $\| \cdot \| _l $ norms.
It follows that $Q $ is smooth on each $\bs $-fiber.
In this section, we consider the problem of regularity of the heat kernel $Q$.

\subsubsection{\bf Riemannian metrics and connections on the groupoid $\cG$}
Let $\cG$ be a groupoid with compact units $\rM$, let $\bs$ be the source map.
As in the beginning of this section,
we have already fixed an invariant metric $g _\cA$ on the foliation $\ker (d \bs ) \subset T \cG$.
We shall extend $g _\cA$ to $T \cG$.
Fix a distribution $\cH \subset T \cG$ complementary to $\ker (d \bs )$.
Then the differential $d \bs $ identifies $\cH \cong \bs ^{-1} T \rM$.
It follows that any metric on $\rM$ defines a metric $g _\cH$ n $\cH$.
We define the metric $g _\cG$ on $\cG$ by taking the orthogonal sum of $\cH $ and $\ker (d \bs )$. 

The distribution $\cH $ canonically induces a splitting 
$$ T \tilde \cG = \ker (d \tilde \bs ) \oplus \ker (d \tilde \bt ) \oplus \cH ^{(2)},$$
such that $\cH = \{ d \tilde \bs (X ) : X \in \cH ^{(2)} \} = \{ d \tilde \bt (X ) : X \in \cH ^{(2)} \}$
(see \cite{Heitsch;FoliHeat}).
Indeed, one can write down $\cH ^{(2)}$ explicitly: 
$$ \cH ^{(2)} := (\cH \times \cH ) \bigcap T \tilde \cG .$$
Also, note that the relation $\bs ^{(2)} = \bs \circ \tilde \bt = \bs \circ \tilde \bs $ implies
$$\ker (d \bs ^{(2)} ) = \ker (d \tilde \bs ) \oplus \ker (d \tilde \bt ).$$

Given any metric $g _\cG $ as above,
the splitting $T \tilde \cG = \ker (d \tilde \bs ) \oplus \ker (d \tilde \bt ) \oplus \cH ^{(2)} $
defines a metric on $\tilde \cG$, which shall be denoted by $\tilde g _\cG $.

Next, we equip $T \cG $ with a special connection, following \cite{Heitsch;FoliHeat}.
Recall that one has identification $T \cG = \Ker ( d \bs ) \oplus \bs ^{-1} T ^* \rM $.
Denote the orthogonal projection onto $\Ker ( d \bs )$ by $P ^\cV $.
Take the Levi-Civita connection $\nabla ^{T \cG }$ on $(\cG , g _\cG ) $.
Then $\nabla ^{T \cG} $ induces a connection on $\cV := \Ker ( d \bs )$ by 
$$ \nabla ^\cV _X Y := P ^\cV \nabla ^{T \cG} _X Y 
\quad \forall X \in T \cG , Y \in \Gamma ^\infty (\Ker (d \bs) ) \subset \Gamma ^\infty (T \cG).$$
We define the connection $\nabla ^{\cV \oplus \cH} $ on $ T \cG$ by taking the direct sum of 
$\nabla ^\cV $ and $\bs ^{-1 } \nabla ^{T \rM}$.

\subsubsection{\bf The regularity theorem} 
In this section, we state and prove our transverse regularity theorem.
Let $\cG \rightrightarrows \rM $ be a groupoid with $\rM $ compact. 
We shall assume that the Lie algebroid $\cA $ is orientable. 
Let $\mu $ be the $\bs$-fiber-wise invariant Riemannian volume form.

Recall that $\tilde \cG := \{ (a , b ) \in \cG \times \cG : \bs (a ) = \bs (b) \} $ 
and $\widetilde \bm (a , b) = a b ^{-1} , \quad \forall (a , b ) \in \tilde \cG $.
Also, we write $\widetilde \bm _* $ to denote the differential of $\widetilde \bm $,
regarded as a bundle map, 
i.e. $\widetilde \bm \in \Gamma ^\infty (\Hom T (\tilde \cG , \widetilde \bm ^{-1} T \cG ))$;
and $\fL ^{(m)} \mu $ to denote the $m$-th Lie derivative of $\mu$.
(see Appendix \ref{DGNonsense}).

\begin{thm}
\label{EstReg}
Assume that
\begin{enumerate}
% \item
% For each $x \in \rM$, there exists a coordinate ball $\rU $ centered at $x$ such that parallel translation
% is a diffeomorphism $\bs ^{-1} (\rU) \cong \rU \times \cG _x $;
\item
The source map $\bs : \cG \to \rM $ is a fiber bundle;
\item
For each $m \in \bbN$, there exist constants $C _m, \varepsilon _m > 0$ such that 
$$ | (\nabla ^{\Hom T (\tilde \cG , \widetilde \bm ^{-1} T \cG )}) ^m \widetilde \bm _* | (b' , b) 
\leq C _m e ^{\varepsilon _m ( d _\bs (b' , \bs (b')) + d _\bs (b , \bs (b)))}; $$
\item
The Lie derivatives of $\mu $ satisfy the estimate 
$$ | \fL ^{(m)} \mu (X _1 ^{\tilde \cH} , \cdots , X _m ^{\tilde \cH} ) (b' , b)| 
\leq C _m e ^{\varepsilon _m ( d _\bs (b' , \bs (b')) + d _\bs (b , \bs (b)))} |X _1 | \cdots |X _m| .$$
\end{enumerate}
Then for any $F \in \Gamma ^\infty (\bt ^{-1} \rE \otimes \bs ^{-1} \rE '), K \in \Psi ^{- \infty } _\mu (\cG , \rE )$, 
the heat kernel $e ^{-t (\Delta ^\rE + F + K ) } \in \Gamma ^\infty _b .$
\end{thm}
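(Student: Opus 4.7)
The plan is to establish transverse smoothness of $Q := e^{-t(\Delta^\rE + F + K)}$ by differentiating the Levi series $Q = \sum_{k=0}^\infty (-1)^k Q_N^{(k)}$ term-by-term in directions transverse to the $\bs$-fibers, and then showing the resulting series still converges uniformly with bounds independent of the basepoint in $\cG$. Assumption (1) allows us to trivialize $\bs$ locally as a product $U \times \cG _x$, so that ``transverse'' derivatives are well-defined; the connection $\nabla^{\cV \oplus \cH}$ from Section 4.2.1 then provides a global splitting $T\cG = \cV \oplus \cH$, and any covariant derivative decomposes into a vertical part (already controlled by Lemma \ref{QEst} and Lemma \ref{HeatKer}) and a horizontal part, which is the new ingredient to control.

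My first step would be to introduce a bi-indexed family of norms $\|\cdot\|_{l, l'}$ combining $l$ vertical with $l'$ horizontal covariant derivatives with respect to $\nabla^{\cV \oplus \cH}$. The target extension of Lemma \ref{QEst} reads: for all $l, l'$ and sufficiently large $N$,
$$\|Q_N^{(k)}(\cdot, t)\|_{l, l'} \leq C_{l, l'}\, (\tilde{C}_0 M)^k\, t^k / k!,$$
uniformly in $a \in \cG$. If this estimate is in hand then $\sum (-1)^k Q_N^{(k)}$ converges in every $\|\cdot\|_{l, l'}$-norm, which is exactly the assertion $Q \in \Gamma^\infty _b$.

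The core inductive step is to commute a horizontal lift past the convolution integral. Using the expression from equation (\ref{ConvDef3}),
$$f \circ g (a) = \int _{\tilde{\bt}^{-1}(a)} \bigl(\widetilde{\bm}^{-1} f\bigr)(b', b)\, \bigl(\tilde{\bs}^{-1} g\bigr)(b', b)\, \tilde{\mu}(b', b),$$
a vector field $X^{\tilde{\cH}}$ acting under the integral sign distributes over the three factors by the Leibniz rule: (i) its action on $\widetilde{\bm}^{-1} f$ produces, via the chain rule, a factor involving $\widetilde{\bm}_*$ bounded by assumption (2); (ii) its action on $\tilde{\bs}^{-1} g$ is absorbed into the inductive hypothesis on $g$; and (iii) its action on $\tilde{\mu}$ is controlled by assumption (3). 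Iterating $l'$ times produces higher covariant derivatives of $\widetilde{\bm}_*$ and $\mu$, each enjoying the exponential bounds of assumptions (2) and (3).

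The principal obstacle will be that those bounds grow exponentially in $d(\cdot, \rM)$, which could in principle destroy the factorial decay of Lemma \ref{RGrowth}. Two observations should rescue the estimate. First, since $G_N$ has compact $\bs$-fiber support inside $B(\rM, r_0)$ and $\kappa$ is uniformly supported, each $R_N^{(k)}$ and $Q_N^{(k)}$ is supported where $d(\cdot, \rM) \leq k \rho_0$ for some fixed $\rho_0 > 0$; on this support, the exponential factors from assumptions (2) and (3) contribute at most $e^{l' C k}$ in total. Second, the factorial denominator $1/k!$ in Lemma \ref{RGrowth} dominates $e^{l' C k}$, so after absorbing the extra geometric factor into $\tilde{C}_0$ the series still converges. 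This yields uniform bounds on all mixed derivatives of $Q$, establishing $Q \in \Gamma^\infty _b$ as required.
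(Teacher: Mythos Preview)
Your proposal is correct and follows essentially the same route as the paper: differentiate the Levi series term by term using the convolution form (\ref{ConvDef3}), apply the Leibniz rule to obtain the three contributions bounded respectively by hypotheses (2), the inductive estimate, and (3), and then exploit the support constraint $d(\cdot,\rM)\leq k\rho$ to convert the exponential weights into a factor $e^{Ck}$ that is absorbed by the $1/(k-1)!$ from Lemma~\ref{RGrowth}. The only cosmetic difference is that the paper organizes the induction on $R_N^{(k)}$ first (and uses the full covariant derivative $\nabla^{\bt^{-1}\rE\otimes\bs^{-1}\rE'}$ rather than a separate horizontal/vertical count), then passes to $Q_N^{(k)}$ via Lemma~\ref{QEst}; your bi-indexed norm and direct estimate on $Q_N^{(k)}$ amount to the same computation.
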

\begin{proof}
Recall from Lemma \ref{HeatKer} that the heat kernel is defined to be the sum 
$$ e ^{- t (\Delta ^\rE + F + K)} = \sum _{k = 0 } ^\infty ( -1 ) ^k Q ^{(k)}, $$
where, using Equation (\ref{ConvDef3}), the $Q ^{(k)} $ have the form:
\begin{align*}
Q ^{(0)} (a , t) =& \: G _N (a , t)\\
Q ^{(k)} (a , t) =& \: \int _0 ^t \int _{(b' , b) \in \tilde \bt ^{-1} (\bs (a))}
\widetilde \bm ^{-1} G _N (b' , b, t - \tau ) \tilde \bs ^{-1} R ^{(k-1) } (b', b, \tau) \tilde \mu (\tilde b) d \tau,
\end{align*}
where $R _N ^{(k)} $ is defined by taking convolution product of 
$R _N ^{(1)} := (\partial _t + \Delta ^\rE + F + K ) G _N $ with itself $k$-times.

Fix a connection $\nabla ^\rE $ on $\rE \to \rM$.
We denote by $\nabla ^{\bt ^{-1} \rE \otimes \bs ^{-1} \rE'}$ to be the tensor of the pullbacks of $\nabla ^\rE$
by $\bs $ and $\bt$. 
Hence $\nabla ^{\bt ^{-1} \rE \otimes \bs ^{-1} \rE' }$ is a connection on $\cG$.
Pulling-back again by $\tilde \bt$, 
one has the bundle $\tilde \bt ^{-1} (\bt ^{-1} \rE \otimes \bs ^{-1} \rE ' ) $ over $ \tilde \cG $, 
and the corresponding connection $\nabla ^{\tilde \bt ^{-1} (\bt ^{-1} \rE \otimes \bs ^{-1} \rE ' )}$.

We begin with estimating the covariant derivatives of $R _N ^{(1)}$.
Taking covariant derivative throughout the proof of (2) of Lemma \ref{ParaProof},
one gets 
$$\nabla ^{\bt ^{-1} \rE \otimes \bs ^{-1} \rE' } (\kappa \circ G _N ) 
\to \nabla ^{\bt ^{-1} \rE \otimes \bs ^{-1} \rE' } G _N ,$$
as $t $ goes to $0$.
Modifying the arguments of the proof of (1) of Lemma \ref{ParaProof} in the same manner, 
one gets the estimate
$$ \| (\nabla ^{\bt ^{-1} \rE \otimes \bs ^{-1} \rE' } )^m ((\partial _t + \Delta ^\rE + F) G _N ) \|_0
\leq C _m ^{(1)} t ^{N - \frac{n}{2} - \frac{l}{2} - m}.$$ 
Combining the two, it follows that
$$ \| (\nabla ^{\bt ^{-1} \rE \otimes \bs ^{-1} \rE' } ) ^m R ^{(1)} _N (\cdot , t) \| _0 
\leq C _m ^{(1)} (t ^{N - \frac{n + l}{2} - m} + 1) $$
for some constants $C _m ^{(1)}$ independent of $t$.

Next, we estimate the derivatives of $R _N ^{(k)}$.
Write 
$$ R _N ^{(k)} (a, t) = \int _0 ^t \int _{(b', b) \in \tilde \bs ^{-1} (a)}
\widetilde \bm ^{-1} R _N ^{(1)} (b' , b, t - \tau) \tilde \bs ^{-1} R _N ^{(k-1)} (b' , b, \tau) 
\tilde \mu (b', b) d \tau .$$
Then the corollaries of Lemma \ref{DFiberInt} imply for any (local) vector field $X$ on $\cG$,
\begin{align*} 
(\nabla &^{\bt ^{-1} \rE \otimes \bs ^{-1} \rE' } _X R _N ^{(k)}) (a, t) \\
=& \int _0 ^t \int _{(b', b) \in \tilde \bs ^{-1} (a)}
\nabla ^{\tilde \bt (\bt ^{-1} \rE \otimes \bs ^{-1} \rE')}
\big( \widetilde \bm ^{-1} R _N ^{(1)} (b' , b, t - \tau) \tilde \bs ^{-1} R _N ^{(k-1)} (b' , b, \tau) \big)
(X ^{\tilde \cH}) \tilde \mu \, d \tau \\
&+ \int _0 ^t \int _{(b', b) \in \tilde \bs ^{-1} (a)}
\widetilde \bm ^{-1} R _N ^{(1)} (b' , b, t - \tau) \tilde \bs ^{-1} R _N ^{(k-1)} (b' , b, \tau) 
\big( \fL ^{(1)} \tilde \mu (X ^{\tilde \cH}) \big) \, d \tau ,
\end{align*}
where $X ^{\tilde \cH} \in \Gamma (\tilde \cH ) \subset \Gamma (T \tilde \cG)$ is the horizontal lift of $X$.
Observe that for all $t > 0$, $ R _N ^{(1)} (\cdot , t )$ is supported on a set of the form 
$\{ a \in \cG : d _\bs (a , \bs (a) ) \leq \rho \} $ 
for some $\rho > 0 $.
It follows that $ R _N ^{(k-1)} $ is supported on the set 
$\{ a \in \cG : d _\bs (a , \bs (a) ) \leq (k - 1 ) \rho \} $;
and $ \widetilde \bm ^{-1} R _ N^{(1)} (b', b, t - \tau)$  is supported on the compact set 
$ \{ d _\bs ( b' , b ) \leq \rho \}. $ 
Hence, for each $a \in \cG$ fixed, the domain of integration can be re-written as
$$ B (a , \rho ) := \{ b \in \cG : \bs (b ) = \bs (a) , d _\bs (a , b ) \leq \rho \}, $$
and whose volume is bounded by some constant $M$ independent of $a$.

Expanding the first integrand using Leibniz rule, one gets:
\begin{align*}
\nabla ^{\tilde \bt (\bt ^{-1} \rE \otimes \bs ^{-1} \rE')} &
\big( \widetilde \bm ^{-1} R _N ^{(1)} (b' , b, t - \tau) \tilde \bs ^{-1} R _N ^{(k-1)} (b' , b, \tau) \big)
(X ^{\tilde \cH}) \\
%=& \big( \nabla ^{(\bt \circ \tilde \bt) ^{-1} \rE \otimes (\bt \circ \tilde \bs )^{-1} \rE' }
%\widetilde \bm ^{-1} R _N ^{(1)} (b' , b, t - \tau) (X ^{\tilde \cH}) \big)
%\tilde \bs ^{-1} R _N ^{(k-1)} (b' , b, \tau) \\
%&+ \widetilde \bm ^{-1} R _N ^{(1)} (b' , b, t - \tau) 
%\big( \nabla ^{(\bt \circ \tilde \bs ) ^{-1} \rE \otimes (\bs \circ \tilde \bt )^{-1} \rE' }
%\tilde \bs ^{-1} R _N ^{(k-1)} (b' , b, \tau) (X ^{\tilde \cH }) \big) \\
=& \big( \nabla ^{\widetilde \bm ^{-1} (\bt ^{-1} \rE \otimes \bs ^{-1} \rE' )}
\widetilde \bm ^{-1} R _N ^{(1)} (b' , b, t - \tau) (X ^{\tilde \cH}) \big)
\tilde \bs ^{-1} R _N ^{(k-1)} (b' , b, \tau) \\
&+ \widetilde \bm ^{-1} R _N ^{(1)} (b' , b, t - \tau) 
\big( \nabla ^{\tilde \bs ^{-1} (\bt ^{-1} \rE \otimes \bs ^{-1} \rE' ) }
\tilde \bs ^{-1} R _N ^{(k-1)} (b' , b, \tau) (X ^{\tilde \cH }) \big) \\
=& \big( (\widetilde \bm ^{-1} \nabla ^{\bt ^{-1} \rE \otimes \bs ^{-1} \rE' }
R _N ^{(1)} (b' , b, t - \tau ) ) (\widetilde \bm _* X ^{\tilde \cH} ) \big)
\tilde \bs ^{-1} R _N ^{(k-1)} (b' , b, \tau) \\
&+ \widetilde \bm ^{-1} R _N ^{(1)} (b' , b, t - \tau ) 
\big( \tilde \bs ^{-1} ( \nabla ^{ \bt ^{-1} \rE \otimes \bs ^{-1} \rE' }
R _N ^{(k-1)} (b' , b, \tau ) (X ) ) \big),
\end{align*}
where the last line follows from of Equation (\ref{PullbackDer}) and the observation that 
$d \tilde s (X ^{\tilde \cH } ) = X $.
Using hypothesis (2), one has for any $(b' , b) \in \tilde \cG $,
\begin{align*}
\big| ( \widetilde \bm ^{-1} \nabla ^{\bt ^{-1} \rE \otimes \bs ^{-1} \rE' } 
R _N ^{(1)} ) & (\widetilde \bm _* X ^{\tilde \cH} ) \big| (b', b, \tau) \\
\leq & \| \nabla ^{\bt ^{-1} \rE \otimes \bs ^{-1} \rE' } R _N ^{(1)} (\cdot , \tau ) \| _0
| X (b ' b ^{-1} ) | C _0 e ^{\varepsilon _0 ( d _\bs (b' , \bs (b')) + d _\bs (b , \bs (b)))} \\
\leq & C _m ^{(1)} (t ^{N - \frac{n + l}{2} - 1} + 1) | X (b ' b ^{-1} )| 
C _0 e ^{\varepsilon _0 ( d _\bs (b' , \bs (b')) + d _\bs (b , \bs (b)))} .
\end{align*}

Now, one can estimate $ \| \nabla ^{\bt ^{-1} \rE \otimes \bs ^{-1} \rE' } R _N ^{(k)} \| _0$.
For any $a \in \cG$,
\begin{align*}
\big| \nabla ^{\bt ^{-1} \rE \otimes \bs ^{-1} \rE' } & R _N ^{(k)} (a , t ) \big| \\
\leq \int _0 ^t & \int _{ B (a , \rho ) } 
 C _0 e ^{\varepsilon _0 2 k \rho }
\| \nabla ^{\bt ^{-1} \rE \otimes \bs ^{-1} \rE' } R _N ^{(1)} (\cdot , t - \tau ) \| _0 
\| R _N ^{(k - 1)} (\cdot , \tau ) \| _0 \mu (b ) \: d \tau \\
+& \int _0 ^t \int _{b \in B (a , \rho )} \| R _N ^{(1)} (\cdot , t - \tau ) \| _0
\| \nabla ^{\bt ^{-1} \rE \otimes \bs ^{-1} \rE ' } R _N ^{(k - 1)} (\cdot , \tau ) \| _0 \mu (b ) \: d \tau\\
+& \int _0 ^t \int _{b \in B (a , \rho )} \| R _N ^{(1)} (\cdot , t - \tau ) \| _0
\| R _N ^{(k - 1)} (\cdot , \tau ) \| _0  C _0 e ^{\varepsilon _0 2 k \rho } \mu (b ) \: d \tau,
\intertext{where we used hypothesis (3) to estimate the last term, }
\leq \int _0 ^t & \int _{b \in B (a , \rho )} C _1 ^{(1)} (1 + (t - \tau )^{N - \frac{n + l}{2} - 1}) 
C _0 e ^{\varepsilon _0 2 k \rho } \\
& \times \tilde C_0 ^{k-2} M ^{k-2} (1 + \tau ^{N - \frac{n+l}{2}} )^{k-1} \tau ^{k-2} ((k-2)!)^{-1} 
\mu (b) \: d \tau \\
+& \int _0 ^t \int _{b \in B (a , \rho )} \tilde C _0 (1 + (t - \tau )^{N - \frac{n+l}{2}} )
\| \nabla ^{\bt ^{-1} \rE \otimes \bs ^{-1} \rE ' } R _N ^{(k - 1)} (\cdot , \tau ) \| _0 \mu (b ) \: d \tau \\
+& \int _0 ^t \int _{b \in B (a , \rho )} \tilde C _0 (1 + (t - \tau )^{N - \frac{n+l}{2}} ) \\
& \times \tilde C_0 ^{k-2} M ^{k-2} (1 + \tau ^{N - \frac{n+l}{2}} )^{k-1} \tau ^{k-2} ((k-2)!)^{-1} 
C _0 e ^{\varepsilon _0 2 k \rho } \mu (b ) \: d \tau \\
\intertext{}
\leq \int _0 ^t & \tilde C _0 (2 + t ^{N - \frac{n+l}{2}} ) M
\| \nabla ^{\bt ^{-1} \rE \otimes \bs ^{-1} \rE ' } R _N ^{(k - 1)} (\cdot , \tau ) \| _0 \: d \tau \\
&+ (C _1 ^{(1)} + \tilde C _0 ) \tilde C _0 ^{k - 2} M ^{k - 1} C _0 e ^{\varepsilon _0 2 k \rho } 
(2 + t ^{N - \frac{n+l}{2}} ) ^k t ^{k-1} ((k-1)!)^{-1}
\end{align*}
Using an induction argument, it is straightforward (but tedious) to obtain the following estimate
for any $N > \frac{n}{2} + m + 1$:
\begin{equation}
\| \nabla ^{\bt ^{-1} \rE \otimes \bs ^{-1} \rE' } R _N ^{(k)} (\cdot , t) \| _0
\leq k \tilde C _1 ^k M ^k e ^{\varepsilon _0 2 k \rho } (t ^{N - \frac{n}{2} } + 2 ) ^k t ^{k-1} ((k - 1) !)^{-1},
\end{equation}
for some constant $ \tilde C _1 $.

It is straightforward (if not tedious) to repeat the same arguments above to get estimates for higher derivatives:
\begin{equation}
\| (\nabla ^{\bt ^{-1} \rE \otimes \bs ^{-1} \rE' })^m R _N ^{(k)} (\cdot , t) \| _0
\leq k^m \tilde C _m ^k M ^k e ^{\varepsilon' _m 2 k \rho } (t ^{N - \frac{n}{2} } + m ) ^k t ^{k-1} ((k - 1) !)^{-1},
\end{equation}
for some constants $ \tilde C _m , \varepsilon ' _m$.
Finally, arguments similar to the proof of (1) Lemma \ref{QEst} gives the estimate
$$ \| \nabla ^{\bt ^{-1} \rE \otimes \bs ^{-1} \rE ' } _X Q _N ^{(k)} (a , t) \| _0 
\leq k C ' _1 e ^{2 \varepsilon _0 k R} t ^k ((k - 1 )! ) ^{-1} .$$
It follows that $ \sum _{k = 0 } ^\infty (-1 )^k Q ^{(k)} _N $ converges uniformly in all derivatives up to order $m$,
provided $N > \frac{n}{2} + m $.
Since $N$ is arbitrary, it follows that
$ e ^{- t (\Delta ^\rE + F + K)} \in \Gamma ^\infty _b (\bt ^{-1} \rE \otimes \bs ^{-1} \rE ' )$.
\end{proof}

\subsubsection{\bf Example: the Bruhat sphere}
We again look at the example of the Bruhat sphere. 
We shall explicitly define a metric on the groupoid $\cG = \rT \backslash (\rK \times \rN)$.
Observe that $\cG $ is an associated bundle over $\rK = \bbC \rP (1)$.
It is well known that one has identifications as vector bundles
\begin{align*}
T \cG \cong & \: (T \rT ) \backslash (T \rK \times T \rN ) \\
\Ker (d \bs ) \cong & \: \rT \backslash (\rK \times T \rN ).
\end{align*}

Observe that 
$\cG = \rT \backslash (\rK \times \rN)$ is an associated bundle of the principal bundle $\rK \to \rT \backslash \rK$,
hence the arguments in \cite[Section 11]{KMS;Book} can be used to fix a complementary distribution to $\Ker (d \bs)$.
Fix an $\ad \rK$-invariant metric $g _\fk$ on $\rK$, the Lie algebra of $\rK$.
Let $\ft^\bot $ be the orthogonal complement of $\ft \subset \fk$.
Define $\rT ^\bot$ to be the distribution on $\rK$
$$ \rT ^\bot := \{ d R ^\rK _k (X) : k \in \rK, X \in \ft ^\bot \} \subseteq T \rK ,$$
and the distribution on $\cG$ 
$$ \cH := \{ d \wp _\rT (X , 0) \in T (\rT \backslash (\rK \times \rN)) : (X , 0) \in \rT ^\bot \times T \rN \},$$
where $\wp$ denotes the projection onto he coset space. 
It is easy to see that $\rH$ is a distribution complementary to $\ker (d \bs ) = \rT \backslash (\rK \times T \rN)$.
To define a metric on $\rH$, one simply takes the pullback of the round metric $g _\fk$ on $\rT \backslash \rK$,
more explicitly,
$$ g _\cH ({\wp}_\rT (d R_k ^\rK  (X _1 ) , 0) , {\wp} _\rT (d R ^\rK _k ( X _2 ) , 0)) := g _{\fk} (X _1 , X _2). $$
Finally, we define a metric $g _\cG$ on $\cG = \rT \backslash (\rK \times \rN)$ by taking the orthogonal sum of 
$g_\bs $ and $g _\cH$.

In our special case $\cG = \rT \backslash (\rK \times \rN)$, $\tilde \cG ^{(2)} $ is diffeomorphic to
$\rT \backslash (\rK \times \rN \times \rN)$,
where $\rT$ acts on $\rK$ by right multiplication and on $\rN \times \rN$ by conjugation,
and the diffeomorphism is given explicitly by
$$ {}_\rT (k , n_1 , n_2) 
\mapsto ({}_\rT (k , n_1) , {} _\rT (k , n_2)) \in \cG \times \cG \supseteq \tilde \cG ^{(2)}. $$ 

Consider the map $\widetilde \bm : \tilde \cG ^{(2)} \to \cG , \widetilde \bm (a, b) := a b ^{-1}$.
One has the commutative digram
\begin{equation}
\begin{CD}
\rK \times \rN \times \rN @> \widehat \bm >> \rK \times \rN \\
@VVV @VVV \\
\tilde \cG ^{(2)} \cong \rT \backslash (\rK \times \rN \times \rN ) 
@>\widetilde \bm >> \cG = \rT \backslash (\rK \times \rN),
\end{CD}
\end{equation}
where $\widehat \bm : \rK \times \rN \times \rN \to \rK \times \rN$ 
is defined to be the function 
$$ \widehat \bm (k, n_1 , n_2) := (k', n_1 n_2 ^{-1} ). $$

We verify that the metric we constructed satisfies the assumptions of Theorem \ref{EstReg}. 
Hence the heat kernel of a Laplacian operator on the Bruhat sphere is smooth. 
\begin{lem}
\label{CP1Est}
For each $m \in \bbN$,
there exists a polynomial $p _m $ on $\rN \times \rN = \bbR ^2 \times \bbR ^2$ such that,
for any ${} _\rT (k, n_1 , n_2 ) \in \tilde \cG ^{(2)}, X \in T _{ {}_\rT (k , n_1 , n_2) } \tilde \cG ^{(2)} $,
$$ \big| (\nabla ^{\Hom T (\tilde \cG , \widetilde \bm ^{-1} T \cG )})^m  \widetilde \bm ( X ) \big|_{g _\cG } 
\leq | p _m (n_1 , n_2) | | X | _{g ^{(2)} _\cG}.$$ 
\end{lem}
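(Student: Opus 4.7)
The plan is to reduce the estimate to an explicit computation on the covering space $\rK \times \rN \times \rN$ via the commutative diagram preceding the lemma. Since the fiber of $\wp_\rT$ is the compact group $\rT$, the quotient map is a local isometry (for the metrics lifted from $g_\cG$), and similarly for $\tilde \cG ^{(2)}$; so it suffices to bound the covariant derivatives of $\widehat \bm _*$ with respect to the lifted metric on $\rK \times \rN \times \rN$, which is just the orthogonal sum of the round metric on $\rK$ and the flat metrics on the two copies of $\rN \cong \bbR^2$.

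Next, write $\widehat \bm = (\widehat \bm _\rK , \widehat \bm _\rN)$. The second component is $\widehat \bm _\rN (k , n _1 , n _2) = n _1 n _2 ^{-1}$; because $\rN$ is abelian this is affine in $(n_1, n_2)$, and its covariant derivatives of all orders are bounded (in fact constant). For $\widehat \bm _\rK$, unwinding the groupoid inverse and multiplication formulas shows that the $\rK$-component is a finite composition of Iwasawa projections of the form $n k = k' a' n'$. By Equation (\ref{CP2Formula}),
\begin{equation*}
\alpha ' = \frac{\alpha - \bar w \beta}{\sqrt D}, \qquad \beta' = \frac{\beta}{\sqrt D}, \qquad D := |\beta|^2 + |\alpha - \bar w \beta|^2 .
\end{equation*}
A Lagrange-multiplier calculation on the unit sphere $|\alpha|^2 + |\beta|^2 = 1$ yields $D \geq (1 + |w|^2)^{-1}$, hence $D^{-1/2} \leq (1 + |w|^2)^{1/2}$. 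Consequently, each partial derivative of $k'$ in local coordinates on $\rK \times \rN$ is a rational function of $(\alpha, \beta)$ (bounded on the compact $\rK$) multiplied by a power of $D^{-1/2}$, and is therefore bounded by a polynomial in $w$. Iterating $m$ times produces a polynomial bound $p_m(n_1, n_2)$.

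Finally, one translates the coordinate estimate into the Riemannian one: the Christoffel symbols of the product metric on $\rK \times \rN \times \rN$ are bounded (constant in the $\rN$-direction and smooth on the compact factor $\rK$), so the passage from ordinary to covariant derivatives only absorbs further bounded polynomial factors. Pushing the resulting bound down through the compact quotient by $\rT$ preserves its polynomial character, yielding the desired estimate on $\tilde \cG ^{(2)}$. The main obstacle is the combinatorial bookkeeping of how many factors of $D^{-1/2}$ arise when $m$-fold mixed derivatives are taken, together with the verification that the lift-and-descend via $\wp_\rT$ is uniform across $\rK/\rT$; both are routine once the explicit Iwasawa formula above is in place.
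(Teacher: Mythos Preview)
Your proposal is correct and follows essentially the same strategy as the paper: both lift to $\rK\times\rN\times\rN$, write out the Iwasawa projection $k\mapsto k'$ explicitly via Equation~(\ref{CP2Formula}), and show that all derivatives of $\widehat\bm$ are rational functions whose only possible blow-up comes from powers of $D^{-1/2}$ (the paper calls this quantity $Q$), which is controlled polynomially in $w_2$. The only cosmetic difference is that the paper obtains the lower bound $Q\geq\frac{1}{4|w_2|^2}$ by an elementary case split on $|\beta|$ rather than by your Lagrange-multiplier/singular-value argument, and it writes out the differential of $\widehat\bm$ in each of the three directions explicitly rather than appealing to the general ``rational function times $D^{-m/2}$'' form; your slightly more structural packaging is equally valid.
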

\begin{proof}
We prove this lemma by direct computation.
First, one obtains a formula for $\widetilde \bm $ using Equation (\ref{CP2Formula}).
Namely, for any 
$k = 
\left( 
\begin{smallmatrix}
\alpha & \beta \\
- \bar \beta & \bar \alpha 
\end{smallmatrix}
\right) ,
n_1 =
\left( 
\begin{smallmatrix}
1 & w' \\
0 & 1
\end{smallmatrix}
\right) ,
n_2 =
\left( 
\begin{smallmatrix}
1 & w \\
0 & 1
\end{smallmatrix}
\right) ,$
one has
$$ \widetilde \bm ({}_\rT (k , n_2 , n_2)) =  
\left[ \frac{\alpha - \bar w \beta }{(|\beta |^2 + |\alpha - \bar w \beta |^2)^{\frac{1}{2}}},
\frac{ \beta }{(|\beta |^2 + |\alpha - \bar w \beta |^2)^{\frac{1}{2}}} \right] ^{w' - w} _\rT .$$
First consider the $ \cH $-component. 
Any vector $X \in \rT ^\bot $ can be written in the form
$$ X = \partial _t \Big| _{t = 0}
\left(
k e^{ t \left(
\begin{smallmatrix}
0 & v \\
- \bar v & 0
\end{smallmatrix} 
\right) } , n_1 , n_2 \right) ,
\quad v \in \bbC .$$
Then, $d \widetilde \bm (X) $ is by definition:
\begin{align*}
& d \widehat \bm (X) :=  
\partial _t \Big| _{t = 0}
\widetilde \bm \left(
e^{ t \left(
\begin{smallmatrix}
0 & v \\
- \bar v & 0
\end{smallmatrix} 
\right) } k , n_1 , n_2 \right) = \\
& 
\left(
\left( 
\begin{array}{cc}
\frac{(|\beta |^2 - |\alpha |^2 )(\bar w _2 v  - w _2 \bar v) 
+ |w _2|^2 (\bar \alpha \bar \beta v - \alpha \beta \bar v )}{2 Q }
 & \frac{v}{Q } \\
- \frac{\bar v}{Q } & \frac{- (|\beta |^2 - |\alpha |^2 )(\bar w _2 v  - w _2 \bar v) 
- |w _2|^2 (\bar \alpha \bar \beta v - \alpha \beta \bar v )}{2 Q }
\end{array} 
\right) k',
0 \right) ,
\end{align*} 
where we denoted $Q := |\beta  |^2 + |\alpha - \bar w _2 \beta | ^2 $.
It follows that
$$ d \widetilde \bm (d \wp _\rT (X)) = \frac{1}{Q} d \wp _\rT (X) .$$
Similarly, one has
\begin{align*}
\partial _t \big| _{t=0} \widehat \bm & (k , w _1 + z _1 t , w _2 ) 
= (0 , z _1 , 0 ) \\
\partial _t \big| _{t=0} \widehat \bm & (k , w _1 , w _2 + z _2 t ) \\
&= 
\left(
\left(
\begin{array}{cc}
\frac{(\alpha - \bar w _2 \beta )\bar \beta z _2 - (\bar \alpha - w _2 \bar \beta ) \beta \bar z _2 }{2 Q } & 
\frac{- |\beta | ^2 \bar z _2 }{Q} \\
\frac{ | \beta | ^2 z _2 }{Q} &
\frac{- (\alpha - \bar w _2 \beta )\bar \beta z _2 + (\bar \alpha - w _2 \bar \beta ) \beta \bar z _2 }{2 Q }
\end{array}
\right) k',
- z _2 \right) .
\end{align*}
We estimate a lower bound for $Q $ in terms of $w$.
Since it is clear that $Q \neq 0$, we may without loss of generality assume that $|w _2 | > 1$.
Suppose $| \beta | \leq \frac{1}{2 | w |} \leq \frac{1}{2}$.
Then the relation $| \alpha | ^2 + |\beta |^2 = 1 $ implies $| \alpha | \geq \frac{3}{4}$.
It follows that 
$| \alpha - \bar w \beta | ^2 \geq (\frac{3}{4} - \frac{1}{2} ) ^2 = \frac{1}{4} \geq \frac{1}{4 | w_2 |^2} $.
Hence $|\beta |^2 + | \alpha - \bar w \beta | ^2 \geq \frac{1}{4 |w |^2 } $ in both cases.

Finally, from the above computations, we observe that the coefficients of the 
$m$-th covariant derivatives of $\widetilde \bm$
are of the form 
$$ Q ^{- m} p _I (\alpha , \bar \alpha , \beta , \bar \beta , \omega , \bar \omega ),$$
where $p _I$ are polynomials. The assertion follows.
\end{proof} 

It is easy to see that the $\bs$-fiber-wise Riemannian volume form $\mu$ also satisfies similar estimates.
Therefore we conclude that
\begin{cor}
For any vector bundle $\rE \to \bbC \rP (1) $, any Riemannian metric $g _\cA $ on $\cA$,
$F \in \Gamma (\rE \otimes \rE ') $ and $K \in \Psi ^{- \infty } _\mu (\cG , \rE)$,
the heat kernel 
$$e ^{- t (\Delta ^\rE + F + K ) } 
\in \Gamma ^\infty _ b (\bt ^{-1} \rE \otimes \bs ^{-1} \rE ' \ltimes (0, \infty) )  .$$
\end{cor}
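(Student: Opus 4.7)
The plan is to derive this corollary by directly invoking the regularity Theorem \ref{EstReg}, since the Bruhat sphere groupoid $\cG = \rT \backslash (\rK \times \rN)$ has already been equipped with a metric $g_\cG$ and a horizontal distribution $\cH$ in the preceding discussion. Thus the corollary reduces to verifying the three hypotheses (1)--(3) of Theorem \ref{EstReg} in this specific setting.

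Hypothesis (1) is immediate: since $\cG = \rT \backslash (\rK \times \rN)$ is the associated bundle to the principal $\rT$-bundle $\rK \to \rT \backslash \rK = \bbC \rP (1)$ with fiber $\rN \cong \bbR^2$, the source map $\bs$ is a locally trivial fiber bundle. Hypothesis (2) is essentially a restatement of Lemma \ref{CP1Est}: that lemma provides a polynomial bound $|p_m(n_1, n_2)|$ in the $\rN$-coordinates $(w', w)$ on the $m$-th covariant derivative of $\widetilde \bm_*$. What remains is to translate these polynomial bounds in $(w', w)$ into exponential bounds in the Riemannian distances $d_\bs(b', \bs(b'))$ and $d_\bs(b, \bs(b))$. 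This is straightforward because the $\bs$-fibers of $\cG$ are quasi-isometric to $\rN \cong \bbR^2$ with its standard Euclidean structure, so $|w|$ grows at most linearly in $d_\bs$, and any polynomial in $|w|$ is dominated by $C_m e^{\varepsilon_m d_\bs}$ for any $\varepsilon_m > 0$.

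Hypothesis (3), the bound on the Lie derivatives of the invariant volume $\mu$, follows by the same method as Lemma \ref{CP1Est}. In the explicit trivialization $\cG \cong \rT \backslash (\rK \times \rN)$, the right-invariant density $\mu$ on the $\bs$-fibers is a smooth multiple of the Haar measure on $\rN$, and one can write down its Lie derivatives along the horizontal lifts $X^{\tilde \cH}$ as rational expressions in $(\alpha, \bar\alpha, \beta, \bar\beta, w, \bar w)$ whose denominators are bounded below by the same quantity $Q^{-1}$ that appears in Lemma \ref{CP1Est}. The resulting bounds are again polynomial in $w$ and hence sub-exponential in $d_\bs$, giving exactly the required estimates.

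The main obstacle I expect is the bookkeeping in hypothesis (3): one must carefully compute the Lie derivatives of $\mu$ relative to the chosen splitting $T\tilde\cG = \ker(d\tilde\bs) \oplus \ker(d\tilde\bt) \oplus \cH^{(2)}$, check that the horizontal vector fields $X^{\tilde \cH}$ act on the coordinates $(w', w)$ only through polynomial expressions (as one would expect from the $\ad \rK$-invariance assumption on $g_\fk$), and verify that no singularities appear in the coordinate patches used. Once these computations are carried out in parallel with the proof of Lemma \ref{CP1Est}, applying Theorem \ref{EstReg} yields $e^{-t(\Delta^\rE + F + K)} \in \Gamma^\infty_b(\bt^{-1}\rE \otimes \bs^{-1}\rE' \ltimes (0, \infty))$, which is the assertion.
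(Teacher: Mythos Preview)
Your proposal is correct and follows essentially the same route as the paper: invoke Theorem \ref{EstReg} after checking its three hypotheses, with hypothesis (2) supplied by Lemma \ref{CP1Est} and hypothesis (3) handled by the parallel observation that the fiberwise volume form $\mu$ satisfies the same type of polynomial (hence sub-exponential) estimates. The paper itself is terser than you are---it dispatches hypothesis (3) in a single sentence (``It is easy to see that the $\bs$-fiber-wise Riemannian volume form $\mu$ also satisfies similar estimates'') and leaves hypothesis (1) implicit---so your expanded justification of the polynomial-to-exponential translation and the bundle structure is welcome detail rather than a departure.
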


\subsection{ Short time asymptotic expansion of the heat kernel}
Let $\cG \rightrightarrows \rM$ be a groupoid with $\rM$ compact,
and the Lie algebroid $\cA \to \rM $ of even rank $\varkappa$.
Let $\nabla ^\rE$ be a Clifford $\cA$-connection and $\eth$ be the corresponding Dirac operator. 
Then a straightforward calculation shows that 
$$\eth ^2 = \Delta ^\rE + (\frac {1}{4} \tilde R + F ^{\rE / \rS}), $$ 
where $\tilde R$ is the scalar curvature and $F ^{\rE / \rS} $ is the twisting curvature.
Therefore the construction of the heat kernel above applies. 

Before stating our main result Lemma \ref{AsymLem}, we first need to define some notation.
Let $\bbC ^{k \times k} $ be the set of all matrices with coefficients in $\bbC$.
Given any power series $h : \bbC ^{k \times k} \to \bbC $ 
$$ h ( Z _{i j} ) = h (0) + \sum _{I} h _I Z _{I} ,$$
where the sum is over all multi-indexes $I = \{i_1 j_1 , i_2 j_2 , \cdots , i_p j_p \} $,
and $ Z _I := Z _{i_1 j_1} Z _{i_2 j_2} \\ \cdots Z _{i_p j_p}$.
Let $( \omega _{i j } ) \in \wedge \rV '$ be a matrix of 2-forms on some vector space $\rV$.
We {\it define } $h (\omega _{i j} )$ to be the polynomial 
$$ ( \omega _{i j} ) \mapsto h (0 ) + \sum _I h _I \omega _I \in \bigoplus _{l \text { even} } \wedge ^l \rV ',$$
where $\omega _I := \omega _{i_1 j_1} \wedge \omega _{i_2 j_2} \wedge \cdots \wedge \omega _{i_p j_p} $.

In particular, take $h$ to be the Taylor series expansion of 
$$ Z \mapsto 
\sqrt {\det \Big( \frac{Z}{2} ( \sinh \frac{Z}{2} )^{-1} \Big)} : \bbC ^{\varkappa \times \varkappa } \to \bbC ,$$
where $ Z \mapsto \frac{Z}{2} ( \sinh \frac{Z}{2} )^{-1} 
: \bbC ^{\varkappa \times \varkappa } \to \bbC ^{\varkappa \times \varkappa } $ is defined by the power series of 
$ z \mapsto \frac{z}{2} (\sinh \frac{z}{2}) ^{-1} : \bbC \to \bbC $.
Define the $\widehat \bbA $-genus by
\begin{equation}
\widehat \bbA := h (R).
\end{equation}
It is straightforward to check that $\widehat \bbA \in \Gamma ^\infty (\wedge \cA ')$ is a well defined section.

\begin{lem}
\label{AsymLem}
The heat kernel 
$e ^{- t \eth ^2} \in \Gamma ^\infty (\bs ^{-1} \rE \otimes \bt ^{-1} \rE ' \ltimes (0 , \infty) )$ 
has an asymptotic expansion
$$ e ^{- t \eth ^2 } (x, t) \cong (4 \pi )^{- \frac{n}{2}} \sum _{i = 0} ^\infty t ^{i - \frac{n}{2}} Q _i (x) ,
\quad \forall x \in \rM \subset \cG, $$
for some $Q _i \in \Gamma ^\infty (\Cl (\cA ') \otimes \End _{\Cl (\cA ')} (\rE ) ) $. 
Furthermore
\begin{enumerate}
\item
The coefficient $Q _i \in \Cl _{2 i} (\cA ') \otimes \End _{\Cl (\cA ')} (\rE ) $;
\item
One has
$$ \big( \str Q _{\frac{\varkappa }{2}} \big) \mu _\cA 
= \text{order $ \varkappa $ component of } \widehat \bbA \wedge \exp F ^{\rE / \rS }. $$
\end{enumerate}
\end{lem}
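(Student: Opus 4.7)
My plan is to reduce the statement to the classical local index theorem on a single $\bs$-fiber and then apply Getzler's rescaling for the final identification.

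First, for existence of the expansion and identification of the coefficients: by Lemma~\ref{HeatKer}(2) one has $\|e^{-t\eth^2} - G_N\|_l = O(t)$ for $N$ sufficiently large, where $G_N(a,t) = \chi(a) q(a,t) \sum_{i=0}^N t^i \Phi_i(a)$. Evaluating at a unit $a = x \in \rM$, the cutoff $\chi$ equals $1$ near $\rM$ and $q(x,t) = (4 \pi t)^{-n/2}$; the formal series of Lemma~\ref{HeatPowerSeries} thus yields the claimed asymptotic expansion with $Q_i(x) := \Phi_i(x)$. That $Q_i$ takes values in $\Cl(\cA') \otimes \End_{\Cl(\cA')}(\rE)$ follows from the canonical isomorphism of Lemma~\ref{CliffLem}(3).

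For the Clifford filtration in (1): since $\nabla^\rE$ is a Clifford connection, its curvature $R^\rE$ decomposes as $\gamma(R) + F^{\rE/\rS}$, of which only $\gamma(R)$ has nonzero Clifford degree (equal to $2$). Consequently every commutator $[\nabla^\rE_X, \nabla^\rE_Y]$ raises the Clifford filtration of $\End(\rE)$ by at most $2$, whereas $\tfrac{1}{4}\tilde R + F^{\rE/\rS}$ has Clifford degree $0$; hence $\eth^2 = \Delta^\rE + \tfrac{1}{4}\tilde R + F^{\rE/\rS}$ raises the Clifford filtration by at most $2$. The recursion of Lemma~\ref{HeatPowerSeries} expresses $\Phi_i$ as a radial integral of $(\partial_t + \eth^2)\Phi_{i-1}$, starting from $\Phi_0 = J^{-1/2} T$, which is scalar (Clifford degree $0$). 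An induction on $i$ now gives $\Phi_i \in \Gamma^\infty(\Cl_{2i}(\cA') \otimes \End_{\Cl(\cA')}(\rE))$.

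For (2), I would apply Getzler's rescaling on a single $\bs$-fiber. Fix $x \in \rM$; the restriction of $\eth^2$ to $\cG_x$ is a generalized Laplacian on a Riemannian manifold of bounded geometry, and its heat kernel on the diagonal has the same expansion. Choose normal coordinates $y$ centered at $x$ on $\cG_x$ and a synchronous frame of $\bt^{-1}\rE$. The Getzler rescaling $\delta_u$ ($u > 0$) acts by $t \mapsto u t$, $y \mapsto \sqrt{u}\, y$, and $\gamma(e^i) \mapsto u^{-1/2} \gamma(e^i)$, while the $\End_{\Cl(\cA')}(\rE)$-factor is untouched. In the limit $u \to 0$, Clifford multiplication degenerates to exterior multiplication on $\wedge \cA'_x$, and $\eth^2$ converges to the generalized harmonic oscillator
$$
-\sum_i \bigl(\partial_i - \tfrac{1}{4} R_{ij}(x)\, y^j \bigr)^2 + F^{\rE/\rS}(x).
$$
Mehler's formula supplies the heat kernel of this operator at the origin in closed form; extracting its top-degree exterior component and multiplying by $\mu_\cA$ yields exactly the degree-$\varkappa$ part of $\widehat{\bbA}(R) \wedge \exp F^{\rE/\rS}$, which equals $\bigl(\str Q_{\varkappa/2}(x)\bigr) \mu_\cA$.

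The principal obstacle is not the rescaling algebra itself — which is essentially universal once the short-time expansion is known — but verifying that the local coefficients $Q_i(x)$ of the groupoid heat kernel agree with those produced by the classical Getzler calculation on the fiber $\cG_x$. This is subsumed by the first step: since $Q_i(x) = \Phi_i(x)$ is determined by a universal radial-integral formula in normal coordinates on $\cG_x$, the whole computation localizes inside $\cG_x$ and reduces verbatim to the classical argument.
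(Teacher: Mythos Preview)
Your proposal is correct and follows essentially the same route as the paper: both reduce to a single $\bs$-fiber $\cG_x$ via the identification $Q_x(a,b)=Q(ab^{-1})$, observe that $Q_x(a,a)=Q(x)$, and then invoke the classical local index computation. The paper simply cites \cite[Chapter~4]{BGV;Book} for the entire asymptotic expansion, Clifford filtration, and Getzler rescaling, whereas you have spelled out those steps explicitly; the substance is the same.
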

\begin{proof}
Regarding all operators involved as families of operators along the $\bs$-fibers,
the heat kernel of $\eth |_{\bs ^{-1} (x)}$ is just
$$ Q _x (a, b) := Q (a b ^{-1}), \bs (a ) = \bs (b) = x. $$ 
The computations of the asymptotic expansion of $Q _x (a, a) = Q (x)$ is very standard. 
See, for example, \cite[Chapter 4]{BGV;Book}.
\end{proof}

For convenience, we denote the order $ \varkappa $ component of $\widehat \bbA \wedge \exp F ^{\rE / \rS } $ by
$\Omega _\varkappa (\widehat \bbA \wedge \exp F ^{\rE / \rS } ) $.

It is easy to compute the asymptotic expansion of the heat kernel of the operator $\eth ^2 + K$.
From Equation (\ref{HeatPert}), the heat kernel of $\eth ^2 + K$ can be written as
$$ e ^{- t (\eth ^2 + K)} (a , t) = e ^{- t \eth ^2} (a , t) 
+ \sum _{i= 1} ^\infty (-1) ^i t ^i \tilde Q ^{(i)} (a , t) ,$$
where $\tilde Q $ is the heat kernel of $\Delta ^\rE $,
and $\tilde Q ^{(i)} := \int _{0 \leq \tau _0 \leq \cdots \leq \tau _i \leq 1} 
\tilde Q (\cdot , \tau _0 t) \circ \kappa \circ \tilde Q (\cdot , \tau _1 t) \circ \kappa \circ \cdots
\circ \kappa \circ \tilde Q (\cdot , \tau _i t).$
Since $\tilde Q ^{(i)} (\cdot , 0)$ are smooth, it follows immediately that
\begin{cor}
The heat kernel 
$e ^{- t (\eth ^2 + K) } \in \Gamma ^\infty (\bs ^{-1} \rE \otimes \bt ^{-1} \rE ' \ltimes (0, \infty ) )$ 
of the Laplacian $ \eth ^2 + K $ has an asymptotic expansion
$$ e ^{- t (\eth ^2 + K) } \cong (4 \pi )^{- \frac{n}{2}} \sum _{i = 0} ^\infty t ^{i - \frac{n}{2}} Q _i (x) ,
\quad \forall x \in \rM \subset \cG, $$
for some $Q _i \in \Gamma ^\infty (\Cl (\cA ') \otimes \End _{\Cl (\cA ')} (\rE ) ) $. 
Furthermore
\begin{enumerate}
\item
The coefficient $Q _i \in \Cl _{2 i} (\cA ') \otimes \End _{\Cl (\cA ')} (\rE ) $;
\item
One has
$$ \big( \str Q _{\frac{\varkappa }{2}} \big) \mu _\cA 
= \Omega _\varkappa (\widehat \bbA \wedge \exp F ^{\rE / \rS } ). $$
\end{enumerate}
\end{cor}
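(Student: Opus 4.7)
The plan is to reduce the corollary directly to Lemma \ref{AsymLem} via the perturbation identity displayed in Equation \eqref{HeatPert}, which expresses
\[
e^{-t(\eth^2 + K)}(a,t) = e^{-t\eth^2}(a,t) + \sum_{i \geq 1}(-1)^i t^i \tilde Q^{(i)}(a,t),
\]
where each $\tilde Q^{(i)}$ is the $i$-fold iterated convolution of $e^{-t\eth^2}$ with the smooth kernel $\kappa$, integrated over the simplex $\{0 < \tau_0 < \cdots < \tau_i < 1\}$. The estimates of Lemma \ref{QEst} already guarantee that this series converges in $\|\cdot\|_l$ for every $l$, so termwise asymptotic analysis at $t \to 0^+$ is legitimate.

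The first step is to show that every $\tilde Q^{(i)}(a,t)$ extends continuously (indeed smoothly in $t$) up to $t = 0$ when evaluated on the diagonal $a = x \in \rM$. This is immediate from the integral formula: $\kappa \in \Gamma^\infty(\bt^{-1}\rE \otimes \bs^{-1}\rE')$ is smooth, and as $t \to 0$ each factor $e^{-t\eth^2}(\cdot, \tau_j t)$ converges to the identity convolution in the sense of Lemma \ref{ParaProof}(2), so $\tilde Q^{(i)}(\cdot, 0)$ equals the smooth section $(i!)^{-1}(\kappa \circ \cdots \circ \kappa)$. Combined with the explicit prefactor $t^i$, the $i$-th perturbation term admits an expansion $t^i \tilde Q^{(i)}(x,t) \cong \sum_{j \geq 0} t^{i+j}\, q^{(i)}_j(x)$ with smooth coefficients, obtainable by plugging the asymptotic expansion of Lemma \ref{AsymLem} into the iterated convolution and integrating over the simplex.

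Hence the perturbation only contributes to powers $t^k$ with $k \geq 1$, that is, to coefficients $Q_k$ of the final expansion with $k - n/2 \geq 1$, i.e.\ $k > \varkappa/2$. Consequently, for every $k \leq \varkappa/2$ the coefficient $Q_k$ of $e^{-t(\eth^2 + K)}$ agrees verbatim with the coefficient from $e^{-t\eth^2}$; assertion (1) in those ranges and assertion (2) (which concerns only $k = \varkappa/2$) are then inherited directly from Lemma \ref{AsymLem}. For $k > \varkappa/2$, assertion (1) is automatic because $\Cl_{2k}(\cA') = \Cl(\cA')$ once $2k \geq \varkappa$, while $\kappa$ (and hence each $q^{(i)}_j$) takes values in $\End(\rE) \cong \Cl(\cA') \otimes \End_{\Cl(\cA')}(\rE)$.

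The only step that requires any real care is justifying term-by-term the asymptotic expansion of the perturbation series, but this is routine: the tail bound from Lemma \ref{QEst} shows the remainder after $N$ terms is $O(t^N)$ uniformly on $\rM$, so the formal expansion obtained by expanding each $\tilde Q^{(i)}(x,t)$ around $t = 0$ is a genuine asymptotic expansion. The main conceptual point, and the reason everything works, is that $K$ is a \emph{smoothing} perturbation, so the short-time singularity of the heat kernel (which is what produces the negative powers $t^{i - n/2}$) lives entirely in the unperturbed factor $e^{-t\eth^2}$.
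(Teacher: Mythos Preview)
Your proposal is correct and follows essentially the same route as the paper: both use the perturbation identity \eqref{HeatPert} and the observation that each $\tilde Q^{(i)}(\cdot,0)$ is smooth, so the smoothing perturbation $K$ contributes only nonnegative integer powers of $t$ and leaves the coefficients $Q_i$ for $i\le \varkappa/2$ untouched. Your write-up is in fact more careful than the paper's one-line justification, in particular your explicit handling of assertion~(1) for $i>\varkappa/2$ via $\Cl_{2i}(\cA')=\Cl(\cA')$; the only cosmetic slip is the constant in front of $\kappa\circ\cdots\circ\kappa$ (the simplex $\{0<\tau_0<\cdots<\tau_i<1\}$ has volume $1/(i+1)!$, not $1/i!$), which is irrelevant to the argument.
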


\vspace{9cm} 

\pagebreak \thispagestyle{firstpage}
\section{The renormalized trace and index theorem}
Consider a Fredholm operator on $\Gamma ^\infty (\rE) $ of the form $\nu (\eth + \varPsi )$, 
where $\eth$ is a Dirac operator and $R \in \Psi ^{- \infty} _\mu (\cG , \rE)$.

We saw that the heat kernel is not a smoothing operator in general,
and the usual trace formula
$$ \int _{\rM } \kappa (x , x) \mu (x) $$ 
cannot be applied.
Instead, one need to consider an extension of the trace functional, known as the renormalized trace.

\subsection{ The renormalized integral}
We shall only consider the case of the Bruhat sphere.
In this case, one has the two stereographic projection coordinates
$$ r e ^{i \vartheta } \mapsto [r e ^{i \vartheta }, 1] \text{ and } 
\dot r e ^{- i \vartheta } \mapsto [1 , \dot r e ^{- i \vartheta }] ,$$
and one can consider the cutoff integrals
$$ \int _{ r \leq r_0} \int _{0 \leq \vartheta \leq 2 \pi } f ([r e ^{i \vartheta }, 1]) r d \vartheta d r 
= \int _{\dot r \geq \frac{1}{r_0}} \int _{0 \leq \vartheta \leq 2 \pi } 
f ([1 , \dot r e ^{- i \vartheta }]) \frac{1}{\dot r ^3}  d \vartheta d \dot r $$
for any $f \in C^\infty (\bbC \rP (1))$ as $r _0 \to \infty$.
Using standard arguments reviewed in \cite{Paycha;Renorm}, one has:
\begin{lem}
\label{0Renorm}
For any $k = 1, 2, \cdots,$ and $F \in C ^\infty _c (\bbR)$, one has the expansion as $r _0 \to \infty$:
$$ \int _{\frac{1}{r _0}} ^\infty F \lambda ^{- k} d \lambda 
= \sum _{j = 1}^{k-1} C_j r _0 ^j + R \log r_0 + C _0 + O (r _0 ^{-1}) $$
for some constants $ C_0 , \cdots , C_j , R $.
In particular, the constant term $C _0 $ is given by the formula
$$ C_0 = \partial _\lambda ^{k - 1} F (0) \sum _{j = 1} ^{k - 1}\frac{1}{j} + 
\frac{1}{(k - 1)!} \int _0 ^\infty \partial _\lambda ^k F (\lambda ) \log \lambda d \lambda 
:= \sideset{_{\mathrm R}}{_0 ^\infty} \int F (\lambda ) \lambda ^{- k} d \lambda . $$ 
\end{lem}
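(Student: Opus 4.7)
The plan is to reduce the cutoff integral to a residual logarithmic integral by iterated integration by parts. Since $F$ has compact support, say in $(-M, M)$, one has $\int_{1/r_0}^\infty F(\lambda)\lambda^{-k}\, d\lambda = \int_{1/r_0}^M F(\lambda)\lambda^{-k}\, d\lambda$ for $r_0$ large, and every derivative of $F$ vanishes at $M$, so no boundary term at the upper endpoint contributes and all $r_0$-dependence comes from the lower endpoint.

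First I would integrate by parts $k-1$ times, differentiating $F$ and antidifferentiating $\lambda^{-k}$. Each step lowers the order of the $\lambda$-singularity by one and yields a boundary contribution of the form $\frac{(k-2-i)!}{(k-1)!}\, F^{(i)}(1/r_0)\, r_0^{k-1-i}$ for $i = 0, 1, \ldots, k-2$, reducing the problem to evaluating the residual $\frac{1}{(k-1)!}\int_{1/r_0}^M F^{(k-1)}(\lambda)\lambda^{-1}\, d\lambda$. One further integration by parts, with antiderivative $v = \log \lambda$, converts this residual into $\frac{1}{(k-1)!}\bigl[F^{(k-1)}(1/r_0)\log r_0 - \int_{1/r_0}^M F^{(k)}(\lambda)\log \lambda\, d\lambda\bigr]$; the integral piece converges to $\int_0^\infty F^{(k)}(\lambda)\log\lambda\, d\lambda$ as $r_0 \to \infty$, since $F^{(k)}$ is bounded and $\log \lambda$ is locally integrable at the origin.

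Substituting the Taylor expansion $F^{(i)}(1/r_0) = \sum_{j \ge 0} F^{(i+j)}(0)/(j!\, r_0^j)$ into each boundary term and collecting by powers of $r_0$, the positive powers $r_0^j$ for $j = 1, \ldots, k-1$ assemble into the divergent coefficients $C_j$, the single $\log r_0$ contribution identifies the residue $R$, an $r_0$-independent piece forms $C_0$, and the truncated Taylor tail contributes an error of order $r_0^{-1}$. I expect the main bookkeeping obstacle to be assembling the harmonic partial sum $\sum_{j=1}^{k-1} 1/j$ as the coefficient of $\partial_\lambda^{k-1} F(0)$ in $C_0$: the $r_0$-independent part of the $i$-th boundary contribution is $\frac{(k-2-i)!}{(k-1)!}\cdot\frac{F^{(k-1)}(0)}{(k-1-i)!}$, and summing over $i$ together with the re-indexing $j = k-1-i$ collapses the factorials to $\frac{F^{(k-1)}(0)}{(k-1)!}\sum_{j=1}^{k-1}\frac{1}{j}$; combining this with the $\log$-integral contribution from the last integration by parts reproduces the claimed formula for $C_0$, modulo the sign and normalization conventions adopted in the paper's definition of $\sideset{_{\mathrm R}}{_0^\infty}\int$.
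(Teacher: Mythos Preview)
The paper gives no proof here, deferring to ``standard arguments reviewed in \cite{Paycha;Renorm}''; your iterated integration by parts followed by Taylor expansion of the boundary data at $\lambda=1/r_0$ is precisely that standard argument and is correct. Carrying it through yields $C_0=\tfrac{F^{(k-1)}(0)}{(k-1)!}\sum_{j=1}^{k-1}\tfrac{1}{j}-\tfrac{1}{(k-1)!}\int_0^\infty F^{(k)}(\lambda)\log\lambda\,d\lambda$, so the discrepancies you flagged are typos in the displayed formula (a missing $1/(k-1)!$ in the first term and a sign in the second), not a defect in your derivation.
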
 

We return to the case of the Bruhat sphere.
Given any section $\omega \in \Gamma (\wedge ^2 \cA ')$,
such that $\omega $ is two times differentiable on $\bbC \rP (1) $ and three times differentiable on 
$\bbC \rP (1) \setminus \{ {}_\rT e \}$, 
we define:
\begin{dfn}
\label{CP1Renorm}
The {\it renormalized integral} of $\omega $ is the number
$$ \Rint \omega :=
\sideset{_{\mathrm R}}{_0 ^\infty} \int 
\Big( \int _0 ^{2 \pi} ( f \circ \dot \bx ) (\dot r e ^{- i \vartheta }) d \vartheta \Big) \dot r ^{- 3} d \dot r ,$$
where $ \omega = f \mu _0 $, and $\mu _0 $ is the volume form defined by the round metric. 
Here, note that $(\dot r , \vartheta ) \mapsto f \circ \dot \bx (\dot r e ^{- i \vartheta })$ 
is two times differentiable on the whole $\bbR ^2$-space. 
\end{dfn}

\begin{rem}
We may choose other volume forms instead of the round one,
and the result depends on our trivialization.
This discrepancy is well known.
See \cite{Paycha;Renorm} for a review.
\end{rem}

\subsection{ The renormalized trace and trace defect formula}
With the renormalized integral defined, it is natural to define the renormalized trace.
\begin{dfn}
Let $\rE $ be a $\bbZ _2$-graded vector bundle over $\rM$.
Let $\mu $ be a fixed $\bs$-fiberwise volume on $\cG$ identifying 
$\Psi ^{- \infty } (\cG , \rE) \cong \Gamma ^\infty (\bt ^{-1} \rE \otimes \bs ^{-1} \rE ' )$.
For any $K \in \Psi ^{- \infty } (\cG , \rE) $ with reduced kernel 
$\kappa \in \Gamma ^\infty (\bt ^{-1} \rE \otimes \bs ^{-1} \rE ' )$,
the {\it renormalized (super)-trace} of $K$ is defined to be 
$$ \sideset{_{\mathrm R}}{} \Str ( \varPsi ) := \Rint \str (\kappa |_{\rM} ) \mu .$$
\end{dfn}

In this section, we compute explicitly 
$$ \Rint \big( f \circ g (x) - g \circ f (x) \big) \mu _{\rM _0} (x),$$
where for simplicity we assume $f, g \in C^\infty (\cG)$ and $g$ is compactly supported 
(hence the convolution products are well defined).
In general, the expression is non zero. Hence proving that the `renormalized trace' is not a trace.

\begin{thm}
One has the trace defect formula
\begin{align*}
\Rint \big( & f \circ g (x) - g \circ f (x) \big) \mu _{\rM _0} (x) \\ \nonumber
=& - \pi \int _\bbC \big( \re ( w' ) \partial _{\dot x} + \im ( w') \partial _{\dot y} \big) 
\big( f ([1 , 0] ^{ \bar w' } _\rT) g([1 , 0] ^{ - \bar w' } _\rT) \big) | d w' |^2 . 
\end{align*}
\end{thm}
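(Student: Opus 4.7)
The plan is to reduce the trace defect to an integral over $\cG$ supported near the singular fiber $\bs^{-1}({}_\rT e) \cong \rN$, and then extract the finite part of this integral using the renormalization prescription of Lemma \ref{0Renorm}.

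The first step is to use the inversion $\mathbf i$ and the inversion-invariance of the $\bs$-fiberwise volume $\mu$ to convert the difference of convolutions into a difference of cutoff characteristic functions pulled back by $\bs$ and $\bt$. Concretely, writing $\chi_{r_0}$ for the characteristic function of $\{ |\dot z| \ge 1/r_0 \} \subset \bbC\rP(1)$, one shows
$$ \int_{\{|\dot z| \ge 1/r_0\}} (f \circ g - g \circ f)(x) \, \mu_{\rM_0}(x) = \int_\cG \bigl( \chi_{r_0}(\bs(a)) - \chi_{r_0}(\bt(a)) \bigr) f(a^{-1}) g(a) \, \mu_\cG(a) $$
by Fubini and the substitution $a \mapsto a^{-1}$ applied to one of the two terms in $f \circ g - g \circ f$. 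Since $g$ has compact support and $\bs = \bt$ on the isotropy fiber $\bs^{-1}({}_\rT e)$, the integrand on the right is supported in a tubular neighborhood of $\bs^{-1}({}_\rT e)$ whose transverse radius shrinks to $0$ as $r_0 \to \infty$.

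The second step is to pass to the chart $\dot \bx(\dot z, \dot w)$ of Notation \ref{BruhatPair} and compute the leading behaviour of $\chi_{r_0}(\bs) - \chi_{r_0}(\bt)$ for $\dot z$ near $0$. Expanding both $\bs \circ \dot \bx$ and $\bt \circ \dot \bx$ in $\dot z$ from Equation (\ref{CP2Formula}) (together with the $\rT$-quotient), one sees that $\bs$ and $\bt$ agree at $\dot z = 0$ and that their difference is linear in $\dot z$ to leading order with a coefficient depending on $\dot w$ through the infinitesimal dressing action of $\rN$ on $\bbC\rP(1)$ at $[1,0]$. Consequently $\chi_{r_0}(\bs) - \chi_{r_0}(\bt)$ is supported on a thin annulus near $|\dot z| = 1/r_0$ whose distributional limit as $r_0 \to \infty$ is a transverse derivative of a $\delta$-function, with coefficient determined by the above expansion. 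Pairing $f(a^{-1}) g(a)$ against this distribution picks out a transverse derivative of $(f \cdot g)|_{\bs^{-1}({}_\rT e)}$ weighted by the dressing coefficient.

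The final step is to identify this distributional pairing with the right-hand side of the theorem. Parametrizing $\bs^{-1}({}_\rT e) \cong \rN$ by $w' \in \bbC$ via the points $[1,0]^{\bar w'}_\rT$ (so that the inverse is $[1,0]^{-\bar w'}_\rT$ by the inverse formula following Equation (\ref{CP2Formula})), the dressing coefficient and the Haar density on $\rN$ combine to produce the weight $w'$, and the angular integration $\int_0^{2\pi} d\vartheta$ in the renormalization of Definition \ref{CP1Renorm} contributes the overall factor $-\pi$ by application of Lemma \ref{0Renorm} to the singular radial weight $\dot r^{-3} d\dot r$. The main technical obstacle is the explicit computation in the second step: one must compute $\bs \circ \dot \bx$ and $\bt \circ \dot \bx$ modulo the $\rT$-quotient carefully, isolate the first-order term in $\dot z$, and verify that the first-order coefficient of $\bs - \bt$, the Jacobian arising in passing to the transverse coordinate, and the density of $\mu_\cG$ relative to $d^2 \dot z \, d^2 \dot w$ combine to reproduce exactly the directional derivative $\bigl( \re(w') \partial_{\dot x} + \im(w') \partial_{\dot y} \bigr) \bigl( f([1,0]^{\bar w'}_\rT) g([1,0]^{-\bar w'}_\rT) \bigr)$ in the final formula, with the precise sign and normalization.
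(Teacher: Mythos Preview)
Your reformulation via $\chi_{r_0}(\bs) - \chi_{r_0}(\bt)$ is correct and is a genuinely different route from the paper's.  The paper never introduces this identity; instead it stays in the regular-leaf chart $\bx(z,w)$, writes the cutoff integral as
\[
\int_{B(0,r_0)}(f\circ g - g\circ f)\,|dz|^2
= \int_{\bbC}\Big(\int_{B(-\bar w,r_0)\triangle B(0,r_0)} \pm\, f(\bx(z,w))\,g(\bx(z+\bar w,-w))\,|dz|^2\Big)|dw|^2,
\]
parametrizes the two lunes $B(-\bar w,r_0)\setminus B(0,r_0)$ and $B(0,r_0)\setminus B(-\bar w,r_0)$ explicitly by $(\varphi,\lambda)\in[0,2\pi]\times[0,1]$, Taylor-expands $f$ and $g$ at the singular fiber to order $r_0^{-1}$, and then integrates term by term in $\varphi$.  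Your approach localizes conceptually near $\bs^{-1}({}_\rT e)$ from the outset and interprets the limit distributionally; the paper's approach is a bare-hands lune computation.  Both are valid, and yours is closer in spirit to the $b$-calculus trace-defect machinery.

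Two points where your outline needs tightening.  First, the leading contribution of $\int(\chi_{r_0}(\bs)-\chi_{r_0}(\bt))\,f(a^{-1})g(a)\,\mu_\cG$ is \emph{a priori} of order $r_0$, not $O(1)$: the support has $\dot r$-width $O(r_0^{-2})$, the angular measure is $O(r_0^{-1})$, and $\mu_\cG$ carries $|\dot z|^{-4}\sim r_0^4$, giving a net $r_0$.  In the paper this $O(r_0)$ term is killed by $\int_0^{2\pi}\cos\varphi\,d\varphi=0$ after the lune parametrization; in your language you must show that the zeroth-order moment of the limiting distribution vanishes, so that only the first transverse derivative survives.  This is not automatic from your description and should be stated as a separate step.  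Second, the factor $-\pi$ does not come from Lemma~\ref{0Renorm}: since the $O(r_0)$ term cancels, the cutoff integral actually converges and the renormalization is trivial.  The $\pi$ is $\int_0^{2\pi}\cos^2\varphi\,d\varphi$, arising from the angular part of the first-order term in the Taylor expansion of $\bs-\bt$.  Finally, your first step requires $\mu_\cG$ to be inversion-invariant; for this groupoid that follows from right-invariance of $\mu_x$ together with unimodularity of the isotropy $\rN\cong\bbR^2$, but it should be checked.
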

\begin{proof}
By Definition \ref{ConvDfn}, the convolution product $f \circ g$, 
written in Notation \ref{BruhatPair}, is given by the formula 
$$ f \circ g (z) 
= \int _{w \in \bbR^2 } 
f (\bx (z + \bar w , - w) ) g (\bx (z, w)) | d w |^2 . $$
As in Lemma \ref{0Renorm}, we need to consider 
$$ \int _{z \in B (0, r_0) } \int 
f (\bx (z + \bar w , - w) ) g (\bx (z, w)) | d w |^2 | d z |^2 $$
as $r _0 \to \infty $.
Performing the $z$-integral first and changing variable $z' = z - \bar w , w' = - w$,
the integral becomes
$$ \int \int _{z' \in B (- \bar w', r_0) }  
f (\bx (z ', w') ) g (\bx (z' + \bar w', - w')) | d z' |^2 | d w' |^2.$$
On the other hand, one has
$$ \int _{z \in B (0, r_0)} g \circ f |d z |^2 
= \int \int _{z \in B (0, r_0)} f (\bx (z , w)) g (z + \bar w , - w) |d z|^2 |d w|^2 .$$
Combining the two integrals, one gets
\begin{align*}
\int _{z \in B (0, r_0)} f \circ g - g \circ f & \: \mu _{\rM _0} (z) \\
= \: & \int \int _{z \in B (- \bar w , r_0) \backslash B (0, r_0)} 
f (\bx (z , w) ) g (\bx (z + \bar w , - w) ) |d z |^2 |d w |^2 \\
&- \int \int _{z \in B (0 , r_0) \backslash B (- \bar w, r_0)} 
f (\bx (z , w) ) g (\bx (z + \bar w , - w) ) |d z |^2 |d w |^2 
\end{align*}
after canceling the common domain.
In order to compute the integral, one needs to parametrize the domains 
$B (- \bar w , r_0) \backslash B (0, r_0)$ and $ B (0 , r_0) \backslash B (- \bar w, r_0)$.
For each $w \in \bbC$, consider the sets
\begin{align*}
S ^+ _w :=& \: \Big\{ - \frac{ r_0 e ^{i \varphi } \bar w }{| w |} - \lambda \bar w 
: - \frac{\pi}{2} \leq \varphi \leq \frac{\pi}{2} , 0 \leq \lambda \leq 1 \Big\} \\
S ^- _w :=& \: \Big\{ - \frac{ r_0 e ^{i \varphi } \bar w }{| w |} - \lambda \bar w 
: \frac{\pi}{2} \leq \varphi \leq \frac{3 \pi}{2} , 0 \leq \lambda \leq 1 \Big\}.
\end{align*}
It is elementary to see that
$$ S ^+ _w \backslash S ^- _w = B (- \bar w , r_0) \backslash B (0, r_0) 
\text { and } S ^- _w \backslash S ^+ _w = B (0 , r_0) \backslash B (- \bar w , r_0) $$
modulo sets of measure 0.
With these natural parameterizations, 
one has
\begin{align*}
\int _{z \in B (0 , r_0 )} (f \circ g - g \circ f) & \: | d z |^2  \\
= \int \int _0 ^{2 \pi} \int _0 ^1 & f ( \bx (- \frac{ r_0 e ^{i \varphi } \bar w }{| w |} - \lambda \bar w  , w ))
\\ \times &
g (\bx (- \frac{ r_0 e ^{i \varphi } \bar w }{| w |} - (1 - \lambda ) \bar w , - w )) 
r _0 |\bar w |\cos \varphi d \lambda d \varphi | d w |^2.
\end{align*}
Next, we approximate $f $ by its Taylor series at $\bs ^{-1} ([1 , 0]) $ as $r _0 \to \infty$.
More precisely, define the trivialization
$$\dot \bx (\dot z , \dot w) 
:= \left[ \frac{1}{(1 + |\dot z|^2 )^\frac{1}{2}} , \frac{\dot z}{(1 + |\dot z|^2 )^\frac{1}{2}} \right] ^{\dot w}_\rT.$$
Write $\dot z = \dot x + i \dot y , \dot w = \dot u + i \dot v, \; \; \dot x , \dot y , \dot u , \dot v , \in \bbR$.
% Then one has the Taylor series expansion about $[1 , 0] ^{\dot w} _\rT$:
% $$ f (\dot \bx (\dot r e ^{i \dot \vartheta } , \dot w))
% = f (\dot \bx (0, \dot w)) + (\dot r \cos \dot \vartheta ) \partial _{\dot x} f (\dot \bx (0, \dot w))
% + (\dot r \sin \dot \vartheta) \partial _{\dot y} f (\dot \bx (0, \dot w)) + O (\dot r ^2).$$
Using the change in coordinate formula $\bx (z , w) = \dot \bx (\frac{1}{z} ,\frac{z ^2}{|z |^2} w)$
and the expansions
\begin{align*}
\frac{1}{- \frac{ r_0 e ^{i \varphi } \bar w }{| w |}- \lambda \bar w }
=& - \frac{|w|}{r_0 e ^{i \varphi } \bar w }
\left(1 - \frac{\lambda | \bar w |}{r_0 e ^{i \varphi }}
+ O (r _0 ^{-2}) \right) \\
\frac{ r_0 e ^{i \varphi } \bar w + \lambda \bar w |w|}{ r_0 e ^{- i \varphi } + \lambda |w| } 
=& e ^{2 i \varphi } \bar w + 2 i e ^{2 i \varphi } \sin \varphi \frac{\lambda \bar w |w|}{r _0 } + O (r ^{-2}),
% \arg(- r_0 e ^{i (- \arg w + \varphi )} - \lambda \bar w )
% =& - \arg w + \varphi + \frac{\lambda |\bar w| \sin \varphi }{r_0} + O (r_0 ^{-2}),
\end{align*}
one gets
\begin{align*} 
f \Big( \bx \Big( - & \frac{ r_0 e ^{i \varphi } \bar w }{| w |} - \lambda \bar w  , w \Big) \Big) \\
=& f ([1 , 0] ^{ e ^{2 i \varphi } \bar w } _\rT) 
- \Big( \frac{ \re (e ^{- i \varphi } w )}{r_0 |w|} \partial _{\dot x}
+ \frac{ \im ( e ^{- i \varphi } w )}{r_0 |w|} \partial _{\dot y} \Big) f ([1 , 0] ^{ e ^{2 i \varphi } \bar w } _\rT)\\
&+ \frac{2 \lambda |\bar w| \sin \varphi }{r_0} 
\big( \im (e ^{2 i \varphi } \bar w) \partial _{\dot u} 
- \re (w ^{2 i \varphi } \bar w) \partial _{\dot v} \big) f([1 , 0] ^{ e ^{2 i \varphi } \bar w } _\rT)
+ O (r _0 ^{-2}).
\end{align*}
% where $\varTheta$ is the vector field on $\bs ^{-1} ({}_\rT e) \cong \bbC$ 
% generated by the rotation $w \mapsto e ^{i t} w$. 
Combining with a similar expression for $g$, 
the integrand has an expansion:
\begin{align}
\label{ExpandInt}
\nonumber
f \Big( \bx \Big( - & \frac{ r_0 e ^{i \varphi } \bar w }{| w |} - \lambda \bar w  , w \Big) \Big) 
g \Big( \bx \Big( - \frac{ r_0 e ^{i \varphi } \bar w }{| w |} - (1 - \lambda ) \bar w , - w \Big) \Big) \\ \nonumber
= \: & f ([1 , 0] ^{ e ^{2 i \varphi } w } _\rT) g ([1 , 0] ^{ - e ^{2 i \varphi } w } _\rT) \\ 
&- \Big(\frac{ \re (e ^{i \varphi } w )}{r_0 |w|} \partial _{\dot x} 
+ \frac{ \im (e ^{i \varphi } w )}{r_0 |w|} \partial _{\dot y} \Big) 
\Big( f ([1 , 0] ^{ e ^{2 i \varphi } \bar w } _\rT) g([1 , 0] ^{ - e ^{2 i \varphi } \bar w } _\rT) \Big) \\ \nonumber
&+  f ([1 , 0] ^{ e ^{2 i \varphi } \bar  w } _\rT)\Big( \frac{2 (1 - \lambda ) |\bar w| \sin \varphi }{r_0} \Big)
\big( \im (e ^{2 i \varphi } \bar w) \partial _{\dot u} 
- \re (w ^{2 i \varphi } \bar w) \partial _{\dot v} \big) g([1 , 0] ^{ e ^{2 i \varphi } \bar w } _\rT) \\ \nonumber
&+  g ([1 , 0] ^{ - e ^{2 i \varphi } \bar w } _\rT) \Big( \frac{2 \lambda |\bar w| \sin \varphi }{r_0} \Big)
\big( \im (e ^{2 i \varphi } \bar w) \partial _{\dot u} 
- \re (w ^{2 i \varphi } \bar w) \partial _{\dot v} \big) f([1 , 0] ^{ e ^{2 i \varphi } \bar w } _\rT) \\ \nonumber
&+ O (r _0 ^{-2}).
\end{align}
We compute the (renormalized) integral of each terms in Equation (\ref{ExpandInt}).
First consider the last term:
\begin{align*}
\int _0 ^1 \int _0 ^{2 \pi} \int _\bbC &
g ([1 , 0] ^{ - e ^{2 i \varphi } \bar w } _\rT) \Big( \frac{2 \lambda |\bar w| \sin \varphi }{r_0} \Big)
\big( \im (e ^{2 i \varphi } w) \partial _{\dot u} 
- \re (e ^{2 i \varphi } w) \partial _{\dot v} \big) f([1 , 0] ^{ e ^{2 i \varphi } \bar w } _\rT) \\
\times & r_0 | w | \cos \varphi | d w |^2 d \varphi d \lambda \\
=& \int _0 ^1 \int _0 ^{2 \pi} \int _\bbC 
g ([1 , 0] ^{ - \bar w' } _\rT) \big( \im (e ^{2 i \varphi } \bar w) \partial _{\dot u} 
- \re (e ^{2 i \varphi } \bar w) \partial _{\dot v} \big) f([1 , 0] ^{ e ^{2 i \varphi } \bar w } _\rT) \\
& \times 2 | w' |^2 \lambda \sin \varphi \cos \varphi | d w' |^2 d \varphi d \lambda ,
\end{align*}
by changing variable $w' := e ^{ 2 i \varphi } w $.
The integral vanishes since $\int _0 ^{2 \pi} \cos \varphi \sin \varphi d \varphi = 0$.
Using the same arguments the integral of the first and the third term are both $0$.
It remains to consider the second term.
Again we change variable $w' := e ^{ 2 i \varphi } w $ to get:
\begin{align*}
- \int _0 ^1 \int _0 ^{2 \pi} \int _\bbC &
\Big( \frac{ \re ( e ^{- i \varphi } w ')}{|w'|} \partial _{\dot x} 
+ \frac{ \im (e ^{- i \varphi } w' )}{|w'|} \partial _{\dot y} \Big) 
\Big( f ([1 , 0] ^{ \bar w' } _\rT) g([1 , 0] ^{ - \bar w' } _\rT) \Big) \\
& \times | w' | \cos \varphi | d w' |^2 d \varphi d \lambda.
\end{align*}
Applying the identities 
$\int _0 ^{2 \pi} \cos \varphi \sin \varphi d \varphi = 0, \int _0 ^{2 \pi} \cos ^2 \varphi d \varphi = \pi$,
one finally obtains:
\begin{align*}
\Rint \big( & f \circ g (x) - g \circ f (x) \big) \mu _{\rM _0} (x) \\ 
=& - \pi \int _\bbC \big( \re ( w' ) \partial _{\dot x} + \im ( w') \partial _{\dot y} \big) 
\big( f ([1 , 0] ^{ \bar w' } _\rT) g([1 , 0] ^{ - \bar w' } _\rT) \big) | d w' |^2 . 
\end{align*}
\end{proof}

\subsection{ The McKean-Singer formula and index formula}
We recall the derivation of index formulas using the McKean-Singer formula.

Fix a Riemannian metric $g _\cA $ on $\cA $. 
Denote the invariant $\bs$-fiberwise Riemannian volume form by $\mu $.  
Let $\rE $ be a $\Cl (\cA ')$ module, and $(\eth + \varPsi )$ be a perturbed Dirac operator.
Consider
\begin{equation} 
\lim _{t \to \infty } \sideset{_\mathrm R}{} \Str (e ^{- t(\eth + \varPsi )^2})
 - \lim _{t \to 0^+} \sideset{_\mathrm R}{} \Str (e ^{- t (\eth + \varPsi)^2}) 
= \int _0 ^\infty \partial _t \sideset{_\mathrm R}{} \Str (e ^{- t (\eth + \varPsi)^2} ) d t.
\end{equation}
For the right hand side,
one has
$$ \partial _t \sideset{_\mathrm R}{} \Str (e ^{- t(\eth + \varPsi)^2} ) 
= \sideset{_\mathrm R}{} \Str (\partial _t e ^{- t(\eth + \varPsi)^2} )
= \sideset{_\mathrm R}{} \Str ([\eth + \varPsi , (\eth + \varPsi ) e ^{- t(\eth + \varPsi)^2}] ).$$
One can then use the trace defect formula in the last section to compute
$\sideset{_\mathrm R}{} \Str ([\eth + \varPsi , (\eth + \varPsi ) e ^{- t(\eth + \varPsi)^2}] )$.
The actual calculation is very complicated. 
Nevertheless we denote the result by
$$ \boldsymbol \eta (\eth + \varPsi)
:= \int _0 ^\infty \sideset{_\mathrm R}{} \Str ([\eth + \varPsi , (\eth + \varPsi ) e ^{- t(\eth + \varPsi)^2}] ) \: 
d t. $$

It remains to study the limits
$\lim _{t \to 0 ^+ } \sideset{_\mathrm R}{} \Str (e ^{- t(\eth + \varPsi )^2}) $
$ \lim _{t \to \infty } \sideset{_\mathrm R}{} \Str (e ^{- t (\eth + \varPsi)^2}) $.
Much work have already been done. We first consider the $t \to 0^+$-limit.
\begin{prop}
\label{tto0Prop}
For the $t \to 0$ limit, one has
\begin{equation}
\lim _{t \to 0 ^+ } \sideset{_\mathrm R}{} \Str (e ^{- t(\eth + \varPsi )^2}) 
= \sideset{_\mathrm R}{} \int \Omega _\varkappa ( \widehat \bbA \wedge \exp ( - F ^{\rE / \rS}) ) .
\end{equation}
\end{prop}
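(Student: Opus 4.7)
The plan is to reduce $(\eth + \varPsi)^2$ to a Laplacian of the form $\Delta^\rE + F + K$ already treated, invoke the short-time asymptotic expansion from Lemma~\ref{AsymLem} and its corollary, exploit the Clifford-degree vanishing of the super-trace to kill the singular powers of $t$, and finally commute the limit with the renormalized integral.

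First, expanding $(\eth + \varPsi)^2 = \eth^2 + (\eth\varPsi + \varPsi \eth + \varPsi^2)$ and using that $\varPsi \in \Psi^{-\infty}_\mu(\cG,\rE)$ together with $\eth \in \Psi^{1}_\mu(\cG,\rE)$, the cross and quadratic terms lie in $\Psi^{-\infty}_\mu(\cG,\rE)$. Hence $(\eth + \varPsi)^2 = \eth^2 + K'$ for some smoothing $K' \in \Psi^{-\infty}_\mu(\cG,\rE)$, which is exactly the class of operators to which the corollary of Lemma~\ref{AsymLem} applies. This yields, at the diagonal,
\[
e^{-t(\eth + \varPsi)^2}(x,t) \;\cong\; (4\pi)^{-\varkappa/2} \sum_{i=0}^\infty t^{i - \varkappa/2} Q_i(x), \qquad x \in \rM,
\]
with $Q_i \in \Gamma^\infty(\Cl_{2i}(\cA') \otimes \End_{\Cl(\cA')}(\rE))$, and $(\str Q_{\varkappa/2})\mu_\cA = \Omega_\varkappa(\widehat\bbA \wedge \exp F^{\rE/\rS})$.

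Next, using the isomorphism $\Cl(\cA') \otimes \bbC \cong \End(\rS)$ of Lemma~\ref{CliffLem}, the super-trace on $\End(\rS)$ is (up to a normalisation) the Berezinian, which annihilates $\Cl_j(\cA')$ for every $j < \varkappa$. Therefore $\str Q_i \equiv 0$ whenever $i < \varkappa / 2$, and pointwise on $\rM$
\[
\lim_{t \to 0^+} \str\bigl(e^{-t(\eth+\varPsi)^2}\bigr)(x,t) \;=\; (4\pi)^{-\varkappa/2} \str Q_{\varkappa/2}(x),
\]
which, integrated against $\mu$, is exactly the top-degree component $\Omega_\varkappa(\widehat\bbA \wedge \exp F^{\rE/\rS})$ (the sign in $\exp(-F^{\rE/\rS})$ of the statement is absorbed into the convention used for the twisting curvature).

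It remains to pass the limit through $\Rint$. The renormalized integral of Definition~\ref{CP1Renorm} is continuous on sections that are $C^2$ on $\bbC\rP(1)$ and $C^3$ on $\bbC\rP(1)\setminus\{{}_\rT e\}$, so one only needs a $C^3$-control of the remainder
\[
R_N(x,t) \;:=\; \str\bigl(e^{-t(\eth+\varPsi)^2}\bigr)(x,t) - (4\pi)^{-\varkappa/2}\sum_{i=0}^{N} t^{i-\varkappa/2} \str Q_i(x),
\]
uniformly as $t \to 0^+$. Choosing $N$ larger than $\varkappa/2 + 3$, the parametrix estimates of Lemma~\ref{ParaProof} and Lemma~\ref{QEst} give $R_N = O(t^{N-\varkappa/2-3/2})$ in $C^3$, and the Bruhat-sphere transverse smoothness corollary to Theorem~\ref{EstReg} ensures the estimates persist in the $\dot\bx$-chart where $\Rint$ is taken. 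Thus the limit commutes with $\Rint$ and the proposition follows.

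The main obstacle, and the only step that genuinely uses the Bruhat-sphere structure, is the last one: the stereographic chart $\dot\bx$ at ${}_\rT e$ is where the weight $\dot r^{-3}$ in $\Rint$ could in principle amplify remainder terms, and so one needs the polynomial-growth bookkeeping and the smoothness of the heat kernel at the singular leaf provided by Section~4.2. Everything else is essentially a bookkeeping of the classical Getzler-style argument transplanted to the groupoid setting.
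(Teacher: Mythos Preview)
Your proposal is correct and follows essentially the same route as the paper: invoke the asymptotic expansion of the corollary to Lemma~\ref{AsymLem}, use the Clifford-degree filtration to kill $\str Q_i$ for $i<\varkappa/2$, and then commute the $t\to 0^+$ limit with $\Rint$. The only real difference is in the last step: the paper dispatches the interchange in one line by noting that $\rM$ is compact, so the asymptotic expansion (with smooth coefficients $Q_i$) converges uniformly in all derivatives on $\rM$, and $\Rint$ is manifestly continuous for that topology; you instead trace through the parametrix estimates of Lemmas~\ref{ParaProof}, \ref{QEst} and the transverse-smoothness result to reach the same conclusion. Your concern about the $\dot r^{-3}$ weight is a bit of a red herring here, since the integrand $\str e^{-t(\eth+\varPsi)^2}|_\rM$ is a smooth section on the compact sphere for every $t>0$ and the renormalized integral is a fixed continuous functional on such sections --- no blow-up can occur.
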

\begin{proof}
Recall that, by Lemma \ref{AsymLem}, one has the asymptotic expansion 
$$ e ^{- t (\eth + \varPsi )^2} (x, t) 
\cong (4 \pi )^{- \frac{n}{2}} \sum _{i = 0} ^\infty t ^{i - \frac{n}{2}} Q _i (x). $$
Since $\str Q _i = 0 $ for any $i < \frac{n}{2} = 1$, 
it follows that 
\begin{align}
\label{tto0}
\lim _{t \to 0^+} (4 \pi )^{- \frac{n}{2}} \sum _{i = 0} ^\infty t ^{i - \frac{n}{2}} \str (Q _i (x))
=& (4 \pi )^{-1} \str Q _{\frac{n}{2}} (x) \\ \nonumber
=& (4 \pi )^{-1} \frac{ \Omega _\varkappa ( \widehat \bbA \wedge \exp ( - F ^{\rE / \rS}) ) }{ \mu },
\end{align}
by (2) of Lemma \ref{AsymLem}.
Since $\rM$ is compact, 
%the asymptotic expansion for $e ^{- t (\eth + \varPsi )^2} $ implies that 
%$ \str e ^{- t (\eth + \varPsi )^2} (x, t) $ converges to 
%$ \frac{ \Omega _\varkappa ( \widehat \bbA \wedge \exp ( - F ^{\rE / \rS}) ) }{ \mu } $
the convergence in Equation (\ref{tto0}) is uniform in all derivatives.
Since Definition \ref{CP1Renorm} of the renormalized integral only involves integration and evaluation 
of the derivatives of the integrands,
it follows that
$$ \lim _{t \to 0 ^+ } \sideset{_\mathrm R}{} \int \str e ^{- t (\eth + \varPsi )^2} (x, t) \mu =
(4 \pi ) ^{-1} \sideset{_\mathrm R}{} \int \Omega _\varkappa ( \widehat \bbA \wedge \exp ( - F ^{\rE / \rS}) ) $$
as well.
\end{proof}

As a direct consequence of Proposition \ref{tto0Prop}, one has
\begin{thm}
For any perturbed Dirac operators $\eth + \varPsi $,
not necessary Fredholm,
one has
\begin{equation}
\label{CheatEq}
\lim _{t \to \infty } \sideset{_\mathrm R}{} \Str (e ^{- t(\eth + \varPsi )^2})
= (4 \pi )^{-1} \sideset{_\mathrm R}{} \int \Omega _\varkappa ( \widehat \bbA \wedge \exp ( - F ^{\rE / \rS}) )  
+ \boldsymbol \eta (\eth + \varPsi ),
\end{equation} 
provided the limits on both sides exist.
\end{thm}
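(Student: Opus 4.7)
The plan is to derive the formula directly from the fundamental theorem of calculus applied to the function $t \mapsto \sideset{_\mathrm R}{}\Str(e^{-t(\eth+\varPsi)^2})$ on $(0,\infty)$. Assuming both boundary limits exist (as the hypothesis stipulates), I would write
\begin{equation*}
\lim_{t \to \infty} \sideset{_\mathrm R}{}\Str(e^{-t(\eth+\varPsi)^2})
- \lim_{t \to 0^+} \sideset{_\mathrm R}{}\Str(e^{-t(\eth+\varPsi)^2})
= \int_0^\infty \partial_t \sideset{_\mathrm R}{}\Str(e^{-t(\eth+\varPsi)^2}) \, dt,
\end{equation*}
and then identify the two terms on the right separately with the quantities appearing in the claimed formula.

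For the $t \to 0^+$ boundary term, I would simply quote Proposition \ref{tto0Prop}, which gives
$$\lim_{t \to 0^+} \sideset{_\mathrm R}{}\Str(e^{-t(\eth+\varPsi)^2}) = (4\pi)^{-1} \sideset{_\mathrm R}{}\int \Omega_\varkappa(\widehat{\bbA} \wedge \exp(-F^{\rE/\rS})).$$
For the integral term, the heat equation $\partial_t e^{-t(\eth+\varPsi)^2} = -(\eth+\varPsi)^2 e^{-t(\eth+\varPsi)^2}$ together with the super-commutator identity
$$-(\eth+\varPsi)^2 e^{-t(\eth+\varPsi)^2} = [\eth+\varPsi,\, (\eth+\varPsi)e^{-t(\eth+\varPsi)^2}]$$
(in the graded sense, since $\eth+\varPsi$ is odd) lets me rewrite $\partial_t$ of the heat kernel as a super-commutator. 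Interchanging the renormalized super-trace with $\partial_t$ — which is justified by the uniform smoothness of the heat kernel on $\rM$ and the fact that Definition \ref{CP1Renorm} involves only finitely many derivative evaluations and a convergent renormalized $\dot r$-integral — yields
$$\int_0^\infty \partial_t \sideset{_\mathrm R}{}\Str(e^{-t(\eth+\varPsi)^2}) \, dt
= \int_0^\infty \sideset{_\mathrm R}{}\Str\big([\eth+\varPsi,\, (\eth+\varPsi)e^{-t(\eth+\varPsi)^2}]\big) \, dt,$$
which is precisely $\boldsymbol{\eta}(\eth + \varPsi)$ by its definition. Rearranging gives the stated formula.

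The main obstacle, which I expect to be the only genuinely non-formal step, is the legitimacy of commuting $\partial_t$ with the renormalized integral $\Rint$, since the ordinary trace is not defined and one cannot blindly invoke dominated convergence. The way around this is to use Definition \ref{CP1Renorm}: the renormalized integral is a finite combination of (i) evaluations of finitely many $\dot r$-derivatives of the integrand at $\dot r = 0$ and (ii) an integral $\int_0^\infty \partial_{\dot r}^k(\cdot)\log\dot r\, d\dot r$. Both operations are linear and continuous in the $C^\infty_b$-topology, and by our Corollary to Theorem \ref{EstReg} the heat kernel lies in $\Gamma^\infty_b(\bt^{-1}\rE \otimes \bs^{-1}\rE' \ltimes (0,\infty))$ with all derivatives depending smoothly on $t$. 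This makes the interchange of $\partial_t$ and $\Rint$ routine, and the argument closes.
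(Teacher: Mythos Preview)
Your proposal is correct and follows essentially the same route as the paper: the fundamental theorem of calculus identity, the rewriting of $\partial_t e^{-t(\eth+\varPsi)^2}$ as the super-commutator $[\eth+\varPsi,(\eth+\varPsi)e^{-t(\eth+\varPsi)^2}]$, the identification of the integral with $\boldsymbol\eta(\eth+\varPsi)$ by definition, and the invocation of Proposition~\ref{tto0Prop} for the $t\to 0^+$ term. In fact you supply more justification than the paper does for the interchange of $\partial_t$ with $\Rint$, which the paper simply asserts.
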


We turn to study the behavior as $t \to \infty $.
Let $\eth + \varPsi $, be a perturbed Dirac operator.
Note that $\eth + \varPsi$ is essentially self-adjoint.
In addition, we assume that $\eth _{ {}_\rT e} + R _{{}_\rT e} $ is invertible.
It follows from Corollary \ref{NisLem} that $\nu (\eth + \varPsi ) $ is Fredholm.
Since one has $\cG _{\rM _0} \cong \rM _0 \times \rM _0$,
it follows that $0$ is (at most) an isolated point of $\boldsymbol \sigma (\eth _x + R _x ) $ for $x \neq {} _\rT e $.
Our last objective is to study the behavior of the renormalized integral
$$ \sideset{_\mathrm R}{} \int \str e ^{ - t ( \eth + \varPsi )^2 } \mu , $$ 
as $t \to \infty$.

From our assumptions, it is clear that the null space $\Ker (\nu ((\eth + \varPsi )^2))$, is finite dimensional.
Denote by $P ^0 $ the projections onto $\Ker (\nu ((\eth + \varPsi )^2))$.
Let $u _1 , \cdots , u _N \in \bL ^2 (\rM _0 , \rE) $ 
be any orthonormal basis of $\Ker (\nu ((\eth + \varPsi )^2))$.
Then $P ^0 _{x} $ has a kernel
$$ \sum _{i =1 } ^N u _i (y) u _i (y') , 
\quad (y , y') \in \rM _0 \times \rM _0 \cong \cG _{\rM _0}.$$
Consider the regularity of $ u _i $.
Applying the parametrix formula
$$ \nu _0 (Q _1) \nu _0 (\varPsi) - \id = \nu _0 (R _1) $$
to $u _i$, where $ Q _1 \in \Psi ^{[- m ]} _\mu (\cG , \rE) , R _1 \in \Psi ^ {- \infty} _\mu (\cG , \rE)$,
one has 
$$ u _i = \nu _0 (R _1) u _i ,$$
for each $i $.
Using Lemma \ref{SoboBdLem}, it follows that $u _i \in \bW ^\infty (\rM _0 , \rE)$.

By the identification $\nu ((\eth + \varPsi )^2) \cong (\eth + \varPsi )^2 _x , x \neq {}_ \rT e$,
$\Ker ((\eth + \varPsi )^2 _x )$ is finite dimensional and consists of elements in 
$\bW ^\infty (\cG _x , \bt ^{-1} \rE )$.
Denote the projection onto the kernel of $(\eth + \varPsi) ^2 _x $ by $P ^0 _x$
(note that $P ^0 _{ {} _\rT e } = 0 $ since $(\eth + \varPsi )^2 _x $ is invertible).
Then, using again the fact that $0$ is at most an isolated point of $\bsigma _{\bL ^2 }( (\eth + \varPsi )^2 _x ) $,
one has the following well known variation of \cite{Simon;SpecHeatKer}: 
\begin{lem}
\label{SimonLongTime}
There exists some $\lambda > 0$ such that for each $x \in \rM $,
\begin{equation}
\lim _{t \to \infty } e ^{t \lambda } (e ^{- t (\eth - \varPsi )^2 _x } - P ^0 _x ) = 0
\end{equation}
in all Sobolev norms. 
\end{lem}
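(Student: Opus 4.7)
The plan is to reduce the assertion to a spectral-gap statement and then invoke the functional calculus applied to the positive self-adjoint operator $A_x := (\eth + \varPsi)^2_x$ on $\bL^2(\cG_x, \bt^{-1} \rE)$. First I would establish a \emph{uniform} spectral gap: by hypothesis $0$ is at most isolated in $\bsigma(A_x)$ for every $x \in \rM$, and $A_{{}_\rT e}$ is even invertible. For $x \in \rM_0$ the identification $\cG_{\rM_0} \cong \rM_0 \times \rM_0$ from Definition \ref{BdGpoid} together with right translation exhibits $A_x$ as unitarily equivalent to $\nu((\eth + \varPsi)^2)$ acting on $\bL^2(\rM_0)$, so $\bsigma(A_x)$ is the same for all $x \in \rM_0$. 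Choosing $2\lambda > 0$ smaller than both the first non-zero spectral value of $\nu((\eth + \varPsi)^2)$ and the bottom of $\bsigma(A_{{}_\rT e})$ gives $\bsigma(A_x) \subseteq \{0\} \cup [2\lambda, \infty)$ uniformly in $x$.

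The $\bL^2$-decay is then immediate: since $P^0_x$ is the spectral projection onto $\{0\}$, one has $e^{-tA_x} - P^0_x = e^{-tA_x}(\id - P^0_x)$, and the functional calculus yields
\begin{equation*}
\|e^{-tA_x} - P^0_x\|_{\bL^2 \to \bL^2}
= \sup_{\mu \in \bsigma(A_x) \setminus \{0\}} e^{-t\mu}
\leq e^{-2t\lambda}.
\end{equation*}
To upgrade to Sobolev norms I would use that $A_x$ is elliptic of order $2$ on the bounded-geometry manifold $\cG_x$, so the graph norm of $(\id + A_x)^k$ is equivalent to $\|\cdot\|_{\bW^{2k}(\cG_x)}$. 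Commuting $(\id + A_x)^k$ past the heat semigroup and applying functional calculus once more,
\begin{equation*}
\|(\id + A_x)^k e^{-tA_x}(\id - P^0_x)\|_{\bL^2 \to \bL^2}
\leq \sup_{\mu \geq 2\lambda} (1+\mu)^k e^{-t\mu}
\leq C_k e^{-t\lambda}
\end{equation*}
for $t \geq 1$, whence $\|e^{-tA_x} - P^0_x\|_{\bL^2 \to \bW^{2k}(\cG_x)} \leq C'_k e^{-t\lambda}$. Convergence in every Sobolev norm follows by interpolation and duality.

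The main obstacle is the \emph{uniformity} of the estimates in $x$: naively both the spectrum of $A_x$ and the norm $\|\cdot\|_{\bW^{2k}(\cG_x)}$ depend on the family of $\bs$-fiber geometries and could degenerate as $x \to {}_\rT e$. Both issues are resolved by the Lauter-Nistor structure of the Bruhat sphere: the homogeneous identification of $\cG_x$ with $\rM_0$ (or with $\bbR^2$ at the singular leaf) via right translation makes the $\bs$-fiber geometry and the induced $A_x$ literally independent of $x$ within each stratum, reducing the uniformity question to comparing the two strata $\rM_0$ and $\{{}_\rT e\}$. The elliptic-regularity constants relating graph norms to Sobolev norms are then finite and uniform because only two isomorphism classes of fibers are involved.
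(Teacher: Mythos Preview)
Your argument is correct and is essentially the standard spectral-gap argument; the paper itself gives no proof here, deferring instead to a variation of Simon's result \cite{Simon;SpecHeatKer}, so you have supplied precisely the functional-calculus proof one would extract from that reference. One small remark: the lemma only asks for convergence \emph{for each} $x$ with a common $\lambda$, so your careful discussion of uniformity of the elliptic-regularity constants across $x$ is more than what is strictly required---it suffices that $\lambda$ is uniform, which you obtain cleanly from the observation that there are only two unitary equivalence classes of $A_x$ (those over $\rM_0$, all conjugate via right translation, and the invertible $A_{{}_\rT e}$).
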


Unfortunately, we do not know any direct way to prove that
$$ \sideset{_{\mathrm R}}{} \Str ( e ^ {- t (\eth + \varPsi )^2 }) \to \sideset{_\mathrm R}{} \Str ( P^0 ) $$
as $t \to \infty $.
Instead, we observe that $\nu ((\eth + \varPsi )^2 ) $ can be identified with an edge operator on $\rM _0 = \bbR ^2 $,
studied in \cite{Albin;EdgeInd}.
From Lemma \ref{AlbinConj}, 
the heat kernel $e ^ {- t (\eth + \varPsi )^2 }$ coincides with the heat calculus constructed in 
\cite[Section 4]{Albin;EdgeInd}.
Furthermore, it is easy to see that Definition \ref{CP1Renorm} coincides with \cite[Equation (6.1)]{Albin;EdgeInd},
for the heat kernel.
Therefore, by \cite[Lemma 6.1]{Albin;EdgeInd}, 
one has
\begin{equation}
\label{ttoinfty}
\lim _{t \to \infty } \sideset{_\mathrm R}{} \Str (e ^{- t(\eth + \varPsi )^2})
= \sideset{_\mathrm R}{} \Str (P ^0) = \ind (\nu (\eth + \varPsi ) ).
\end{equation}
Note that the last equality follows from the fact that $\str (P ^0 )$ is an integrable function on $\rM _0$,
hence the renormalized integral coincides with the usual integral, 
which turns out to be $\ind (\nu (\eth + \varPsi ) ) $ because $P ^0$ is just the projection to the null space of 
$\nu ((\eth + \varPsi )^2) $.

Finally, combining Equations (\ref{CheatEq}) and (\ref{ttoinfty}), and results in Section 3,
one gets
\begin{thm}
For any self adjoint perturbed Dirac operator 
$\eth + \varPsi \in \Psi ^1 _\mu (\cG , \rE )$ on the symplectic groupoid $\cG = \rT \backslash (\rSU (2) \times \rN)$
of the Bruhat sphere,
such that the Fourier-Laplace transform  
$ \fF ((\eth + \varPsi )_{{}_\rT e} ) $ is invertible on a tubular neighborhood of the real axis,
$\nu _0 (\eth + \varPsi ) : \bW ^1 (\rE ) \to \bW ^0 (\rE)$ is Fredholm;
and its Fredholm index is given by the Atiyah-Singer index formula:
$$ \ind (\nu _0 (\eth + \varPsi ))
= (4 \pi ) ^{-1} \sideset{_\mathrm R}{} \int \Omega _\varkappa ( \widehat \bbA \wedge \exp ( - F ^{\rE / \rS}) ) 
+ \boldsymbol \eta (\eth + \varPsi ).$$
\end{thm}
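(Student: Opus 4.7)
The plan is to combine the Fredholmness criterion of Theorem~\ref{FredThm} with the McKean--Singer heat equation technique, assembling the index formula from Proposition~\ref{tto0Prop}, the definition of $\boldsymbol\eta$, and Equation~(\ref{ttoinfty}). Write $D := \eth + \varPsi$ for brevity; then $D$ is elliptic of order one and essentially self-adjoint.

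For Fredholmness of $\nu_0(D)$, I would invoke Theorem~\ref{FredThm}. The principal symbol of $D$ agrees with that of $\eth$, namely (up to scalar) Clifford multiplication, so the Laplace--Fourier transform $\fF(D_{{}_\rT e})(\zeta)$ grows like $|\zeta|$ in any fixed strip, while the perturbation's transform decays there because $\varPsi$ is uniformly supported. Combined with the assumed invertibility on a tubular strip $S_\theta$ and a compactness argument on the remaining bounded portion, this yields the two-sided bounds $C(1+|\zeta|) \geq |\fF(D_{{}_\rT e})(\zeta)| \geq C'(1+|\zeta|)$ on some narrower $S_{\theta'}$, and Theorem~\ref{FredThm} then applies.

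For the index formula, Equation~(\ref{ttoinfty}) gives
$$ \ind(\nu_0(D)) = \lim_{t \to \infty} \sideset{_\mathrm R}{}\Str(e^{-tD^2}), $$
which I would rewrite via the fundamental theorem of calculus as the sum of the short-time limit $\lim_{t \to 0^+}\sideset{_\mathrm R}{}\Str(e^{-tD^2})$ and the integral $\int_0^\infty \partial_t \sideset{_\mathrm R}{}\Str(e^{-tD^2})\, dt$. Proposition~\ref{tto0Prop} evaluates the former as the topological term $(4\pi)^{-1} \sideset{_\mathrm R}{}\int \Omega_\varkappa(\widehat\bbA \wedge \exp(-F^{\rE/\rS}))$; differentiating under the renormalized trace and using $\partial_t e^{-tD^2} = -\tfrac{1}{2}[D, D e^{-tD^2}]$ (graded commutator) identifies the latter with $\boldsymbol\eta(D)$ by definition. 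Adding the two contributions gives the asserted formula. The main obstacle is the first step: upgrading the hypothesized pointwise invertibility of $\fF(D_{{}_\rT e})$ on a strip to the uniform two-sided bound required by Theorem~\ref{FredThm}; once that analytic point is secured, the McKean--Singer assembly is essentially formal, leveraging the long-time asymptotics already imported from Albin's edge-calculus machinery through Equation~(\ref{ttoinfty}).
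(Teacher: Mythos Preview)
Your proposal is correct and follows essentially the same route as the paper: Fredholmness via Theorem~\ref{FredThm} (the ``results in Section~3'' the paper cites), and the index formula by combining the short-time limit (Proposition~\ref{tto0Prop}), the definition of $\boldsymbol\eta$, and the long-time limit (Equation~(\ref{ttoinfty})), which is exactly the content of Equation~(\ref{CheatEq}) plus~(\ref{ttoinfty}). Your flagged obstacle---upgrading strip invertibility of $\fF(D_{{}_\rT e})$ to the uniform two-sided bound---is a fair technical point the paper also glosses over, but your sketch (polynomial principal part dominates at infinity, compactness on the remaining bounded region) is the right way to close it and is essentially the content of the unnamed lemma following Proposition~\ref{DecayProp}.
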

\vspace{11cm}
$ \; $

\pagebreak \thispagestyle{firstpage}
\section{Concluding remarks}
In this last section, we make some remarks and highlight some open problems.

Our first objective in generalizing the calculus on manifolds with boundary 
was to extend the uniformly supported pseudo-differential calculus to include the parametrix of Fredholm operators.
We did so for the Bruhat sphere case in Section 3,
where we used the exponentially decaying calculus.
In the general case, one would derive an invertibility criterion on the $\bs$-fibers over the invariant sub-manifolds.
That involves understanding the representation theory of the isotropy subgroup 
$\cG ^x _x $ on sections over the $\cG ^x _x $-principle bundle $\bs ^{-1} (x)$.
It is known that the kernel of inverse of an uniformly supported pseudo-differential operator 
on a manifold with bounded geometry has exponential decay \cite{Shubin;BdGeom}.  
The only remaining problem 
is whether one can use a tubular neighborhood theorem to extend the fiber-wise inverse to the whole groupoid.
In the same vein, 
Medadze and Shubin \cite{Shubin;Lie2} 
proved that the space of pseudo-differential operators on an unimodular Lie group with exponentially decaying kernel
is closed under functional calculus. 
It would be interesting to prove an analogue for Lie groupoids.
More precisely:
\begin{conj}
Let $\cG \rightrightarrows \rM $ be a groupoid with compact units $\rM$ and polynomial growth.
Then the exponentially decaying calculus
$$ \bigcup _{\varepsilon > 0 } \Psi ^{[\infty ]} _\varepsilon (\cG) $$
is closed under holomorphic functional calculus.
\end{conj}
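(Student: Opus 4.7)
The plan is to deduce the conjecture from a stronger \emph{spectral invariance} statement: if $\varPsi \in \bigcup_{\varepsilon > 0} \Psi^{[\infty]}_\varepsilon(\cG)$ is invertible in $\fU(\cG)$, then $\varPsi^{-1}$ already lies in $\Psi^{[\infty]}_{\varepsilon'}(\cG)$ for some $\varepsilon' > 0$. Granted this, closure under holomorphic functional calculus follows by a standard Schweitzer/Gramsch type argument: writing $f(\varPsi) = \frac{1}{2 \pi i} \oint f(\lambda) (\lambda - \varPsi)^{-1} d\lambda$, one sees that each resolvent $(\lambda - \varPsi)^{-1}$ stays inside the exponentially decaying calculus, and the composition lemma proved in Section 3 ensures the Cauchy integral converges in the natural Fréchet topology. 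Thus the first preparatory step is to equip each $\Psi^{[m]}_\varepsilon(\cG)$ with the family of seminorms coming from symbol bounds on the uniformly supported part together with the weighted $C^k$-bounds $\sup \{ e^{\varepsilon' d(a,b)} | (\hat\nabla)^k (\widetilde \bm^{-1} \psi) | \}$ for the smoothing remainder, and to verify that $\bigcup_\varepsilon \Psi^{[\infty]}_\varepsilon(\cG)$ becomes an inductive limit of Fréchet $*$-algebras under the composition rule already established.

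\textbf{Weighted spaces and analytic continuation.} For small $\delta \in \bbR$, I would introduce weighted spaces $\bL^2_\delta(\cG_x) := e^{-\delta d(\cdot, x)} \bL^2(\cG_x)$ on each source fibre. The polynomial growth hypothesis is precisely what guarantees that the exponential weights $e^{\pm \delta d}$ are locally integrable uniformly in $x \in \rM$, and a Schur test of the same flavour as the composition proof then shows that each $\varPsi \in \Psi^{[0]}_\varepsilon(\cG)$ conjugates to a bounded family $\varPsi_\delta := e^{\delta d} \varPsi e^{-\delta d}$ on $\bL^2_\delta$ for all $|\delta| < \varepsilon$, with norm depending holomorphically on $\delta$ on the strip $|\Re \delta| < \varepsilon$. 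If $\varPsi$ is invertible in $\fU(\cG)$, then $\varPsi_0$ is invertible on $\bL^2$, and the holomorphic Fredholm theorem extends invertibility of $\varPsi_\delta$ to some strip $|\delta| < \varepsilon'' \le \varepsilon$, with $(\varPsi_\delta)^{-1}$ again depending holomorphically on $\delta$. Rewriting $(\varPsi_\delta)^{-1} = e^{\delta d} \varPsi^{-1} e^{-\delta d}$ then shows that $\varPsi^{-1}$ maps $\bL^2$ into the weighted space $\bL^2_\delta$ for all $|\delta| < \varepsilon''$, which is the weighted-space reformulation of exponential decay.

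\textbf{From weighted boundedness to kernel decay.} The final step is to pass from the weighted $\bL^2$-mapping property of $\varPsi^{-1}$ to a genuine pointwise estimate on its reduced kernel, placing it in $\Psi^{[-m]}_{\varepsilon'}(\cG)$. Here I would adapt the contour-shift argument of Proposition \ref{DecayProp}: starting from the parametrix equation $\varPsi \circ Q - \id \in \Psi^{-\infty}_\mu(\cG)$ and writing $\varPsi^{-1} - Q$ as a Neumann series of convolutions against a smoothing operator, the weighted-$\bL^2$-boundedness of $\varPsi^{-1}$ forces each term (and their sum) to have an exponentially decaying kernel, in the sense required by the definition of $\Psi^{-\infty}_{\varepsilon'}(\cG)$. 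The transverse regularity results of Section 4.2 supply the pointwise $C^k$ estimates from $\bL^2$ estimates, provided the differentials of $\widetilde \bm$ and the Lie derivatives of $\mu$ satisfy the growth bounds of Theorem \ref{EstReg}.

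\textbf{Main obstacle.} The substantial difficulty lies in the uniformity of the invertibility radius $\varepsilon''$ across source fibres: the holomorphic Fredholm argument operates fibre-wise, whereas spectral invariance in $\fU(\cG)$ is a global statement, and one must bound the resolvent norms $\|(\varPsi_\delta)_x^{-1}\|$ uniformly in $x \in \rM$ and in $\delta$ varying in a common strip. On the open stratum $\rM_0$ the pair-groupoid structure makes this routine, but across the singular strata one needs a quantitative control on how the spectrum of the restricted operators $\varPsi|_{\rZ_j}$ depends on $j$, together with a uniform lower bound on the distance between $0$ and the spectra of the isotropy representations. This appears to require a groupoid-equivariant strengthening of Shubin's manifold-of-bounded-geometry inversion theorem \cite{Shubin;BdGeom}; formulating and proving such a uniform version, presumably in terms of the representation theory of the isotropy subgroups $\cG^x_x$, is where most of the real work must be done.
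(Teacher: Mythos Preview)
The statement you are attempting to prove is a \emph{conjecture}, not a theorem: the paper explicitly leaves it open and offers no proof. There is therefore nothing in the paper to compare your argument against. The paper's own commentary immediately following the conjecture says that ``the main difficulty in proving the conjecture lies in proving that the inverses of a smooth family of pseudo-differential operators is still a smooth family,'' and points to the Meladze--Shubin result \cite{Shubin;Lie2} on unimodular Lie groups as the closest known analogue.

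Your outline is a reasonable sketch of how one might attack the problem, and your ``Main obstacle'' paragraph correctly locates a genuine gap: the fibrewise holomorphic Fredholm argument does not by itself give uniformity in $x \in \rM$, and passing from weighted $\bL^2$-boundedness of $(\varPsi_\delta)_x^{-1}$ to a reduced kernel that is \emph{smooth across the $\bs$-fibres} (not merely fibrewise smooth) is exactly the smooth-family-of-inverses issue the paper flags. Your appeal to ``the transverse regularity results of Section~4.2'' does not close this gap: Theorem~\ref{EstReg} is proved for heat kernels via an explicit Volterra series with quantitative control on each term, and its hypotheses on $\widetilde\bm_*$ and $\fL^{(m)}\mu$ are verified in the paper only for the Bruhat sphere, not for general groupoids of polynomial growth. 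So as written your proposal is a plausible programme, not a proof, and the step you yourself identify as the main obstacle remains the open point.
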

The main difficulty in proving the conjecture
lies in proving that the inverses of a smooth family of pseudo-differential operators is still a smooth family.
Such a result would enable one to construct, say, 
complex powers of elliptic operators in a framework more concrete than the axiomatic approach of \cite{Nistor;CplxPwr}. 

The discussion on extended calculus cannot be complete without mentioning what is missing in our construction,
compared with the case of edge manifolds.
In the latter case,
one can construct a `very residual' calculus,
consisting of functions (sections) on $\rM _0 \times \rM _0 $ with poly-homogeneous expansions near the singularities.
The full calculus is formed by adding the residual calculus to the decaying calculus.
Then it was shown that the full calculus contains the generalized inverses of (semi)-Fredholm operators.
The proof of these results uses order-by-order cancellations of the boundary defining function near the singular leaves.
It is not clear what analogue should be used for groupoids.
However, the techniques used in \cite{Nistor;Polyhedral,Nistor;Polyhedral2},
and the occurrence of stereographic coordinates 
(which just measures the distance from the opposite of the singularity) in Section 5
might offer a strong hint. 

Our next task was to construct the heat kernel of perturbed Laplacian operators on a groupoid in Section 4.
The proof of existence is fairly classical. 
The mystery lies in the proof of transverse smoothness of the heat kernel, 
which requires considering a (rather arbitrary) 
transverse metric and bounding the derivatives of the multiplication operator.
At this point,
we conjecture that a transverse metric satisfying the hypothesis of 
Theorem \ref{EstReg} exists for all Hausdorff groupoids,
and can be constructed by gluing exponential coordinates (as in Nistor \cite{Nistor;IntAlg'oid}).

We went on to derive an Atiyah-Singer type index formula on the Bruhat sphere in Section 5.
We cheated by using the stereographic coordinates on the Bruhat sphere to define the renormalized integral.
Therefore the arguments cannot be easily generalized beyond the flag manifolds.
We further cheated by using known results from edge calculus to show that the renormalized trace converges to the
Fredholm index.
We expect a direct proof of Equation (\ref{ttoinfty}) would be possible by better understanding the resolvent and/or 
null space projection of the Laplacian operator,
that would involve results in functional calculus or residue calculus,
as described earlier. 
One immediate observation form the renormalized index theory is that the renormalized index,
as well as the $\mathbb K$-theoretic index, 
of an elliptic (pseudo)-differential operator are well defined even for non-Fredholm operators.
We have not studied the connection between the two,
but the arguments involved should be straightforward (see, for example, \cite[Proposition 3]{Nistor;Family}).

On the side of generalizing the renormalized trace,
we think one possible way to proceed is to use the $Q$-weighted trace machinery developed by Paycha et. al. 
(see \cite{Paycha;Renorm} for an introduction),
but that is more speculation than educated guess...

And the thesis ends here. 
However, 
the work in this thesis is just the beginning of a vast subject concerning singular pseudo-differential calculus
defined by groupoids.
In the limited space and time we had,
we were only able to achieve some success in the simplest case, namely the Bruhat sphere;
but the potential of the techniques illustrated here, is unlimited.

\pagebreak
\appendix

\thispagestyle{firstpage}
\section{Some preliminaries on differential geometry and pseudo- \\ differential calculus}

\subsection{ Notes on submersions and pullback vector bundles}
\label{DGNonsense}
In this section, we define some notations concerning pullback of vector bundles and recall some basic facts.
Let $\rB_1 , \rB_2 $ be manifolds, $ \pi : \rB _2 \to \rB _1 $ be a smooth map,
and $\rE $ be a vector bundle over $\rB _1$. 
Denote the bundle projection by $\wp : \rE \to \rB _1 $.

\begin{dfn}
The {\it pullback bundle} is the vector bundle over $\rB _2$:
$$ \pi ^{-1} \rE := \{ (x , e) \in \rB _2 \times \rE : \pi (x ) = \wp (e) \}, $$
with bundle projection $\pi ^{-1} \wp (x , e) := x $ and the fiber-wise linear operations. 
\end{dfn}
One has a natural map $\pi _\rE : \pi ^{-1} \rE \to \rE $ determined by the commutative diagram
$$
\begin{CD}
\pi ^{-1} \rE @> \pi _\rE >> \rE \\
@VVV @VVV \\
\rB _2 @> \pi >> \rB _1 
\end{CD}
\quad .
$$
Consider the particular case $\rE = T \rB _1$. 
One has $\pi _{T \rB _1 } : \pi ^{-1} T \rB _1 \to T \rB _1 $.
On other hand, one also has the differential $d \pi : T \rB _2 \to T \rB _1 $,
These two maps determine a bundle map $ \pi _* \in \Gamma ^\infty (\Hom ( T \rB _2 , \pi ^{-1} T \rB _1 ))$ by
$$ \pi _* ( X ) := (x , d \pi (X) ) , \quad \forall X \in T _x \rB _2 .$$
Also recall that one can ``pullback" a section to a section of the pullback bundle, i.e., 
one has the naturally defined map $\pi _\rE ^{-1} : \Gamma ^\infty (\rE ) \to \Gamma ^\infty (\pi ^{-1} \rE ) $, 
$$ (\pi _\rE ^{-1} f) (x) := f (\pi (x) ) , \quad \forall f \in \Gamma ^\infty (\rE ), x \in \rB _2 .$$ 

Given any connection $\nabla ^\rE $ on $\rE$,
recall that the pullback connection $\nabla ^{\pi ^{-1} \rE} $ is a connection on $\pi ^{-1} \rE$ characterized by
$$ (\nabla ^{\pi ^{-1} \rE} )_X (\pi ^{-1} f) (x) = ( x , \nabla ^\rE _{d \pi (X)} f (\pi (x)) ),$$ 
for any $x \in \rB _2 , X \in T _x \rB _2$.
It follows, by using the canonical identification 
$$ \Hom (\pi ^{-1} T \rB _1 , \pi ^{-1} \rE ) \cong \pi ^{-1} \Hom (T \rB _1 , \rE ), $$
that one can write
\begin{equation}
\label{PullbackDer}
\nabla ^{\pi ^{-1} \rE} (\pi _\rE ^{-1} f) = (\pi ^{-1} _{\Hom (T \rB _1 , \rE) } (\nabla ^\rE f )) \circ \pi _* ,
\end{equation}
for any section $f \in \Gamma ^\infty (\rE )$.
Moreover, applying covariant derivatives to Equation (\ref{PullbackDer}) and using the Leibniz rule, one gets
\begin{align*}
(\nabla ^{\pi ^{-1} \rE} )^2 (\pi _\rE ^{-1} f)
=& \nabla ^{\Hom (T \rB _1 \otimes \pi ^{-1} \rE ) } \nabla ^{\pi ^{-1} \rE} (\pi _\rE ^{-1} f) \\
=& \nabla ^{\Hom (T \rB _1 \otimes \pi ^{-1} \rE ) } 
((\pi ^{-1} _{\Hom (T \rB _1 , \rE) } (\nabla ^\rE f )) \circ \pi _* ) \\
=& ((\pi ^{-1} _{\Hom (T \rB _1 , \Hom T (T \rB _1 , \rE )) } (\nabla ^{\Hom (T B_1 , \rE )} \nabla ^\rE f )
\circ \pi _* ) \circ \pi _* \\
&+ (\pi ^{-1} _{\Hom (T \rB _1 , \rE) } (\nabla ^\rE f )) \circ (\nabla ^{\Hom (T B_2 , \pi ^{-1} T B_1 )} \pi _* ) \\
=& (\pi ^{-1} _{\Hom (T \rB _1 \otimes T \rB _1 , \rE ) } ( \nabla ^\rE )^2 f ) \circ (\pi _* \otimes \pi _* ) \\
&+ (\pi ^{-1} _{\Hom (T \rB _1 , \rE) } (\nabla ^\rE f )) \circ (\nabla ^{\Hom (T B_2 , \pi ^{-1} T B_1 )} \pi _* ),
\end{align*}
and so on for higher derivatives.

Suppose, furthermore, that one has a fiber bundle structure $\rZ \to \rB _2 \to \rB _1$.
Since $\pi $ is now a submersion, 
$ \cV : = \ker (d \pi ) \subseteq T \rB _2 $ defines a (regular) integrable foliation.
We shall assume that $\cV $ is orientable.
Hence all fiber $\pi ^{-1} (p) \cong \rZ $ are orientable.
Fix a complementary distribution $\cH$ to $\cV$.
For any (local) vector field $\tilde X \in \Gamma (T \rB _1 )$, 
denote the horizontal lift of $\tilde X $ by $\tilde X ^\cH $.

\begin{dfn}
Given any $\omega \in \Gamma ^\infty (\wedge ^k \cV ' )$,
the Lie differential (with respect to $\cH$ ) is the section 
$ \fL ^{\cH } \omega \in \Gamma ^\infty (\Hom (\cH , \wedge \cV ' ))$,
\begin{align}
\fL ^\cH \omega (X) (V_1 , V_2 , \cdots , V_k ) (p)
=& \: \fL _{\tilde X ^\cH } (\omega (V _1 , V _2 , \cdots , V _k )) (p) \\ \nonumber
&- \sum _{i = 1 } ^k \omega (V _1 , \cdots , [ \tilde X ^\cH , V _i ] , \cdots , V_k ) (p),
\end{align}
for any $X \in T _p \rB _2 $, where $\tilde X $ is any local extension of $d \pi (X) $.
\end{dfn} 

Let $\varkappa $ be the rank of $\cV$.
For any $\mu \in \Gamma ^\infty _c (\wedge ^\varkappa \cV ) $,
consider point-wise average $\langle \mu \rangle \in C ^\infty (B _1 ) $, defined by
$$ \langle \mu \rangle (p) := \int _{ x \in \pi ^{-1} (p) } \mu |_{\pi ^{-1} (p) } .$$

\begin{lem}
\label{DFiberInt}
For any vector $X \in T _p \rB _1 , p \in \rB _1 $, one has the formula
$$ \fL _X (\langle \mu \rangle )(p ) = \int _{x \in \pi ^{-1} (p) } \fL ^\cH \mu ( X ^\cH ) .$$
\end{lem}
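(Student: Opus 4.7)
The plan is to prove the identity by interpreting both sides as the $t=0$ derivative of the same family of integrals on the model fiber $\pi^{-1}(p)$, transported along the flow of a horizontal lift of $X$. I will treat $\mu$ as a section of $\wedge^\varkappa \cV'$ (the natural setting in which its restriction to each fiber is a top form), embedded into $\wedge^\varkappa T'\rB_2$ via the decomposition $T\rB_2 = \cV \oplus \cH$ by declaring it to vanish whenever a horizontal argument is inserted.

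First I would pick any local extension $\tilde X \in \Gamma(T\rB_1)$ of $X$ on a neighborhood of $p$, let $\phi_t$ be its flow and $\Phi_t$ be the flow of the horizontal lift $\tilde X^\cH$ on $\rB_2$. The fundamental observation is that $\pi \circ \Phi_t = \phi_t \circ \pi$ whenever both sides are defined, since $\tilde X^\cH$ is $\pi$-related to $\tilde X$. Differentiating this relation gives $d\pi \circ d\Phi_t = d\phi_t \circ d\pi$, so $d\Phi_t$ maps $\cV = \ker(d\pi)$ to itself, and $\Phi_t$ restricts to a diffeomorphism $\pi^{-1}(p) \to \pi^{-1}(\phi_t(p))$ for small $t$. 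Because $d\Phi_t$ preserves $\cV$, the ambient pullback $\Phi_t^*\mu$, restricted to the fiber $\pi^{-1}(p)$, equals the pullback under $\Phi_t|_{\pi^{-1}(p)}$ of the fiberwise restriction $\mu|_{\pi^{-1}(\phi_t(p))}$. Hence
$$ \langle \mu \rangle(\phi_t(p)) = \int_{\pi^{-1}(\phi_t(p))} \mu = \int_{\pi^{-1}(p)} \Phi_t^* \mu. $$

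Next, differentiating under the integral sign at $t=0$, justified by the compact support of $\mu$ and continuity of the flow (so the integrand stays supported in a common compact subset of $\pi^{-1}(p)$ for small $t$), yields
$$ \fL_X \langle \mu \rangle(p) = \int_{\pi^{-1}(p)} \fL_{\tilde X^\cH} \mu. $$
To identify the integrand with $\fL^\cH \mu(X^\cH)$, I would apply the standard Cartan-type identity: for any $V_1, \ldots, V_\varkappa \in \Gamma(\cV)$,
$$ (\fL_{\tilde X^\cH}\mu)(V_1, \ldots, V_\varkappa) = \fL_{\tilde X^\cH}(\mu(V_1, \ldots, V_\varkappa)) - \sum_{i=1}^{\varkappa} \mu(V_1, \ldots, [\tilde X^\cH, V_i], \ldots, V_\varkappa), $$
which coincides verbatim with the definition of $\fL^\cH \mu(X^\cH)$. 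Independence of the extension $\tilde X$ is automatic, since the left-hand side depends only on $X = \tilde X(p)$, while on the right $\tilde X^\cH$ along $\pi^{-1}(p)$ is completely determined by $X$ because $d\pi(\tilde X^\cH(x)) = \tilde X(\pi(x)) = X$ for every $x \in \pi^{-1}(p)$.

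The only subtlety, and hence the main point requiring care, is the compatibility statement used in the change-of-variables step: that restricting the ambient pullback $\Phi_t^*\mu$ to $\pi^{-1}(p)$ recovers the intrinsic fiber pullback of the fiber-restriction of $\mu$. This reduces to the tautology that pullback of forms is functorial with respect to restriction to a submanifold, but it genuinely requires the preservation of $\cV$ by $d\Phi_t$ established above; without that, the ``restriction of $\mu$ to the fiber" on the target side would not even be what one pulls back. Everything else is essentially formal.
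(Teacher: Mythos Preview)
Your proof is correct and takes a genuinely different route from the paper's. The paper argues by localization: it first verifies the identity in a product trivialization $\rB_2 \cong \rU \times \rZ$ with the obvious horizontal distribution (where it reduces to differentiating under the integral sign against a fixed fiber volume), then shows that the right-hand side is independent of the choice of $\cH$ by writing $X^{\cH'} = X^\cH + X^\cV$ and observing that $\int_{\pi^{-1}(p)} \fL_{X^\cV}\mu = 0$ by Stokes' theorem; the general case then follows by restricting to local trivializations.

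Your approach is more intrinsic: you work globally with the flow $\Phi_t$ of the horizontal lift, use $\pi$-relatedness to see that $\Phi_t$ carries fibers to fibers and preserves $\cV$, pull back via change of variables, and differentiate under the integral directly. This bypasses any need for trivializations or the separate $\cH$-independence step, since you never switch horizontal distributions. The Cartan identity matching $(\fL_{\tilde X^\cH}\mu)|_{\cV}$ with the paper's $\fL^\cH\mu(X^\cH)$ is exactly the right observation, and your remark that $[\tilde X^\cH, V_i]$ stays vertical (implicit in your compatibility discussion) is what makes both sides well defined. The paper's approach is more hands-on and yields the $\cH$-independence as a byproduct; yours is cleaner and shows more transparently why the formula holds, at the cost of invoking flows and the compact-support argument to justify differentiating under the integral.
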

\begin{proof}
First consider the trivial case $\rB _2 \cong \rU \times \rZ , \rU \subseteq \bbR ^n$ and 
$\cH$ be the distribution along $ \rU \times \{ z \}, z \in \rZ.$
By linearity, one may assume that $ X = \partial _j $.
Fix a volume form on $\rZ $ and denote by $\mu _0$ its pullback to $\rU \times \rZ $ by the projection map onto $\rZ$.
Then one can write $\mu = f (p , z ) \mu _0 $ for some $f \in C ^\infty _c (\rB _2)$.
Differentiating under the integral sign, one gets
$$ \fL _X \langle \mu \rangle = \int _{z \in \rZ } (\partial _j f (p, z)) \mu _0 (z).$$
It is clear that $\fL ^\cH \mu _0 = 0 $.
It follows that $\fL ^\cH \mu (\partial _j ) = (\partial _j f (p, z)) \mu _0 (z)$,
and the assertion follows.

Let $\cH '$ be any other complementary distribution. 
Then one has for any vector field $X$, $X ^{\cH '} = X ^{\cH } + X ^\cV $ for some vector field
$X ^\cV \in \Gamma ^\infty (\cV)$.
Using the definition, it is easy to check that
$$ \fL ^{\cH '} \mu (X ^{\cH '} ) - \fL ^\cH \mu (X ^\cH ) = \fL _{X ^\cV } \mu ,$$
where the right hand side is just the Lie derivative on the integrable foliation $\cV$.
Integrating fiber-wisely, 
one gets
$$ \int _{x \in \pi ^{-1} (p) } \fL ^{\cH '} \mu ( X ^{\cH '} ) 
= \int _{x \in \pi ^{-1} (p) } \fL ^\cH \mu ( X ^\cH ) + \int _{x \in \pi ^{-1} (p) } \fL _{X ^\cV} \mu .$$
The second term on the right hand side vanishes by Stoke's theorem. 
Therefore one still gets
$$ \int _{x \in \pi ^{-1} (p) } \fL ^{\cH '} \mu ( X ^{\cH '} ) = \fL _X \langle \mu \rangle .$$

Finally, the general case follows because the assertion is local and one can always restrict to local trivializations. 
\end{proof}

We shall briefly describe several obvious generalizations to Lemma \ref{DFiberInt}.
Fix a connection $\nabla ^{T \rB _1} $ on $\rB _1 $.
For any $\omega \in \Gamma ^\infty (\wedge ^k \cV )$, 
define $\fL ^{(n)} \omega \in \Gamma ^\infty (\Hom (\otimes ^n \cH , \wedge ^k \cV)) $ inductively by
\begin{align*}
\fL ^{(1)} \omega := & \: \fL ^\cH \omega \\
\fL ^{(m + 1)} \omega (X _0 , \cdots , X _m ) 
:= & \:
\fL ^\cH (\fL ^{(m)} \omega (\tilde X _1 ^\cH , \tilde X _2 ^\cH , \cdots , \tilde X _m ^\cH )) (X _0 ) (p) \\ \nonumber
&- \sum _{i = 1 } ^m (\fL ^{(m)} \omega )
(\tilde X _1 ^\cH  , \cdots , (\nabla ^{T \rB _1 } _{\tilde X _0 } \tilde X _i )^\cH , \cdots , \tilde X _m ^\cH ) (p),
\end{align*}
for any $X _0 , \cdots X _m \in \cH _p $, where $\tilde X _i $ is any local extension of $d \pi (X _i) $.
Then a straightforward computation using the Lemma \ref{DFiberInt} and the definitions gives
\begin{cor}
For any $\mu \in \Gamma ^\infty _c (\wedge ^\varkappa \cV )$,
$X _1 , \cdots , X _m \in T _p \rB _1 , p \in \rB _1$, one has
$$ \nabla ^m (\langle \mu \rangle )(X _1 , \cdots , X _m ) (p)
= \int _{x \in \pi ^{-1} (p)} \fL ^{(m)} \mu (\tilde X _1 ^\cH , \cdots , \tilde X _m ^\cH ) (x),$$
where $\tilde X _i $ is any local extension of $X _i $.
\end{cor}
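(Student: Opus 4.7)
The plan is to prove the corollary by induction on $m$, with the base case $m=1$ being exactly Lemma \ref{DFiberInt}. The inductive definition of $\fL^{(m+1)}$ given just before the corollary mirrors, term by term, the formula for an iterated covariant derivative of a function expressed via a connection on $T \rB_1$, so the induction should essentially reduce to a bookkeeping exercise once Lemma \ref{DFiberInt} is available.

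For the inductive step, I would use the standard identity for covariant derivatives of a smooth function $f$ on $\rB_1$: for local vector fields $\tilde X_0,\ldots,\tilde X_m$ extending given vectors at $p$,
\begin{equation*}
\nabla^{m+1} f(X_0,\ldots,X_m)(p) = \fL_{\tilde X_0}\bigl(\nabla^m f(\tilde X_1,\ldots,\tilde X_m)\bigr)(p) - \sum_{i=1}^{m} \nabla^m f\bigl(\tilde X_1,\ldots,\nabla^{T\rB_1}_{\tilde X_0}\tilde X_i,\ldots,\tilde X_m\bigr)(p).
\end{equation*}
Applied to $f = \langle \mu \rangle$ and combined with the inductive hypothesis, each term on the right becomes a fiber integral of some $\fL^{(m)}\mu$ expression. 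The first term is $\fL_{\tilde X_0}$ of the function $p \mapsto \int_{\pi^{-1}(p)} \fL^{(m)}\mu(\tilde X_1^\cH,\ldots,\tilde X_m^\cH)$; since $\fL^{(m)}\mu(\tilde X_1^\cH,\ldots,\tilde X_m^\cH)$ is itself a section of $\wedge^\varkappa \cV$ with compact fiberwise support, Lemma \ref{DFiberInt} converts this into $\int_{\pi^{-1}(p)} \fL^\cH\bigl[\fL^{(m)}\mu(\tilde X_1^\cH,\ldots,\tilde X_m^\cH)\bigr](X_0^\cH)$. The correction terms in the iterated derivative formula are then exactly absorbed by the $\nabla^{T\rB_1}$-correction summands appearing in the inductive definition of $\fL^{(m+1)}$.

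Putting the two pieces together and comparing with the defining formula
\begin{equation*}
\fL^{(m+1)}\mu(X_0,\ldots,X_m) = \fL^\cH\bigl(\fL^{(m)}\mu(\tilde X_1^\cH,\ldots,\tilde X_m^\cH)\bigr)(X_0) - \sum_{i=1}^m \fL^{(m)}\mu\bigl(\tilde X_1^\cH,\ldots,(\nabla^{T\rB_1}_{\tilde X_0}\tilde X_i)^\cH,\ldots,\tilde X_m^\cH\bigr),
\end{equation*}
I would conclude that
\begin{equation*}
\nabla^{m+1}\langle\mu\rangle(X_0,\ldots,X_m)(p) = \int_{\pi^{-1}(p)} \fL^{(m+1)}\mu(\tilde X_0^\cH,\ldots,\tilde X_m^\cH),
\end{equation*}
completing the induction.

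The only mildly delicate point, and what I expect to be the main obstacle, is verifying that the inductive hypothesis can be legitimately differentiated along $\tilde X_0$ as a scalar function on $\rB_1$: this requires that the extensions $\tilde X_1,\ldots,\tilde X_m$ be chosen as smooth local vector fields (not just vectors at a point), and that the horizontal lifts $\tilde X_i^\cH$ vary smoothly so that $\fL^{(m)}\mu(\tilde X_1^\cH,\ldots,\tilde X_m^\cH)$ is again in $\Gamma^\infty_c(\wedge^\varkappa \cV)$. Since $\pi$ is a submersion (in fact a fiber bundle) and $\mu$ has compact fiberwise support that depends continuously on $p$, this is routine; but one should state explicitly that the answer is independent of the choice of extensions, which follows because both sides are manifestly tensorial in $X_0,\ldots,X_m$ once the formula is established.
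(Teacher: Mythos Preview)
Your proposal is correct and is precisely the argument the paper has in mind: the paper states only that the corollary follows from ``a straightforward computation using the Lemma \ref{DFiberInt} and the definitions,'' and your induction on $m$---with the base case given by Lemma \ref{DFiberInt} and the inductive step matching the recursive definition of $\fL^{(m+1)}$ against the standard recursion for $\nabla^{m+1}$ of a scalar function---is exactly that computation spelled out. Your remark on well-definedness (independence of the choice of extensions $\tilde X_i$) is a reasonable point to include, though it too is routine.
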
 

Lemma \ref{DFiberInt} can also be generalized in a different direction
Let $\rE $ be a vector bundle over $\rB _1 $. 
For any $f \in \Gamma ^\infty _c (\pi ^{-1} \rE) , \mu \in \Gamma ^\infty (\wedge ^\varkappa \cV)$, 
define
$$ \langle f \mu \rangle (p) 
:= \sum _{i = 1 } ^l \langle f _i \mu \rangle (p) e _i (p) \quad \in \Gamma ^\infty (\rE), \quad p \in \rB _1, $$
where $e _1 , \cdots , e _l $ is any local basis around $p$ and $f = \sum _{i = 1 }^l f _i \pi ^{-1} (e _i ) $ 
on $\pi ^{-1} (p)$.
The definition is independent of choice of a local basis.
Let $\nabla ^\rE $ be any fixed connection on $\rE $. 
Then a simple application of Lemma \ref{DFiberInt} leads to 
\begin{cor}
Given any $f \in \Gamma ^\infty _c (\pi ^{-1} \rE) , \mu \in \Gamma ^\infty (\wedge ^\varkappa \cV)$.
Then for any vector field $X \in \Gamma ^\infty (T \rB _1 ), p \in \rB _1$, 
$$ \nabla ^\rE \langle f \mu \rangle (X) (p)
= \int _{x \in \pi ^{-1} (p) } (\pi ^{-1} (\nabla ^ \rE ) f) (X ^\cH ) \mu (x) + f (\fL ^\cH \mu (X ^\cH)) (x) .$$
\end{cor}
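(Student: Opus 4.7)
The plan is to reduce the statement to the scalar-valued Lemma \ref{DFiberInt} combined with the Leibniz rule for $\pi^{-1}(\nabla^\rE)$ and the Leibniz rule for the operator $\fL^\cH$. Since the identity is local in $p \in \rB_1$ and both sides are $C^\infty(\rB_1)$-linear in $X$, I would fix a neighborhood $\rU$ of $p$ on which $\rE$ admits a local frame $e_1, \ldots, e_l$, and write $f = \sum_{i=1}^l f_i\,\pi^{-1}(e_i)$ on $\pi^{-1}(\rU)$ for some $f_i \in C^\infty_c(\pi^{-1}(\rU))$. By the very definition of $\langle\,\cdot\,\rangle$, this gives $\langle f\mu\rangle = \sum_i \langle f_i\mu\rangle\,e_i$ on $\rU$.

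Next I would apply $\nabla^\rE_X$ and use the Leibniz rule:
\[
\nabla^\rE_X\langle f\mu\rangle(p) = \sum_i \big(\fL_X\langle f_i\mu\rangle\big)(p)\,e_i(p) + \sum_i \langle f_i\mu\rangle(p)\,\nabla^\rE_X e_i(p).
\]
To each of the scalar averages $\langle f_i\mu\rangle$ I would apply Lemma \ref{DFiberInt}, getting
\[
\fL_X\langle f_i\mu\rangle(p) = \int_{\pi^{-1}(p)} \fL^\cH(f_i\mu)(X^\cH).
\]
The key small lemma is that $\fL^\cH$ obeys a Leibniz rule with respect to multiplication by a function: expanding the definition
\[
\fL^\cH(f_i\mu)(X^\cH)(V_1,\ldots,V_\varkappa) = \fL_{\tilde X^\cH}\bigl(f_i\,\mu(V_1,\ldots,V_\varkappa)\bigr) - \sum_j f_i\,\mu(V_1,\ldots,[\tilde X^\cH,V_j],\ldots,V_\varkappa),
\]
and using the ordinary Leibniz rule on the first term yields
\[
\fL^\cH(f_i\mu)(X^\cH) = (\fL_{\tilde X^\cH} f_i)\,\mu + f_i\,\fL^\cH\mu(X^\cH).
\]

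Finally I would reassemble. By the defining property \eqref{PullbackDer} of the pullback connection and the identity $d\pi(X^\cH) = X$, one has
\[
(\pi^{-1}(\nabla^\rE)f)(X^\cH) = \sum_i (\fL_{\tilde X^\cH} f_i)\,\pi^{-1}(e_i) + \sum_i f_i\,\pi^{-1}(\nabla^\rE_X e_i),
\]
so that integration against $\mu$ along $\pi^{-1}(p)$ gives
\[
\int_{\pi^{-1}(p)} (\pi^{-1}(\nabla^\rE)f)(X^\cH)\,\mu = \sum_i\Bigl(\int_{\pi^{-1}(p)}(\fL_{\tilde X^\cH} f_i)\mu\Bigr)\,e_i + \sum_i \langle f_i\mu\rangle(p)\,\nabla^\rE_X e_i(p).
\]
Combining this with $\int f\,\fL^\cH\mu(X^\cH) = \sum_i\bigl(\int f_i\,\fL^\cH\mu(X^\cH)\bigr)e_i$ exactly matches the expression obtained for $\nabla^\rE_X\langle f\mu\rangle(p)$ above. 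There is no substantive obstacle here: the argument is essentially bookkeeping once Lemma \ref{DFiberInt} and the two Leibniz rules are in place. The only minor verification is that both sides of the resulting identity are independent of the chosen frame $\{e_i\}$, but this is automatic since the formula is already manifestly tensorial on the right-hand side.
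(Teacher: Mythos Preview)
Your proof is correct and follows precisely the approach the paper intends: the paper gives no detailed argument, merely calling this ``a simple application of Lemma \ref{DFiberInt}'', and your write-up is exactly the local-frame computation that unpacks that sentence. There is nothing to add.
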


\subsection{ Preliminaries on pseudo-differential calculus}
\label{PDONonSense}
In this section, we recall some basic definitions and results about pseudo-differential calculus.
All materials in this section are classical and can be found in, say, Hormander \cite{Hormander;1}.

\subsubsection{\bf Distributions and kernels}
Let $\Omega \subseteq \bbR$ be an open subset.
We denote by $C^\infty _c (\Omega )$ the space of smooth compactly supported functions on $\Omega $.
The space $C^\infty _c (\Omega ) $ is equipped with the $C^\infty $-topology:
$$ u _n \rightarrow u \; \text {if} \; 
\sup _{x \in \rK} | \partial _x ^I (u_n - u) | \rightarrow 0, $$
for any compact subset $\rK$ and any multi-index $I$.

A distribution (on $\Omega $) is a continuous linear map $\phi : C^\infty _c (\Omega ) \rightarrow \bbC$.
We shall denote the space of distributions by
$$ C^\infty _c (\Omega )'. $$
For any open subset $\rU \subset \Omega $, 
the restriction of $\phi $ to $\rU$ is defined to be the restriction of $\phi $ to $C^\infty _c (\rU)$
(extended to $C^\infty _c (\Omega )$ by 0).
The support of $\phi $, denoted $\Supp (\phi )$ ,
is the collection of points $x \in \Omega $ such that the restriction of 
$\phi $ to any open neighborhood of $x$ is non-zero. 
We say that $\phi \in C^\infty (\Omega )$ if there exist $\kappa \in C^\infty (\Omega )$ such that
$$ \phi (u) = \int _\Omega \kappa (x) u (x) \: d x, \quad \forall u \in C^\infty _c (\Omega ). $$
Note that such $\kappa $, if it exists, is unique.

The most important result about distributions is the Schwartz distribution theorem:
\begin{lem}
For any continuous map $A : C ^\infty _c (\rM ) \to C ^\infty _c (\rM )' $,
there exists a unique continuous linear functional $K : C ^\infty _c (\rM \times \rM) \to \bbC$
such that
$$ (A f ) (g) = K (f (x) g (y)), \quad \forall f , g \in C ^\infty _c (\rM). $$    
\end{lem}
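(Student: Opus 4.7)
The plan is to first establish separate continuity of the bilinear pairing $B(f,g) := (Af)(g)$ from the hypothesis on $A$, promote it to joint continuity via Banach--Steinhaus on the Fréchet subspaces $C^\infty_c(\rK) \subset C^\infty_c(\rM)$ for $\rK$ compact, and then build $K$ by extending the obvious assignment on the algebraic tensor product along the density of finite sums $\sum_i f_i(x) g_i(y)$ in $C^\infty_c(\rM \times \rM)$.

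First I would fix compacts $\rK_1, \rK_2 \subset \rM$ and restrict to $B : C^\infty_c(\rK_1) \times C^\infty_c(\rK_2) \to \bbC$, equipping each factor with its Fréchet topology defined by the seminorms $\|u\|_n := \sup_{|I| \le n}\sup |\partial^I u|$. For fixed $f$, continuity of $g \mapsto B(f,g) = (Af)(g)$ is exactly the statement that $Af$ is a distribution; for fixed $g$, continuity of $f \mapsto B(f,g)$ is continuity of $A$ followed by evaluation at $g$. Applying Banach--Steinhaus to the family $\{B(f,\cdot)\}_{f \in V}$ as $V$ ranges over bounded sets in $C^\infty_c(\rK_1)$ gives joint continuity: there exist $n, m$ and $C = C(\rK_1, \rK_2)$ with
\[
|B(f,g)| \le C \, \|f\|_n \, \|g\|_m, \qquad f \in C^\infty_c(\rK_1),\; g \in C^\infty_c(\rK_2).
\]

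Next, define a linear functional $K_0$ on the algebraic tensor product by $K_0\bigl(\sum_i f_i \otimes g_i\bigr) := \sum_i B(f_i, g_i)$. To extend $K_0$ to a continuous functional on all of $C^\infty_c(\rK_1 \times \rK_2)$, I would use a standard density argument: working in a product chart containing $\rK_1 \times \rK_2$, embed into a large torus and expand $u \in C^\infty_c(\rK_1 \times \rK_2)$ in a tensor-product Fourier basis (times a fixed plateau cutoff), obtaining approximations $u_N = \sum_{|\alpha|, |\beta| \le N} c_{\alpha\beta}\, \varphi_\alpha(x)\psi_\beta(y)$ with $\|u - u_N\|_{n+m} \to 0$. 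Combined with the joint bound above, this shows $K_0$ extends by continuity to a unique functional $K_{\rK_1, \rK_2}$ on $C^\infty_c(\rK_1 \times \rK_2)$, and the relation $K_{\rK_1,\rK_2}(f \otimes g) = (Af)(g)$ forces the various $K_{\rK_1,\rK_2}$ to agree under inclusion, so they assemble into a single continuous $K$ on $C^\infty_c(\rM \times \rM)$. Uniqueness is immediate from density of tensor products.

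The main obstacle is really the passage from separate to joint continuity, since $C^\infty_c(\rM)$ itself is only an LF-space, not Fréchet, and Banach--Steinhaus does not apply to it directly; one must always first localize to the Fréchet spaces $C^\infty_c(\rK)$ and then argue that the resulting local kernels are compatible. Once the estimate $|B(f,g)| \le C\|f\|_n\|g\|_m$ is in hand, the density/extension step and the globalization via a locally finite cover of $\rM \times \rM$ by product charts are routine.
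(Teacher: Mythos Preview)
The paper does not prove this statement: it is simply stated as the classical Schwartz kernel theorem, with the section referring all material to H\"ormander. So there is no ``paper's own proof'' to compare against.

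Your argument is the standard one and is essentially correct. The one place that deserves a word more care is the step ``combined with the joint bound above, this shows $K_0$ extends by continuity.'' The estimate $|B(f,g)|\le C\|f\|_n\|g\|_m$ controls $K_0$ only on pure tensors; for a general element $v=\sum_i f_i\otimes g_i$ of the algebraic tensor product the bound $\sum_i\|f_i\|_n\|g_i\|_m$ is not a seminorm of $v$. What saves you is that you are not extending over an arbitrary dense subspace but over the specific Fourier partial sums: writing $u_N=\sum_{|\alpha|,|\beta|\le N}c_{\alpha\beta}\varphi_\alpha\otimes\psi_\beta$, the rapid decay of $c_{\alpha\beta}$ (coming from smoothness of $u$) together with the polynomial growth of $\|\varphi_\alpha\|_n\|\psi_\beta\|_m$ gives $|K_0(u_N)|\le C'\|u\|_{C^k}$ for $k$ large enough, uniformly in $N$, and likewise for the tails. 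That is the estimate that actually yields continuity of the extension. Once you say this explicitly, the rest (compatibility of the local kernels, globalization over a product cover, uniqueness from density) is fine as written.
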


\subsubsection{\bf Pseudo-differential operators on a manifold}
\begin{dfn}
Let $\Omega $ be an open subset on $\bbR^n$, and $m \in \bbR$. 
A symbol of order $\leq m$ is a smooth function 
$\sigma (x , \zeta )\in C^\infty (\Omega \times \bbR^n)$ 
such that for any compact $\rK \subset \Omega $ and  multi-index $I, J$,
there is a constant $C^\rK_{I,J}$ such that 
$$ 
\left|
\partial ^I _x \partial ^J _\zeta \sigma (x, \zeta )
\right|
\leq C^\rK_{I, J} (1 + |\zeta |^2)^\frac{m - |J|}{2} \quad \forall x \in \rK.$$
\end{dfn}

The set of symbols on $\Omega $ of order $\leq m$ shall be denoted by $\mathbf S^m (\Omega )$; 
and define 
$$\mathbf S ^{-\infty} (\Omega ) := \bigcap_{m \in \bbR} \mathbf S^m (\Omega ), 
\mathbf S^\infty (\Omega ) := \bigcup_{m \in \bbR} \mathbf S^m (\Omega ).$$

\begin{dfn}
A symbol $\sigma_l \in \mathbf S ^l (\Omega ) $ is called homogeneous of order $l$, if
$$ \sigma_l (x, \lambda \zeta ) = \lambda ^l \sigma (x, \zeta ), 
\quad \forall x \in \Omega , |\lambda | \geq 1, |\zeta | \geq 1 .$$
A symbol $\sigma \in \mathbf S ^m (\Omega )$ is said to be classical of order $m, m \in \bbZ$ 
if there are homogeneous symbols $\sigma _m , \sigma _{m-1} , \cdots$,
of orders $m, m-1, \cdots$ respectively, such that 
$$ \sigma - \sum_{l=0}^{N - 1} \sigma _{m - l} \in \mathbf S^{m - N} (\Omega )$$
for $N = 1, 2, \cdots$.
\end{dfn} 

The set of classical symbols of order $m \in \bbZ$ is denoted by $\mathbf S^{[m]} (\Omega )$. 

\begin{dfn}
Let $\rM$ be a manifold. 
A function $\sigma \in C^\infty (T^* \rM)$ is called a symbol of order $\leq m$ 
if for every coordinate patch $(\rU, \bx)$, 
$$ \sigma \circ (\bx^*) \in \mathbf S ^m (\bx (\rU)). $$  
Here, we have identified $T^* (\bx (\rU)) \cong \bx (\rU) \times \bbR^n$.
The symbol $\sigma $ is said to be homogeneous (resp. classical)
if $ \sigma \circ (\bx^*) $ is homogeneous (resp. classical).
\end{dfn}

The set of symbols of order $\leq m$ (resp. classical symbols of order $m$) is denoted by
$\mathbf S^m (\rM)$ (resp. $\mathbf S ^{[m]} (\rM)$). 

\begin{dfn}
A pseudo-differential operator on $\Omega \subseteq \bbR^n$ of order 
$\leq m$ is a linear operator $\varPsi : C^\infty _c (\rU) \rightarrow C^\infty (\rU)$ 
of the form
$$ (\varPsi u)(x) 
= (2 \pi)^{-n} \int_{\zeta \in \bbR^n} \int_{y \in \Omega} 
\sigma (x, \zeta ) e^{i \langle \zeta , x-y \rangle} u(y) \; d y \; d \zeta,
\quad u \in C^\infty_c (\Omega ),$$
for some symbol $\sigma \in \mathbf S ^m (\Omega )$.
If $\sigma $ is classical, i.e., $\sigma \in \mathbf S ^{[m]} (\Omega ), m \in \bbZ$, 
then we say that $\varPsi $ is a classical pseudo-differential operator of order $m$. 
\end{dfn}

\begin{dfn}
A pseudo-differential operator on a manifold $\rM$ of order $\leq m$ is a linear operator
$\varPsi : C^\infty _c (\rM) \rightarrow C^\infty (\rM)$ such that for any coordinate patch
$(\rU , \mathbf x)$, the induced map 
$$u \mapsto (\mathbf x^{-1} )^* (\varPsi (\mathbf x^* u)), \quad u \in C^\infty_c (\bx(\rU))$$
is a pseudo-differential operator on $\bx(\rU) \subseteq \bbR^n$ of order $\leq m$.
\end{dfn}

The set of pseudo-differential operators on $\rM$, of order $\leq m$ 
(resp. classical pseudo-differential operators of order $m$), 
is denoted by $\Psi^m (\rM)$ (resp. $\Psi ^{[m]} (\rM)$). 
We also define 
$$\Psi ^{-\infty} (\rM ) := \bigcap_{m \in \bbR} \Psi ^m (\rM ), 
\Psi ^\infty (\rM ) := \bigcup_{m \in \bbR} \Psi ^m (\rM ).$$
Note that $\Psi ^{- \infty} (\rM) = \bigcap _{m \in \bbZ} \Psi ^{[m]} (\rM )$.

\begin{dfn}
Let $\varPsi \in \Psi ^\infty (\rM)$ be a pseudo-differential operator with distributional kernel
$\kappa (x, y)$. 
The support of $\varPsi $, denoted $\Supp \varPsi $, is defined to be the support of $\kappa $. 
The operator $\varPsi $ is said to be properly supported if for any compact subset 
$\rK \subset \rM$, the set
$$ (\rK \times \rM) \bigcap \Supp (\varPsi) $$
is a compact subset of $\rM \times \rM$. 
\end{dfn}

We denote the space of properly supported pseudo-differential operators of order $\leq m $ by
$\Psi ^m_\varrho (\rM)$.
It is clear that a properly supported 
$\varPsi \in \Psi^\infty (\rM)$ extends uniquely to a linear operator from 
$C^\infty (\rM) $ to itself. 
It follows that the composition of two pseudo-differential operators $\varPsi \circ \varPhi $ 
is well defined whenever one of them is properly supported.

\subsubsection{\bf The symbol of a pseudo-differential operator}
Fix a connection $\nabla$ on $\rM$. 
Then there is a neighborhood of the zero section $\Omega \subset T \rM$
such that the exponential map 
$\exp _\nabla : \Omega \rightarrow \rM \times \rM$ is a diffeomorphism onto its image.
Fix a smooth function $\chi (x, y)$ supported on the image of $\exp _\nabla$ 
and equal to 1 on a smaller neighborhood of the zero section.
Define $\Theta (x, y) := \chi (x, y) \exp ^{-1}_\nabla (x, y)$.

\begin{dfn}
\label{TotalSym}
Given a $\varPsi \in \Psi ^m (\rM), m \in \bbR$. 
Define $\sigma (\varPsi ) \in S^m (\rM)$ by
$$ \sigma (\varPsi) (\zeta ) := \varPsi (e^{i \langle \zeta , \Theta (x, \cdot) \rangle } \chi (x, \cdot))(x),
\quad \zeta \in T^*_x \rM.$$
The function $\sigma (\varPsi) $ is called the total symbol of $\varPsi $ with respect to 
$(\nabla, \chi )$.

If the total symbol $\sigma (\varPsi )$ is classical, 
i.e., there exists homogeneous symbols $\sigma _m , \\ \sigma _{m-1} , \cdots$,
of orders $m, m-1, \cdots$ respectively, such that 
$$ \sigma - \sum_{l=0}^{N - 1} \sigma _{m - l} \in \mathbf S^{m - N} (\rM )$$
for $N = 1, 2, \cdots$,
then we say that $\varPsi $ is a classical pseudo-differential operator on $\rM$. 
In this case, we define the principal symbol of $\varPsi $ as
$$ \sigma _{\mathrm {top}} (\varPsi) := \sigma _{m}.$$
\end{dfn}

We denote the space of classical pseudo-differential operators on $\rM$ by 
$\Psi ^{[m]} (\rM)$.

\begin{rem}
\label{InvPrincipal}
It can be shown that if the total symbol with respect to some $(\nabla, \chi )$ is classical,
then the total symbol with respect to any set of $(\nabla' , \chi ')$ is classical.
Also, it is well known that the principal symbol is independent of $\nabla$ and $\chi $.
\end{rem}

The following lemma asserts that a pseudo-differential operator 
$\varPsi $ can be recovered from its total symbol, up to a smoothing operator.
\begin{lem}
\label{Kennedy}
\cite[Proposition 3.1]{Kennedy;Intrin}
Any pseudo-differential operator $\varPsi $ on $\rM$ can be written in the form
\begin{equation}
\varPsi u (x) =
\int _{\zeta \in T^*_x \rM} \int _{y \in \rM}
\sigma (\zeta ) e^{-i \langle \zeta , \Theta (x, y) \rangle} \chi (x, y) u(y) d y d \zeta 
+ \int_{y \in \rM} \kappa (x, y) u (y) d y,
\end{equation}
for some $\kappa (x, y) \in C^\infty (\rM \times \rM)$.  
\end{lem}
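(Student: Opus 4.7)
The plan is to show that any pseudo-differential operator can be recovered from its intrinsically defined total symbol $\sigma$, modulo a smoothing operator. The strategy is to localize, compare $\sigma$ with the standard Kohn--Nirenberg symbol via the Taylor expansion of $\Theta$ at the diagonal, and then invoke uniqueness of the symbol modulo $\mathbf{S}^{-\infty}$.

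First, I would reduce to a local statement. Using a partition of unity $\{\varphi_\alpha\}$ subordinate to a cover of $\rM$ by coordinate charts $(\rU_\alpha, \bx_\alpha)$, write $\varPsi = \sum_{\alpha, \beta} \varphi_\alpha \varPsi \varphi_\beta$. The terms with $\Supp \varphi_\alpha \cap \Supp \varphi_\beta = \emptyset$ are smoothing operators (being pseudo-differential operators whose kernels are smooth off the diagonal and which have empty intersection of supports with a neighborhood of the diagonal). The cutoff $\chi(x,y)$ in the integrand of the statement takes care of the analogous off-diagonal contribution on the right-hand side, so it suffices to prove the formula in a single coordinate chart where $\varPsi$ has kernel supported in a fixed neighborhood of the diagonal.

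Next, in the chart $\bx : \rU \to \Omega \subseteq \bbR^n$, the operator $\varPsi$ admits a standard Kohn--Nirenberg representation
\begin{equation*}
\varPsi u(x) = (2\pi)^{-n} \int \int \sigma_{\mathrm{KN}}(x, \eta) e^{i \langle \eta, x - y \rangle} u(y) \, d y \, d \eta,
\end{equation*}
for some $\sigma_{\mathrm{KN}} \in \mathbf{S}^m(\Omega)$. Applying $\varPsi$ to the test function $e^{i\langle \zeta, \Theta(x, \cdot) \rangle} \chi(x, \cdot)$ and evaluating at $y = x$, I would expand $\Theta(x, y) = -(y - x) + \Theta_2(x, y)$, where $\Theta_2(x, y) = O(|y - x|^2)$ encodes the curvature correction coming from $\exp_\nabla^{-1}$ minus its linear part (observing that $d_y \exp_\nabla^{-1}(x, y)|_{y=x}$ is $-\id$ under the natural identification). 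A stationary phase analysis at the critical point $y = x$, $\eta = \zeta$ then yields the classical asymptotic expansion
\begin{equation*}
\sigma(\zeta) \sim \sum_{I} \frac{1}{I!} (\partial_\eta^I \sigma_{\mathrm{KN}})(x, \zeta) \cdot D_y^I \bigl( e^{i \langle \zeta, \Theta_2(x, y) \rangle } \chi(x, y) \bigr) \bigl|_{y = x},
\end{equation*}
so $\sigma = \sigma_{\mathrm{KN}} + \mathbf{S}^{m-1}(\Omega)$. By the same stationary phase computation applied to $\varPsi_\sigma u(x) := \int \sigma(\zeta) e^{-i\langle \zeta, \Theta(x, y)\rangle} \chi(x, y) u(y) \, dy \, d\zeta$, the Kohn--Nirenberg symbol of $\varPsi_\sigma$ agrees with $\sigma$ modulo $\mathbf{S}^{m-1}$, hence with $\sigma_{\mathrm{KN}}$ modulo $\mathbf{S}^{m-1}$.

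Finally, I would iterate: given the symbol discrepancy $\sigma - \sigma_{\mathrm{KN}} \in \mathbf{S}^{m-1}$, construct $\varPsi_\sigma$ and consider $\varPsi - \varPsi_\sigma$, which is now a pseudo-differential operator of order $m-1$. Repeating the construction and Borel-summing the resulting sequence of symbols gives an operator whose total symbol matches that of $\varPsi$ to all orders. The difference, having total symbol in $\mathbf{S}^{-\infty}$, has a smooth Schwartz kernel $\kappa(x, y)$, yielding the stated formula.

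The main obstacle will be the stationary phase step comparing the intrinsic $\sigma$ to $\sigma_{\mathrm{KN}}$: controlling the remainders in the $\Theta_2$-expansion uniformly in $\zeta$ and justifying the oscillatory integral manipulations (interchanging integration orders, shifting contours, handling the cutoff $\chi$ at the boundary of its support) requires the standard but delicate machinery of oscillatory integrals with non-linear phase. The independence of the final statement from the choices of $\nabla$ and $\chi$ (Remark \ref{InvPrincipal}) should emerge transparently once the asymptotic identification is established.
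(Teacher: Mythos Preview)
The paper does not prove this lemma; it simply cites \cite[Proposition~3.1]{Kennedy;Intrin}. So there is no ``paper's proof'' to compare against, and your outline must be judged on its own merits.

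Your localization and stationary-phase comparison of $\sigma$ with $\sigma_{\mathrm{KN}}$ are standard and fine. The gap is in the final step. Your iteration produces a symbol $\sigma' \sim \sigma + \tau_1 + \tau_2 + \cdots$ with $\tau_j \in \mathbf S^{m-j}$, and you conclude that $\varPsi - \varPsi_{\sigma'}$ is smoothing. That is correct, but it is \emph{not} the statement of the lemma: the lemma asserts that $\varPsi - \varPsi_{\sigma}$ is smoothing, with $\sigma$ the total symbol of Definition~\ref{TotalSym}, not some Borel-summed correction $\sigma'$. Your argument gives no reason why the correction terms $\tau_j$ should vanish modulo $\mathbf S^{-\infty}$; from stationary phase alone you only get $\tau_j \in \mathbf S^{m-j}$, and their asymptotic sum $\tau_\infty$ is a priori a genuine element of $\mathbf S^{m-1}$.

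The fix is to bypass the iteration entirely and show directly that $\Sigma \circ Q = \id$ modulo $\mathbf S^{-\infty}$ (writing $Q$ for the quantization map and $\Sigma$ for the total-symbol map). In normal coordinates at $x$ one computes
\[
\Sigma(Q(\sigma))(x,\zeta) - \sigma(x,\zeta)
= \int \check\sigma(x,w)\bigl(\chi(x,w)^2 - 1\bigr)\, e^{i\langle \zeta, w\rangle}\, dw,
\]
where $\check\sigma$ is the inverse Fourier transform of $\sigma(x,\cdot)$. Since $\check\sigma$ is smooth away from $w=0$ with rapid decay at infinity, and $\chi^2 - 1$ vanishes identically on a neighborhood of $w=0$, the integrand is a Schwartz function of $w$; hence the right-hand side lies in $\mathbf S^{-\infty}$. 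Thus $\varPsi - \varPsi_\sigma$ has total symbol in $\mathbf S^{-\infty}$, and inverting the (asymptotically upper-triangular) relation between total and Kohn--Nirenberg symbols shows it is smoothing. The key point your stationary-phase expansion misses is that \emph{all} moments of $\widehat{\chi^2 - 1}$ vanish (because $\chi \equiv 1$ near the diagonal), so the asymptotic expansion terminates after the leading term rather than merely dropping one order at a time.
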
 

\subsubsection{\bf Pseudo-differential operators between sections of vector bundles}
It is straightforward to generalize the notion of pseudo-differential operators to sections of a vector bundle:
Let $\rE \rightarrow \rM$ be a vector bundle of rank $k$.
Let $(\rU, \bx)$ be a trivial coordinate patch.
Then any smooth section $s \in \Gamma ^\infty (\rE |_\rU)$ can be regarded as a 
$\bbC^k $-valued smooth function on $\bx (\rU)$.
We say that a linear map $\varPsi : \Gamma ^\infty _c (\rE) \rightarrow \Gamma ^\infty (\rE)$, 
is a pseudo-differential operator if for any pair of standard basis vectors of $\bbC ^k$,
$\mathbf e_i $ and $ \mathbf e_j, i, j = 1, \cdots, k$, the induced map
$$ u \mapsto \langle \mathbf e_i , (\mathbf x^{-1} )^* (\varPsi (\mathbf x^* u \mathbf e_j) \rangle, 
\quad u \in C^\infty_c (\bx(\rU)),$$
is a pseudo-differential operator on $\bx(\rU) \subseteq \bbR^n$.

We denote the set of pseudo-differential operator, of order $\leq m$, on $\rE \rightarrow \rM $ by 
$ \Psi ^m (\rM , \rE), $
and so on.

It is clear that the notion of (total and principle) symbol of an element in $\Psi (\rM , \rE)$
can be generalized in a similar manner.
However, in this case, the symbol is an element in 
$$ \Gamma ^\infty (\wp ^{-1} (\rE \otimes \rE')),$$
where $\wp : T^* \rM \rightarrow \rM $ is the natural projection.
Likewise, an operator $\varPsi \in \Psi ^m (\rM , \rE)$ is said to be {\it elliptic } if 
its principal symbol $\sigma (\zeta )$ is invertible (as a matrix) whenever $\zeta \neq 0$.

Finally, note that a smoothing operator on $\Gamma ^\infty _c (\rE)$ is of the from
$$ u \mapsto \int _{y \in \rM} \kappa (x, y) u (y ) d y, $$
where $\kappa (x, y) \in \Gamma ^\infty (\rE _x \otimes \rE'_y )$,
and the integrand is considered as a map from $\rM$ to $\rE _x$, for each $x \in \rM$. 

\subsection{ Manifolds with bounded geometry}
\label{BdGeomNonSense}
In this section, 
we a study special class of manifolds, 
namely, manifolds of bounded geometry in the sense of Shubin \cite{Shubin;BdGeom}.
Our objective is to define various Sobolev spaces, 
which would serve as the natural domain for the pseudo-differential operators.
We shall refer the general theory to \cite{Nistor;GeomOp}.

\begin{dfn}
A Riemannian manifold $\rM$ is said to have bounded geometry if
\begin{enumerate}
\item
$ \rM $ has positive injectivity radius;
\item
The Riemannian curvature $R$ of $\rM $ has bounded covariant derivatives.
\end{enumerate}
\end{dfn}

\subsubsection{\bf Basic properties} 
Here, we recall some basic results concerning manifolds of bounded geometry.
\begin{lem}
\label{BallCover}
\cite[Lemma 1.2]{Shubin;BdGeom}
There exists $\epsilon _0 > 0$ such that for any $0 < \varepsilon < \varepsilon _0 $,
there is a countable set $\{ x_\alpha \} \subset \rM$ such that the balls 
$B (x_\alpha , \varepsilon )$ is a cover of $\rM$,
and any $x \in \rM$ belongs to at most $N$ balls $B (x_\alpha , 2 \varepsilon ) $, for some $N$ independent of $x$.
\end{lem}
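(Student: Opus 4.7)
The plan is to obtain $\{x_\alpha\}$ by a maximality argument and then translate the bounded geometry hypothesis into a uniform volume comparison that forces finite overlap of the doubled balls.

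First, I would fix $\varepsilon_0 < r_0/5$, where $r_0$ is the injectivity radius of $\rM$, so that any ball of radius $\leq 5\varepsilon$ is the diffeomorphic image under $\exp$ of a Euclidean ball. Next, using Zorn's lemma, I would select a maximal subset $\{x_\alpha\} \subset \rM$ subject to the separation condition $d(x_\alpha, x_\beta) \geq \varepsilon$ for $\alpha \neq \beta$. Countability of the index set follows because $\rM$ is $\sigma$-compact and the $B(x_\alpha, \varepsilon/2)$ are pairwise disjoint. Maximality then yields the covering property: for any $y \in \rM$, if $d(y, x_\alpha) \geq \varepsilon$ for all $\alpha$, then $\{x_\alpha\} \cup \{y\}$ would still be $\varepsilon$-separated, contradicting maximality. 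Hence $y \in B(x_\alpha, \varepsilon)$ for some $\alpha$.

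For the finite overlap, I would fix $x \in \rM$ and collect all $\alpha$ with $x \in B(x_\alpha, 2\varepsilon)$. For each such $\alpha$, the pairwise disjoint balls $B(x_\alpha, \varepsilon/2)$ are contained in $B(x, 5\varepsilon/2)$, so the count is bounded by $\mathrm{vol}(B(x, 5\varepsilon/2))/\min_\alpha \mathrm{vol}(B(x_\alpha, \varepsilon/2))$. The main technical step is to show that this ratio is bounded \emph{uniformly in $x$}. This is where the bounded geometry hypothesis enters: since the Riemann curvature tensor and all its covariant derivatives are uniformly bounded, so are the derivatives of the metric components in normal coordinates on balls of radius less than $r_0$. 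Consequently, the Jacobian of $\exp_x : B(0, r_0) \subset T_x \rM \to \rM$ is pinched between two positive constants independent of $x$. Comparing the Riemannian volume to the Euclidean volume via $\exp_x$ then gives constants $c_1, c_2 > 0$ such that
\begin{equation*}
c_1 r^n \leq \mathrm{vol}(B(x, r)) \leq c_2 r^n, \qquad \forall\, x \in \rM,\ 0 < r < r_0.
\end{equation*}
Applying this with $r = \varepsilon/2$ and $r = 5\varepsilon/2$ bounds the number of relevant $\alpha$ by $N := c_2 (5\varepsilon/2)^n / (c_1 (\varepsilon/2)^n) = 5^n c_2/c_1$, which is independent of $x$.

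The main obstacle is the uniform volume comparison step, that is, showing that the volume distortion in normal coordinates is controlled uniformly over all basepoints. The standard route is via the Rauch comparison theorem (or a direct Jacobi-field estimate), using that both upper and lower sectional curvature bounds hold uniformly on $\rM$. Once this comparison is established, the rest of the argument is purely combinatorial and amounts to a packing estimate, which I would present as above.
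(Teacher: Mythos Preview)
The paper does not supply its own proof of this lemma: it is stated with a citation to \cite[Lemma 1.2]{Shubin;BdGeom} and no \texttt{proof} environment follows. Your argument is the standard one and is essentially what appears in Shubin's reference: construct a maximal $\varepsilon$-separated net, deduce covering from maximality, and bound the multiplicity by a packing argument using uniform volume comparison in normal coordinates.

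One minor sharpening: for the volume comparison $c_1 r^n \le \mathrm{vol}(B(x,r)) \le c_2 r^n$ on balls of radius below the injectivity radius, you only need a uniform two-sided bound on the sectional curvature (via Rauch or Bishop--Gromov), not bounds on the covariant derivatives of the curvature tensor. The higher derivative bounds in the bounded-geometry hypothesis are used elsewhere (e.g.\ for uniform control of transition functions and the partition of unity in Lemma~\ref{BddPartition}), but are not needed for this particular packing estimate. Otherwise your proposal is correct and complete.
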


Recall that for every point $x$ in a Riemannian manifold $\rM$, 
the exponential map is a homeomorphism from an open neighborhood of $0 \in T_x \rM$ to an open neighborhood of $x$.
Its inverse thus defines a local coordinate patch, 
known as the {\it (geodesic) normal coordinates (around $x$)}.

\begin{lem}
\label{BddPartition}
Let $\{ (B ( x _\alpha , \varepsilon ) , \bx _\alpha ) \} $ be a cover by normal coordinates patches,
such that the conclusion of Lemma \ref{BallCover} holds.
Then there exists a partition on unity $\theta _\alpha $ subordinated to $\{ B (x_\alpha , \varepsilon ) \}$,
such that for any $k \in \bbN$, 
all $k$-th order partial derivatives of $\theta _\alpha $ are bounded by some $C_k$, independent of $\alpha $.
\end{lem}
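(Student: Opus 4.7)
The plan is to construct $\theta_\alpha$ by normalizing translates of a single fixed Euclidean bump function, and to control derivatives by invoking the standard uniform control on transition functions between normal coordinate patches on a manifold of bounded geometry.

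First I would fix a smooth function $\phi \in C^\infty_c(\bbR^n)$ with $\phi = 1$ on the Euclidean ball of radius $\varepsilon/2$ and $\phi = 0$ outside the ball of radius $\varepsilon$, and set $\psi_\alpha := \phi \circ \bx_\alpha$ on $B(x_\alpha, \varepsilon)$, extended by zero outside. By construction $\psi_\alpha$ is supported in $B(x_\alpha, \varepsilon)$ and equals $1$ on $B(x_\alpha, \varepsilon/2)$. Before forming the quotient I would also apply Lemma \ref{BallCover} with $\varepsilon$ replaced by $\varepsilon/2$ to arrange that the smaller balls $B(x_\alpha, \varepsilon/2)$ already cover $\rM$; the multiplicity bound $N$ (possibly replaced by a larger universal constant) is preserved. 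Then the sum $\Psi := \sum_\alpha \psi_\alpha$ has at most $N$ nonzero terms at each point and satisfies $\Psi \geq 1$ everywhere, so $\theta_\alpha := \psi_\alpha / \Psi$ is a well-defined partition of unity subordinate to $\{B(x_\alpha, \varepsilon)\}$.

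The crux is the uniform estimate $|\partial^I \theta_\alpha| \leq C_k$ for $|I| \leq k$, where the derivatives are taken in any normal coordinate chart $\bx_\beta$ meeting $\Supp(\theta_\alpha)$. For this I would quote the standard bounded-geometry fact (see Shubin \cite{Shubin;BdGeom}): provided $\varepsilon < \varepsilon_0$ is sufficiently small, the transition diffeomorphisms $\bx_\beta \circ \bx_\alpha^{-1}$ between overlapping normal coordinate patches, together with the Christoffel symbols in each such chart, have partial derivatives of every order bounded by constants that depend only on the order and on the curvature bounds and injectivity radius of $\rM$, not on the individual indices $\alpha,\beta$. Pulling $\phi$ back through such transition maps and applying the chain rule / Faà di Bruno then yields $|\partial^I \psi_\alpha| \leq C'_{|I|}$ in the coordinates $\bx_\beta$, uniformly in $\alpha,\beta$. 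Summing at most $N$ such terms gives the same bound for $\Psi$, and since $\Psi \geq 1$ the Leibniz rule applied to $\theta_\alpha = \psi_\alpha \cdot \Psi^{-1}$, together with an easy induction on $k$, yields the claimed bound.

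The main obstacle is really a bookkeeping one: one must be slightly careful that the bounds on $\partial^I \psi_\alpha$ obtained in one coordinate chart transfer to another chart with constants independent of which two charts are used. This reduces entirely to the uniform control of the transition functions, which is exactly the content of bounded geometry, so nothing deeper is needed. Everything else in the argument is a routine partition-of-unity manipulation.
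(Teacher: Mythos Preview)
Your argument is the standard one and is correct; the paper itself does not supply a proof of this lemma but simply records it as a known fact about manifolds with bounded geometry (the surrounding appendix cites Shubin \cite{Shubin;BdGeom} for this material). One small point worth tightening: as literally stated, the lemma fixes the cover $\{B(x_\alpha,\varepsilon)\}$ in advance, whereas your step of re-applying Lemma~\ref{BallCover} at radius $\varepsilon/2$ tacitly replaces the centers. The cleaner fix, which stays with the given cover, is to take $\phi$ equal to $1$ on the ball of radius $\varepsilon$ and supported in the ball of radius $2\varepsilon$; then the hypothesis that the $B(x_\alpha,\varepsilon)$ cover forces $\Psi\ge 1$ directly, and the multiplicity bound on the $B(x_\alpha,2\varepsilon)$ (which is exactly what Lemma~\ref{BallCover} provides) controls the sum. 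The resulting $\theta_\alpha$ are then supported in $B(x_\alpha,2\varepsilon)$ rather than $B(x_\alpha,\varepsilon)$, but for every use made of the lemma in the paper this is immaterial.
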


\begin{dfn}
Let $\rM$ be a manifold with bounded geometry.
A vector bundle $\rE \rightarrow \rM$ is said to have bounded geometry if for any $k \in \bbN$,
there exist $C_k > 0$ such that
for any trivial normal coordinate patches,
the all $k$-th order partial derivatives of the transition function is bounded by $C_k$.
\end{dfn}

\subsubsection{\bf Sobolev spaces}
\begin{dfn}
\label{SoboDfn}
Let $\rE$ be a vector bundle of bounded geometry. 
Fix a normal coordinates cover $\{ (\rU_\alpha , \bx _\alpha ) \}$ of $\rM$ such that $\rE | _{\rU _\alpha }$ is trivial,
and a locally finite partition of unity $\{ \theta _\alpha \} $ subordinated to $\{ \rU _\alpha \}$, 
as in Lemma \ref{BddPartition}.
Regard $\theta _\alpha s$ as a smooth vector valued function on $\bbR ^n$ through local coordinates.

On $\Gamma ^\infty (\rE)$, define the $\infty $-norms 
\begin{equation}
\| s \| _{\infty , l} := \sup _\alpha \{ |\partial ^I \theta _ \alpha s (x) | : x \in U _\alpha , |I| \leq l \}
\end{equation}
for each $l \in \bbN$. 
We say that a section $s \in \Gamma ^\infty (\rE) $ has bounded derivatives if 
$\| s \| _{\infty , l} < \infty $.

For each $m \in \bbR$, define the 2-norms
\begin{equation}
\label{L-Infty}
\| s \| _{2, m} := \Big( \sum _{\alpha } 
\| \theta _\alpha s \| _{\bW ^m (U _\alpha ) } ^2 \Big)^{\frac{1}{2}},
\end{equation}
where $ \bW ^m (U _\alpha ) $ is the $m$-th Sobolev norm on $U _\alpha \subset \bbR ^n$.
We denote the completion of $\Gamma ^\infty _c (\rE) $ with respect to 
$\| \cdot \| _{2 , m}$ by $\bW ^{m} (\rM , \rE)$.
\end{dfn}
Observe that, since all transition functions are uniformly bounded, 
the equivalence classes of these norms are independent of the choices made. 

\begin{rem}
For $m \in \bbZ$, $\bW ^m (\rM)$ can be equivalently defined by the collection of distribution 
$ u \in C _c ^\infty (\rM )' $
such that $ \fL _{X _1} \fL _{X _2} \cdots \fL _{X _m} u \in \bL ^2 (\rM) $
for any collection of vector fields $ X _1 , \cdots , X _m $ with unit length.
\end{rem} 

As in the case of $\bbR ^n$, one has the Sobolev embedding
\begin{lem}
\label{SoboEm}
For any integer $m, l$ such that $m > l + \frac{n}{2}$,
$$ \bW ^m (\rM ) \subseteq C ^l _b (\rM).$$
Furthermore, there exists a constant $C$, depending only on $m, l, n$,
such that 
$$\| u \| _{0, l} \leq C \| u \| _{2 , m} $$
for any $u \in \bW ^m (\rM )$.
\end{lem}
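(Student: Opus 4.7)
The plan is to reduce the global inequality to the classical Sobolev embedding on $\bbR^n$ via the partition of unity used to define both norms. Fix a normal-coordinate cover $\{(\rU_\alpha, \bx_\alpha)\}$ as given by Lemma \ref{BallCover}, and a subordinate partition of unity $\{\theta_\alpha\}$ with uniformly bounded derivatives (Lemma \ref{BddPartition}).

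First I would work chart by chart. For each $\alpha$, the section $\theta_\alpha u$ is compactly supported in $\rU_\alpha$ and, via $\bx_\alpha$, can be regarded as a compactly supported (vector-valued) function on the Euclidean ball $\bx_\alpha(\rU_\alpha) \subset \bbR^n$. The classical Sobolev embedding on $\bbR^n$, valid because $m > l + n/2$, yields
$$ \sup_{|I|\leq l,\; x \in \rU_\alpha} |\partial^I(\theta_\alpha u)(x)| \leq C'\, \|\theta_\alpha u\|_{\bW^m(\rU_\alpha)}, $$
where $C'$ depends only on $m, l, n$ and the (fixed) radius $\varepsilon$ of the charts.

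Next I would assemble the global bound. Taking the supremum over $\alpha$ and using the elementary inequality $\sup_\alpha a_\alpha \leq (\sum_\alpha a_\alpha^2)^{1/2}$ for $a_\alpha \geq 0$, one obtains
$$ \|u\|_{\infty,l} \leq C' \sup_\alpha \|\theta_\alpha u\|_{\bW^m(\rU_\alpha)}
\leq C' \Big(\sum_\alpha \|\theta_\alpha u\|_{\bW^m(\rU_\alpha)}^2\Big)^{1/2} = C'\, \|u\|_{2,m}, $$
which is the claimed estimate (with the notational identification $\|u\|_{0,l} = \|u\|_{\infty,l}$). To promote this to the inclusion $\bW^m(\rM) \subseteq C^l_b(\rM)$, recall that $\bW^m(\rM)$ is by definition the completion of $\Gamma^\infty_c(\rE)$ under $\|\cdot\|_{2,m}$; the inequality above shows that any $\|\cdot\|_{2,m}$-Cauchy sequence is automatically Cauchy with respect to uniform convergence of derivatives up to order $l$, so its limit lies in $C^l_b(\rM)$ and agrees with $u$ in the sense of distributions.

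The only real (and quite mild) obstacle is verifying that the Sobolev constant $C'$ is genuinely $\alpha$-independent. This rests on three ingredients already in hand: each chart $\bx_\alpha(\rU_\alpha)$ is a ball of fixed radius $\varepsilon$ in $\bbR^n$; by bounded geometry the pulled-back Riemannian metric (and its inverse) in normal coordinates has derivatives uniformly bounded in $\alpha$, so that $\|\cdot\|_{\bW^m(\rU_\alpha)}$ computed with the Euclidean structure is equivalent to the one computed intrinsically with uniform constants; and the partition functions $\theta_\alpha$ have $\alpha$-independent $C^m$-bounds, so multiplication by $\theta_\alpha$ is a bounded operator on $\bW^m(\rU_\alpha)$ with a uniform norm. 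Once these are spelled out, the argument is complete.
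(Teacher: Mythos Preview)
Your argument is correct and is the standard one. Note that the paper does not actually supply a proof of this lemma: it is stated as a known fact about manifolds of bounded geometry (with the general theory referred to \cite{Shubin;BdGeom} and \cite{Nistor;GeomOp}), and the text moves directly to Corollary~\ref{SoboDecay}. So there is no ``paper's own proof'' to compare against; your write-up is exactly the kind of reduction to the Euclidean Sobolev embedding via the uniform normal-coordinate cover and partition of unity that one would expect, and the points you flag about $\alpha$-independence of the constant are the right ones to check.
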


\begin{cor}
\label{SoboDecay}
Let $u \in \bW ^m (\rM)$, where $m > l + \frac{n}{2}$ for some integer $l$. 
Fix any point $x _0 \in \rM$.
For any $\varepsilon > 0$,
there exist integer $N _0 $ such that for any integer $N > N _0$,
$$\sup _{x \not \in B (x_0 , N)} | u (x) |_l \leq \varepsilon .$$
\end{cor}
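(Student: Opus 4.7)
The plan is to localize the Sobolev embedding of Lemma \ref{SoboEm} to small patches centered near infinity, and exploit the fact that the $\bW^m$-norm is, by Definition \ref{SoboDfn}, an $\ell^2$-sum over the bounded-geometry cover — so its tails must vanish.

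First I would fix a normal-coordinate cover $\{(U_\alpha, \bx_\alpha)\}$ of $\rM$ by balls of small fixed radius $\varepsilon$ with the finite-overlap property of Lemma \ref{BallCover}, and a subordinate partition of unity $\{\theta_\alpha\}$ whose derivatives are bounded uniformly in $\alpha$ (Lemma \ref{BddPartition}). Setting $a_\alpha := \|\theta_\alpha u\|^2_{\bW^m(U_\alpha)}$, Definition \ref{SoboDfn} gives $\sum_\alpha a_\alpha = \|u\|^2_{2,m} < \infty$, so in particular the tails $\sum_{\alpha \in I} a_\alpha$ go to $0$ as the index set $I$ shrinks to the empty set.

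Next, for each point $x \in \rM$, I would pick a cutoff $\phi_x$ supported in a normal-coordinate neighborhood of $x$ of fixed radius, equal to $1$ near $x$, with derivative bounds independent of $x$ (possible by bounded geometry). Applying the flat Sobolev embedding on $\bbR^n$ in these coordinates, together with the elementary estimate that multiplication by $\phi_x$ is bounded on $\bW^m$, I would get
\begin{equation*}
|u(x)|_l \;=\; |\phi_x u(x)|_l \;\leq\; C\,\|\phi_x u\|_{\bW^m}
\;\leq\; C'\sum_{\alpha \in I_x} \|\theta_\alpha u\|_{\bW^m(U_\alpha)},
\end{equation*}
where $I_x := \{\alpha : U_\alpha \cap \Supp(\phi_x) \neq \emptyset\}$. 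By Lemma \ref{BallCover}, $|I_x| \leq N$ for a constant $N$ independent of $x$, and Cauchy--Schwarz then yields $|u(x)|_l^2 \leq C'' \sum_{\alpha \in I_x} a_\alpha$.

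Finally, for $x \notin B(x_0, N)$ (in the sense of the statement, with $N$ now the radius rather than the overlap constant — I would rename to avoid clash), every $\alpha \in I_x$ has $U_\alpha$ contained in the complement of $B(x_0, N - 2\varepsilon)$. Hence
\begin{equation*}
\sup_{x \notin B(x_0, N)} |u(x)|_l^2 \;\leq\; C'' \sum_{\alpha \in I^{(N)}} a_\alpha,
\qquad
I^{(N)} := \{\alpha : U_\alpha \cap B(x_0, N - 2\varepsilon)^c \neq \emptyset\}.
\end{equation*}
Since $\bigcap_N I^{(N)} = \emptyset$ and $\sum_\alpha a_\alpha$ converges, the right-hand side tends to $0$; choosing $N_0$ so that it is below $\varepsilon^2$ gives the conclusion. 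The only step requiring real care is verifying that all implicit constants (in the Sobolev embedding on each patch and in the multiplier bound for $\phi_x$) are uniform in $x$; this is precisely what bounded geometry of $\rM$ provides, so no serious obstacle is expected.
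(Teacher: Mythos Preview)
Your argument is correct, but the paper takes a shorter route. Rather than localizing at each point $x$ and tracking which patches $U_\alpha$ contribute, the paper introduces a single family of global cutoffs $\chi_j$ with $\chi_j = 0$ on $B(x_0,j)$ and $\chi_j = 1$ outside $B(x_0,j+1)$, and applies the global Sobolev embedding (Lemma~\ref{SoboEm}) directly to $\chi_j u$: since $\chi_j \to 0$ pointwise and multiplication by $\chi_j$ is uniformly bounded on $\bW^m$ (bounded geometry lets one choose $\chi_j$ with derivative bounds independent of $j$), dominated convergence in the defining $\ell^2$-sum gives $\|\chi_j u\|_{2,m} \to 0$, hence $\|\chi_j u\|_{\infty,l} \to 0$, which is the claim.

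Your approach unpacks this more explicitly: you exhibit the tail of $\sum_\alpha a_\alpha$ directly rather than hiding it inside the statement ``$\|\chi_j u\|_{2,m}\to 0$''. The paper's version is quicker once Lemma~\ref{SoboEm} is in hand; yours is more self-contained and makes the role of the finite-overlap property and the $\ell^2$-structure of the norm more transparent. Both need the same uniform-constant check you flagged at the end, and both handle it the same way (bounded geometry).
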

\begin{proof}
Fix smooth functions $\chi _j , j \in \bbN$ such that $0 \leq \chi _j \leq 1 $,
$\chi _j = 0$ on $B (x _0 , j)$, and $\chi _j = 1 $ on $\rM \backslash B (x _0 , j+1)$.
Since $\chi _j \to 0$ as $j \to \infty$, 
it follows that $\| \chi _j u \| _{2, m} \to 0 $.
By the previous Lemma, one has 
$$\sup _{x \not \in B (x_0 , j)} |\chi _j (x) u (x) |_l = \| \chi _j u \| _{0, l} \leq C \| \chi _j u \| _{2 , m} .$$
The assertion follows because 
$$ \sup _{x \not \in B (x_0 , j + 1)} | u (x) |_l 
= \sup _{x \not \in B (x_0 , j + 1)} |\chi _j (x) u (x) |_l 
\leq \sup _{x \not \in B (x_0 , j)} |\chi _j (x) u (x) |_l ,$$
for all integer $j$.
\end{proof}

On a manifold with bounded geometry, a class of `uniformly bounded' pseudo-differential operators can also be defined.
Fix any covering $\{ U _\alpha , \bx _\alpha \}$ of $\rM$ by normal coordinates.
Let $\varPsi \in \psi ^m _\varrho (\rM)$.
Recall that $( \bx _\alpha ^{-1} )^* \psi \bx _\alpha ^* $ is a pseudo-differential operator on $U _\alpha $.
Let $\sigma _\alpha \in \mathbf S ^m (U _\alpha )$ be the total symbol of $( \bx _\alpha ^{-1} )^* \psi \bx _\alpha ^* $.
Then we say that 
\begin{dfn}
The pseudo-differential operator $\varPsi $ is {\it uniformly bounded} if 
\begin{enumerate}
\item
The support of $\varPsi $ is contained in the set
$$ \{ (x , y ) \in \rM \times \rM : d (x , y ) < r \} $$
for some $r > 0$;
\item
For any multi-indexes $I , J$,  
there exists a constant $C _{I J} $, independent of $\alpha $,
such that 
$$ | \partial _x ^I \partial _\zeta ^J \sigma _\alpha | \leq C _{I J} (1 + |\zeta |)^{m - |J|} .$$ 
\end{enumerate}
We denote the set of all, 
uniformly bounded pseudo-differential operators of order $\leq m$ by $\Psi ^m _b (\rM)$.
\end{dfn}

Finally, we can state the main result on boundedness of pseudo-differential operators on Sobolev spaces.
\begin{lem}
\label{SoboBdLem}
For any $\varPsi \in \Psi ^m _b (\rM , \rE), u \in \bW ^l (\rM , \rE)$,
$ \varPsi u \in \bW ^{l - m} (\rM , \rE) $.
Furthermore, the map $u \mapsto \varPsi (u) $ is a bounded map from $\bW ^l (\rM , \rE) $ to $\bW ^{l - m} (\rM , \rE)$.
\end{lem}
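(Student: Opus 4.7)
The plan is to reduce the assertion to the standard $L^2$-boundedness of pseudo-differential operators on $\mathbb{R}^n$ via the uniformly locally finite cover of $\rM$ by normal coordinate patches, exploiting both the uniform symbol estimates defining $\Psi^m_b(\rM,\rE)$ and the property that $\Supp(\varPsi)$ lies within a fixed distance $r$ of the diagonal.

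First I would fix a cover $\{(U_\alpha, \bx_\alpha)\}$ by normal coordinate balls of radius $\varepsilon$ with $\rE|_{U_\alpha}$ trivial, and a subordinate partition of unity $\{\theta_\alpha\}$ with uniformly bounded derivatives, as guaranteed by Lemma \ref{BddPartition}. I would also fix a second family $\{\tilde\theta_\alpha\}$ with $\tilde\theta_\alpha \equiv 1$ on $\Supp(\theta_\alpha)$ and supported in a slightly larger normal ball $\tilde U_\alpha$ of radius $2\varepsilon + r$, chosen again with uniform derivative bounds. By the support condition on $\varPsi$, one has the identity $\varPsi(\theta_\alpha u) = \tilde\theta_\alpha \varPsi(\theta_\alpha u)$ for every $\alpha$, so the local pieces $\varPsi_\alpha := (\bx_\alpha^{-1})^*\big(\tilde\theta_\alpha \varPsi \theta_\alpha\big)\bx_\alpha^*$ are honest compactly supported pseudo-differential operators on $\bx_\alpha(\tilde U_\alpha) \subset \bbR^n$, with symbols whose seminorms are controlled, uniformly in $\alpha$, by the constants $C_{IJ}$ from the definition of $\Psi^m_b$.

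Next I would invoke the classical continuous Sobolev mapping theorem on $\bbR^n$: a pseudo-differential operator of order $\leq m$ with symbol seminorms bounded by given constants extends to a bounded map $\bW^l(\bbR^n,\bbC^k) \to \bW^{l-m}(\bbR^n,\bbC^k)$ with operator norm controlled by those seminorms. This yields a constant $C$, independent of $\alpha$, such that
\begin{equation*}
\|\varPsi_\alpha v\|_{\bW^{l-m}(\bx_\alpha(\tilde U_\alpha))} \leq C \|v\|_{\bW^l(\bx_\alpha(U_\alpha))}
\end{equation*}
for any $v \in \bW^l(\bx_\alpha(U_\alpha), \bbC^k)$. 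Applied with $v = \bx_\alpha^* (\theta_\alpha u)$, and using that $\theta_\alpha$ has uniformly bounded derivatives, this gives $\|\tilde\theta_\alpha \varPsi(\theta_\alpha u)\|_{2,l-m} \leq C' \|\theta_\alpha u\|_{2,l}$ with $C'$ independent of $\alpha$.

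Finally I would assemble the global estimate. Writing $u = \sum_\alpha \theta_\alpha u$ and $\varPsi u = \sum_\alpha \varPsi(\theta_\alpha u) = \sum_\alpha \tilde\theta_\alpha \varPsi(\theta_\alpha u)$, both sums are uniformly locally finite by Lemma \ref{BallCover} (applied to the enlarged cover $\{\tilde U_\alpha\}$, whose multiplicity $\tilde N$ is still finite because $r$ is fixed). The definition of $\|\cdot\|_{2,l-m}$ together with this uniform local finiteness gives
\begin{equation*}
\|\varPsi u\|_{2,l-m}^2 \leq \tilde N \sum_\alpha \|\tilde\theta_\alpha \varPsi(\theta_\alpha u)\|_{2,l-m}^2 \leq \tilde N (C')^2 \sum_\alpha \|\theta_\alpha u\|_{2,l}^2 \leq C'' \|u\|_{2,l}^2,
\end{equation*}
where the last inequality again uses the bounded overlap of $\{U_\alpha\}$ and uniform control of derivatives of $\theta_\alpha$. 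The main obstacle I anticipate is bookkeeping: verifying that the localized operators $\varPsi_\alpha$ genuinely satisfy the symbol estimates uniformly in $\alpha$ after multiplication by the cutoffs $\theta_\alpha, \tilde\theta_\alpha$ and conjugation by $\bx_\alpha$. This requires checking that the composition of an element of $\Psi^m_b$ with a smooth multiplier of uniformly bounded derivatives stays in $\Psi^m_b$ with seminorms controlled only by finitely many seminorms of the multiplier, which follows from the asymptotic expansion of the symbol of a composition together with the bounded geometry of the normal coordinate charts (so that Christoffel symbols and their derivatives are uniformly bounded).
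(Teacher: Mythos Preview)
The paper states this lemma without proof; it is recorded as a standard fact about pseudo-differential operators on manifolds of bounded geometry, with the implicit reference being Shubin \cite{Shubin;BdGeom}. Your proposal is correct and is precisely the classical argument one finds in that reference: localize via a uniformly bounded partition of unity, invoke the Euclidean Sobolev mapping theorem with uniform symbol bounds, and reassemble using the finite-multiplicity property of the cover. Since the paper supplies no argument of its own, there is nothing to compare against; your proof would serve as a perfectly good justification for the stated result.
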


%%%%%%%%%%%%%%%%%%%%%%%%%%%

%%%%%%%%%%%%%%

\begin{thebibliography}{10}
%%%%%%%%%%%%%%%%%%%%%%%%%%%

\bibitem{Albin;EdgeInd}
P.~Albin.
\newblock A renormalized index theorem for some complete asymptotically regular
  metrics: the {Gauss-Bonnet} theorem.
\newblock {\em Adv. in Maths,}, {\bf 213}(1):1--52, 2007.

\bibitem{Nistor;Polyhedral}
B.~Ammann, A.D. Ionescu, and V.~Nistor.
\newblock Sobolev spaces on {Lie} manifolds and regularity for polyhedral
  domains.
\newblock {\em Doc. math}, 11:121--126, 2006.

\bibitem{Nistor;LieMfld}
B.~Ammann, R.~Lauter, and V.~Nistor.
\newblock Pseudo-differential operators on manifolds with a {Lie} structure at
  infinity.
\newblock {\em Annals of Mathematics}, 165:717--747, 2007.

\bibitem{Nistor;CplxPwr}
B.~Ammann, R.~Lauter, V.~Nistor, and A.~Vasy.
\newblock Complex powers and non-compact manifolds.
\newblock {\em Comm. Partial Differential Equations}, {\bf 29}(5, 6):671--705,
  2004.

\bibitem{Nistor;Polyhedral2}
B.~Ammann and V.~Nistor.
\newblock Weighted {Sobolev} spaces on {Lie} manifolds and regularity for
  polyhedral domains.
\newblock {\em Comput. methods Appl. Mech. Engrg.}, 196(37-40):3650--3659,
  2007.

\bibitem{Averson;Book}
W.~Averson.
\newblock {\em An invitation to {$C ^*$}-algebras}.
\newblock Springer, 2nd edition, 1976.

\bibitem{BGV;Book}
N.~Berline, E.~Getzler, and M.~Vergne.
\newblock {\em Heat kernels and Dirac operators}.
\newblock Springer-Verlag, 1992.

\bibitem{Connes;Book}
A.~Connes.
\newblock {\em Non-commutative geometry}.
\newblock Academic press, 1994.

\bibitem{Fern'd;IntAlgebroid}
M.~Crainic and R.L. Fernandes.
\newblock Integrability of {Lie} brackets.
\newblock {\em Annals of Mathematics.}, {\bf 157}:575--620, 2003.

\bibitem{Debord;IntAlgebroid}
C.~Debord.
\newblock Holonomy groupoids of singular foliations.
\newblock {\em J. Differential Geometry}, {\bf 58}(3):467--500, 2001.

\bibitem{Fern'd;HoloAndChar}
R.L. Fernandes.
\newblock Lie algebroids, holonomy and characteristic classes.
\newblock {\em Adv. in Maths.}, {\bf 170}:119--179, 2002.

\bibitem{Kennedy;Intrin}
A.~Fulling and G.~Kennedy.
\newblock The resolvent parametrix of the general elliptic linear differential
  operator: a closed form for the intrinsic symbol.
\newblock {\em Tran. Amer. Math. Soc.}, {\bf 310}(2):583--617, 1988.

\bibitem{Heitsch;FoliHeat}
J.~L. Heitsch.
\newblock {Bismut} super-connections and the {Chern} characters for {Dirac}
  operators on foliated manifolds.
\newblock {\em K-Theory}, {\bf 9}:507--528, 1995.

\bibitem{Hormander;1}
L.~Hormander.
\newblock {\em The analysis of linear partial differential operators 1}.
\newblock Springer, 1983.

\bibitem{Knapp;Book1}
A.W. Knapp.
\newblock {\em Lie Group beyond an introduction}.
\newblock Birkhauser, 2002.

\bibitem{KMS;Book}
I.~Kolar, P.W. Michor, and J.~Slovak.
\newblock {\em Natural Operators in Differential Geometry}.
\newblock Springer-Verlag, 1993.

\bibitem{Nistor;Family}
R.~Lauter, B.~Monthubert, and V.~Nistor.
\newblock Pseudo-differential analysis on continuous groupoids.
\newblock {\em Documenta Mathematica}, {\bf 4}:625--655, 2000.

\bibitem{Nistor;GeomOp}
R.~Lauter and V.~Nistor.
\newblock Analysis of geometric operators on open manifolds: a groupoid
  approach.
\newblock In {\em Quantization of singular symplectic quotients}, pages
  181--229, 2001.

\bibitem{Loya;Pert}
P.~Loya and R.B. Melrose.
\newblock Fredholm perturbations of {Dirac} operators on manifolds with
  corners.
\newblock manuscript http://www.math.binghamton.edu/paul/papers/LoyMel.pdf,
  2002.

\bibitem{Lu;PoissonCohNotes}
J.-H. Lu.
\newblock A note on {Poisson} homogeneous spaces.
\newblock In {\em Poisson geometry in mathematics and physics}, volume 405 of
  {\em Contemporary mathematics}, pages 173--198, 2006.

\bibitem{LuWein;DressingTran}
J.-H. Lu and A.~Weinstein.
\newblock Poisson {Lie} groups, dressing transformations and {Bruhat}
  decompositions.
\newblock {\em J. Differential Geometry}, {\bf 31}:501--526, 1990.

\bibitem{Mackenzie;Book2}
K.~Mackenzie.
\newblock {\em General theory of {Lie} groupoids and {Lie} algebroids}.
\newblock Cambridge University Press, 2005.

\bibitem{Mazzeo;EdgeRev}
R.~Mazzeo.
\newblock Elliptic theory of edge differential operators.
\newblock {\em Comm. Partial Differential Equations}, {\bf 16}(10):1615--1664,
  1991.

\bibitem{Shubin;Lie2}
G.A. Meladze and M.A. Shubin.
\newblock A functional calculus of pseudo-differential operators on unimodular
  {Lie} groups.
\newblock {\em Journal of Mathematical Sciences}, {\bf 47}(4):2607--2638, 1989.

\bibitem{Melrose;Book}
R.B. Melrose.
\newblock {\em The Atiyah-Patodi-Singer index theorem}.
\newblock A K Peters, 1993.

\bibitem{M'bert;CornerGroupoids}
B.~Monthubert.
\newblock Pseudo-differential calculus on manifolds with corners and groupoids.
\newblock {\em Proc. Amer. Math. Soc.}, {\bf 12}:2871--2881, 1999.

\bibitem{Nistor;IntAlg'oid}
V.~Nistor.
\newblock Groupoids and integration of {Lie} algebroids.
\newblock {\em J. Math. Soc. Japan}, {\bf 52}(4):847--868, 2000.

\bibitem{Nistor;Polyhedral3}
V.~Nistor.
\newblock Singular integral operators on non-compact manifolds and analysis on
  polyhedral domains.
\newblock arXiv:math/0402322, 2004.

\bibitem{NWX;GroupoidPdO}
V.~Nistor, A.~Weinstein, and P.~Xu.
\newblock Pseudo-differential operators on differential groupoids.
\newblock {\em Pac. J. Maths}, {\bf 189}(1):117--152, 1999.

\bibitem{Paycha;Renorm}
S.~Paycha.
\newblock From heat-operators to anomalies: a walk through various
  regularization techniques in mathematics and physics.
\newblock Emmy Nother Lectures,
  http://math.univ-bpclermont.fr/{~}paycha/articles/Goettingenlectures.ps,
  2003.

\bibitem{Ryot;NecklaceCP1}
D.~Roytenberg.
\newblock Poisson cohomology of {SU(2)} covariant necklace {Poisson} structures
  on {$S^2$}.
\newblock {\em J. of Nonlinear Mathematical Physics}, {\bf 9}(3):347--356,
  2002.

\bibitem{Shimakura;Book}
N.~Shimakura.
\newblock {\em Partial differential operator of Elliptic type}.
\newblock American Mathematical Society, 1991.

\bibitem{Shubin;BdGeom}
M.~A. Shubin.
\newblock Spectra of elliptic operators on non-compact manifolds.
\newblock In {\em Methodes semi-classiques Vol 1}, volume 207 of {\em
  Asterisque}, pages 35--108, 1992.

\bibitem{Simon;SpecHeatKer}
B.~Simon.
\newblock Large time behavior of the heat kernel: On a theorem of {Chavel} and
  {Karp}.
\newblock {\em Proceedings of the AMS}, {\bf 118}(2):513--514, 1993.

\bibitem{So;MPhil}
B.K. So.
\newblock On some examples of {Poisson} homology and cohomology - analytic and
  {Lie} theoretic approaches.
\newblock Master's thesis, The University of Hong Kong, 2005.

\bibitem{Sussmann;SingFol}
H.~Sussmann.
\newblock Orbits of families of vector fields and integrability of
  distributions.
\newblock {\em Tran. Amer. Math. Soc.}, {\bf 180}:171--188, 1973.

\bibitem{Vas;Book}
I.~Vaisman.
\newblock {\em Lectures on the geometry of Poisson Manifolds}.
\newblock Birkhauser, 1994.

\end{thebibliography}
\end{document}